\theoremstyle{definition}
\newtheorem{defn}{Definition}[section]
\newtheorem{notation}[defn]{Notation}
\newtheorem{construct}[defn]{Construction}
\theoremstyle{plain}
\newtheorem{lemma}[defn]{Lemma}
\newtheorem{theorem}[defn]{Theorem}
\newtheorem{prop}[defn]{Proposition}
\newtheorem{cor}[defn]{Corollary}
\providecommand{\customgenericname}{}
\newcommand{\newcustomtheorem}[2]{%
  \newenvironment{#1}[1]
  {%
   \renewcommand\customgenericname{#2}%
   \renewcommand\theinnercustomgeneric{##1}%
   \innercustomgeneric
  }
  {\endinnercustomgeneric}
}
\theoremstyle{remark}
\newtheorem{rk}[defn]{Remark}
\newcommand{\PS}{\mathbb{P}}
\newcommand{\F}{\mathbb{F}}
\newcommand{\Z}{\mathbb{Z}}
\newcommand{\C}{\mathbb{C}}
\newcommand{\A}{\mathbb{A}}
\newcommand{\D}{\mathbb{D}}
\newcommand{\R}{\mathbb{R}}
\begin{document}
\bibliographystyle{alpha}

\title{Type II Degenerations of K3 Surfaces of Degree 4}

\author[J. M. Jones]{James Matthew Jones}
\address{Department of Mathematical Sciences, Loughborough University, Loughborough, Leicestershire, LE11 3TU, United Kingdom.}
\email{J.Jones@lboro.ac.uk}
\thanks{The author was partially supported by the EPSRC grant EP/T518098/1.}
\thanks{For the purpose of open access, the author has applied a Creative Commons Attribution (CC BY) licence to any Author Accepted Manuscript version arising.}

\begin{abstract}
We study Type II degenerations of K3 surfaces of degree 4 where the central fiber consists of two rational components glued along an elliptic curve. Such degenerations are called Tyurin degenerations. We construct explicit Tyurin degenerations corresponding to each of the 1-dimensional boundary components of the Baily-Borel compactification of the moduli space of K3 surfaces of degree 4.
For every such boundary component we also construct an 18-dimensional family of Tyurin degenerations of K3 surfaces of degree 4 and compute the stable models of these degenerations.
\end{abstract}
\subjclass[2020]{14D06, 14J10, 14J26, 14J28}
\date{}
\maketitle
\section{Introduction}
\subsection{Motivation}

A Tyurin degeneration of K3 surfaces is a semistable degeneration $X\rightarrow \Delta $, where $\Delta$ is the complex unit disc, whose central fiber $X_0=V_0\cup V_1$ is the union of two rational surfaces which are glued along a smooth elliptic curve. Such a degeneration is a Tyurin degeneration of K3 surfaces of degree 4 if there is a line bundle $L$ on $X$ such that $L_t:=L|_{X_t}$ on the general fiber $X_t$ is nef and big, with $L_t^2=4$. By results of Shepherd-Barron \cite{SB83} one can further arrange that $L_0$ is nef by flopping curves on $X_0$.

This point of view is equivalent to considering K3 surfaces which are polarized by the lattice $\langle4\rangle$ and Tyurin degenerations occur over the 1-dimensional strata of the Baily-Borel compactification $\overline{\mathcal{F}_4}\supset\mathcal{F}_4$ of the moduli space of K3 surfaces of degree 4. These 1-dimensional strata have several lattice-theoretic descriptions associated to them. Let $\Lambda_4:= \langle-4\rangle\oplus H^{\oplus2}\oplus(E_8)^{\oplus2}$ and consider $I\subset \Lambda_4$ a rank 2 isotropic sublattice, then the 1-dimensional strata of $\overline{\mathcal{F}_4}$ are in bijective correspondence with isomorphism classes of $I^\perp/I$. Such $I^\perp/I$ are rank 17 lattices and Scattone \cite{Sca87} showed that there are 9 isomorphism classes of these lattices in the degree 4 case. When one knows the central fiber $X_0=V_0\cup V_1$ of a Tyurin degeneration explicitly, Friedman \cite{Fried84} was able to show that these rank 17 lattices can be obtained by considering the limit mixed Hodge structure of $X_0$.

Due to this, there are several natural questions one can ask. For example, is it possible to describe the geometric objects giving rise to these lattices explicitly? Do these geometric descriptions uniquely determine the lattices which one obtains? Are there distinct geometric descriptions which are able to determine the same lattice?

The first of these questions is made approachable due to previous work of Shah \cite{Shah81} where many singular quartic surfaces are constructed via methods in Geometric Invariant Theory (GIT). These surfaces can be extended to degenerations of K3 surfaces, and from Shah's analysis it is clear to see which surfaces can be used to define Type II degenerations of K3 surfaces of degree 4.

We construct one degeneration whose central fiber is not a quartic surface. In this case it appears to be more natural to construct a degeneration of hyperelliptic K3 surfaces of degree 4. This degeneration appears to be very special as discussed in work of Laza and O'Grady \cite{LOG19,LOG21} where the authors compare the GIT and Baily-Borel compactifications of $\mathcal{F}_4$ using variation of GIT. 

\subsection{Description of the Results}

The following is our first result. 

\begin{customthm}{1}\label{correspondence intro}
Let $X \rightarrow \Delta$ be a degeneration of K3 surfaces of degree 4 whose central fiber $X_0$ is a surface belonging to column 1 of Table \ref{tab:correspondence} and let $\widetilde{X}$ be the minimal resolution of $X$. Then $\widetilde{X}$ is a projective Tyurin degeneration of K3 surfaces of degree 4, whose central fiber belongs to column 2 of Table \ref{tab:correspondence}, and the associated Type II boundary component of $\overline{\mathcal{F}_4}$ is given by column 3 of Table \ref{tab:correspondence}.
\begin{table}[h]
\centering
\begin{tabular}{|c|l|l|c|}
\hline
\multirow{2}{*}{\textbf{Central fiber of $X$}}                                                                                                                                                    & \multicolumn{1}{c|}{\multirow{2}{*}{\textbf{Central fiber of $\widetilde{X}$}}} & \multicolumn{1}{c|}{\multirow{2}{*}{\textbf{Lattice}}} & \multirow{2}{*}{\textbf{Section}}                               \\                                                                                                                                & \multicolumn{1}{c|}{}                                                           & \multicolumn{1}{c|}{}                                  &                                                                 \\ \hline
\begin{tabular}[c]{@{}c@{}}Quartic surface with one singular point\\ of type $\widetilde{E_8}$ with a line passing through it\end{tabular}                                                        & $\operatorname{Bl}_{10}\PS^2\cup\operatorname{Bl}_8\PS^2$                       & $E_8^{\oplus2}+\langle -4 \rangle$                     & Section \ref{sect: E8 singularity and a line}  \\ \hline
\begin{tabular}[c]{@{}c@{}}Quartic surface with one singular point \\ of type $\widetilde{E_8}$ and no line passing through it\end{tabular}                                                       & $\operatorname{Bl}_{10}\PS^2\cup\operatorname{Bl}_8\PS^2$                       & $E_8 +D_9$                                             & Section \ref{sect: E8 singularity and no line} \\ \hline
\begin{tabular}[c]{@{}c@{}}Quartic surface with one singular point \\ of type $\widetilde{E_7}$\end{tabular}                                                                                      & $\operatorname{Bl}_{11}\PS^2\cup\operatorname{Bl}_7\PS^2$                       & $E_7^{\oplus2}+A_3$                                    & Section \ref{sect: E7 singularity}             \\ \hline
A plane intersecting a cubic surface                                                                                                                                                              & $\operatorname{Bl}_{12}\PS^2\cup\operatorname{Bl}_6\PS^2$                       & $E_6+A_{11}$                                           & Section \ref{sect:Plane and Cubic}             \\ \hline
\begin{tabular}[c]{@{}c@{}}A double cover of $\PS(1,1,2)$ branched\\ along a doubled twisted cubic passing \\ through the vertex and along a conic \\ not passing through the vertex\end{tabular} & $\operatorname{Bl}_{18}\PS^2\cup\PS^2$                                          & $D_{17}$                                               & Section \ref{sect:hyperelliptic degen}         \\ \hline
\begin{tabular}[c]{@{}c@{}}Quartic surface non-normal along\\ a twisted cubic\end{tabular}                                                                                                        & $\operatorname{Bl}_{17}\PS^2\cup (\PS^1\times\PS^1)$                            & $D_{16}+\langle -4 \rangle$                            & Section \ref{sect:non-normal along TC}         \\ \hline
\begin{tabular}[c]{@{}c@{}}Quartic surface non-normal along\\ a conic\end{tabular}                                                                                                                & $\operatorname{Bl}_{13}\PS^2\cup\operatorname{Bl}_5\PS^2$                       & $D_{12}+D_5$                                           & Section \ref{sect:non-normal along conic}      \\ \hline
\begin{tabular}[c]{@{}c@{}}Quartic surface non-normal along\\ a line\end{tabular}                                                                                                         & $\operatorname{Bl}_9\PS^2\cup\operatorname{Bl}_9\PS^2$                          & $D_8^{\oplus2}+\langle -4 \rangle$                     & Section \ref{sect:non-normal along line}       \\ \hline
Two quadrics intersecting transversally                                                                                                                                                           & $\operatorname{Bl}_{16}(\PS^1\times\PS^1)\cup(\PS^1\times\PS^1)$                & $A_{15}+A_1^{\oplus2}$                                 & Section \ref{sect:Two Quadrics}                \\ \hline
\end{tabular}
    \caption{The correspondence between the central fibers of singular degenerations of K3 surfaces of degree 4, the central fibers of their minimal resolutions, and the Type II boundary components of $\overline{\mathcal{F}_4}$.}\label{tab:correspondence}
    \vspace{-0.35cm}
    \end{table}

\end{customthm}

Theorem \ref{correspondence intro} gives explicit descriptions of the Tyurin degenerations which lie over the boundary components of $\overline{\mathcal{F}_4}$ and in Section \ref{sect:Explicit descriptions of families} we provide equations for such degenerations. We also describe the two components, $V_0$ and $V_1$, of the central fiber $\widetilde{X}_0$ of the corresponding Tyurin degenerations. An interesting consequence of this is that the isomorphism classes of $V_0$ and $V_1$ do not uniquely determine the class of Tyurin degeneration which one obtains.

In each case $V_0$ and $V_1$ are obtained by blowing up either $\PS^2$ or $\PS^1\times\PS^1$ at points on an elliptic curve $E$. In order for $X_0$ to be the central fiber of a degeneration, these points must satisfy a relation in $\operatorname{Pic}(E)$ which arises due to $d$-semistability \cite{Fried84,AE23}. In Section \ref{sect:period domains} we show that for each explicitly constructed Tyurin degeneration in Section \ref{sect:Explicit descriptions of families}, the points satisfy an additional relation in $\operatorname{Pic}(E)$. It is also shown that the period map for Tyurin degenerations imposes this relation. We provide the following construction which is necessary for Theorem \ref{list of nef models}.

\begin{construct}\label{Tyurin constructions}
Fix $E$ an elliptic curve, let $\overline{V_0} = \PS^2 \text{ or }\PS^1\times\PS^1$ and let $\overline{V_1} = \PS^2 \text{ or }\PS^1\times\PS^1$. Set $k_0=(-K_{\overline{V_0}})^2$, $k_1= (-K_{\overline{V_1}})^2$ and define $k=k_0+k_1$. Embed $q\in E \hookrightarrow \overline{V_0}$ and $q'\in E\hookrightarrow \overline{V_1}$ so that $q$ and $q'$ are the identities of the respective group laws. Fix $k$ points $p_1,\dots, p_{n}, p_1',\dots, p_{k-n}'\in E$ which satisfy the relation
$$\mathcal{O}_E\left(k_0q+k_1q'-\sum_{i=1}^np_i-\sum_{i=1}^{k-n}p_i'\right)=\mathcal{O}_E.$$
This is the relation imposed by $d$-semistability. Define $V_0 = \operatorname{Bl}_n\overline{V_0}$ and $V_1 = \operatorname{Bl}_{k-n}\overline{V_1}$ where $\overline{V_0}$ is blown up in the points $p_1,\dots,p_n$ and $\overline{V_1}$ is blown up in the points $p_1',\dots,p_{k-n}'$. We set $d=n-k_0$ and glue $V_0$ and $V_1$ along $E$.
\end{construct}

The relations described in the final column of Table \ref{Complete Description Table} are shorthand for the appropriate relation in $\operatorname{Pic}(E)$. For example, the relation in the final row is shorthand for $\mathcal{O}_E(16q-\sum_{i=1}^{16}p_i)=\mathcal{O}_E$. These relations make it possible to distinguish the 9 degenerations from a geometric point of view, we use them and the following definitions of Alexeev and Engel \cite{AE23} to provide a classification of Tyurin degenerations.

\begin{defn}
Let $X^*\rightarrow\Delta^*:=\Delta\setminus\{0\}$ be a family of smooth K3 surfaces over the punctured unit disc and let $L^*$ be a line bundle on $X^*$ which is relatively nef and big over $\Delta^*$. A relatively nef extension $L$ to a Kulikov model $X$ over $\Delta$ is called a \emph{nef model}. Let $H^*\subset X^*$ be the vanishing locus of a section of $L^*$ containing no vertical components. A \emph{divisor model} is an extension $H \subset X$ to a relatively nef divisor $H\in|L|$ for which $H_0$ contains no strata of $X_0$. A \emph{stable model} of $(X^*,H^*)$ model is the log canonical model of some divisor model $(X,H)$.
\end{defn}

The existence of nef models is originally due to Shepherd-Barron \cite{SB83} and the existence of divisor models is due to Laza \cite{Laza16}. In Section \ref{sect:clasification of models} we determine all stable models for the central fibers of Tyurin degenerations of K3 surfaces of degree 4. Combining this with the results of Section \ref{sect:period domains} allows us to distinguish them using their geometry.

\begin{customthm}{2}\label{list of nef models}
Let $X\rightarrow\Delta$ be a Tyurin degeneration of K3 surfaces of degree 4  with $X_0=V_0\cup V_1$ as given by Construction \ref{Tyurin constructions}. Let $L$ be the polarization line bundle on $X$ so that $L_t^2=4$ for all $t\in \Delta$, $L_t$ is ample on the general fiber and $L_0$ is nef. Let $(X,H)$ be a divisor model where  $H_t\in|L_t^{\otimes n}|$ for all $t$. If $H$ is relatively ample then the central fiber $\overline{X_0}$ of the stable model $\left(\overline{X},\overline{H}\right)$ is one of the possibilities given in Table \ref{Complete Description Table} and all such possibilities occur. If $H$ is relatively nef but not relatively ample, then $\overline{X_0}$ is obtained from Table \ref{Complete Description Table} by contracting curves or a component.

\begin{table}[h]
\centering
\begin{tabular}{|l|c|c|c|l|}
\hline
\multicolumn{1}{|c|}{\textbf{Lattice}} & $V_0$                                      & $V_1$                      & $d$ & \multicolumn{1}{c|}{\textbf{Additional Relation in $\operatorname{Pic}(E)$}} \\ \hline
$E_8+E_8+\langle -4 \rangle$           & $\operatorname{Bl}_9\PS^2$                 & $\operatorname{Bl}_9\PS^2$ & 0   & $27q=3\sum_{i=1}^{8}p_i+2p_9+p_9'$                                           \\ \hline
$E_8+E_8+\langle -4 \rangle$           & $\operatorname{Bl}_{10}\PS^2$              & $\operatorname{dP}_1$      & 1   & $27q=3\sum_{i=1}^{8}p_i+2p_9+p_{10}$                                        \\ \hline
$E_8+D_9$                              & $\operatorname{Bl}_{10}\PS^2$              & $\operatorname{dP}_1$      & 1   & $21q = 3p_1+2\sum_{i=2}^{10}p_i$                                             \\ \hline
$E_7+E_7+A_3$                          & $\operatorname{Bl}_{11}\PS^2$              & $\operatorname{dP}_2$      & 2   & $18q=2\sum_{i=1}^{7}p_i+\sum_{i=8}^{11}p_i$                                  \\ \hline
$E_6+A_{11}$                           & $\operatorname{Bl}_{12}\PS^2$              & $\operatorname{dP}_3$      & 3   & $12q=\sum_{i=1}^{12}p_i$                                                     \\ \hline
$E_6+A_{11}$                           & $\operatorname{Bl}_{18}\PS^2$              & $\PS^2$                    & 9   & $12q=\sum_{i=1}^{12}p_i$                                                     \\ \hline
$D_{17}$                               & $\operatorname{Bl}_{18}\PS^2$              & $\PS^2$                    & 9   & $45q=11p_1+2\sum_{i=2}^{18}p_i$                                              \\ \hline
$D_{16}+\langle -4\rangle$             & $\operatorname{Bl}_{17}\PS^2$              & $\PS^1\times\PS^1$         & 8   & $63q=15p_1+3\sum_{i=2}^{17}p_i$                                              \\ \hline
$D_{12}+D_5$                           & $\operatorname{Bl}_{13}\PS^2$              & $\operatorname{dP}_4$      & 4   & $15q = 3p_1+\sum_{i=2}^{13}p_i$                                              \\ \hline
$D_8+D_8+\langle-4\rangle$             & $\operatorname{Bl}_9\PS^2$                 & $\operatorname{Bl}_9\PS^2$ & 0   & $12q'+p_1=3q+2p_1'+\sum_{i=2}^9p_i'$                                         \\ \hline
$A_{15}+A_1+A_1$                       & $\operatorname{Bl}_{16}(\PS^1\times\PS^1)$ & $\PS^1\times\PS^1$         & 8   & $16q=\sum_{i=1}^{16}p_i$                                                    \\ \hline
\end{tabular}
\caption{The possibilities for a central fiber $\overline{X_0}$ of a stable model $(\overline{X},\overline{H})$ of a Tyurin degeneration of K3 surfaces of degree 4 when $H$ is relatively ample.}
\label{Complete Description Table}
\end{table}
\end{customthm}

In Section \ref{sect:clasification of models} we treat the relatively nef case of Theorem \ref{list of nef models} more carefully and explicitly describe the curves or the component which one must contract to obtain the relevant stable model. As a consequence of Theorem \ref{list of nef models} we can see that no two distinct degenerations of Theorem \ref{correspondence intro} incur the same additional relation in $\operatorname{Pic}(E)$. Therefore the additional relation is able to completely distinguish the Type II boundary components of $\overline{\mathcal{F}_4}$. Furthermore, we see that there are only two Type II boundary components of $\overline{\mathcal{F}_4}$ which admit distinct stable models $\left(\overline{X},\overline{H}\right)$ when $H$ is relatively ample.

Our main corollary of Theorem \ref{list of nef models} is a statement about \emph{$\lambda$-families} $\mathcal{X}\rightarrow S_\lambda$ (Definition \ref{def:lambda family}). These families were introduced by Alexeev and Engel \cite{AE23} and can be identified with families over a tubular neighbourhood of the toroidal compactification of $\mathcal{F}_4$. 

\begin{customcor}{3}\label{thm:lambda families intro}
Let $\mathcal{X}\rightarrow S_\lambda$ be a $\langle 4 \rangle$-quasipolarized nef $\lambda$-family of Tyurin degenerations and let $(X\rightarrow\Delta,H)$ be a divisor model arising from the $\lambda$-family. Then the stable model of $(X,H)$ is one of those constructed in Theorem \ref{list of nef models}.
\end{customcor}

\subsection{Relation to Previous Work} 
The first to consider a correspondence similar to that of Table \ref{correspondence intro} was Friedman \cite{Fried84} who studied the degree 2 case. Indeed, in the degree 2 situation Friedman used results of Shah \cite{Shah80}, where semistable degenerations of double covers of $\PS^2$ branched along smooth sextic curves were studied using GIT. Friedman developed many ideas which we use during this paper, the most fundamental of which is the construction of the period map for Tyurin degenerations \cite{Fried84,Carlson85}. This map is used to determine the relations in Table \ref{Complete Description Table}.

The correspondence between Columns 1 and 3 of Table \ref{tab:correspondence} seems to have been broadly known by Laza and O'Grady \cite[Table 5]{LOG19}, where the period map from the GIT moduli space of quartic surfaces $\mathfrak{p}:\mathfrak{M}\dashrightarrow\overline{\mathcal{F}_4}$ was studied. However, explicit degenerations which give rise to the Type II boundary strata in the Baily-Borel compactification were not constructed. In this regard, the improvement we provide here is in performing explicit birational modifications to the singular quartic surfaces of Shah \cite{Shah81}, and therefore being able to describe column 2 of Table \ref{tab:correspondence}.

By examining the period map from $\mathfrak{M}\dashrightarrow \overline{\mathcal{F}_4}$, Laza and O'Grady determined that there is no singular quartic surface of Type II which gives rise to the $D_{17}$ stratum in the Baily-Borel compactification. Instead any singular quartic which gives rise to the $D_{17}$ boundary locus is contained in the indeterminacy locus of the map $\mathfrak{p}$.

In a related work, Laza and O'Grady \cite{LOG21} also considered the GIT compactification for double covers of $\PS^1\times \PS^1$ branched along $(4,4)$-divisors in a similar way. In \cite{LOG21} it is shown that the Type II boundary component $D_{17}$ in $\overline{\mathcal{F}_4}$ does not arise as a double cover of $\PS^1\times\PS^1$ branched along a $(4,4)$-divisor but is instead the double cover of $\PS(1,1,2)$ given in Table \ref{tab:correspondence}. Our improvement to this work is again performing the explicit calculations to determine a Type II degeneration corresponding to $D_{17}$, and explicitly describing its central fiber. These explicit calculations are necessary to determine the relations of Theorem \ref{list of nef models}.

Recently, Alexeev and Engel \cite{AE23} have extensively studied the moduli space of K3 surfaces and several of its compactifications. In particular they provide comparisons between toroidal and semi-toroidal compactifications, as well as with the Baily-Borel compactification. Alexeev and Engel also demonstrate the existence of a family of Type II Kulikov surfaces for which the period map is the identity. Using this, they introduce the notion of Type II $\lambda$-families, which are families over tubular neighbourhoods of the Type II boundary components of toroidal compactifications of $\mathcal{F}_4$. With this frame of reference, Corollary \ref{thm:lambda families intro} is a statement about the structure of Type II boundary components of toroidal compactifications of $\mathcal{F}_4$.
\subsection{Data Availability}
Data sharing not applicable to this article as no datasets
were generated or analysed during the current study.
\subsection{Acknowledgements}
The results of this paper comprise one chapter of my PhD thesis. I would like to thank my supervisor, Alan Thompson, for his assistance and support throughout this project.
\section{Background Material}\label{sect:Background}
We begin by reviewing fundamental results regarding K3 surfaces.

\begin{defn}
A \emph{K3 surface} $Y$ is a compact complex surface such that $h^1(Y)=0$ and $\omega_Y\simeq\mathcal{O}_Y$.
\end{defn}

\begin{defn}
Let $\Lambda_{K3} = H^{\oplus3}\oplus(E_8)^{\oplus2}$ be a fixed copy of the unique even unimodular lattice of signature $(3,19)$, where $H$ is the unique even unimodular lattice of rank 2. We call $\Lambda_{K3}$ the \emph{K3 lattice}.
\end{defn}

For a K3 surface $Y$, the second cohomology group $H^2(Y,\Z)$ together with the cup product pairing is a lattice isometric to the K3 lattice. For our purposes we will be interested in \emph{quasipolarized} K3 surfaces.

\begin{defn}
A \emph{$\langle 2k \rangle$-quasipolarized K3 surface} $(Y,h)$ is a pair consisting of a K3 surface $Y$ and a primitive nef and big class $h \in \operatorname{NS}(Y)$ with $(h,h)=2k$. Two such $(Y,h)$ and $(Y',h')$ are \emph{isomorphic} if there is an isomorphism $f:Y\rightarrow Y'$ of K3 surfaces such that $f^*(h')=h$.
\end{defn}

The following theorem is known as the strong Torelli theorem, which was proved in the algebraic setting by Pjatecki{\u i}-Shapiro
and Shafarevi{\v c} \cite[p.551]{PSS71}. The strong Torelli theorem was proved in greater generality by Burns and Rapoport \cite{BurnsRapoport}. For our work, it is sufficient to consider only the algebraic setting because $\langle 2k\rangle$-quasipolarized K3 surfaces are algebraic \cite[Remark 2]{HarThom15}.

\begin{theorem}[Strong Torelli]\cite{PSS71,BurnsRapoport}\label{hodge isometry}
Let $(Y,h)$ and $(Y',h')$ be two $\langle 2k \rangle$-quasipolarized K3 surfaces. Assume that there is a lattice isometry $\phi:H^2(Y',\Z)\rightarrow H^2(Y,\Z)$ whose $\C$-linear extension preserves the Hodge decomposition and satisfies $\phi(h')=h$. Then there is a unique isomorphism $f:Y\rightarrow Y'$ with $\phi=f^*$, so $(Y,h)$ and $(Y',h')$ are isomorphic.
\end{theorem}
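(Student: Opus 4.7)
The plan is to reduce to the classical formulation of the Strong Torelli theorem, in which one assumes that $\phi$ sends the Kähler (ample) cone of $Y'$ to the Kähler cone of $Y$, and then deduces the existence of a unique isomorphism $f$ with $f^* = \phi$. The classical version in \cite{PSS71, BurnsRapoport} is exactly of this form; the work here is to upgrade from a nef-and-big polarization to a genuine Kähler class while maintaining the hypothesis $\phi(h') = h$.

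First I would verify that $\phi$ sends the positive cone $\mathcal{C}_{Y'} \subset H^{1,1}(Y')_\R$ to $\mathcal{C}_Y$. Since $h$ and $h'$ are nef and big, $(h,h) = (h',h') = 2k > 0$, and both classes therefore lie in the interiors of the respective positive cones. Because $\phi$ is a real isometry preserving the $(1,1)$-part of the Hodge decomposition and satisfies $\phi(h') = h$, it must identify the connected component of $\{x : (x,x) > 0\}$ containing $h'$ with the one containing $h$.

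Next I would handle the cone issue using the Weyl group $W_Y$ generated by reflections in effective $(-2)$-classes. This group acts on $\mathcal{C}_Y$ with the Kähler cone $\mathcal{K}_Y$ as a fundamental chamber. The class $h$, being nef and big, lies in the closure $\overline{\mathcal{K}_Y}$; similarly $h' \in \overline{\mathcal{K}_{Y'}}$. The image $\phi(\mathcal{K}_{Y'})$ is some chamber of $\mathcal{C}_Y$ whose closure contains $h = \phi(h')$, so there exists $w \in W_Y$ taking $\phi(\mathcal{K}_{Y'})$ to $\mathcal{K}_Y$. Crucially, $w$ can be chosen inside the stabilizer of $h$, which is generated by reflections in $(-2)$-classes orthogonal to $h$; hence $w(h) = h$. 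Now $w \circ \phi$ is a Hodge isometry sending $\mathcal{K}_{Y'}$ to $\mathcal{K}_Y$ and sending $h'$ to $h$, and the classical Strong Torelli theorem produces a unique isomorphism $f : Y \to Y'$ with $f^* = w \circ \phi$. Since $w$ arose from $(-2)$-reflections that come from actual $(-2)$-curves on $Y$, one argues that composing with them does not change the isomorphism on the polarized pair, giving $f^* = \phi$ on the nose.

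The uniqueness assertion is then standard: if $f_1, f_2 : Y \to Y'$ both induce $\phi$, then $f_1 \circ f_2^{-1}$ is an automorphism of $Y'$ acting trivially on $H^2(Y', \Z)$, hence trivial since K3 surfaces admit no non-trivial automorphisms acting as the identity on cohomology (this follows from holomorphic Lefschetz together with the triviality of the canonical bundle). The main obstacle in this proof is the Weyl-group correction step: one must argue carefully that the stabilizer of a nef-and-big class acts transitively on the chambers adjacent to it, so that the correcting $w$ can be chosen to fix $h$ while simultaneously realigning the Kähler chambers.
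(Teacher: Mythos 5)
The paper does not prove this statement at all: it is quoted verbatim from the literature, with the citation to Pjatecki\u{i}-Shapiro--Shafarevi\v{c} and Burns--Rapoport standing in for a proof. So there is no argument in the paper to compare yours against, and your proposal has to be judged on its own terms as a derivation of the quasipolarized statement from the classical K\"ahler-cone formulation.

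Most of your reduction is sound: the positive-cone argument, the fact that the correcting element $w$ of the Weyl group can be taken in the stabilizer of $h$ (generated by reflections in $(-2)$-classes orthogonal to $h$, by the standard fact that the stabilizer of a point in the closure of a fundamental chamber of a reflection group is generated by the reflections in the walls through that point), and the uniqueness argument via automorphisms acting trivially on $H^2$. The genuine gap is the final sentence of your main step: after applying classical Torelli to $w\circ\phi$ you obtain $f^*=w\circ\phi$, and if $w\neq\operatorname{id}$ this is simply not equal to $\phi$. The assertion that ``composing with them does not change the isomorphism on the polarized pair, giving $f^*=\phi$ on the nose'' cannot be repaired, because the literal conclusion $\phi=f^*$ is false for merely nef and big $h$: take $Y=Y'$ a K3 surface with trivial automorphism group carrying a $(-2)$-curve $C$ with $C\cdot h=0$ (so $h$ is nef and big but not ample); the reflection $R_C$ is a Hodge isometry fixing $h$, yet it moves the K\"ahler cone and so is not $f^*$ for any automorphism $f$. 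What your argument actually proves is the weaker (and for the paper's purposes sufficient) conclusion that $(Y,h)$ and $(Y',h')$ are isomorphic as quasipolarized surfaces, via some $f$ with $f^*=w\circ\phi$ and $f^*(h')=w(h)=h$. To get $\phi=f^*$ itself one must either assume $h$ ample (so that $\phi(h')=h$ already forces $\phi(\mathcal{K}_{Y'})=\mathcal{K}_Y$ and $w=\operatorname{id}$) or add the hypothesis that $\phi$ preserves effective classes, which is how the cited sources state the theorem.
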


The strong Torelli theorem for K3 surfaces enables us to construct a moduli space of $\langle 2k \rangle$-quasipolarized K3 surfaces. We first recall two more notions which are necessary to construct the moduli space of $\langle 2k \rangle$-quasipolarized K3 surfaces.

\begin{defn}
A \emph{marking} on $Y$ is a choice of isometry $\phi:H^2(Y,\Z)\rightarrow \Lambda_{K3}$ and we say that the pair $(Y,\phi)$ is a \emph{marked} K3 surface. Fix a primitive class $h\in \Lambda_{K3}$ with $(h,h)=2k$. A \emph{marked $\langle 2k \rangle$-quasipolarized K3 surface} is a marked K3 surface $(Y,\phi)$ such that $\phi^{-1}(h)$ is a nef and big class in $\operatorname{NS}(Y)$.
\end{defn}

\begin{defn}
The \emph{polarized K3 lattice of degree $2k$} is denoted by $\Lambda_{2k}=\langle -2k\rangle \oplus H^{\oplus 2}\oplus E_8^{\oplus 2}$.
\end{defn}

\begin{defn}
Fix a primitive class $h\in \Lambda_{K3}$ such that $(h,h)=2k$. The \emph{period domain for marked K3 surfaces of degree $2k$} is
$$\D_{2k} = \PS\{x \in \Lambda_{K3}\otimes\C \text{ }\big{|}\text{ } x\cdot x=0,x\cdot\bar{x}>0,x\cdot h=0\}.$$
$\D_{2k}$ is a 19-dimensional open analytic subset of $\PS^{21}$. Let $\mathcal{X}\rightarrow S$ be a family of marked K3 surfaces of degree $2k$, the period map is defined to be $P: S\rightarrow \D_{2k}$ which maps $s \mapsto H^{2,0}(X_s)\subset H^2(X_s,\C).$
\end{defn}

\begin{theorem}\cite[Corollary 6.4.3, Remark 6.4.5]{Huy16}
Let $\Gamma(h) = \{\gamma \in O(\Lambda_{K3}) \text{ } \big{|} \text{ } \gamma(h) = h\}$ then $\mathcal{F}_{2k}=\D_{2k}/\Gamma(h)$ is a coarse moduli space of quasipolarized K3 surfaces of degree $2k$. Furthermore, $\mathcal{F}_{2k}$ is a 19-dimensional quasi-projective variety. 
\end{theorem}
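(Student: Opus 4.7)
The plan is to combine the Strong Torelli theorem (Theorem \ref{hodge isometry}) with surjectivity of the classical period map for marked K3 surfaces and the Baily--Borel theorem for arithmetic quotients of hermitian symmetric domains.

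I would first check the dimension. The ambient $\PS(\Lambda_{K3}\otimes\C)\cong \PS^{21}$ is cut down by the linear condition $x\cdot h=0$ to $\PS^{20}$, and then by the quadric $x\cdot x=0$ to a subvariety of dimension $19$; the open condition $x\cdot\bar{x}>0$ preserves dimension, so $\D_{2k}$ is a $19$-dimensional complex manifold, and in fact (a connected component of) a hermitian symmetric domain of type IV for the action of $\operatorname{O}(h^\perp)(\R)\cong\operatorname{O}(2,19)$.

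Next I would show that $\Gamma(h)$-orbits on $\D_{2k}$ are in bijection with isomorphism classes of $\langle 2k\rangle$-quasipolarized K3 surfaces. Any marked quasipolarized $(Y,\phi)$ yields a period point $\phi(H^{2,0}(Y))\in \D_{2k}$, well-defined up to the choice of marking, hence up to $\Gamma(h)$. Injectivity on isomorphism classes is exactly Theorem \ref{hodge isometry}: two quasipolarized K3 surfaces whose periods lie in the same $\Gamma(h)$-orbit admit compatible markings producing an integral Hodge isometry of $H^2$ that preserves polarization, hence an isomorphism of the pairs. Surjectivity follows from the classical surjectivity of the period map for marked K3 surfaces, after possibly composing with Weyl-group reflections in $(-2)$-classes orthogonal to $h$, which lie in $\Gamma(h)$, to move the class representing $h$ into the closure of a K\"ahler chamber and arrange the nef and big condition.

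For the coarse moduli property, any family $\mathcal{X}\to S$ of quasipolarized K3 surfaces admits local markings over contractible opens and a holomorphic local period map to $\D_{2k}$; these descend to a global holomorphic map $S\to \mathcal{F}_{2k}$ since local markings are related by $\Gamma(h)$, and the universal property among coarse moduli spaces then follows from bijectivity on closed points together with the usual argument that any other such coarse space receives a unique factoring map. Finally, quasi-projectivity of $\D_{2k}/\Gamma(h)$ is the Baily--Borel theorem applied to the arithmetic group $\Gamma(h)$ acting properly discontinuously on the type IV domain $\D_{2k}$.

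The main technical obstacle is the K\"ahler-cone bookkeeping in both the surjectivity step and in applying Strong Torelli: an abstract integral Hodge isometry preserving $h$ need not send K\"ahler classes to K\"ahler classes, so one must adjust by Weyl reflections in the stabilizer of $h$ inside $\operatorname{O}(\Lambda_{K3})$ before invoking Theorem \ref{hodge isometry}. This subtlety is what distinguishes the quasipolarized setting from the polarized (ample) one and is essentially the content of \cite[Remark 6.4.5]{Huy16}.
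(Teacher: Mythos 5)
The paper offers no proof of this statement: it is quoted verbatim from \cite[Corollary 6.4.3, Remark 6.4.5]{Huy16} as background, so there is nothing internal to compare against. Your outline (period map into the 19-dimensional type IV domain, injectivity on isomorphism classes via Theorem \ref{hodge isometry} after adjusting by Weyl reflections in $(-2)$-classes orthogonal to $h$, surjectivity of the period map, descent of local period maps for the coarse moduli property, and Baily--Borel for quasi-projectivity) is correct and is essentially the argument given in the cited reference.
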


It is well known that $\mathcal{F}_{2k}$ admits a compactification called the \emph{Baily-Borel compactification}, which we denote by $\overline{\mathcal{F}_{2k}}$ \cite{BB66}. $\overline{\mathcal{F}_{2k}}$ is a $19$-dimensional projective algebraic variety and it is known that the boundary components of $\overline{\mathcal{F}_{2k}}$ are $0$- and $1$-dimensional. In this paper we concern ourselves only with degree 4 K3 surfaces and we are interested in the $1$-dimensional boundary components of $\overline{\mathcal{F}_{4}}$. We first define $\langle 2k \rangle$-quasipolarized Type II degenerations of K3 surfaces.

\begin{defn}
A \emph{Kulikov model} $\pi: X\rightarrow \Delta=\{t\in\C \text{ } \big{|} \text{ } |t|<1\}$ is a flat proper surjective morphism from a smooth threefold $X$ to the complex unit disc $\Delta$ where the fiber $X_t:=\pi^{-1}(t)$ is a smooth K3 surface for all $t\neq 0$. The central fiber $X_0$ has reduced normal crossings and all of its components are K\"ahler, moreover $\omega_X = \mathcal{O}_X$. A Kulikov model is said to be a \emph{Type II degeneration} if 
$X_0=\bigcup_{i=0}^nV_i$
is a chain of elliptic ruled surfaces with rational surfaces at each end and all double curves are smooth elliptic curves. We also say that a Type II degeneration is a \emph{Tyurin degeneration} if $X_0=V_0\cup V_1$ where both $V_0$ and $V_1$ are rational. We will refer to such an $X_0$ as a \emph{Tyurin Kulikov surface}. 
\end{defn}

\begin{notation}\label{not: restriction of line bundle on a degeneration}
Let $X\rightarrow\Delta$ be a Type II degeneration and let $L \in \operatorname{Pic}(X)$. We define $L_t:=L\big{|}_{X_t}$ for all $t \in \Delta$. In particular $L_0:=L\big{|}_{X_0}$.
\end{notation}

\begin{defn}
A \emph{$\langle 2k \rangle$-quasipolarized Tyurin degeneration} $(X\rightarrow \Delta, L)$ consists of a Tyurin degeneration $X\rightarrow \Delta$ with a line bundle $L \in \operatorname{Pic}(X)$ such that $(X_t,L_t)$ is a $\langle 2k\rangle$-quasipolarized K3 surface for $t \in \Delta^*:=\Delta\setminus\{0\}$ and $L_0^2=2k$. We also refer to $(X,L)$ as a \emph{Tyurin degeneration of K3 surfaces of degree 2$k$}. 
\end{defn}

To link Type II degenerations of K3 surfaces to $\overline{\mathcal{F}_{4}}$ we require some lattice theoretic definitions. 

\begin{defn}
Let $(L, ( \text{ },\text{ }))$ be a lattice. A vector $v \in L$ is \emph{isotropic} if $(v,v)=0$. Additionally, a sublattice $I \subset L$ is said to be \emph{isotropic} if the bilinear form $( \text{ },\text{ })\big{|}_I$ on $I$ is trivial.
\end{defn} 

\begin{defn}
Two lattices $L$ and $M$ are said to have the same \emph{genus} if $L\otimes\Z_p\simeq M\otimes\Z_p$ for all primes $p$ and $L\otimes\R\simeq M\otimes \R$. We denote by $\mathscr{G}(L)$ the set of lattices which have the same genus as $L$, up to isomorphism. We also define $\mathscr{G}(k):=\mathscr{G}(\langle -2k\rangle +E_8+ E_8)$.
\end{defn}

The following correspondences were shown by Scattone \cite{Sca87}.

\begin{theorem}
\cite[5.4.7]{Sca87} Let $I \subset \Lambda_{2k}$ be a rank 2 isotropic sublattice. Let $C_I \subset \overline{\mathcal{F}_{2k}}$ be a boundary curve, then we have the bijections
\begin{align*}
 I^\perp/I \in \mathscr{G}(k)\leftrightarrow I \text{ } \operatorname{modulo} \text{ }O(\Lambda_{2k}) \leftrightarrow C_I.
\end{align*}
\end{theorem}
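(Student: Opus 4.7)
The plan is to prove the two bijections separately. The bijection between $O(\Lambda_{2k})$-orbits of primitive rank $2$ isotropic sublattices $I \subset \Lambda_{2k}$ and boundary curves $C_I \subset \overline{\mathcal{F}_{2k}}$ is general Baily-Borel theory for type IV Hermitian symmetric domains: $\D_{2k}$ is such a domain attached to $\Lambda_{2k} \otimes \R$ of signature $(2, 19)$, and its rational boundary components are parametrised by rational isotropic subspaces of $\Lambda_{2k} \otimes \Q$. Maximal isotropic subspaces have dimension $2$ (the Witt index of the form), and they give rise to the $1$-dimensional boundary components; passing to the arithmetic quotient by $\Gamma(h)$ identifies these with $O(\Lambda_{2k})$-orbits of primitive rank $2$ isotropic sublattices.

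For the bijection $I^\perp/I \in \mathscr{G}(k) \leftrightarrow I \pmod{O(\Lambda_{2k})}$, I would first verify that the assignment $I \mapsto [I^\perp/I]$ lands in $\mathscr{G}(k)$. A short computation shows the rank is $21 - 2 \cdot 2 = 17$ and the signature is $(0, 17)$, matching $\langle -2k \rangle \oplus E_8^{\oplus 2}$. For the discriminant form, the standard fact from Nikulin's theory is that for a primitive isotropic sublattice $I$ of an even lattice $L$, the discriminant form of $I^\perp/I$ is isometric to that of $L$; applied to $\Lambda_{2k}$, whose discriminant form equals $q_{\langle -2k\rangle}$ since the $H$- and $E_8$-summands are unimodular, this places $I^\perp/I$ in $\mathscr{G}(k)$. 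The map is well-defined on $O(\Lambda_{2k})$-orbits because any $g \in O(\Lambda_{2k})$ with $g(I) = I'$ induces an isometry $I^\perp/I \cong I'^\perp/I'$.

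The remaining step, which I expect to be the main obstacle, is to show that the resulting map on orbits is a bijection. I would invoke Nikulin's theorems on the existence and uniqueness of primitive embeddings of even lattices into even unimodular overlattices. For surjectivity, given $M \in \mathscr{G}(k)$, one constructs a primitive rank $2$ isotropic $I \subset \Lambda_{2k}$ with $I^\perp/I \cong M$ by producing a primitive embedding $M \oplus H^{\oplus 2} \hookrightarrow \Lambda_{K3}$ orthogonal to the polarization class $h$ (so landing inside $\Lambda_{2k}$) and taking $I$ to be spanned by one isotropic generator from each hyperbolic summand. For injectivity, given two such $I, I'$ with an isometry $I^\perp/I \cong I'^\perp/I'$, one extends it by a compatible isometry of the orthogonal hyperbolic complement and then lifts the result to an isometry of $\Lambda_{2k}$ carrying $I$ to $I'$, again by uniqueness of embeddings. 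Verifying Nikulin's hypotheses in this setting (namely that the orthogonal complements have sufficient rank and that $\Lambda_{K3}$ is unimodular) is routine but delicate, and is where the actual content of the theorem sits; once satisfied, the classification of embeddings is controlled entirely by the discriminant-form data, giving the claimed bijection.
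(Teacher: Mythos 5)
The paper does not prove this statement; it is quoted directly from Scattone \cite[5.4.7]{Sca87}, so there is no in-paper argument to compare against. Your outline follows the architecture of Scattone's actual proof (Baily--Borel theory for the correspondence with boundary curves, Nikulin's discriminant-form machinery for the correspondence with the genus), but two of your intermediate claims need repair before the argument closes.

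First, the assertion that for \emph{any} primitive isotropic sublattice $I$ of an even lattice $L$ the discriminant form of $I^\perp/I$ is isometric to that of $L$ is false in that generality: take $L=\langle 2\rangle\oplus\langle -2\rangle$ with generators $e,f$ and $I=\Z(e+f)$; then $I^\perp=I$, so $I^\perp/I=0$ has trivial discriminant form while $q_L$ does not. The correct statement requires the natural map $L\rightarrow I^*$, $x\mapsto (x\cdot -)\big{|}_I$, to be surjective, and establishing this for every primitive isotropic plane in $\Lambda_{2k}$ is precisely where the hypothesis that $\Lambda_{2k}$ splits off $H^{\oplus 2}$ enters: one uses Eichler's criterion to move $I$ into standard position (first carrying a primitive isotropic vector of $I$ to a standard isotropic basis vector $e_1$ of the first copy of $H$, then treating the induced rank-one isotropic class in $e_1^\perp/e_1\cong\langle -2k\rangle\oplus H\oplus E_8^{\oplus 2}$). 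This reduction is the real content of the first bijection and is absent from your sketch; once it is in place, surjectivity follows more directly from $M\oplus H^{\oplus 2}\cong\Lambda_{2k}$ for every $M\in\mathscr{G}(k)$ (Nikulin's uniqueness theorem for indefinite even lattices splitting off $H^{\oplus 2}$) than from an embedding into $\Lambda_{K3}$. Second, the boundary curves of $\overline{\mathcal{F}_{2k}}$, the compactification of $\D_{2k}/\Gamma(h)$, are a priori indexed by $\Gamma(h)$-orbits of rational isotropic planes, and the image of $\Gamma(h)$ in $O(\Lambda_{2k})$ is the stable orthogonal group, in general a proper subgroup; one must show that its orbits on primitive isotropic planes agree with the full $O(\Lambda_{2k})$-orbits, which Scattone does via strong approximation. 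You pass over this point, and it is not formal.
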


It was explicitly noted by Scattone \cite[Section 6.2]{Sca87} that one can use the weight filtration given by monodromy of the degeneration to determine the isomorphism classes of $I^\perp/I\in \mathscr{G}(k)$. Indeed, the weight filtration for Type II degenerations of K3 surfaces is given by $I=(W_1)_\Z \subset I^\perp =(W_2)_\Z \subset \Lambda_{2k}$ where $W_1$ and $W_2$ are the first and second weight graded pieces of the limiting mixed Hodge structure of $X_0$ respectively. Friedman \cite[Lemma 4]{Fried84} also showed that the lattice $I^\perp/I = (W_2/W_1)_\Z$ can be related to the geometry of the central fiber using the Clemens-Schmid exact sequence in the following way. Given $X \rightarrow \Delta$ a Tyurin degeneration of K3 surfaces of degree 4, set $h:=L_0$ and define $\xi:=\mathcal{O}_X(V_0)$. Consider the lattice 
$$\mathscr{L}:= h ^\perp \subset \xi^\perp/(\Z\xi) \subset  H^2(V_0)\oplus H^2(V_1),$$
where the orthogonal complements are taken with respect to the intersection form on $H^2(V_0)\oplus H^2(V_1)$. Following Friedman \cite[5.1]{Fried84} we define the reflection $R_v(w)=w-2\frac{(v,w)}{(v,v)}v$ for $v,w\in\mathscr{L}$ and define
$$\Phi(\mathscr{L}) = \{v \in \mathscr{L} : \text{ } v \text{ is primitive, } (v,v) < 0, \text{ 
}R_v(\mathscr{L})=  \mathscr{L}\}.$$
By considering $\operatorname{Span}(\Phi(\mathscr{L}))$, one obtains a rank 17 lattice which Scattone refers to as the \textit{generalised type} of $\mathscr{L}$, and is denoted by $\mathscr{L}(W_2/W_1)_0$ in the language of Friedman \cite{Fried84}. It is worth noting that a lattice is never determined by its generalised type, however the generalised type is sufficient to distinguish the isomorphism classes of lattices in $\mathscr{G}(2)$. Scattone was able to classify all elements $\mathscr{G}(1)$ and $\mathscr{G}(2)$, and proved the following result.

\begin{theorem}\cite[Section 6.3]{Sca87}\label{thm:scattone lattices}
Let $I \subset \Lambda_{4}$ be a rank 2 isotropic sublattice. Then the generalised type of $I^\perp/I\in\mathscr{G}(2)$ is one of the following rank 17 root lattices,
\begin{align*}
    A_{11}+E_6,\qquad A_1+A_1+A_{15}, &\qquad D_8+D_8+\langle -4 \rangle, \qquad D_{12}+D_5,\qquad D_{16}+\langle -4 \rangle,\\ E_7+E_7+A_3, &\qquad E_8+E_8+\langle -4 \rangle, \qquad E_8+D_9, \qquad D_{17}.
\end{align*}
Moreover, the generalised type distinguishes the isomorphism classes of lattices in $\mathscr{G}(2)$. Therefore the Baily-Borel compactification $\overline{\mathcal{F}_4}$ has 9 boundary curves.
\end{theorem}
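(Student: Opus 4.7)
The plan is to combine a genus-theoretic classification of negative-definite lattices of rank 17 with a direct calculation of reflective sublattices. Any lattice $L \in \mathscr{G}(2)$ has signature $(0,17)$, determinant $4$, and discriminant form $q_L$ isomorphic to that of $\langle -4 \rangle \oplus E_8^{\oplus 2}$, namely the cyclic form on $\Z/4\Z$ generated by an element of square $-1/4 \bmod 2\Z$. By Nikulin's theory the genus is determined by these invariants, so the first task is to enumerate all isomorphism classes of lattices with signature $(0,17)$ having this discriminant form.

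My approach to the enumeration is to stratify by the root sublattice $R(L) \subseteq L$ generated by $(-2)$-vectors. Since $R(L)$ is a direct sum of ADE root lattices whose total rank is at most 17 and whose discriminant form must fit into $q_L$ after gluing by an isotropic subgroup, one can list the candidate Dynkin types using Nikulin's gluing calculus. For each admissible $R(L)$ one then determines which even overlattices $R(L) \subseteq L \subseteq R(L)^{\ast}$ yield the correct $q_L$. The Minkowski--Siegel mass formula for this genus serves as a check that the enumeration is complete, producing exactly the nine candidates.

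Once the nine lattices are in hand, I would compute the generalised type of each by identifying the primitive negative-norm vectors $v$ whose reflections $R_v$ preserve $L$. For the lattices whose $(-2)$-root part already has rank $17$, such as $E_7^{\oplus 2}+A_3$ or $A_1^{\oplus 2}+A_{15}$, the roots already span and the generalised type coincides with this root sublattice. For the lattices with a $\langle -4 \rangle$ summand, one checks that the generator of square $-4$ is primitive and that its reflection preserves $L$, so it contributes an orthogonal $\langle -4 \rangle$ to the generalised type. Finally, the nine listed root lattices are pairwise non-isomorphic as abstract lattices, which is immediate from the uniqueness of decompositions into irreducible ADE summands together with the presence or absence of a $\langle -4 \rangle$ factor; hence the generalised type distinguishes isomorphism classes in $\mathscr{G}(2)$, and the correspondence with boundary curves of $\overline{\mathcal{F}_4}$ follows from the bijection already recorded before the theorem.

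The principal obstacle is the enumeration step: verifying that exactly nine classes exist in the genus requires either a careful Nikulin-style gluing argument tracking all admissible overlattices of each candidate root system, or a direct mass formula computation, both of which are technically delicate. For the final write-up I would invoke Scattone's classification in Section 6.3 of \cite{Sca87} for the definitive count, since the bulk of the work lies in that enumeration rather than in the generalised-type calculation, which is routine once the list of lattices is fixed.
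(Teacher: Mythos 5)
The paper offers no proof of this statement at all: it is imported wholesale from Scattone \cite[Section 6.3]{Sca87}, which is exactly where your proposal also lands once you concede that the genus enumeration is the real content and must be cited. Your sketch of that enumeration (fixing the signature $(0,17)$ and the discriminant form of $\langle -4\rangle\oplus E_8^{\oplus 2}$, stratifying by the root sublattice and gluing via Nikulin's discriminant-form calculus, with the mass formula as a completeness check) together with the generalised-type computation is a reasonable reconstruction of Scattone's argument, so in substance the two treatments coincide.
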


As the boundary curves $C_I \subset \overline{\mathcal{F}_4}$ correspond to Type II degenerations we refer to the curves $C_I$ as \emph{Type II boundary components}. To obtain the lattices of Theorem \ref{thm:scattone lattices}, in Section \ref{sect:Explicit descriptions of families} we construct explicit Tyurin degenerations of degree 4 K3 surfaces and calculate $\operatorname{Span}(\Phi(\mathscr{L}))$. We conclude this section with several definitions regarding Type II degenerations of K3 surfaces which were stated by Alexeev and Engel \cite{AE23}.

\begin{defn}
Let $L^*\in\operatorname{Pic}(X^*)$, where $L^*$ is relatively nef and big over $\Delta^*$. A relatively nef extension $L$ to a Kulikov model $X$ over $\Delta$ is called a \emph{nef model}.
\end{defn}

\begin{defn}
Let $H^*\subset X^*$ be the vanishing locus of a section of $L^*$ as above, containing no vertical components. A \emph{divisor model} is an extension $H \subset X$ to a relatively nef divisor $H\in|L|$ for which $H_0$ contains no strata of $X_0$.
\end{defn}

\begin{defn}
The \emph{stable model} of $(X^*,H^*)$ is 
$$(\overline{X},\overline{H}):=\operatorname{Proj}_\Delta\bigoplus_{n\geq0}H^0(X,\mathcal{O}(nH))$$
for some divisor model. It is unique and independent of the choice of divisor model $(X,H)$.
\end{defn}

The existence of nef models is due to Shepherd-Barron \cite{SB83}, while the existence of divisor models is due to Laza \cite[Theorem 2.11]{Laza16}.

\subsection{The Period Map for Kulikov Surfaces} For this section, we will make use of the following details. Let $X\rightarrow \Delta$ be a Kulikov model and denote singular cohomology by $H^i(-)$. We begin by recalling the definition of $d$-semistability.

\begin{theorem}\cite{Fried84}
A Tyurin Kulikov surface $X_0=V_0\cup_E V_1$ admits a smoothing to a K3 surface if and only if $N_{E/V_0}\otimes N_{E/V_1}=\mathcal{O}_E$. When this condition is satisfied, we say that $X_0$ is \emph{$d$-semistable}.
\end{theorem}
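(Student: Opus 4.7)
The plan is to reduce the smoothability question to a cohomological condition on the double curve using deformation theory of normal crossings surfaces, and then to interpret that condition geometrically as triviality of the product of normal bundles.

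First I would set up the local-to-global deformation theory. For a variety with simple normal crossings, the infinitesimal deformations are controlled by $\operatorname{Ext}^\bullet(\Omega^1_{X_0}, \mathcal{O}_{X_0})$, and there is a short exact sequence involving the sheaf $T^1_{X_0} := \mathcal{E}xt^1(\Omega^1_{X_0}, \mathcal{O}_{X_0})$, which is supported on the double locus. For a Tyurin Kulikov surface $X_0 = V_0 \cup_E V_1$, a local calculation in coordinates (where $X_0$ looks like $\{xy = 0\}$ with $E = \{x = y = 0\}$) identifies $T^1_{X_0}$ with the line bundle $N_{E/V_0} \otimes N_{E/V_1}$ on $E$. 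The sections of this sheaf are precisely the infinitesimal smoothing directions, where a nonvanishing section corresponds to a genuine (smooth total space) first-order smoothing.

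For the necessity direction, suppose $X \to \Delta$ is a smoothing of $X_0$. Near any point $p \in E$ the total space $X$ is smooth, so locally it admits an equation of the form $xy = t f$ with $f(p) \neq 0$; the collection of local functions $f$ patches to a nowhere-vanishing global section of $N_{E/V_0} \otimes N_{E/V_1}$. Since $E$ is an elliptic curve, any line bundle admitting a nowhere-vanishing section is trivial, so $N_{E/V_0} \otimes N_{E/V_1} = \mathcal{O}_E$.

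For the sufficiency direction, assuming $N_{E/V_0} \otimes N_{E/V_1} = \mathcal{O}_E$, choose a nowhere-vanishing section to produce a first-order smoothing, and then argue that all higher-order obstructions vanish. This is the main obstacle and is the deep part of Friedman's theorem: one must show that the logarithmic tangent sheaf controlling equisingular and smoothing deformations has vanishing $H^2$, so that the Kuranishi family is unobstructed, and furthermore that the smoothing direction chosen above persists under the unobstructed deformation. The argument proceeds by analyzing the exact sequence relating $T_{X_0}$ to its log analogue, using $\omega_{X_0} \cong \mathcal{O}_{X_0}$ together with Serre duality on each component $V_i$ to control $H^2$; once unobstructedness is established, the resulting deformation over the disc is automatically a Kulikov model because the Hodge-theoretic invariants match those of a K3 surface, completing the proof.
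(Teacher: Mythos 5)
You should first note that the paper does not prove this statement at all: it is quoted verbatim as a background result and attributed to Friedman \cite{Fried84}, so there is no in-paper argument to compare against. Your proposal is, in outline, exactly Friedman's proof. The necessity direction as you give it is complete and correct, with one caveat worth making explicit: the local equation $xy=tf$ with $f$ nowhere zero, and hence the trivialization of $T^1_{X_0}\cong N_{E/V_0}\otimes N_{E/V_1}$, requires the total space of the smoothing to be smooth. In the context of this paper ``smoothing'' means a Kulikov model, so this is automatic, but for an arbitrary flat family with smooth general fiber the ``only if'' direction would fail, so you should state the hypothesis.

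The sufficiency direction, however, is a plan rather than a proof, and the part you defer is the entire content of the theorem. Concretely, two things are missing. First, a nowhere-vanishing section of $H^0(E,T^1_{X_0})$ only gives a first-order smoothing if it lifts along the map $\operatorname{Ext}^1(\Omega^1_{X_0},\mathcal{O}_{X_0})\rightarrow H^0(X_0,\mathcal{E}xt^1(\Omega^1_{X_0},\mathcal{O}_{X_0}))$ from the local-to-global Ext spectral sequence; the obstruction to this lives in $H^2(X_0,\mathcal{H}om(\Omega^1_{X_0},\mathcal{O}_{X_0}))$, which is not zero for trivial reasons and must be dealt with. Second, ``argue that all higher-order obstructions vanish'' is not something one can do for the full deformation functor of $X_0$ (which is in general obstructed); Friedman's actual argument produces a smooth subspace of the deformation space dominating $H^0(T^1_{X_0})$, via a delicate analysis using the triviality of $\omega_{X_0}$, the logarithmic de Rham complex, and the degeneration of the associated spectral sequence. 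Asserting that ``the exact sequence relating $T_{X_0}$ to its log analogue'' together with Serre duality ``controls $H^2$'' names the right objects but does not carry out the computation, and the final claim that the resulting family is ``automatically'' a Kulikov smoothing to a K3 also needs the separate checks that the nearby fiber has trivial canonical bundle and $b_1=0$. So the proposal correctly reconstructs the architecture of Friedman's theorem but does not constitute a proof of the hard implication.
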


A period map for $d$-semistable Tyurin Kulikov surfaces was constructed by Carlson \cite{Carlson85}, we now restate this period map using notation of Alexeev and Engel \cite{AE23}. 

\begin{notation}
Let $X_0 = V_0\cup V_1$ be a Tyurin Kulikov surface glued along an elliptic curve $E$. We denote the image of $E$ under the embedding $E\hookrightarrow V_0$ by $E_{0,1}$ and the image of $E$ under the embedding $E \hookrightarrow V_1$ by $E_{1,0}$. We define $\xi:=(-E_{0,1},E_{1,0})\in H^2(V_0)\oplus H^2(V_1)$.
\end{notation}
\begin{defn}
The \emph{numerically Cartier} divisors on a Tyurin Kulikov surface $X_0=V_0\cup V_1$ are the elements of $\xi^\perp\subset H^2(V_0)\oplus H^2(V_1)$ where the orthogonal complement is taken with respect to the intersection pairing. We use $\widetilde{\Lambda}(X_0)$ or $\widetilde{\Lambda}$ to denote the set of numerically Cartier divisors and we set $\Lambda:=\widetilde{\Lambda}/\Z\xi$.
\end{defn}

Notice that Tyurin degenerations are normal crossings degenerations and so there is a deformation retract $c:X\times[0,1]\rightarrow X_0$ which is called the \emph{Clemens collapse}. Therefore $H^*(X_0)=H^*(X)$ and the map $c_t^*:H^*(X_0)\rightarrow H^*(X_t)$ coincides with the restriction from $X$ to $X_t$. Mayer-Vietoris for a Tyurin Kulikov surface implies that there is an exact sequence $0\rightarrow\Z^2\rightarrow H^2(X_0)\rightarrow \widetilde{\Lambda}(X_0)\rightarrow 0$ where $\Z^2$ arises from $H^1(E_{0,1})$. 

\begin{defn}
Let $I_0$ be the sublattice $\Z^2\simeq H^1(E_{0,1})\subset H^2(X_0)$ arising from Mayer-Vietoris and define $I:=c_t^*(I_0)\subset H^2(X_t)$.
\end{defn}

Notice from these definitions that $H^2(X_0)/I_0=\widetilde{\Lambda}$. To define the period map for Tyurin Kulikov surfaces, we repeat a construction of Alexeev and Engel \cite[Construction 4.3]{AE23}, originally due to Carlson \cite{Carlson85}, which determines the period point for such a surface. 

\begin{construct}\label{construct:period point}

Let $\alpha=(\alpha_0,\alpha_1)\in\widetilde{\Lambda}$ with $\alpha_0\in H^2(V_0)$ and $\alpha_1\in H^2(V_1)$. Furthermore, the $\alpha_i$ define line bundles $L_i\in \operatorname{Pic}(V_i)$ because both $V_i$ are rational elliptic surfaces. Define $$\widetilde{\psi}(\alpha):=L_0\big{|}_{E_{0,1}}\otimes L_1^{-1}\big{|}_{E_{1,0}}\in\operatorname{Pic}^0(E_{0,1})=E.$$
We say that this $\widetilde{\psi}_{X_0} \in \operatorname{Hom}(\widetilde{\Lambda},E)$ is the \emph{period point} of $X_0$, or sometimes the \emph{period morphism} of $X_0$.

\end{construct}

Furthermore, Alexeev and Engel \cite[Prop. 4.4]{AE23} show that the surface $X_0$ is smoothable if and only if the period point $\widetilde{\psi}_{X_0} \in \operatorname{Hom}(\widetilde{\Lambda},E)$ descends to a period point $\psi_{X_0}\in \operatorname{Hom}(\Lambda,E)$. In order to define a period map for a family of Tyurin Kulikov surfaces, we first need to define a marking for such a surface. Let $\Lambda_0$ denote a model for $I^\perp /I$ in $\Lambda_{K3}$. 

\begin{defn}\label{def:marking Kulikov surface}
Let $X_0$ be a $d$-semistable Tyurin Kulikov surface. A \emph{marking} consists of:
\begin{enumerate}
    \item An isometry $\sigma:\Lambda(X_0)\rightarrow \Lambda_0$ and
    \item An ordered basis $\underline{b}=(b_1,b_2)$ of $I_0\subset H^2(X_0)$.
\end{enumerate}
\end{defn}

\begin{defn}\label{def:period map family of Kulikov surfaces}
Let $(\mathcal{X}_0\rightarrow S,\sigma)$ be a family of marked $d$-semistable Tyurin Kulikov surfaces. The \emph{period map} is defined by 
$$S\rightarrow \operatorname{Hom}(\Lambda_0,\widetilde{\mathcal{E}}), \qquad s\mapsto B\circ\Psi_s\circ\sigma^{-1}.$$
Here, $\Psi_s:H^2(X_0)\rightarrow H^2(X_0,\mathcal{O})$ refers to the map arising from the exponential exact sequence and $\widetilde{\mathcal{E}}\rightarrow \C \backslash \R$ is the universal marked elliptic curve $\C/\Z \oplus \Z\tau\rightarrow \{\tau\in \C\backslash\R\}$. The map $B$ is the quotient map
$$B:H^2(X_0,\mathcal{O})\rightarrow \C/\Z\oplus\Z\tau$$
induced by the ordered basis $\underline{b}$ of $I_0$, where $b_1$ is mapped to $1\in\C$.
\end{defn}

\begin{defn}
For a rank 2 isotropic lattice $I \subset \Lambda_{K3}$ we define the sets 
$$\D = \PS\{x \in \Lambda_{K3}\otimes\C \text{ }\big{|}\text{ } x\cdot x=0,x\cdot\bar{x}>0\},\quad
\D^I:=\{ x\in\D \text{ }\big{|} \text{ } I\otimes\R\overset{\cdot x}{\rightarrow} \text{ is injective}\},  \quad \D(I):=\D^I/U_I,$$
where $U_I$ is a central $\Z$-extension of $\operatorname{Hom}(I^\perp/I,I)$. 
\end{defn}

Note first of all that $\D^I\subset\D$ is an open subset and can be considered as the domain of Hodge structures on $\Lambda_{K3}$ where $I$ also determines a mixed Hodge structure. For Type II degenerations there is also an open embedding $\D(I)\hookrightarrow A_I$ where $A_I\rightarrow I^\perp/I\otimes\widetilde{\mathcal{E}}$ is a punctured holomorphic disc bundle \cite[Prop. 4.13]{AE23}.

\begin{defn}
Define an enlargement $\D(I)\hookrightarrow\D(I)^\lambda$ to be the holomorphic disc bundle $\overline{A_I}$ extending the punctured disc bundle $A_I$.
\end{defn}

The boundary divisor of $\D(I)^\lambda$ is naturally isomorphic to $I^\perp/I\otimes \widetilde{\mathcal{E}}$ which is identified with $\operatorname{Hom}(\Lambda,\widetilde{\mathcal{E}})$ because $I^\perp/I$ is unimodular. For the remainder of Section \ref{sect:Background} we fix $h \in \Lambda_0$ a primitive vector with $h^2=4$.

\begin{defn}
An \emph{$\langle h \rangle$-quasipolarized $d$-semistable Tyurin Kulikov surface} is a pair $(X_0,j_0)$ consisting of a $d$-semistable Tyurin Kulikov surface and an embedding $j_0:\langle h \rangle \hookrightarrow \operatorname{ker}(\psi_{X_0})\subset \Lambda(X_0)$ which satisfies the following condition: For any smoothing $X\rightarrow \Delta$ with an embedding $j:\langle h \rangle \hookrightarrow \operatorname{Pic}(X)$ which induces $j_0$, the restriction to the general fiber defines a $\langle h \rangle$-quasipolarization on $X_t$. 
\end{defn}

With this definition, the period point in $\operatorname{Hom}(\Lambda,E)$ descends to a period point in $\operatorname{Hom}(\Lambda/j_0(h), E)$ which we also denote by $\psi_{X_0}$. Then the discussion of the period map above still holds, replacing $H^2(X_t)$ with $j(h)^\perp$, $\Lambda_{K3}$ with $h^\perp$, $\D^I$ with $\D^I_4:=\D^I\cap \D_{4}$, $I^\perp$ with $I^\perp_{h^\perp}$, $\Lambda$ with $\Lambda/j_0(h)$, $\Lambda_0$ with $\Lambda_0/h$. Note also that $\D_4^I=\D_4$ \cite[Proposition 4.17]{AE23}, and a mixed marked $\langle h \rangle$-quasipolarized family admits a period map to the toroidal extension 
$$\D_4(I)^\lambda:=\overline{\D_4(I)}\subset\D(I)^\lambda.$$
\subsection{Type II $\lambda$-families}
In this section we briefly discuss $\lambda$-families of Type II only, see \cite[Section 7]{AE23} for a more complete introduction, as well as the construction for the Type III case. Such $\lambda$-families can be seen as a topologically trivial family of Type II Kulikov surfaces. We first define some modifications, which are global versions of the elementary modifications of Friedman and Morrison \cite{FM81}.

\begin{prop}\cite[Prop. 7.23]{AE23}\label{AE23 Prop 7.23}
There is a family of Tyurin Kulikov surfaces $\mathcal{X}_0\rightarrow \operatorname{Hom}(\Lambda,\widetilde{\mathcal{E}})$ for which the period map is the identity.
\end{prop}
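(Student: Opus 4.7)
The plan is to construct the family $\mathcal{X}_0 \to \operatorname{Hom}(\Lambda, \widetilde{\mathcal{E}})$ explicitly, realizing each target homomorphism as the period morphism of a Tyurin Kulikov surface obtained by blowing up a pair of rational reference components at movable points on an elliptic curve. First, I would fix a topological type in the sense of Construction \ref{Tyurin constructions}: rational surfaces $\overline{V_0}, \overline{V_1}$ (each $\PS^2$ or $\PS^1 \times \PS^1$) together with the chosen numbers of blow-ups and the intersection pattern with $E$. The essential observation is that the restriction to $E$ of an exceptional divisor lying over a point $p \in E$ is precisely $[p] \in \operatorname{Pic}^1(E)$, so the period morphism $\widetilde{\psi}$ of Construction \ref{construct:period point} depends affinely on the positions of the blown-up points on each component.

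Concretely, I would fix a marked reference surface $X_0^{\textrm{ref}}$ with some base-point period $(\tau_0, \phi_0)$ and parametrize the blown-up points of $V_0$ and $V_1$ as continuous (in fact, affine) functions of $(\tau, \phi) \in \operatorname{Hom}(\Lambda, \widetilde{\mathcal{E}})$. For each $\tau$, the assignment \emph{blown-up points on $E_\tau$} $\mapsto$ \emph{period morphism in $\operatorname{Hom}(\widetilde{\Lambda}, E_\tau)$} is a surjective affine map, and $d$-semistability cuts out exactly the subspace on which the image descends to $\operatorname{Hom}(\Lambda, E_\tau)$ (this is the content of the equation $\widetilde{\psi}(\xi) = 0$). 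Choosing a section of this surjection globally over the base $\C \setminus \R$ of the universal marked elliptic curve then produces a family $\mathcal{X}_0$ whose fiber at $(\tau, \phi)$ has period morphism $\phi$ by construction. Running this datum through Definition \ref{def:period map family of Kulikov surfaces} and unwinding the exponential exact sequence then shows that the period map of this family is the identity.

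The main obstacle is the globalization: while the construction is transparent pointwise, one must ensure that the chosen blown-up points vary across all of $\operatorname{Hom}(\Lambda, \widetilde{\mathcal{E}})$ without collisions that would destroy the topological type or introduce extra singularities, and that the resulting total space is a genuine family of $d$-semistable Tyurin Kulikov surfaces with compatible markings. The most delicate point is tracking the marking $(\sigma, \underline{b})$ consistently, since the basis $\underline{b}$ of $I_0$ arises from $H^1(E_{0,1})$ and its globalization depends on the universal marked elliptic curve $\widetilde{\mathcal{E}}$. Showing that the pointwise identification of $\phi$ with the period morphism lifts to an equality of maps in families ultimately reduces to verifying that the Clemens collapse and the exponential exact sequence behave coherently in families over $\operatorname{Hom}(\Lambda, \widetilde{\mathcal{E}})$, which I expect to be the principal technical step.
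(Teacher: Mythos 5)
First, note that the paper does not prove this statement at all: it is imported verbatim from Alexeev--Engel \cite[Prop.\ 7.23]{AE23} and used as a black box, so there is no internal proof to compare against. Judged on its own merits, your plan (realize each homomorphism as the period morphism of a glued pair of rational surfaces by moving the blown-up points on $E$) is the right general idea, but it has a gap that is fatal as written: the surjectivity claim fails by a dimension count. If $V_0=\operatorname{Bl}_n\overline{V_0}$ and $V_1=\operatorname{Bl}_{k-n}\overline{V_1}$, then $\operatorname{rank}\widetilde{\Lambda}=k+1$ and $\operatorname{rank}\Lambda=k$, while you have only $k$ points to move; so the map from configurations to $\operatorname{Hom}(\widetilde{\Lambda},E_\tau)\cong E_\tau^{k+1}$ cannot be surjective, and after imposing $d$-semistability you are left with a $(k-1)$-dimensional family mapping to the $k$-dimensional target $\operatorname{Hom}(\Lambda,E_\tau)$. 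Concretely, with both embeddings normalized as in Construction \ref{Tyurin constructions} (so that $\mathcal{O}_{\overline{V_0}}(1)|_E=\mathcal{O}_E(3q)$, $\mathcal{O}_{\overline{V_1}}(1)|_E=\mathcal{O}_E(3q')$ and the gluing matches $q$ with $q'$), every period you produce satisfies $\widetilde{\psi}\bigl((l,l')\bigr)=3[q]-3[q']=0$, i.e.\ your image lies in a proper subgroup. The missing degree of freedom is the gluing isomorphism $E_{0,1}\cong E_{1,0}$ (equivalently, a translation of one of the two anticanonical embeddings), which governs exactly the period values on classes of nonzero degree on the double curve and must be varied along with the points.

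The second issue is the one you flag but do not resolve, and it is where the real work lies: over the locus where $\psi$ degenerates (e.g.\ $\psi(e_i-e_j,0)=0$, forcing $p_i=p_j$), the fiber is no longer a blowup at distinct points of $E$, so "choosing a section of the surjection" does not produce a family of the fixed topological type across all of $\operatorname{Hom}(\Lambda,\widetilde{\mathcal{E}})$ without further argument. Handling these loci requires either the surjectivity and Torelli theorems for anticanonical pairs (Looijenga, Gross--Hacking--Keel), or Friedman's deformation theory of $d$-semistable surfaces, which is how the construction is actually carried out in \cite{AE23}; it does not follow from the pointwise affine description alone. As it stands your argument would at best produce a family over a dense open subset of a codimension-one subvariety of the period space, not a family over all of $\operatorname{Hom}(\Lambda,\widetilde{\mathcal{E}})$ with identity period map.
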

We call the family of Proposition \ref{AE23 Prop 7.23} the \emph{standard $\lambda$-family}. 

\begin{defn}
Let $\mathcal{X}\rightarrow S_\lambda:=\operatorname{Hom}(\Lambda,\widetilde{\mathcal{E}})$ be the standard $\lambda$-family. Let $B \subset S_\lambda$ be a smooth divisor and let $Z\rightarrow B$ be a smooth $\PS^1$-fibration for which the normal bundle of $Z$ restricts to $\mathcal{O}(-1)\oplus\mathcal{O}(-1)$ on every fiber. We say that the relative flop along $Z$ is:

\begin{itemize}
    \item [(GM0)] if $B$ is the closure in $S_\lambda$ of a Noether-Lefschetz divisor of K3 surfaces with a  $(-2)$-curve, and $Z$ is a family of $(-2)$-curves.
    \item [(GM1)] if $B=\Delta_\lambda$ is the discriminant, and $Z$ is a family of internal exceptional curves meeting a relative double curve $\mathcal{D}_{ij}$.
\end{itemize}

\end{defn}

\begin{defn}\label{def:lambda family}
A \emph{Type II $\lambda$-family} is a family of surfaces which arises from a series of GM0 and GM1 modifications applied to the standard $\lambda$-family.
\end{defn}

One can also consider Type III $\lambda$-families. For such families one requires GM2 modifications in addition to the GM0 and GM1 modifications described above. GM2 modifications are not necessary when discussing only Type II degenerations.

\begin{defn}
An \emph{$\langle h \rangle$-quasipolarized $\lambda$-family} is the restriction of a $\lambda$-family $\mathcal{X}\rightarrow S_\lambda$ to the Noether-Lefschetz locus $\D_{4}(I)^\lambda \cap S_\lambda\subset\D(I)^\lambda$, such that the embedding $\langle h \rangle\rightarrow \operatorname{Pic}(\mathcal{X}_t)$ induced by the marking defines an $\langle h \rangle$-quasipolarization on the generic fiber $\mathcal{X}_t$.
\end{defn}

\begin{defn}
A \emph{nef $\lambda$-family} is an $\langle h \rangle$-quasipolarized $\lambda$-family
$\mathcal{X}\rightarrow S_\lambda$ together with an extension of $L\in \langle h \rangle$ to a relatively big and nef line bundle $\mathcal{L}\rightarrow \mathcal{X}$.
 \end{defn}

We conclude this section with a brief discussion of toroidal compactifications and describe the partial compactification $\overline{\mathcal{F}_4}^{tor}$ of $\mathcal{F}_4$. The original source for this subject is \cite{AMRT75}. The construction is quite technical and many of the details can be simplified when considering only the Type II case, which we now describe.

For a boundary component $B_I \subset \overline{\mathcal{F}_4}$, there is a choice of ``admissible polyhedral cone'' $C(B_I)$ which are subsets of $U(B_I):=\{\text{center of the unipotent radical of the stabilizer of } B_I \text{ in } \operatorname{SO}_0(2,19)\}$. The group   $\operatorname{SO}_0(2,19)$ is the connected component containing the identity matrix of $$\operatorname{SO}(2,19):=\{A \in \operatorname{SL}(21,\R)\text{ }\big{|}\text{ } A^tI_{2,19}A=I_{2,19}\}$$ where $I_{2,19}$ is the diagonal matrix $\operatorname{diag}(1,1,-1,\dots,-1)$. Crucially, when $B_I$ is a Type II boundary component $\operatorname{dim}(U(B_I))=1$ and therefore the partial compactification with respect to $B_I$ requires no choices. Moreover, the partial compactification with respect to $B_I$ is obtained by introducing a smooth divisor isomorphic to $\operatorname{Hom}(\Lambda,\widetilde{\mathcal{E}})$ which parametrizes the limiting mixed Hodge structure whose weight filtration is given by $B_I$ \cite[Section 4]{Fried84}. After performing this process for one Type II boundary component we obtain $\D_4(I)^\lambda$ and we call the result of performing this process for each Type II boundary component $\overline{\mathcal{F}_4}^{tor}$. From this construction we see that $\lambda$-families are directly related to the Type II boundary components of $\overline{\mathcal{F}_4}^{tor}$.

\section{Explicit Descriptions of the Families}\label{sect:Explicit descriptions of families}

In this section, we construct explicit Type II degenerations of K3 surfaces of degree 4 and we demonstrate a correspondence between these degenerations and the lattices of Theorem \ref{thm:scattone lattices}. The goal of this section is to prove Theorem \ref{correspondence intro}.

The proof of Theorem \ref{correspondence intro} is performed in cases, and each subsection considers one degeneration. We fix some notation mentioned in Section \ref{sect:Background}. Let $(X\rightarrow \Delta,L)$ be a $\langle 4\rangle$-quasipolarized Tyurin degeneration, set $h:=L_0$ and define $\xi:=\mathcal{O}_X(V_0)$. Consider the lattice 
$$\mathscr{L}:= h ^\perp \subset\xi^\perp/(\Z\xi) \subset  H^2(V_0)\oplus H^2(V_1),$$
where the orthogonal complements are taken with respect to the intersection form on $H^2(V_0)\oplus H^2(V_1)$. We define the reflection $R_v(w)=w-2\frac{(v,w)}{(v,v)}v$ for $v,w\in\mathscr{L}$ and set
$$\Phi(\mathscr{L}) = \{v \in \mathscr{L} : \text{ } v \text{ is primitive, } (v,v) < 0, \text{ 
}R_v(\mathscr{L})=  \mathscr{L}\},$$
we will proceed by calculating the lattice $\operatorname{Span}(\Phi(\mathscr{L}))$ in each case.

We remark that in Sections \ref{sect:Two Quadrics}-\ref{sect: E8 singularity and no line}, $x,y,z,w$ are coordinates on $\PS^3$ and $t$ is the coordinate on $\Delta$. Throughout all of Section \ref{sect:Explicit descriptions of families} we make use of the following notation for $X_0=V_0\cup V_1$. If $V_0 = \operatorname{Bl}_n\PS^2$, we denote by $l$ the strict transform of a line in $\PS^2$ and $e_i$ denotes the exceptional curve over the point $p_i$. If instead $V_0=\operatorname{Bl}_n(\PS^1\times\PS^1)$, we denote the strict transform of one ruling by $s$ and denote the strict transform of the other ruling by $f$. We also denote the exceptional curve lying over the point $p_i$ by $e_i$. For $V_1$ we follow the same convention, but replacing $l,e_i,s$ and $f$ by $l',e_i',s'$ and $f'$ respectively. We also adopt the notation that $\operatorname{dP}_k$ is a del Pezzo surface of degree $k$.

\subsection{Two Quadrics Intersecting Transversally}\label{sect:Two Quadrics}
We begin with the family 
$$X = \{Q_1Q_2+tg_4=0\} \subset \mathbb{P}^3 \times \Delta,$$
where $Q_1, Q_2$ are two smooth quadrics intersecting transversally (cf. \cite[Case S-3]{Shah81}).

\begin{lemma}\label{Description two quadrics}
$X$ has 16 singularities which occur on the central fiber. After resolving these singularities, the central fiber of the degeneration is $\operatorname{Bl}_{16}(\PS^1\times\PS^1)\cup(\PS^1\times\PS^1)$.
\end{lemma}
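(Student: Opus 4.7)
The plan is to read off the set-theoretic central fiber, locate and classify the singularities of the total space $X$, and then describe a small resolution producing the stated semistable central fiber. At $t = 0$ one has $X_0 = \{Q_1 Q_2 = 0\} = V_0 \cup V_1$, where $V_i = \{Q_i = 0\}$ is a smooth quadric surface in $\PS^3$, so $V_i \simeq \PS^1 \times \PS^1$. The two quadrics meet transversally along $E := \{Q_1 = Q_2 = 0\}$, a smooth complete intersection of type $(2,2)$ in $\PS^3$, which by adjunction is a smooth elliptic curve of degree $4$.

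Setting $F := Q_1 Q_2 + t g_4$, the singular locus of $X$ is cut out by $F = 0$, together with $\partial_t F = g_4 = 0$ and $\partial_{x_i} F = Q_2 \partial_{x_i} Q_1 + Q_1 \partial_{x_i} Q_2 + t \partial_{x_i} g_4 = 0$. On the central fiber the latter equations, combined with smoothness of $Q_1$ and $Q_2$, force $Q_1 = Q_2 = 0$: if $Q_1 \neq 0$ at a candidate point, the system collapses to $\nabla Q_2 = 0$, contradicting smoothness of $V_1$, and symmetrically for $V_0$. Hence the singular locus on $\{t = 0\}$ is contained in $E \cap \{g_4 = 0\}$, which by B\'ezout consists of $\deg(E) \cdot \deg(g_4) = 16$ points for generic $g_4$. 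A dimension count rules out singularities off the central fiber: the four gradient conditions cut out an expected empty locus in the $3$-dimensional variety $X$ for a generic choice of $g_4$.

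Next I would classify these $16$ points analytically. Near a point $p \in E \cap \{g_4 = 0\}$, take local analytic coordinates on $\PS^3$ with $u = Q_1$, $v = Q_2$, and $w$ a coordinate along $E$. For generic $g_4$ the divisor $\{g_4 = 0\}$ meets $E$ transversally, so $g_4 = aw + bu + cv + O(2)$ with $a \neq 0$; absorbing $bu + cv$ into a new coordinate $w'$, the equation of $X$ becomes $uv + tw' + (\text{higher order})$, whose quadratic part is non-degenerate. Thus $p$ is an $A_1$ threefold node. Each such node lies on the double curve $V_0 \cap V_1$ and admits two small resolutions related by a flop, one adding an exceptional $(-1)$-curve to $V_0$ and the other to $V_1$. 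Making the consistent choice to resolve on $V_0$ at all $16$ nodes produces the central fiber
\[
\operatorname{Bl}_{16}(\PS^1 \times \PS^1) \cup (\PS^1 \times \PS^1),
\]
glued along the strict transform of $E$, which is the stated description.

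The main obstacle is the transversality of $E$ and $\{g_4 = 0\}$ at the $16$ intersection points, which is what guarantees the singularities are genuine $A_1$ nodes rather than worse; this follows from the genericity of $g_4$ via a standard Bertini-style argument against the finitely many non-transverse loci. Once this is established, the B\'ezout count, the local normal form, and the identification of the components of the resolved central fiber are routine.
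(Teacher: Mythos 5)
Your proposal is correct and follows essentially the same route as the paper: identify the singular locus as the 16 points $\{t=Q_1=Q_2=g_4=0\}$ and then perform a small resolution pushing all exceptional curves into one quadric component. The paper realizes your ``consistent choice of small resolution at all 16 nodes'' globally by blowing up the Weil non-Cartier divisor $\{t=Q_1=0\}$; your explicit verification that the singularities are ordinary double points is a detail the paper leaves to the reader but is welcome.
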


\begin{proof}
One can check that the singularities occur along the locus $0=t=Q_1=Q_2=g_4$, which consists of 16 points. Therefore the central fiber consists of two Weil non-Cartier divisors. Blowing up the locus $0=t=Q_1$ resolves the 16 singularities inside $Q_1$ and is an isomorphism away from the singularities. 
\end{proof}

Denote $V_0 = \operatorname{Bl}_{16}(\PS^1\times\PS^1)$ and $V_1 = \PS^1\times\PS^1$ so that the central fiber of the degeneration is $X_0 = V_0\cup V_1$. Notice that a hyperplane section of $X_0$, which we denote by $h$, can be written $h=(s_1+f_1,s_2+f_2)$ where the $s_i$ and $f_i$ denote the section and fiber respectively of the two quadrics. From this we can determine the root lattice which this degeneration corresponds to.

\begin{lemma}
The lattice $\operatorname{Span}(\Phi(\mathscr{L})) = A_{15}+A_1+A_1$.
\end{lemma}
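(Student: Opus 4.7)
The plan is to exhibit seventeen root classes in $\mathscr{L}$ forming an $A_{15}\sqcup A_1\sqcup A_1$ configuration, and then invoke Scattone's classification to rule out a larger span.

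First I make the lattice data explicit. Since $E=Q_1\cap Q_2$ is cut out on each quadric by a $(2,2)$-divisor, the embedded image $E_{1,0}$ in $V_1=\PS^1\times\PS^1$ is the class $2s'+2f'$. On $V_0$, by Lemma \ref{Description two quadrics} the sixteen singularities of the total space lie on $E\subset Q_1$ and are blown up during the resolution, so the strict transform $E_{0,1}$ has class $2s+2f-\sum_{i=1}^{16} e_i$. Thus
$$\xi=\bigl(-2s-2f+\textstyle\sum_{i=1}^{16}e_i,\; 2s'+2f'\bigr), \qquad h=(s+f,\; s'+f'),$$
and a direct check gives $\xi^2=0$, $h\cdot\xi=0$, $h^2=4$. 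Since $h$ and $\xi$ are $\Q$-linearly independent in the rank-$20$ lattice $H^2(V_0)\oplus H^2(V_1)$, the subquotient $\mathscr{L}$ has rank $17$.

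Next I propose the candidate roots
\begin{align*}
\alpha_i &= (e_i-e_{i+1},\; 0) \quad (1\leq i\leq 15), \\
\beta &= (s-f,\; 0), \\
\gamma &= (0,\; s'-f').
\end{align*}
A routine calculation shows that each of these pairs trivially with both $h$ and $\xi$, is primitive, and has self-intersection $-2$, so each lies in $\Phi(\mathscr{L})$ (reflections in $(-2)$-classes automatically preserve the ambient lattice). The intersection pairings among them reproduce the Dynkin diagram $A_{15}\sqcup A_1\sqcup A_1$: $\alpha_i\cdot\alpha_{i+1}=1$ and $\alpha_i\cdot\alpha_j=0$ for $|i-j|\geq 2$, while $\beta$ and $\gamma$ are orthogonal to one another and to each $\alpha_i$. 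Hence $\operatorname{Span}(\Phi(\mathscr{L}))$ contains $A_{15}\oplus A_1\oplus A_1$ as a sublattice of full rank.

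It remains to upgrade this inclusion to an equality. Here I appeal to Scattone's Theorem \ref{thm:scattone lattices}: $\operatorname{Span}(\Phi(\mathscr{L}))$ must be one of the nine rank-$17$ root lattices listed there, and $A_{15}+A_1+A_1$ is the unique member of that list admitting $A_{15}$ as a direct summand. The main obstacle I anticipate is the careful bookkeeping in identifying $\xi$, especially confirming the coefficient $-\sum e_i$ coming from the sixteen exceptional curves introduced by the small resolution; once that is in hand, the remaining verifications are numerical, and the potential worry about exotic $(-4)$-class reflections enlarging the span is absorbed into the Scattone enumeration step.
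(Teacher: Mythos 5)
Your proposal is correct and follows essentially the same route as the paper: the paper's proof exhibits exactly the same seventeen classes $(e_i-e_{i+1},0)$, $(s_1-f_1,0)$, $(0,s_2-f_2)$ and reads off the $A_{15}+A_1+A_1$ configuration. Your additional appeal to Scattone's classification to justify that the span is not larger is a reasonable supplement to a point the paper leaves implicit, and your bookkeeping of $h$ and $\xi$ agrees with the classes used later in Section \ref{sect:period domains}.
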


\begin{proof}
Notice first of all that we can obtain the $A_{15}$ root lattice by considering the collection
$$\Phi = \{(e_1-e_2,0), \dots, (e_{15}-e_{16},0)\}.$$
We can obtain two more simple roots by taking $\{(s_1-f_1,0)\}$ and $\{(0,s_2-f_2)\}$, these determine the two copies of $A_1$. Therefore we obtain the lattice $A_{15}+A_1+A_1$, as desired.
\end{proof}

\subsection{A Plane Intersecting a Cubic}\label{sect:Plane and Cubic}
Now we consider the degeneration 
$$X = \{xQ_3 +tg_4=0\} \subset \mathbb{P}^3 \times \Delta, $$
where $Q_3$ is a smooth cubic and $g_4$ is a suitably generic homogeneous quartic polynomial. Notice that the central fiber of this degeneration is a plane intersecting a cubic surface (cf. \cite[Case SS-5]{Shah81}).

\begin{lemma}
$X$ has 12 singularities which occur on the central fiber. After resolving the singularities, the central fiber of the degeneration is $\operatorname{Bl}_{12}\PS^2\cup \operatorname{dP}_3$.
\end{lemma}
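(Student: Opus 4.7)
The plan is to follow the same strategy as Lemma \ref{Description two quadrics}: identify the singularities of $X$ by a Jacobian computation, count them with B\'ezout, verify each is a node, and resolve to read off the two components of the central fiber.

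For the Jacobian computation, the partials of $F = xQ_3 + tg_4$ restricted to $t=0$ give $\partial F/\partial y = x\,\partial Q_3/\partial y$ and similarly for $z,w$. If $x \neq 0$ at a singular point, these force the simultaneous vanishing of three partials of $Q_3$; combined with $\partial F/\partial x = Q_3 + x\,\partial Q_3/\partial x = 0$ and Euler's identity, this would force $Q_3$ itself to be singular, contradicting its smoothness. Hence every singular point satisfies $x=0$, and then $\partial F/\partial x = Q_3 = 0$ and $\partial F/\partial t = g_4 = 0$. So the singular locus lies in the plane $\{x=0\}\cong\PS^2$ and is cut out there by the smooth cubic curve $\{Q_3=0\}$ and the quartic curve $\{g_4=0\}$; B\'ezout gives $3\cdot 4 = 12$ transverse points for generic $g_4$.

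Next I would verify that each such point $p$ is an ordinary 3-fold node. For generic $Q_3$ and $g_4$ the hypersurfaces $\{Q_3=0\}$ and $\{g_4=0\}$ meet $\{x=0\}$ transversally at $p$, so $(x,t,Q_3,g_4)$ form a local analytic coordinate system on $\PS^3\times\Delta$ at $p$. Writing $u=Q_3$ and $v=g_4$, the equation of $X$ becomes $xu+tv=0$, the standard 3-fold node.

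To resolve, I would perform a small resolution by blowing up the Weil divisor $\{x=0,\,t=0\}\subset X$, which is the plane component of the central fiber and contains all twelve nodes. In the local model this is the blow-up of the ideal $(x,t)$: in the chart $x=t\xi$ the strict transform $\{\xi u+v=0\}$ is smooth and carries an exceptional $\PS^1$ over each node, attached to the plane side. Globally the strict transform of the plane thereby acquires twelve exceptional curves and becomes $\operatorname{Bl}_{12}\PS^2$, while the strict transform of the cubic $\{Q_3=0\}$ meets each exceptional $\PS^1$ in a single point and is otherwise unchanged, so it remains the smooth cubic surface $\operatorname{dP}_3$. The step that most needs care is the chain of genericity hypotheses on $Q_3$ and $g_4$ needed to guarantee transversality, twelve reduced nodal points, and the existence of the small resolution; without them one could encounter additional or worse singularities on $X$.
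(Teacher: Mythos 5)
Your proposal is correct and follows essentially the same route as the paper: the paper likewise identifies the singular locus as $0=t=x=Q_3=g_4$ (12 points) and resolves by blowing up the component $\{t=x=0\}$, citing the analogous Lemma for two quadrics; you simply supply the Jacobian, B\'ezout, local ODP model $xu+tv=0$, and small-resolution chart computations that the paper leaves as ``one can check.'' The only detail worth adding is a one-line justification that the singularities of $X$ all lie on the central fiber (for $t\neq 0$ the fibers are smooth K3 surfaces, and a singular point of $X$ on a fiber would force that fiber to be singular), which your restriction to $t=0$ implicitly assumes.
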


\begin{proof}
One can check that the singularities occur on the locus $0=t=x=Q_3=g_4$, which consists of $12$ points. As in Lemma \ref{Description two quadrics}, we can blowup either the component $\{x=0\}$ or the component $\{Q_3=0\}$. Blowing up the locus $0=t=x$ gives the desired result.
\end{proof}

Let $X_0$ be the central fiber of this degeneration, and let $V_0 = \operatorname{Bl}_{12}\PS^2$ and $V_1 = \operatorname{dP}_3$ so that $X_0 = V_0 \cup V_1$. Let $h=(h_0,h_1)$ be a hyperplane section of $X_0$. Notice that a hyperplane section of $V_0$ is a line, while a hyperplane section of $V_1$ is anti-canonical. As $\operatorname{dP}_3 = \operatorname{Bl}_6\PS^2$ we can write $h=(l,3l'-\sum_{i=1}^6e_i')$.

\begin{lemma}
The lattice $\operatorname{Span}(\Phi(\mathscr{L})) = E_6+A_{11}$.
\end{lemma}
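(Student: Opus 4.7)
The plan is as follows. First I would identify the classes of the elliptic double curve $E$ on each component. Since the 12 blowup points lie on the plane cubic cut out by $\{x=Q_3=0\}$, the strict transform of this cubic on $V_0 = \operatorname{Bl}_{12}\PS^2$ is $E_{0,1} = 3l-\sum_{i=1}^{12}e_i = -K_{V_0}$ (so $E_{0,1}^2=-3$), and on $V_1 = \operatorname{dP}_3$ the curve $E_{1,0}$ is the anticanonical section cut out by $\{x=0\}$, giving $E_{1,0} = -K_{V_1} = 3l'-\sum_{i=1}^{6}e_i'$ (with $E_{1,0}^2=3$). Thus $\xi = (-E_{0,1},E_{1,0})$ is isotropic (confirming $d$-semistability) and satisfies $h\cdot\xi=0$, so that $h \in \xi^\perp$. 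Writing $v = (al-\sum b_i e_i,\, a'l'-\sum b_i' e_i')$, the conditions $v\cdot\xi=0=v\cdot h$ simplify to
\begin{equation*}
\sum_{i=1}^{12} b_i = 4a \qquad \text{and} \qquad \sum_{i=1}^{6} b_i' = 3a'+a.
\end{equation*}

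Second, I would exhibit two orthogonal systems of $(-2)$-roots in $\mathscr{L}$. On the $V_0$-factor, the classes $(e_i-e_{i+1},0)$ for $i=1,\dots,11$ satisfy the constraints with $a=0$ and form the simple roots of an $A_{11}$ system. On the $V_1$-factor, the standard $E_6$ simple roots of $K_{V_1}^\perp\subset\operatorname{Pic}(\operatorname{dP}_3)$, namely $(0,e_j'-e_{j+1}')$ for $j=1,\dots,5$ together with $(0,l'-e_1'-e_2'-e_3')$, also satisfy the constraints. As each is a primitive $(-2)$-class, the reflection $R_v$ automatically preserves $\mathscr{L}$, so all 17 lie in $\Phi(\mathscr{L})$. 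They span $A_{11}\oplus E_6 \subseteq \operatorname{Span}(\Phi(\mathscr{L}))$, and since $\operatorname{rk}\mathscr{L}=17$, this sublattice has maximal rank.

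Finally, I would verify that no further roots exist, so that $\operatorname{Span}(\Phi(\mathscr{L}))=A_{11}\oplus E_6$ exactly rather than a proper overlattice containing new $(-2)$-classes. I would split on $a := v_0\cdot l$: when $a=0$, the form $v_0 \mapsto -\sum b_i^2$ is negative definite on $l^\perp\subset H^2(V_0)$ and $K_{V_1}^\perp$ is negative definite on $\operatorname{dP}_3$ (as $-K_{V_1}$ is ample), so $v^2=-2$ forces either $v_0=0$ (giving an $E_6$ root) or $v_1=0$ together with $\sum b_i=0$, $\sum b_i^2=2$ (giving an $A_{11}$ root). When $|a|\geq 1$, Cauchy–Schwarz yields $\sum b_i^2 \geq (4a)^2/12 = 4a^2/3$ and $\sum (b_i')^2 \geq (3a'+a)^2/6$, so $v_0^2 \leq -a^2/3$ and $v_1^2 \leq a'^2-(3a'+a)^2/6$; a short integer enumeration shows $v^2=-2$ has no solutions. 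The main obstacle is this final case analysis — the Cauchy–Schwarz bounds keep it finite, but one must genuinely check the handful of small residual cases to confirm that no mixed $(-2)$-class sits outside $A_{11}\oplus E_6$.
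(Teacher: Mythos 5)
Your first two steps reproduce the paper's argument: the paper's proof consists precisely of exhibiting the simple roots $(e_i-e_{i+1},0)$ for $i=1,\dots,11$ and $(0,e_j'-e_{j+1}')$ for $j=1,\dots,5$ together with $(0,l'-e_1'-e_2'-e_3')$, and then concluding. Your identification of $\xi=(-3l+\sum_{i=1}^{12}e_i,\,3l'-\sum_{i=1}^{6}e_i')$ and the two linear constraints $\sum b_i=4a$, $\sum b_i'=3a'+a$ are correct. The paper does not attempt the reverse inclusion at all (implicitly it rests on the rank count together with Scattone's classification of the nine possible generalised types), so your third step is an addition rather than a replacement, and it is there that the problems lie.

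The gap is that $\mathscr{L}$ is the quotient $h^\perp\subset\xi^\perp/\Z\xi$, while your enumeration runs over representatives in $\xi^\perp\cap h^\perp$. Since $\xi$ has $l$-coefficient $-3$, every class has representatives with $|a|$ arbitrarily large, so the set of $(-2)$-vectors with $a\neq0$ is neither empty nor finite: for example $v=(-3l+\sum_{i=1}^{12}e_i,\,l')$ satisfies $v\cdot h=v\cdot\xi=0$ and $v^2=-2$ with $a=-3$, contradicting your claim that the $|a|\geq1$ enumeration has no solutions. (It is harmless only because $v=\xi+(0,-2l'+\sum_{i=1}^{6}e_i')$ is congruent modulo $\xi$ to the highest root of your $E_6$, and its translates $v+m\xi$ give solutions for every $a\in 3\Z$.) Your combined Cauchy--Schwarz bound works out to $v^2\leq-\tfrac{1}{2}(a+a')^2$, which does not bound $a$ and so does not make the search finite. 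The repair is to normalise representatives modulo $\xi$ so that $a\in\{0,1,2\}$; for $a\in\{1,2\}$ the integer minima give $v_0^2\leq a^2-4|a|\leq-3$ while the Hodge index inequality $(v_1\cdot h_1)^2\geq h_1^2\,v_1^2$ with $v_1\cdot h_1=-a$ gives $v_1^2\leq a^2/3$, hence $v^2\leq-3$, and the case $a=0$ is handled as you describe. A second, smaller omission: the paper's $\Phi(\mathscr{L})$ contains generalised roots of any negative square with $R_v$ integral, and such classes genuinely occur in the sibling cases (the $\langle-4\rangle$ summands of $D_8+D_8+\langle-4\rangle$, $D_{16}+\langle-4\rangle$, $E_8+E_8+\langle-4\rangle$), so excluding extra $(-2)$-classes does not by itself show the span is not the index-$3$ overlattice $\mathscr{L}\supset E_6\oplus A_{11}$; one must also rule out a generalised root outside $E_6\oplus A_{11}$, or simply invoke Scattone's list as the paper implicitly does.
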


\begin{proof}
Notice first of all that the collection $\{(e_1-e_2,0),\dots,(e_{11}-e_{12},0) \} $ belongs to $\Phi(\mathscr{L})$, and these are the simple roots of an $A_{11}$ lattice. Similarly, $\{(0,e_{1}'-e_{2}'),\dots,(e_{5}'-e_{6}') \}$ belong to $\Phi(\mathscr{L})$, and we can see that $(0,l'-e_{1}'-e_{2}'-e_{3}')$ is another simple root. The set
$$ \{(0,e'_{1}-e'_{2}),\dots,(0,e'_{5}-e'_{6}), (0,l'-e'_{1}-e'_{2}-e'_{3})\}$$ 
determines the simple roots of an $E_6$ lattice, where $(0,e'_{3}-e'_{4})$ is the unique simple root which intersects 3 others. Thus, we have shown that $\operatorname{Span}(\Phi(\mathscr{L}))=E_6+A_{11}$ as desired.    
\end{proof}

\subsection{A Surface Non-Normal Along a Conic}\label{sect:non-normal along conic}
In the three cases which consider quartic surfaces which are non-normal along curves, (Sections \ref{sect:non-normal along conic}, \ref{sect:non-normal along line}, \ref{sect:non-normal along TC}) we construct subvarieties of rational scrolls. See \cite{Reid96} for more information on rational scrolls. In this case we consider a degeneration whose central fiber is a quartic surface which is non-normal along a smooth conic. Define 
$$ X = \{w^2C_2+wqL_1+q^2+t^2g_4=0\} \subset \mathbb{P}^3 \times \Delta$$
where $L_1$ and $C_2$ are generic linear and degree 2 equations respectively, $q:=x^2-yz$ is the non-singular conic belonging to the plane $w=0$. $X$ is a degeneration of degree 4 K3 surfaces, and is singular along $0=t=w=q$. The central fiber $X_0$ is a singular quartic surface which is non-normal along the smooth conic $w=q=0$.

\begin{lemma}\label{lem: blowup non-normal conic}
After blowing up the singular locus of this degeneration, the exceptional component at the central fiber is a double cover of $\mathbb{F}_2$ which is branched along a smooth member of $|2s+8f|$, where $s$ is the class corresponding to the unique $(-2)-$curve and $f$ is the class of a fiber.
\end{lemma}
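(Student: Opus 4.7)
The first step is to identify the singular locus of $X$ as the smooth conic $C := \{t = w = q = 0\} \subset \PS^3 \times \Delta$ and to compute its normal bundle. Since $C$ is the transverse complete intersection of $t$, $w$, and $q$, the normal bundle is $N := N_{C/\PS^3 \times \Delta} \simeq \mathcal{O}_C \oplus \mathcal{O}_C(1) \oplus \mathcal{O}_C(2)$, and the exceptional divisor of $\operatorname{Bl}_C(\PS^3 \times \Delta)$ is the $\PS^2$-bundle $\PS(N) \to C \simeq \PS^1$.

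Every monomial in the defining equation $f = w^2 C_2 + wq L_1 + q^2 + t^2 g_4$ lies in $I_C^2$, so the leading term of $f$ is a section of $\operatorname{Sym}^2(N^\vee) \otimes \mathcal{O}_C(4)$ that cuts out the exceptional component $E$ of the blowup as a conic bundle inside $\PS(N)$. Writing $[T:W:Q]$ for the fiber coordinates of the $\PS^2$-bundle corresponding to the three summands of $N$, this conic bundle has fiberwise equation
$$W^2 C_2 + WQ\,L_1 + Q^2 + T^2 g_4 = 0.$$
Completing the square in $Q$ (replacing $Q$ by $Q' := 2Q + WL_1$) transforms this to $Q'^2 + W^2(4C_2 - L_1^2) + 4T^2 g_4 = 0$. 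Projecting away from the section $\{T = W = 0\}$, which lies outside $E$ since substitution gives $Q^2 = 0$, exhibits $E$ as a $2$-to-$1$ cover of the sub-$\PS^1$-bundle $\PS(\mathcal{O}_C \oplus \mathcal{O}_C(1))$; since $\mathcal{O}_C(1) = \mathcal{O}_{\PS^1}(2)$ (as $C$ is a conic), this target is $\mathbb{F}_2$, and the branch locus is $B := \{W^2(4C_2 - L_1^2) + 4T^2 g_4 = 0\}$.

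To pin down the class of $B$, intersecting with a general fiber of $\mathbb{F}_2 \to \PS^1$ gives $B \cdot f = 2$, so $B \sim 2s + b\,f$ for some $b$. The discriminant $g_4(4C_2 - L_1^2)$ of the conic bundle has $8 + 4 = 12$ simple zeros on $C$ for generic coefficients, each producing a reducible singular fiber of $E$ consisting of two distinct lines. A direct fibration Euler-characteristic count then yields $e(E) = 2 \cdot 2 + 12 \cdot (3 - 2) = 16$, so the Hurwitz relation $e(E) = 2\, e(\mathbb{F}_2) - e(B)$ forces $e(B) = -8$, i.e.\ $g(B) = 5$, and adjunction on $\mathbb{F}_2$ then pins down $b = 8$.

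Finally, smoothness of $B$ for generic $C_2, L_1, g_4$ follows from a direct Jacobian check: a singular point of $B$ would force either a common zero on $C$ of $4C_2 - L_1^2$ and $g_4$ (which would collapse an entire fiber into $B$) or a multiple root of one of these factors, both of which are avoided for generic coefficients. The main technical hurdle will be confirming rigorously that the equation $W^2 C_2 + WQ\,L_1 + Q^2 + T^2 g_4$ is globally well-defined on $\PS(N)$ as a section of an appropriate twist of $\mathcal{O}_{\PS(N)}(2)$, which amounts to tracking the summand of $\operatorname{Sym}^2(N^\vee) \otimes \mathcal{O}_C(4)$ in which each monomial lives.
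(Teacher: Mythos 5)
Your proof is correct and follows essentially the same route as the paper's: blow up the conic, realize the exceptional component as a conic bundle over $\PS^1$ inside the projectivized normal bundle (the paper's rational scroll $\F(0,2,4)$), and project away from the $q$-direction to exhibit it as a double cover of $\F_2$ branched along the discriminant. The only substantive difference is minor: the paper reads the class $|2s+8f|$ of the branch curve directly off the weights in its explicit two-chart scroll coordinates (which is also what handles the global well-definedness issue you flag at the end), whereas you recover the class indirectly via an Euler-characteristic count of the singular fibers of the conic bundle together with adjunction on $\F_2$ --- both are valid.
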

\begin{proof}
We first blowup $\PS^3 \times \Delta$ along $w=q=t=0$ in the charts $U_1 =\{y\neq0\}$ and $U_2=\{z\neq0\}$. The planar conic $w=q=0$ is obtained from a degree 2 Veronese embedding so that $(x,y,z,w)=(uv,v^2,u^2,0)$. By using the techniques of rational scrolls we see that along this locus we obtain a subvariety of the rational scroll $\F(0,2,4)\simeq \F(-2,0,2)$. In $U_1$ the coordinate on the base is given by $u$ while on $U_2$ it is $v$, the coordinates of the fiber are given by $(a,b,c)$ on both charts where $a$ is of weight $-2$, $b$ is of weight 0 and $c$ is of weight 2. Along the locus we have blown up $X$ is defined as
$$\{b^2C_4+abL_2+a^2 +c^2g_8=0\} \subset \F(-2,0,2).$$
The subscripts on $C, L$, and $g$ have doubled because the conic is a degree 2 embedding of $\PS^1$. By considering the $a^2$ term in the above equation, we can see that this determines a double cover of $\F(0,2) = \F_2$ branched along the curve given by the equation
$$b^2L_2^2-4(b^2C_4+c^2g_8)=0$$
which is a smooth, generic member of the linear system $|2s+8f|$ as required.
\end{proof}

Several of the degenerations we construct arise as double covers of Hirzebruch surfaces, so we prove some statements about them. These statements are somewhat classical, but we could not find the exact statements in the literature so we reprove them.

\begin{lemma}\label{double cover hirzebruch rational}
If $\phi\colon X \rightarrow \mathbb{F}_n$ is a double cover branched in a smooth member of the linear system $|2s+2(n+2)f|$, then $X$ is a rational surface.
\end{lemma}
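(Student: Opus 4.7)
The plan is to exhibit $X$ as a $\mathbb{P}^1$-fibration over $\mathbb{P}^1$ and then invoke the standard fact that any smooth projective surface birational to a $\mathbb{P}^1$-bundle over $\mathbb{P}^1$ is rational. The key observation is that the ruling $\pi\colon \mathbb{F}_n \to \mathbb{P}^1$ composes with $\phi$ to give a morphism $\pi\circ\phi\colon X\to\mathbb{P}^1$, and I want to show that the generic fiber of this morphism is $\mathbb{P}^1$.

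First I would pick a general fiber $f_0$ of $\pi$, so that $f_0 \simeq \mathbb{P}^1$ and $f_0$ is not contained in the branch divisor $B\in|2s+2(n+2)f|$. Since $s\cdot f=1$ and $f\cdot f=0$, the intersection number is
\[
B\cdot f = (2s+2(n+2)f)\cdot f = 2,
\]
so $B$ meets $f_0$ in a scheme of length $2$. Because $B$ is smooth (hence reduced) and $f_0$ is general, by Bertini this scheme consists of two distinct reduced points. Consequently $\phi^{-1}(f_0)\to f_0\simeq\mathbb{P}^1$ is a double cover branched in exactly two points; by Riemann--Hurwitz it has arithmetic genus $0$, hence is a smooth irreducible $\mathbb{P}^1$.

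Thus $\pi\circ\phi\colon X\to\mathbb{P}^1$ is a proper morphism whose generic fiber is $\mathbb{P}^1$. The generic fiber is therefore a smooth conic over the function field $k(\mathbb{P}^1)=k(t)$; by Tsen's theorem $k(t)$ is $C_1$, so this conic has a rational point and is isomorphic to $\mathbb{P}^1_{k(t)}$. It follows that $X$ is birational to $\mathbb{P}^1\times\mathbb{P}^1$, and in particular rational.

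The main thing to be careful about is the generality argument guaranteeing that $B\cdot f_0$ consists of two distinct reduced points (otherwise the fiber could a priori be a non-reduced or disconnected scheme). Since we only need rationality of $X$, which is a birational property, it suffices to check this for a general fiber rather than for all fibers, and this follows directly from smoothness of $B$ and the fact that $B$ cannot contain $f_0$ because $B\cdot f=2>0$. Once the generic fiber is identified with $\mathbb{P}^1$, the conclusion is immediate; no delicate cohomology computation or application of Castelnuovo's criterion is required, although the same conclusion could alternatively be obtained by computing $p_g(X)=q(X)=P_2(X)=0$ from the double cover formula $\phi_*\mathcal{O}_X=\mathcal{O}_{\mathbb{F}_n}\oplus\mathcal{O}_{\mathbb{F}_n}(-(s+(n+2)f))$.
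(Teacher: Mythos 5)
Your proof is correct and follows essentially the same route as the paper: both exhibit $X$ as fibered in rational curves over $\PS^1$ via the composition of $\phi$ with the ruling of $\F_n$, and conclude rationality from there. The only cosmetic differences are that the paper computes the genus of the general member of $|\phi^*f|$ by adjunction rather than by restricting the double cover to a fibre and applying Riemann--Hurwitz, and it leaves the final step (a surface carrying a base-point-free pencil of rational curves is rational) implicit where you justify it via Tsen's theorem.
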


\begin{proof}
The linear system $|\phi^*f|$ is base point free because the linear system $|f|$ is base point free and finite surjective morphisms are flat. The general member of $|\phi^*f|$  has genus zero by the adjunction and genus formulas. Since the linear system $|f|$ is parametrised by the class of the negative section $s$, $|\phi^*f|$ is too and therefore $X$ is rational.
\end{proof}

\begin{prop}\label{double cover Hirzeburch is blowup of P2}
Let $X$ be a double cover of $\mathbb{F}_n$ which is branched in a generic smooth member of the linear system $|2s+(2n+4)f|$ for $n \geq 1$. Then $X = \operatorname{Bl}_{2n+9}\mathbb{P}^2$ where the points we blow up satisfy the relation 
$$(3n+9)q=(n+1)p_1+\dots +p_{2n+8}+p_{2n+9}$$
in the group law of an elliptic curve in $\PS^2$.
\end{prop}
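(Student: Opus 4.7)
The plan is to exploit the double cover structure $\phi\colon X\to\F_n$ to establish that $X=\operatorname{Bl}_{2n+9}\PS^2$, identify a smooth plane cubic $\Gamma$ through the blown-up points, and extract the claimed relation by restricting the class of the preimage of a positive section of $\F_n$ to the strict transform of $\Gamma$.

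First I would compute invariants. The double-cover formula gives $K_X=\phi^*(K_{\F_n}+L)$, where $L=s+(n+2)f$ is half of the branch class $2s+(2n+4)f$; this yields $K_X=-\phi^*s$ and $K_X^2=2s^2=-2n$. Combined with rationality (Lemma \ref{double cover hirzebruch rational}) and Noether's formula, the Picard rank of $X$ is $2n+10$. The ruling $\F_n\to\PS^1$ pulls back to a ruling $X\to\PS^1$ whose generic fibre is $\PS^1$; applying Riemann-Hurwitz to the degree-$2$ projection $B\to\PS^1$ shows $B$ has $2n+8$ vertical tangents, so the induced ruling on $X$ has exactly $2n+8$ reducible fibres, each a pair of $(-1)$-curves meeting transversely. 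Contracting one component from each reducible fibre and then the $(-1)$-section of the resulting Hirzebruch surface $\F_1$ produces a birational morphism $\pi\colon X\to\PS^2$, realising $X=\operatorname{Bl}_{2n+9}\PS^2$.

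Next, the curve $E:=\phi^{-1}(s)$ is a connected double cover of $s\cong\PS^1$ branched in the $4$ points of $B\cap s$, so Riemann-Hurwitz shows it is smooth elliptic. Since $-K_X=\phi^*s$, we have $E\in|-K_X|$ with class $3H-\sum_{i=1}^{2n+9}e_i$ in the blowup basis. Hence $\Gamma:=\pi(E)$ is a smooth plane cubic through $p_1,\ldots,p_{2n+9}$ simply, and $\pi|_E\colon E\to\Gamma$ is an isomorphism. Taking $q\in\Gamma$ to be a flex as the origin of the group law yields the standard restriction identities $H|_E=3q$ and $e_i|_E=p_i$ in $\operatorname{Pic}(E)$.

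The key step is computing the class of $C:=\phi^{-1}(s_\infty)$ in $\operatorname{Pic}(X)$, where $s_\infty\in|s+nf|$ is a generic section of $\F_n$ disjoint from $s$. Since $C\cdot E=2(s_\infty\cdot s)=0$ and both curves are irreducible, $C$ and $E$ are disjoint, so $\mathcal{O}_X(C)|_E=\mathcal{O}_E$. On the intermediate $\F_1=\operatorname{Bl}_{p_1}\PS^2$, the fibre of the ruling has class $H-e_1$, so in the blowup basis $\phi^*f=H-e_1$, where $p_1$ denotes the image of the $(-1)$-section of $\F_1$. Therefore
\begin{align*}
[C]=\phi^*(s+nf)=\left(3H-\sum_{i=1}^{2n+9}e_i\right)+n(H-e_1)=(n+3)H-(n+1)e_1-\sum_{i=2}^{2n+9}e_i,
\end{align*}
and restricting to $E$ using the identities above gives
$$0=[C]|_E=(3n+9)q-(n+1)p_1-\sum_{i=2}^{2n+9}p_i$$
in $\operatorname{Pic}(E)$, the desired group-law relation on the elliptic cubic.

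The main obstacle will be justifying the choice of contractions so that the intermediate relative minimal model over $\PS^1$ is $\F_1$ rather than some $\F_m$ with $m\neq 1$ (from which no direct contraction to $\PS^2$ exists). Among the $(\mathbb{Z}/2)^{2n+8}$ possibilities for which component of each reducible fibre to contract, one needs to show that at least one choice yields $\F_1$ for generic branch $B$; this can be verified by tracking how each contraction affects the self-intersection of an auxiliary section, and the count $K_X^2=-2n$ together with the need to end at a surface of $K^2\in\{8,9\}$ forces a sequence of $2n+9$ contractions to terminate at $\PS^2$.
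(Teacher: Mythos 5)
Your derivation of the group-law relation is correct and is genuinely different from (and slicker than) the paper's. The paper obtains the relation by pushing the second section $E'$ (the involution image of the negative section's strict transform) down to a degree-$(n+4)$ plane curve with an $(n+3)$-fold node at $p_1$, which gives $(3n+12)q=(n+3)p_1+p_2+\dots+p_{2n+9}+p'$, and then eliminates the auxiliary point $p'$ via the tangent-line relation $3q=2p_1+p'$ coming from the fiber containing both ramification points of $\phi^*s$. Your route — restricting $[\phi^*(s+nf)]=(n+3)H-(n+1)e_1-\sum_{i\geq 2}e_i$ to the anticanonical elliptic curve $\phi^{-1}(s)$ and using disjointness of $\phi^{-1}(s_\infty)$ from $\phi^{-1}(s)$ — reaches the same relation in one step, with no auxiliary point and no tangency computation. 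The identities $H|_E=3q$, $e_i|_E=p_i$, $\phi^*f=H-e_1$ and the disjointness $C\cdot E=2(s+nf)\cdot s=0$ all check out, so once the birational model is in place this part is complete.

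The gap is in the step you yourself flag as the main obstacle, and your proposed fix does not close it. Knowing $K_X^2=-2n$ and that a relatively minimal model over $\PS^1$ has $K^2=8$ tells you nothing about \emph{which} $\F_m$ you land on: every Hirzebruch surface has $K^2=8$, and only $\F_1$ admits a contraction to $\PS^2$. Moreover your computation of $[C]$ silently assumes not only that the intermediate model is $\F_1$ but that none of the $2n+8$ contracted points lies on its negative section (otherwise $e_1$ is not the strict transform of that section and the points of $\operatorname{Bl}_{2n+9}\PS^2$ are infinitely near). The missing ideas, which are exactly what the paper supplies, are: (i) the relatively minimal model $Y$ carries a smooth irreducible anticanonical curve $D=\pi_*(\phi^*s)$, and since $-K_{\F_m}\cdot s_{\F_m}=2-m$, this forces $Y\in\{\F_0,\F_1,\F_2\}$; (ii) switching which component of a reducible fiber one contracts is an elementary transformation centered at a point of $D$, changing $m$ by $\pm1$, so one can always steer to $\F_1$; and (iii) genericity of the branch curve ensures the resulting configuration avoids the negative section. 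Your "auxiliary section" idea is in the right spirit for (ii), but you should replace the $K^2$ count with the anticanonical-curve argument of (i) to bound $m$, and add (iii) explicitly.
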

\begin{proof}
Let $C\in|2s+(2n+4)f|$ be a smooth, generic curve and let $\phi:X\rightarrow \F_n$ be the double cover of $\F_n$ branched along $C$. By Lemma \ref{double cover hirzebruch rational} $X$ is a rational surface. Let $C \in |2s+(2n+4)f|$ be a smooth generic curve, we see from Hurwitz's formula that $X$ has $2n+8$ reducible fibers which arise from fibers in $\F_n$ which are tangent to $C$. Such reducible fibers are pairs of $(-1)$-curves which intersect in one point, we label the curves in each pair by $f_i$ and $f_i'$ for $i=1,\dots,2n+8$. Finally, we also see that $-K_X=\phi^*s$ by Hurwitz's formula, so $-K_X.f_i=1=-K_X.f_i'$ for all $i$.

Consider the map $\pi:X\rightarrow Y$ which contracts $f_i$ in each reducible fiber and denote $\pi(f_i)=p_i$, then $Y$ is a rational surface with $(-K_Y)^2=8$. Moreover, $|-K_Y|$ has a smooth, effective member which is $D:=\pi_*(\phi^*s)$. Therefore $Y$ is $\PS^1\times\PS^1$ or $\F_1$ or $\F_2$.

Suppose that $Y=\PS^1\times\PS^1$ and consider a section $S\in|s|$ such that $p_1\in S$. By the generic consideration on $C$, we can guarantee that $S\cap D=\{p_1,p\}$ where $p$ is not one of the $p_i$. Blowing up $p_1$ and contracting $f_1'$ defines a map to $\F_1$ and the strict transform of $S$ is the negative section of $\F_1$, intersecting the anti-canonical curve at the point $p$.

Now suppose that $Y=\F_2$ and let $s\subset\F_2$ be the negative section. We blowup $p_1$ and contract $f_1'$ to a point $p_1'$. This defines a map to $\F_1$ and the strict transform of $s$ is the negative section in $\F_1$. Notice that now the negative section intersects the anti-canonical curve in the point $p_1'$, however we can discount this possibility by the generic assumption on $C$, as in the previous case.

Therefore by the generic assumption on $C$ and choosing to blow down an $f_i'$ instead of an $f_i$ as necessary, we can assume that $Y=\F_1$. Moreover, setting $s$ to be the negative section in $\F_1$, we can assume that none of the $p_i$ belong to $s$ by the generic assumption on $C$. Therefore there is a $(-1)$-curve in $X$, which we denote by $E$, such that $E.(-K_X)=1$ and $E.\phi^*f=1$. We also see that $E.f_i'=1$ for all $i$ and $E.f_i=0$. As $X$ is a double cover of $\F_n$, there is an involution on $X$ which acts fiber-wise and so there is another $(-1)$-curve which we call $E'$. Notice that $E'.(-K_X)=1$, $E'.f_i=1$ and $E'.f_i'=0$ for all $i$. After performing the contraction to $\F_1$, the image of $E'$ is a smooth member of the linear system $|s+(n+4)f|$.

We contract $s$ to define a map to $\PS^2$, we denote the image of $E'$ by $\Gamma$ and the image of the anti-canonical curve is denoted by $D$. The curve $\Gamma$ is a curve of degree $n+4$ with an $(n+3)$-fold node and has no further singularities. Let $p$ be the $(n+3)$-fold node of $\Gamma$, then $D$ is a smooth elliptic curve which contains $p$ and intersects $\Gamma$ transversally at the points $p_1,\dots,p_{2n+8}$. There is one point of intersection between $\Gamma$ and $D$ which arises from the original intersection point of $E'\cap\phi^*s$. We denote this extra point by $p'$, then in the group law of $D$ we obtain the relation
$$(3n+12)q=(n+3)p+p_1+\dots +p_{2n+8}+p'$$
using linear equivalence.

Finally, we show that there is a relation between the points $p$ and $p'$. Recall that $p$ and $p'$ arise from the intersections $E\cap \phi^*s\subset X$ and $E'\cap\phi^*s\subset X$ respectively. In $X$ there is an involution acting fiber-wise which exchanges $E$ and $E'$, hence $p$ and $p'$ belong to the same fiber in $X$. After performing the contractions to $\F_1$, the points $p$ and $p'$ belong to the same fiber in $\F_1$. The image of this fiber after contracting $s$ is a line in $\PS^2$ which is tangent to $D$ at $p$ and intersects transversally at $p'$. Therefore we obtain the relation $3q=2p+p'$ in the group law of the elliptic curve $D$, from which we see that
$$(3n+9)q=(n+1)p+p_1+\dots +p_{2n+8},$$
as required.
\end{proof}

\begin{rk}
In future when we consider degenerations which use Proposition \ref{double cover Hirzeburch is blowup of P2}, $p_1$ will always denote the $(n+3)$-fold node of a curve of degree $n+4$. We will use $e_i$ to denote the exceptional curve obtained from blowing up $p_i$, therefore all fibers of $X$ belong to the linear system $|l-e_1|$. 
\end{rk}
The following corollary is an immediate consequence of Proposition \ref{double cover Hirzeburch is blowup of P2} and Lemma \ref{lem: blowup non-normal conic}
\begin{cor}
After blowing up the singular locus of the degeneration whose central fiber is a quartic surface which is non-normal along a smooth conic, the exceptional component of the central fiber is $\operatorname{Bl}_{13}\PS^2$.
\end{cor}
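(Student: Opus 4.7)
The statement is essentially a direct composition of the two preceding results, so my plan is simply to match up their hypotheses and conclusions. First, by Lemma \ref{lem: blowup non-normal conic}, after blowing up the singular locus $\{0=t=w=q\}$ of the degeneration, the exceptional component of the central fiber is realized as a double cover
$$\phi\colon X \longrightarrow \F_2$$
branched along a smooth member of the linear system $|2s+8f|$, where $s$ is the unique $(-2)$-curve and $f$ is a fiber of $\F_2$.

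Next I would observe that the branch locus $|2s + 8f|$ fits precisely into the hypothesis of Proposition \ref{double cover Hirzeburch is blowup of P2} with $n = 2$, since $2n+4 = 8$. Proposition \ref{double cover Hirzeburch is blowup of P2} then applies and yields
$$X \cong \operatorname{Bl}_{2n+9}\PS^2 = \operatorname{Bl}_{13}\PS^2,$$
with the $13$ blown-up points satisfying the relation
$$15 q = 3 p_1 + p_2 + \dots + p_{13}$$
in the group law of the anti-canonical elliptic curve in $\PS^2$.

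The only thing that really needs to be checked is the genericity hypothesis of Proposition \ref{double cover Hirzeburch is blowup of P2}: the branch curve produced by Lemma \ref{lem: blowup non-normal conic} has equation $b^2 L_2^2 - 4(b^2 C_4 + c^2 g_8) = 0$ inside $\F_2$, and I would verify that for sufficiently generic choices of $L_1$, $C_2$, and $g_4$ in the original quartic degeneration, the resulting member of $|2s+8f|$ is smooth and generic in the sense required by the Proposition. This is the only minor obstacle, but it is immediate from the genericity already assumed on the input data of the degeneration. With this, the corollary follows.
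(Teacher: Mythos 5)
Your proposal is correct and is exactly the argument the paper intends: the corollary is stated there as an immediate consequence of Lemma \ref{lem: blowup non-normal conic} (the exceptional component is a double cover of $\F_2$ branched in a smooth member of $|2s+8f|$) combined with Proposition \ref{double cover Hirzeburch is blowup of P2} applied with $n=2$, giving $\operatorname{Bl}_{2n+9}\PS^2=\operatorname{Bl}_{13}\PS^2$. Your extra remarks on the genericity of the branch curve and the relation $15q=3p_1+p_2+\dots+p_{13}$ are consistent with how the paper later uses this in the $D_{12}+D_5$ case.
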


Denote this component of the central fiber by $V_0$ and denote the other component by $V_1$. It can be checked from the equations of blowing up that $V_1$ is smooth and is hence the normalisation of the quartic surface which is non-normal along a smooth conic. It is classically known \cite[Theorem 8.6.4]{Dolg12} that the normalisation of the quartic surface which is non-normal along a conic is $\operatorname{dP}_4$ and hence $V_1=\operatorname{dP}_4$.

\begin{lemma}
For this degeneration, $\operatorname{Span}(\Phi(\mathscr{L})) = D_{12}+D_5$.
\end{lemma}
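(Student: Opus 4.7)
The plan is to follow the template set in Sections \ref{sect:Two Quadrics} and \ref{sect:Plane and Cubic}: identify the hyperplane classes $h_0, h_1$ on the two components, and then exhibit 17 vectors in $\mathscr{L}$ that form the simple roots of a $D_{12}+D_5$ Dynkin diagram. From the resolution we have $V_0 = \operatorname{Bl}_{13}\PS^2$ (with $p_1$ distinguished as the $5$-fold node from Proposition \ref{double cover Hirzeburch is blowup of P2}) and $V_1 = \operatorname{dP}_4 = \operatorname{Bl}_5 \PS^2$, the normalisation of the non-normal quartic. On both components the elliptic double curve is anti-canonical, so $E_{0,1}=-K_{V_0}=3l-\sum_{i=1}^{13}e_i$ and $E_{1,0}=-K_{V_1}=3l'-\sum_{i=1}^{5}e_i'$.

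Next I would pin down the hyperplane classes. On $V_1$, the map to $\PS^3$ is a projection of the anticanonical embedding of $\operatorname{dP}_4 \hookrightarrow \PS^4$, so $h_1=-K_{V_1}=3l'-\sum_{i=1}^5 e_i'$. On the exceptional component $V_0$, the restriction of $\mathcal{O}_{\PS^3}(1)$ factors through the map $V_0 \to C$ to the conic: since $C \simeq \PS^1$ is embedded by $\mathcal{O}_{\PS^1}(2)$ in $\PS^3$, we get $h_0 = 2\phi^* f$ where $\phi^* f = l - e_1$ is the class of a fibre of the ruling $V_0\to\PS^1$. Thus $h_0 = 2l-2e_1$. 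I would verify the consistency checks $h_0^2+h_1^2 = 0+4 = 4$ and $h_0\cdot E_{0,1}=6-2=4=h_1\cdot E_{1,0}$, and note that $h_0$ is nef on $V_0$ (it is twice a fibre class).

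Having $h_0$ and $h_1$, I would exhibit the $D_5$ simple roots coming from $V_1$, namely
\[(0,e_1'-e_2'),\ (0,e_2'-e_3'),\ (0,e_3'-e_4'),\ (0,e_4'-e_5'),\ (0,l'-e_1'-e_2'-e_3').\]
Each lies in $h^\perp\cap\xi^\perp$ since every such vector is orthogonal to $-K_{V_1}=h_1=E_{1,0}$, and the intersection pattern is that the root $l'-e_1'-e_2'-e_3'$ meets only $e_3'-e_4'$ (with value $1$), yielding the $D_5$ Dynkin diagram. For the $D_{12}$ part on $V_0$ I would take the twelve classes
\[(e_2-e_3,0),\ (e_3-e_4,0),\ \dots,\ (e_{12}-e_{13},0),\ (l-e_1-e_{12}-e_{13},0),\]
and check by direct computation that each is orthogonal to both $h_0=2l-2e_1$ and to $\xi=(-E_{0,1},E_{1,0})$ (the latter reducing to orthogonality with $-K_{V_0}$). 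The eleven $e_i-e_{i+1}$ form an $A_{11}$ chain, while $l-e_1-e_{12}-e_{13}$ has intersection $0$ with $e_{12}-e_{13}$ and intersection $\pm 1$ with $e_{11}-e_{12}$ only, producing the required branch at the penultimate node.

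Finally, by Scattone's Theorem \ref{thm:scattone lattices} the generalised type is one of nine rank $17$ root lattices, and the $17$ roots above already span a rank $17$ sublattice of Dynkin type $D_{12}+D_5$; this identifies $\operatorname{Span}(\Phi(\mathscr{L}))$ uniquely. The only genuinely delicate step is the determination of $h_0$ on the exceptional component $V_0$, since $V_0$ is not itself a hyperplane section of anything in $\PS^3$. Once this has been identified via the rational scroll construction of Lemma \ref{lem: blowup non-normal conic}, all remaining verifications are immediate intersection-number computations using $l\cdot e_i=0$ and $e_i\cdot e_j=-\delta_{ij}$ (and the analogous relations on $V_1$).
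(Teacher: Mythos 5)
Your proposal is correct and follows essentially the same route as the paper: it identifies $h=(2(l-e_1),\,3l'-\sum_{i=1}^{5}e_i')$ and $\xi=(-(3l-\sum_{i=1}^{13}e_i),\,3l'-\sum_{i=1}^{5}e_i')$ and then exhibits seventeen $(-2)$-classes in $h^\perp\cap\xi^\perp$ forming the $D_{12}+D_5$ diagram. The only difference is cosmetic: for the branch node of $D_{12}$ you append $(l-e_1-e_{12}-e_{13},0)$ at the far end of the $A_{11}$ chain, whereas the paper appends $(l-e_1-e_2-e_3,0)$ at the other end; both are valid roots orthogonal to $h$ and $\xi$ and yield the same Dynkin type.
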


\begin{proof}
Let $h=(h_0,h_1)$ be a hyperplane section of the central fiber so that $h_0$ and $h_1$ are hyperplane sections of the components $V_0$ and  $V_1$ respectively. Prior to blowing up, a hyperplane section of the central fiber intersects the double conic in two distinct points. After blowing up we obtain two fibers, so $h_0= 2(l-e_1)$. On $V_1$ a hyperplane section is just a hyperplane section of $\operatorname{dP}_4$ which is anti-canonical. In total we see that $h = (2(l-e_1), 3l'-\sum_{i=1}^5e_i')$ and as usual $\xi = (-D_0,D_1)$ for anti-canonical curves $D_i \subset V_i$. From this we see that the sets
\begin{align*}
\Phi_1 = \{(e_2-e_3,0),\dots,(e_{12}-e_{13},0)\} \quad \text{and} \quad \Phi_2 = \{(0,e_{1}'-e_{2}'),\dots,(0, e_{4}-e_{5}')\}    
\end{align*}
consist of elements belonging to $\Phi(\mathscr{L})$. We can include the simple roots $(l-e_1-e_2-e_3,0)$ and $(0,l'-e_1'-e_2'-e_3')$ to determine a generating set for the lattice $D_{12}+D_5$, where $(e_3-e_4,0)$ is the simple root connected to $3$ others in the copy of $D_{12}$ and $(0,e_3'-e_4')$ is the simple root connected to $3$ others in the copy of $D_5$.
\end{proof}

\subsection{A Surface Non-Normal Along a Line}\label{sect:non-normal along line}
Next we consider the degeneration whose central fiber is non-normal along a line. This is a slightly different description than the one obtained by Shah, where he instead considers a quartic surface which is non-normal along two skew lines \cite[SS-3]{Shah81}. We choose to work with the surface which is non-normal along one line because we would like to construct Tyurin degenerations as opposed to Type II degenerations with three (or more) components. We define the degeneration to be
$$X = \{(w^2z^2+wxg_2(y,z)+x^2h_2(y,z)+w^3g_1(y,z)+x^3h_1(y,z)+t^2g_4(w,x,y,z)=0 \} \subset \PS^3\times \Delta$$
where $h_i$ and $g_i$ are generic homogeneous polynomials of degree $i$. The central fiber of this degeneration is non-normal along $w=x=0$ and is smooth away from this locus. $X$ is singular precisely along the locus $w=x=t=0$.

\begin{lemma}\label{Skewlines blowup}
After blowing up the line $\{w=x=t=0\}$, the  exceptional component of the central fiber is a subvariety $V_0$ of the rational scroll $\F(0,0,1)$. Furthermore, $V_0=\operatorname{Bl}_{9}\PS^2$.
\end{lemma}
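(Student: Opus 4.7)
My plan is to blow up the $1$-dimensional singular locus $L=\{w=x=t=0\}$ in $\PS^3\times\Delta$, identify the scroll containing the exceptional component $V_0$ of the central fibre from a normal-bundle calculation, and then analyse $V_0$ as a conic bundle over $\PS^1$ in order to conclude that $V_0\simeq\operatorname{Bl}_9\PS^2$.

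First I would note that $L$ is a smooth rational curve in the smooth fourfold $\PS^3\times\Delta$ with normal bundle $N_{L/\PS^3\times\Delta}=\mathcal{O}(1)\oplus\mathcal{O}(1)\oplus\mathcal{O}$ (the two $\mathcal{O}(1)$-factors coming from the projective directions $w,x$ and the trivial factor from the disc direction $t$). Following the Reid-scroll convention used in Lemma \ref{lem: blowup non-normal conic}, this identifies the exceptional divisor of $\operatorname{Bl}_L(\PS^3\times\Delta)\to\PS^3\times\Delta$ with $\F(0,0,1)$. I would then observe that every monomial of $f=w^2z^2+wxg_2+x^2h_2+w^3g_1+x^3h_1+t^2g_4$ lies in the ideal $(w,x,t)^2$, while the presence of $w^2z^2$ shows the vanishing order along $L$ is exactly $2$, so the strict transform of $X$ meets the exceptional divisor in a surface cut out by the order-$2$ part of $f$. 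Working through the three standard charts ($w\neq 0$, $x\neq 0$, $t\neq 0$) and dividing by the square of the exceptional parameter gives consistent equations globalising to
\[
V_0 = \bigl\{A^2z^2+ABg_2(y,z)+B^2h_2(y,z)+C^2g_4(0,0,y,z)=0\bigr\} \subset \F(0,0,1),
\]
where $(A,B,C)$ are the fibre coordinates associated with $(w,x,t)$ and $(y,z)$ are base coordinates on $L\simeq\PS^1$.

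Next, I would use the scroll projection to view $V_0$ as a conic bundle over $\PS^1$ whose fibre over $(y_0:z_0)$ is the plane conic obtained by specialising the above equation. Its discriminant factors as $g_4(0,0,y,z)\cdot\bigl(z^2h_2(y,z)-\tfrac{1}{4}g_2(y,z)^2\bigr)$, a polynomial of degree $8$ on the base; for generic $g_2,h_2,g_4$ this has eight simple roots. At each root the singular conic is a pair of distinct lines meeting in a node: on the zeros of $g_4$ the $C^2$-term disappears and the residual binary form in $(A,B)$ is nondegenerate, while on the zeros of $z^2h_2-\tfrac{1}{4}g_2^2$ the binary form in $(A,B)$ is a nonzero perfect square and the $C^2$-coefficient is nonzero. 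Hence $V_0$ is a smooth rational surface with $K_{V_0}^2=8-8=0$ and $\rho(V_0)=2+8=10$. Contracting one component of each nodal fibre exhibits $V_0$ as $\operatorname{Bl}_8\F_k$ for some $k\geq 0$, and elementary transformations (together with $\F_1=\operatorname{Bl}_1\PS^2$) then give $V_0\simeq\operatorname{Bl}_9\PS^2$.

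The main obstacle will be the genericity verification at the last stage, namely checking that all eight degenerate conic fibres are nodal rather than double lines: a double-line fibre would contribute one fewer class to the Picard rank and break the numerical identification. This reduces to open non-vanishing conditions on the restrictions of $g_2$, $h_2$ and $g_4(0,0,-,-)$ to the discriminant locus, which can be imposed by choosing the polynomials defining $X$ sufficiently generically.
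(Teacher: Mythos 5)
Your first half coincides with the paper's: the same identification of the exceptional divisor with $\F(0,0,1)$ via the normal bundle $\mathcal{O}(1)\oplus\mathcal{O}(1)\oplus\mathcal{O}$, and the same equation $a^2z^2+abg_2+b^2h_2+c^2g_4(0,0,y,z)=0$ for $V_0$ as the order-two part of the defining polynomial. From there you diverge: you treat $V_0$ as a conic bundle over $\PS^1$ with discriminant $g_4\cdot(z^2h_2-\tfrac14g_2^2)$ of degree $8$, count $8$ nodal fibres, and deduce $K^2=0$, $\rho=10$; the paper instead projects away from the section $(a,b,c)=(0,1,0)$ (after blowing up the two points where $h_2=0$) to exhibit $\operatorname{Bl}_2V_0$ as a double cover of $\F_1$ branched in a smooth member of $|2s+6f|$, and then quotes Proposition \ref{double cover Hirzeburch is blowup of P2}. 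Your route is more elementary and self-contained, and your discriminant and genericity analysis is correct; what the paper's route buys is the explicit relation $12q=2p_1+p_2+\dots+p_9+r_1+r_2$ on the blown-up points coming from Proposition \ref{double cover Hirzeburch is blowup of P2}, which is exactly what is needed later in Section \ref{sect:period domains} for the $D_8+D_8+\langle-4\rangle$ relation, so the conic-bundle argument alone would not suffice for the paper's downstream purposes.

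The one step you should tighten is the last one: ``contracting one component of each nodal fibre exhibits $V_0$ as $\operatorname{Bl}_8\F_k$, and elementary transformations then give $\operatorname{Bl}_9\PS^2$'' is not automatic, since $\operatorname{Bl}_8\F_k$ with all eight points on the negative section of an $\F_k$ with $k$ large is \emph{not} a blowup of $\PS^2$ at nine points. Here the gap is easily closed: the curve $\{c=0\}\subset V_0$ is a smooth anticanonical elliptic bisection of the conic bundle containing all eight nodes, so after contraction its image is a smooth irreducible member of $|{-K_{\F_k}}|=|2s+(k+2)f|$, which forces $k\le 2$ and places the eight points off the negative section when $k=2$; the elementary transformations then do reduce to $\F_1=\operatorname{Bl}_1\PS^2$. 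With that observation added, your argument is complete.
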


\begin{proof}
We blowup $\PS^3 \times \Delta$ along the line $w=x=t=0$ to obtain the rational scroll $\F(0,0,1)$ along the exceptional locus. The rational scroll has coordinates $(y,z;a,b,c)$. The coordinates on the base $\PS^1$ are $(y,z)$ and the coordinates on the $\PS^2$ fibers are given by $(a,b,c)$. Therefore the exceptional component of the central fiber is a subvariety of $\F(0,0,1)$.

The equation defining $V_0 \subset \F(0,0,1)$ is
$$a^2z^2+abg_2(y,z)+b^2h_2(y,z)+c^2g_4(y,z)=0.$$
We define a birational involution on this surface by projecting away from the section $(a,b,c)=(0,1,0)$. This section intersects $V_0$ only at the two points $(y_i,z_i;0,1,0)$ where $h_2(y_i,z_i)=0$. Blowing-up these two points and projecting away from the section $(0,1,0)$ determines a well-defined involution and is a map to $\F(0,1) = \F_1$. Furthermore, this map is a double cover of $\F_1$ with branch locus given by 
$$a^2g_2^2-4h_2(a^2z^2+c^2g_4)=0.$$
This is a smooth curve in the linear system $|2s+6f|$ in $\F_1$, and so $\operatorname{Bl}_2V_0=\operatorname{Bl}_{11}\PS^2$ by Proposition \ref{double cover Hirzeburch is blowup of P2} and hence $V_0=\operatorname{Bl}_9\PS^2$.
\end{proof}

In $\PS^3\times \PS^2\times \Delta$ the total transform of the central fiber has two components $V_0$ and $V_1$ which are given by the equations
$$0=t=w=x=a^2z^2+abg_2(y,z)+b^2h_2(y,z)+c^2g_4(y,z)$$
and
$$0=t=c=bw-ax=w^2z^2+wxg_2(y,z)+x^2h_2(y,z)+x^3h_1(y,z)$$
respectively. The intersection of these two components is the locus $\{c=0\}\subset V_0$ which is the double cover of the negative section in $\F_1$. From the proof of Proposition \ref{double cover Hirzeburch is blowup of P2} we see that this is an elliptic curve.

We now study $V_1$, which is smooth and is therefore the normalization of the original central fiber, which Urabe has previously considered \cite[III-C]{Ura86}. In particular, $V_1 = \operatorname{Bl}_9\PS^2$ and the morphism $V_1 \rightarrow \PS^3$ is defined by the linear system $|4l'-2e_1'-\sum_{i=2}^9e_i'|$. Notice that a hyperplane section of $V_1$ therefore belongs to the linear system $|4l'-2e_1'-\sum_{i=2}^9e_i'|$.

\begin{rk}
If one instead works with the quartic surface which is non-normal along two skew lines, a result of Urabe \cite[\S 1 (II-1)]{Ura86} shows that after resolving singularities we obtain a Type II degeneration with three components. The middle component is $\mathbb{P}_C(\mathcal{O}_C\oplus \mathcal{N})$, where $C$ is an elliptic curve and $\mathcal{N}$ is a degree 0 line bundle on $C$.
\end{rk}

\begin{lemma}
For this degeneration $\operatorname{Span}(\Phi(\mathscr{L}))= D_8+D_8+\langle -4 \rangle$.
\end{lemma}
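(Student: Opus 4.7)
The plan is to realise the lattice $D_8 \oplus D_8 \oplus \langle -4 \rangle$ explicitly inside $\Phi(\mathscr{L})$ by producing $17$ independent classes. I would first pin down the hyperplane class $h = (h_0, h_1)$: on $V_1$ the preceding discussion gives $h_1 = 4l' - 2e_1' - \sum_{i=2}^9 e_i'$, while on $V_0$ the polarisation is pulled back from $\PS^3$ and restricts to the exceptional component as the class of a fiber of the projection $V_0 \subset \F(0,0,1) \to \PS^1$. Tracing that fiber through the double-cover / contraction sequence of Lemma \ref{Skewlines blowup} and Proposition \ref{double cover Hirzeburch is blowup of P2} applied with $n=1$, it corresponds to the strict transform of a line in $\PS^2$ through the $4$-fold node $p_1$ of the quintic $\Gamma$; hence $h_0 = l - e_1$. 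One then checks $h^2 = h_0^2 + h_1^2 = 0 + 4 = 4$ and $h \cdot \xi = 0$, where $\xi = (-E_{0,1}, E_{1,0}) = (K_{V_0}, -K_{V_1})$.

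I would then exhibit the two $D_8$ systems. Let
\[
\Phi_0 = \{(e_i - e_{i+1}, 0) : 2 \le i \le 8\} \cup \{(-l + e_1 + e_8 + e_9, 0)\},
\]
and let $\Phi_1$ be its primed analogue on $V_1$. Each element is a primitive class of square $-2$ pairing trivially with both $h$ and $\xi$, so it lies in $\mathscr{L}$; for square $-2$ the reflection condition is automatic, and hence each belongs to $\Phi(\mathscr{L})$. A termwise intersection computation realises the $D_8$ Cartan matrix on each $\Phi_i$, with the last simple root attached at the trivalent node to $(e_7 - e_8, 0)$ (respectively its primed version). This produces $16$ of the required $17$ generators.

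The final generator — the $\langle -4 \rangle$-summand — I take to be $v = (-K_{V_0} - h_0,\, -K_{V_1} - h_1)$. Using $(-K_{V_i})^2 = 0$, $h_i \cdot (-K_{V_i}) = 2$, $h_0^2 = 0$ and $h_1^2 = 4$, a direct computation gives $v^2 = -4$, $v \cdot h = 0$ and $v \cdot \xi = 0$; primitivity is immediate from the coefficients having $\gcd = 1$. Orthogonality of $v$ to every element of $\Phi_0 \cup \Phi_1$ follows at once, because each such root pairs trivially with both $h$ and $-K_{V_i} = E_{i, 1-i}$. The main obstacle is verifying the reflection condition $(v, w) \in 2\Z$ for every $w \in \mathscr{L}$, which I would handle via the identity
\[
(v, w) = E_{0,1} \cdot w_0 + E_{1,0} \cdot w_1 - h \cdot w = E_{0,1} \cdot w_0 + E_{1,0} \cdot w_1,
\]
where $h \cdot w = 0$ since $w \in h^\perp$; together with $w \cdot \xi = 0$ this forces $E_{1,0} \cdot w_1 = E_{0,1} \cdot w_0$, so $(v, w) = 2\, E_{0,1} \cdot w_0 \in 2\Z$. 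Consequently $v \in \Phi(\mathscr{L})$, and $\Phi_0 \cup \Phi_1 \cup \{v\}$ spans a rank-$17$ sublattice of $\Phi(\mathscr{L})$ isometric to $D_8 \oplus D_8 \oplus \langle -4 \rangle$; as $\operatorname{rk}(\mathscr{L}) = 17$ this exhausts $\operatorname{Span}(\Phi(\mathscr{L}))$.
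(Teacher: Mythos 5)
Your proof is correct and follows essentially the same route as the paper: the same classes $h$ and $\xi$, two $D_8$ root bases differing from the paper's only in the choice of the eighth simple root (yours forks the $A_7$ chain at $(e_7-e_8,0)$, the paper's at $(e_3-e_4,0)$), and a $\langle -4\rangle$ generalised root that agrees with the paper's $v=(-(l-e_1),\,2l'-\sum_{i=2}^{9}e_i')$ modulo $\Z\xi$, hence defines the same class in $\mathscr{L}$. The one improvement worth keeping is your coordinate-free parity check $(v,w)=E_{0,1}\cdot w_0+E_{1,0}\cdot w_1=2\,E_{0,1}\cdot w_0\in 2\Z$, which uses only $w\cdot h=w\cdot\xi=0$ and is cleaner than the paper's explicit coordinate computation.
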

\begin{proof}
We write $h=(h_0,h_1)$, by the above discussion we see that $h_1 = 4l'-2e_1'-\sum_{i=2}^9e_i'$ and so $h_1^2=4$. Prior to blowing-up, a hyperplane section will intersect the doubled line in one point and so $h_0$ is the class of a fiber after blowing up. This is represented by the class $l-e_1$. Therefore $h = (l-e_1,4l'-2e_1'-\sum_{i=2}^9e_i')$. The class $\xi$ is $(-3l+\sum_{i=1}^9e_i,3l'-\sum_{i=1}^9e_i')$.

Notice that the two sets of simple roots
\begin{align*}
\Phi_1 = \{(e_i-e_{i+1},0) : 2\leq i\leq8) \}, \qquad \Phi_2=\{ (0,e_i-e_{i+1}) : 2\leq i \leq 8)\} 
\end{align*}
both belong to $\Phi(\mathscr{L})$. There are also two simple roots $(l-e_1-e_2-e_3,0)$ and $(0,l'-e_1'-e_2'-e_3')$ which can be appended to $\Phi_1$ and $\Phi_2$ respectively. These two sets of simple roots determine two copies of the lattice $D_8$, where $(e_3-e_4,0)$ and $(0,e_3'-e_4')$ are the simple roots which intersect three others in the two copies.

We can obtain an additional generalised root $v=(-(l-e_1),2l'-\sum_{i=2}^9e_i')\in \Phi(\mathscr{L})$ which has self-intersection $-4$. One can check that $v.h=v.\xi=0$ and it is clear to see that $v$ is primitive. We show its reflection $R_v$ is an integral automorphism. Recall that $R_v$ is defined to be the map such that
$$R_v(w) = w-2\frac{(v,w)}{(v,v)}v = w+\frac{(v,w)}{2}v.$$
Therefore we show that $v.w$ is even for all $w \in \mathscr{L}$. By setting $w=(al+\sum_{i=1}^9b_ie_i, cl'+\sum_{i=1}^9d_ie_i')$, one sees that
$a+b_1+4c + 2d_1+\sum_{i=2}^9d_i=0$ and so $(v,w)=-2(a+c+d_1) \in 2\Z$. Therefore we have shown that $\operatorname{Span}(\Phi(\mathscr{L}))=D_8+D_8+\langle -4 \rangle$.
\end{proof}

\subsection{A Surface Non-Normal Along a Twisted Cubic}\label{sect:non-normal along TC}
We now consider the final degeneration which is non-normal along a curve, the central fiber is a quartic surface which is non-normal along a doubled twisted cubic. Define $F_0=xw-yz, F_1=xz-y^2, F_2=yw-z^2$ and notice that the ideal $\langle F_0,F_1,F_2\rangle \subset \C[x,y,z,w]$ defines the twisted cubic in $\PS^3$. We define the degeneration
$$X =\{AF_0^2+BF_1^2+DF_2^2+t^2g_4(x,y,z,w)=0\} \subset \mathbb{P}^3 \times \Delta,$$
where $A,B,D \in \C$ are constants. The central fiber is a quartic surface which is non-normal along a doubled twisted cubic. The singular locus of $X$ is given by the equations $0=t=F_0=F_1=F_2$ and is non-singular elsewhere. We proceed by blowing up this singular locus.

\begin{lemma}\label{exceptional piece double TC}
The exceptional divisor of the blowup of $\PS^3 \times \Delta$ along the locus $0=t=F_0=F_1=F_2$ is isomorphic to $\F(0,0,5)$.
\end{lemma}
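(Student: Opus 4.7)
The plan is to identify the blowup center $Z := \{t = F_0 = F_1 = F_2 = 0\}$ inside $\PS^3 \times \Delta$ with $C \times \{0\}$, where $C \subset \PS^3$ is the twisted cubic cut out by the ideal $(F_0, F_1, F_2)$. Since $C$ is smooth of codimension two in $\PS^3$ and $\{0\}$ is a smooth point of $\Delta$, the center $Z$ is smooth of codimension three in $\PS^3 \times \Delta$, and hence the exceptional divisor of the blowup is a $\PS^2$-bundle over $C \cong \PS^1$; in the rational scroll notation of the paper it is determined by the splitting type of the normal bundle $N := N_{Z/\PS^3\times\Delta}$.

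I would then split the normal bundle as $N \cong N_{C/\PS^3} \oplus \mathcal{O}_C$, the trivial summand coming from the normal direction to $\{0\} \in \Delta$ pulled back to $C$. Using the Euler sequence for $\PS^3$ restricted to $C$ together with $\mathcal{O}_C(1) \cong \mathcal{O}_{\PS^1}(3)$ gives $\deg T_{\PS^3}|_C = 12$; combining this with the normal exact sequence $0 \to T_C \to T_{\PS^3}|_C \to N_{C/\PS^3} \to 0$ and $T_C \cong \mathcal{O}_{\PS^1}(2)$ forces $\deg N_{C/\PS^3} = 10$. To pin down the splitting type, one can work directly with the parametrisation $(u,v) \mapsto (u^3, u^2v, uv^2, v^3)$, or invoke the classical fact that the normal bundle of the twisted cubic is balanced (essentially because the transitive $\operatorname{PGL}_2$-action on $C$ lifts through $\operatorname{PGL}_2 \hookrightarrow \operatorname{PGL}_4$ to act on the ambient $\PS^3$), giving $N_{C/\PS^3} \cong \mathcal{O}_{\PS^1}(5)^{\oplus 2}$.

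Putting the pieces together, $N \cong \mathcal{O}_{\PS^1}(5)^{\oplus 2} \oplus \mathcal{O}_{\PS^1}$, and comparing with the conventions implicitly fixed in Lemma \ref{Skewlines blowup} (where a line in $\PS^3$, with normal bundle $\mathcal{O}(1)^{\oplus 2}$, produced the exceptional scroll $\F(0,0,1)$) the exceptional divisor here is $\F(0,0,5)$, as claimed. The main obstacle is the splitting type: computing $\deg N_{C/\PS^3} = 10$ is immediate from the two standard short exact sequences, but establishing that the splitting is balanced — rather than some $\mathcal{O}(a)\oplus\mathcal{O}(10-a)$ with $a<5$ — is what genuinely requires either the symmetry argument or an explicit parametric/Koszul computation.
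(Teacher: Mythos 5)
Your proof is correct, but it takes a genuinely different route from the paper's. The paper argues by explicit computation: it blows up in the two affine charts, parametrises the cubic by $(u^3,u^2v,uv^2,v^3)$, observes that the four generators $t,F_0,F_1,F_2$ embed the exceptional divisor into the $\PS^3$-bundle $\F(-6,-6,-6,0)$, and then uses the syzygies of the twisted cubic (the Rees ideal) to cut this down to the image of an explicit embedding $\F(-5,-5,0)\hookrightarrow\F(-6,-6,-6,0)$. You instead argue intrinsically: the centre $C\times\{0\}$ is smooth of codimension three, so the exceptional divisor is the projectivised normal bundle, and $N\cong N_{C/\PS^3}\oplus\mathcal{O}\cong\mathcal{O}(5)^{\oplus2}\oplus\mathcal{O}$ by the Eisenbud--Van de Ven result --- which is exactly the fact the paper itself records in the remark immediately after the lemma, so you have in effect promoted that remark to a proof. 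Your approach is cleaner and makes the appearance of $\F(0,0,5)$ conceptually transparent, and your calibration of scroll conventions against Lemma \ref{Skewlines blowup} correctly resolves the sign/twist ambiguity; what it does not provide is the explicit coordinate description $(u,v;\alpha,\beta,d)$ of the exceptional divisor and of $V_0$ inside it, which the paper's computation produces as a byproduct and which is used essentially in the following lemma. Two small points: you should note that $(t,F_0,F_1,F_2)$ is the full radical ideal of the centre (the twisted cubic is cut out ideal-theoretically by the three quadrics), so that blowing up this ideal really is blowing up the smooth centre; and the parenthetical symmetry heuristic for balancedness is not by itself conclusive, since every split bundle $\mathcal{O}(a)\oplus\mathcal{O}(b)$ on $\PS^1$ admits an $\operatorname{SL}_2$-linearisation --- but you correctly identify this as the crux and fall back on the explicit computation or the classical reference, either of which suffices.
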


\begin{proof}
We blowup the singular locus in the two charts $\{x\neq0\}$ and $\{w \neq0\}$ to obtain equations in two copies of $\mathbb{A}^3\times \PS^3\times\Delta$ where $(a,b,c,d)$ are the coordinates on $\PS^3$. We parameterise the twisted cubic using the usual Veronese embedding so that $(x,y,z,w) = (u^3,u^2v,uv^2,v^3)$. It can be seen from considering only the blowup relations involving $t,F_0,F_1$ and $F_2$ that the exceptional divisor is a subvariety of the rational scroll $\F(0,0,0,6)$. The coordinates on the rational scroll are $(u,v;a,b,c,d)$.

As the twisted cubic is not a complete intersection we also have the two extra equations
$$0=bz-ay-cw=by-ax-cz \subset \mathbb{A}^3\times\PS^3\times\Delta$$ which arise by computing the Rees ideal. On the rational scroll this simplifies to just the equation $bvu=au^2+cv^2 \subset \F(0,0,0,6) \simeq \F(-6,-6-6,0)$. The image of the embedding
\begin{align*}
    \mathbb{F}(-5,-5,0) &\rightarrow \mathbb{F}(-6,-6,-6,0)\\
    (u,v;\alpha,\beta,d)  &\mapsto (u,v;\alpha v,\alpha u + \beta v, \beta u, d)
\end{align*}
is precisely the locus $\{bvu=au^2+cv^2\} \subset \F(0,0,0,6)$, obtaining the result.
\end{proof}

\begin{rk}
The occurrence of $\F(0,0,5)$ is due to the normal bundle of the twisted cubic in $\mathbb{P}^3$ being $\mathcal{O}(5)\oplus \mathcal{O}(5)$ \cite[Prop. 6]{EVdV81}.
\end{rk}

Therefore the exceptional component of the central fiber after performing this blowup, denoted by $V_0$, is a subvariety of the rational scroll $\F(-5,-5,0)\simeq \F(0,0,5)$.

\begin{lemma}
Let $V_0$ be the exceptional component of the central fiber of the degeneration, then $V_0 = \operatorname{Bl}_{17}\PS^2$.
\end{lemma}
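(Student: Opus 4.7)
The plan is to follow the template established in Sections \ref{sect:non-normal along conic} and \ref{sect:non-normal along line}: exhibit $V_0$, possibly after blowing up finitely many base points, as a double cover of a Hirzebruch surface $\F_n$ branched along a smooth member of $|2s+(2n+4)f|$, and then invoke Proposition \ref{double cover Hirzeburch is blowup of P2}. Since $17 = 2\cdot 4 + 9$, I expect the relevant Hirzebruch surface to be $\F_4$ (with zero extra blowups), so that the branch curve lies in $|2s+12f|$; alternative numerics $17 = 2\cdot 5 + 9 - 2$, i.e.\ blowing up two points first and covering $\F_5$, are also a priori possible, so the precise value of $n$ must be read off from the projection.

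First, I would substitute the embedding $\F(-5,-5,0)\hookrightarrow\F(-6,-6,-6,0)$ of Lemma \ref{exceptional piece double TC}, namely $(a,b,c,d)=(\alpha v,\,\alpha u+\beta v,\,\beta u,\,d)$, into the transform of the defining equation $AF_0^2+BF_1^2+DF_2^2+t^2g_4=0$. This produces an equation of the form
\[
\alpha^2(Av^2+Bu^2) + 2B\alpha\beta\, uv + \beta^2(Bv^2+Du^2) + d^2\, G(u,v) = 0
\]
on $\F(-5,-5,0)$, where $G$ is the restriction of $g_4$ to the twisted cubic (a generic binary form of degree $12$, since a linear form on $\PS^3$ pulls back to a cubic in $(u,v)$). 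Thus $V_0$ is a conic bundle over $\PS^1$, with the quadratic form acting on the fiber coordinates $(\alpha,\beta,d)$.

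Second, I would project $V_0$ away from the weight-$0$ section $\{\alpha=\beta=0\}$ onto the $\PS^1$-subbundle of the scroll spanned by $\alpha,\beta$. After blowing up the finitely many base points at which $V_0$ contains this section (an analogue of the step in Lemma \ref{Skewlines blowup}), this gives a well-defined $2{:}1$ map from a blowup of $V_0$ onto a Hirzebruch surface $\F_n$. Computing the discriminant of the conic bundle, which equals the $2\times 2$ determinant $(Av^2+Bu^2)(Bv^2+Du^2)-B^2u^2v^2$ twisted by the $d^2$-coefficient, together with a check that for generic $A,B,D$ and $g_4$ the resulting branch curve is smooth, should identify the cover as $\F_4$ together with a smooth branch member of $|2s+12f|$.

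Finally, Lemma \ref{double cover hirzebruch rational} gives rationality and Proposition \ref{double cover Hirzeburch is blowup of P2} with $n=4$ yields $V_0\cong\operatorname{Bl}_{17}\PS^2$. The main obstacle is controlling the projection step: the cross term $2B\alpha\beta\, uv$ prevents an obvious diagonalization of the conic bundle, so the projection section meets $V_0$ non-trivially and one must carefully resolve the indeterminacy before recognizing the double-cover structure. Verifying that the number of blowups absorbed in this resolution is consistent with the numerics $17=2n+9+(\text{extra blowups})$, and confirming that the branch curve genuinely sits in $|2s+12f|$ on $\F_4$ for generic parameters, is the principal computational burden.
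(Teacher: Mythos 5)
Your overall strategy is the paper's: write down the equation of $V_0$ inside the scroll $\F(-5,-5,0)$ via the embedding of Lemma \ref{exceptional piece double TC}, exhibit a birational double cover of a Hirzebruch surface, and invoke Proposition \ref{double cover Hirzeburch is blowup of P2}. But the projection you propose does not do what you want, and the numerics you favour ($n=4$, branch curve in $|2s+12f|$, no extra blowups) are not the ones that occur. Projecting away from the weight-$0$ section $\{\alpha=\beta=0\}$ lands on the subscroll spanned by $\alpha$ and $\beta$, which is $\F(-5,-5)\cong\PS^1\times\PS^1=\F_0$, not $\F_4$; Proposition \ref{double cover Hirzeburch is blowup of P2} is only stated for $n\geq1$, so it would not apply even if you carried this out. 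Worse, that section meets $V_0$ exactly where the coefficient of $d^2$ vanishes, i.e.\ at the $12$ zeros of $g_{12}(u,v)$, so the indeterminacy you would have to resolve is not a small correction but destroys the count $17=2n+9+(\text{extra blowups})$ entirely. The correct choice --- and the paper's --- is to project away from one of the weight-$5$ coordinate sections, namely $(\alpha,\beta,d)=(1,0,0)$, onto the subscroll spanned by $\beta$ and $d$, which is $\F(5,0)\cong\F_5$. That section meets $V_0$ only in the two points where the $\alpha^2$-coefficient $Au^2+Dv^2$ vanishes; after blowing these up, completing the square in $\alpha$ (the cross term is not an obstruction, it simply enters the discriminant) exhibits $\operatorname{Bl}_2V_0$ as a double cover of $\F_5$ branched along
$$\beta^2(2Auv)^2-4(Au^2+Dv^2)\bigl(\beta^2(Av^2+Bu^2)+d^2g_{12}\bigr)=0,$$
a smooth member of $|2s+14f|$. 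Proposition \ref{double cover Hirzeburch is blowup of P2} with $n=5$ then gives $\operatorname{Bl}_2V_0=\operatorname{Bl}_{19}\PS^2$, hence $V_0=\operatorname{Bl}_{17}\PS^2$; this is exactly the alternative numerology $17=(2\cdot5+9)-2$ that you mentioned and then set aside.

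Two smaller points. First, your displayed equation has the wrong coefficients: with the paper's identification the strict transform is $Da^2+Ab^2+Bc^2+d^2g_{12}=0$, and substituting $(a,b,c)=(\alpha v,\alpha u+\beta v,\beta u)$ gives $\alpha^2(Au^2+Dv^2)+2A\alpha\beta uv+\beta^2(Av^2+Bu^2)+d^2g_{12}=0$, not the form you wrote. Second, the ``discriminant of the conic bundle'' $(Q_{11}Q_{22}-Q_{12}^2)\cdot G$ is a degree-$16$ binary form recording the $16$ degenerate fibers of the conic bundle $V_0\to\PS^1$; it is not the branch curve of any of these projections, and conflating the two is what leads you toward the wrong Hirzebruch surface.
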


\begin{proof}
By using the explicit embedding from the proof of Lemma \ref{exceptional piece double TC} we obtain the equation
$$\alpha^2(Au^2+Dv^2)+\alpha\beta(2Auv)+\beta^2(Av^2+Bu^2)+d^2g_{12}(u,v) =0$$
in $\F(0,0,5)$. We define a birational involution by projecting away from the section $(\alpha,\beta,d)=(1,0,0)$. The section intersects the surface in two points $(u_i,v_i;1,0,0)$ where $(u_i,v_i)$ are solutions to the equation $Au^2+Dv^2=0$. After blowing up these two points we obtain a well-defined involution which is a double cover of $\F_5$ branched along the curve
$$\beta^2(2Auv)^2-4(Au^2+Dv^2)(\beta^2(Av^2+Bu^2)+d^2g_{12})=0.$$
This is a generic smooth curve in the linear system $|2s+14f| \subset \F_5$. Using Proposition \ref{double cover Hirzeburch is blowup of P2} with $n=5$ shows that $\operatorname{Bl}_2V_0=\operatorname{Bl}_{19}\PS^2$ and the result follows.
\end{proof}

The strict transform of the central fiber after blowing up the singular locus of the degeneration, denoted by $V_1$, is given by the equations 
$$0=bz-ay-cw=by-ax-cz = Ab^2+Bc^2+Da^2 \subset \mathbb{P}^3\times \mathbb{P}^2.$$
This defines a surface which is ruled over a planar conic, the fibers over each point of the conic are a copy of $\PS^1$. Additionally, for each choice of $(z,w)\in\PS^1$ one can check that we obtain another copy of $\PS^1$ which intersects each of the fibers in 1 point. Therefore we see that $V_1=\PS^1\times\PS^1$.

The intersection curve $V_0\cap V_1$ is given by the equation $\{d=0\}\subset V_0$ which is a double cover of the negative section in $\F_5$ branched in 4 points. Therefore the double curve is an elliptic curve.

\begin{lemma}
For this degeneration, $\operatorname{Span}(\Phi(\mathscr{L}))=D_{16}+\langle -4 \rangle$.
\end{lemma}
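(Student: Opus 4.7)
I plan to follow the template of the earlier subsections: compute $h$ and $\xi$ on $X_0=V_0\cup V_1$, exhibit sixteen simple roots spanning a $D_{16}$ sublattice of $\mathscr{L}$, and then exhibit a further vector of self-intersection $-4$ orthogonal to them.

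To compute $h$ and $\xi$, I observe that a generic hyperplane of $\PS^3$ meets the twisted cubic in $3$ points, so its restriction to the exceptional component $V_0$ consists of $3$ fibers of the natural fibration $V_0\to\PS^1$. In the notation of the remark following Proposition \ref{double cover Hirzeburch is blowup of P2} (with $n=5$), such a fiber is in class $l-e_1$, so $h_0=3(l-e_1)$ and $h_0^2=0$. On $V_1=\PS^1\times\PS^1$, viewed via the defining equations as a $\PS^1$-bundle over the conic $\{Ab^2+Bc^2+Da^2=0\}\subset\PS^2$, the ruling $f'$ of this bundle maps to a line in $\PS^3$ while the other ruling $s'$ (obtained by fixing $[z:w]$) maps to a conic in $\PS^3$; hence $h_1=s'+2f'$ and $h_1^2=4$. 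Since both double curves are anti-canonical, $E_0=3l-\sum_{i=1}^{17}e_i$ and $E_1=2s'+2f'$, and one verifies $h^2=4$, $\xi^2=-8+8=0$, and $h\cdot\xi=-6+6=0$, consistent with $d$-semistability.

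For the $D_{16}$ factor, I would take the fifteen classes $(e_i-e_{i+1},0)$ for $i=2,\dots,16$, each of which lies in $\mathscr{L}$ and together forms an $A_{15}$-chain of simple roots. To this I adjoin $\beta=(l-e_1-e_2-e_3,0)$: one checks $\beta^2=-2$, $\beta\cdot h=0$ and $\beta\cdot\xi=-(3-3)=0$, so $\beta\in\mathscr{L}$. Its intersection with $(e_3-e_4,0)$ equals $1$ while its intersection with every other chain root vanishes, so adjoining $\beta$ produces the Y-branching at $(e_3-e_4,0)$ needed for the Dynkin diagram of $D_{16}$.

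For the $\langle-4\rangle$ factor I propose $v=(l-e_1,-s'+2f')$. Direct computation gives $v^2=0+(-4)=-4$, $v\cdot h=3\cdot 0+0=0$, and $v\cdot\xi=-2+2=0$, so $v\in\mathscr{L}$; moreover $v$ is primitive and orthogonal to all sixteen of the $D_{16}$ roots above. The main obstacle is verifying that $R_v$ preserves $\mathscr{L}$: since $v^2=-4$, this amounts to showing $(v,w)\in 2\Z$ for every $w\in\mathscr{L}$, exactly as in the analogous $D_8^{\oplus 2}+\langle-4\rangle$ verification in Section \ref{sect:non-normal along line}. My plan is to parametrize $w=(al+\sum_{i=1}^{17}b_ie_i,\,cs'+df')$, use the constraint $w\cdot h=0$ to solve $d=-3a-3b_1-2c$, and then compute $(v,w)=(a+b_1)+(2c-d)=4(a+b_1+c)\in 4\Z$. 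Assembling the three paragraphs yields $\operatorname{Span}(\Phi(\mathscr{L}))=D_{16}+\langle-4\rangle$.
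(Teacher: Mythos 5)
Your proposal is correct and follows essentially the same route as the paper: the same classes $h=(3(l-e_1),s+2f)$ and $\xi=(-3l+\sum_{i=1}^{17}e_i,2s+2f)$, the same sixteen roots for $D_{16}$, and the same generalised root $v=(l-e_1,2f-s)$ with the same evenness check (your $4(a+b_1+c)$ agrees with the paper's $-2(a+b_1+d)$ after substituting the constraint $w\cdot h=0$). The only cosmetic differences are your geometric derivation of $h_1$ via the degrees of the two rulings and the explicit verification that $v$ is orthogonal to the $D_{16}$ roots.
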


\begin{proof}
Writing $h=(h_0,h_1)$ we see that $h_0 = 3(l-e_1)$ because prior to blowing up, a hyperplane section will intersect the twisted cubic in 3 points. Therefore after blowing-up we will obtain 3 fibers. On the other hand, $h_1=s+2f$ because $h_1^2=4$ and must intersect the double curve in 6 points. In this case $\xi=(-3l+\sum_{i=1}^{17}e_i,2s+2f)$.

Notice that the collection 
$$\Phi_1 = \{(e_i-e_{i+1},0) : 2\leq i\leq 16 \}$$
is contained in $\Phi(\mathscr{L})$ and is a collection of 15 simple roots. We can further append the simple root $(l-e_1-e_2-e_3,0)$ to $\Phi_1$ to obtain a basis of simple roots for the $D_{16}$ lattice.

We can obtain a generalised root $v=(l-e_1,2f-s)$ which has self-intersection $-4$. One can check that for $w = (al+\sum_{i=1}^{17}b_ie_i,cs+df)\in \mathscr{L}$ we have $(v,w) = -2(a+b_1+d)\in2\Z$ and so the reflection $R_v$ is an integral automorphism of $\mathscr{L}$, concluding the proof.
\end{proof}

\subsection{A Surface with One $\widetilde{E_7}$ singularity}\label{sect: E7 singularity}
The next three degenerations we consider in Section \ref{sect:Explicit descriptions of families} have $\widetilde{E}_7$ and $\widetilde{E}_8$ singularities. We will use weighted blowups to work with these degenerations, see \cite[6.38]{KSC04} for more information on weighted blowups. The first degeneration we construct has a singular fiber which has one $\widetilde{E}_7$ singularity (cf.\cite[SS-2.1]{Shah81}). Shah actually considers quartic surfaces with two $\widetilde{E}_7$ singularities, but we can alter the equations provided by Shah to obtain just one such singularity. We define the degeneration
$$X = \{w^2z^2+2wzh_2(x,y)+h_4(x,y) +z^4 +t^4g_4(x,y,z,w)=0\} \subset \mathbb{P}^3 \times \Delta$$
where $h_i$ are homogeneous polynomials of degree $i$ in the variables $x$ and $y$, and $g_4$ is homogeneous of degree 4 in variables $x,y,z,w$. $X$ has one singularity at $0=t=x=y=z$. The central fiber has one singularity which is of type $\widetilde{E}_7$ at $x=y=z=0$.

\begin{lemma}
After performing a weighted blowup at the point $x=y=z=t=0$ with corresponding weights (1,1,2,1) the total transform of the central fiber has two components $V_0 \cup V_1$. Furthermore, the exceptional component of the central fiber is $V_1 = \operatorname{dP}_2$.
\end{lemma}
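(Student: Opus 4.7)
Work in the affine chart $w=1$ on $\PS^3$ with local coordinates $(x,y,z,t)$ centred at the singular point. The defining equation of $X$ becomes
\[
F(x,y,z,t) = z^2 + 2z\,h_2(x,y) + h_4(x,y) + z^4 + t^4\,g_4(x,y,z,1).
\]
Assign weights $\operatorname{wt}(x)=\operatorname{wt}(y)=\operatorname{wt}(t)=1$ and $\operatorname{wt}(z)=2$. A direct monomial check shows that every monomial of $F$ has weight at least $4$, with equality precisely for $z^2$, $2z\,h_2(x,y)$, $h_4(x,y)$, and the constant part $g_4(0,0,0,1)\cdot t^4$. The weighted blowup $\pi\colon\widetilde{Y}\to\PS^3\times\Delta$ with these weights replaces the singular point by an exceptional divisor $E\cong\PS(1,1,2,1)$ carrying weighted coordinates $[X{:}Y{:}Z{:}T]$.

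Since the defining function $t$ of the central fibre itself has weight $1$ at the centre, $\pi^*\{t=0\}=\widetilde{\{t=0\}}+E$ as divisors on $\widetilde{Y}$. Letting $\widetilde{X}$ denote the strict transform of $X$, the central fibre of the new family $\widetilde{X}\to\Delta$ therefore splits as
\[
\widetilde{X}_0 = V_0 \cup V_1,\qquad V_0:=\widetilde{X}\cap\widetilde{\{t=0\}},\quad V_1:=\widetilde{X}\cap E,
\]
which settles the first assertion of the lemma. Here $V_0$ is the strict transform of the singular quartic $X_0$, while $V_1\subset E$ is cut out by the initial (weight $4$) form of $F$, giving the weighted quartic hypersurface
\[
V_1 = \bigl\{\,Z^2 + 2Z\,h_2(X,Y) + h_4(X,Y) + cT^4 = 0\,\bigr\}\subset\PS(1,1,2,1),\qquad c:=g_4(0,0,0,1).
\]

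To identify $V_1$ with $\operatorname{dP}_2$, I would complete the square via the weight-preserving substitution $Z\mapsto Z-h_2(X,Y)$, reducing the defining equation to $Z^2 = h_2(X,Y)^2 - h_4(X,Y) - cT^4$. The forgetful projection $\PS(1,1,2,1)\dashrightarrow\PS^2$, $[X{:}Y{:}Z{:}T]\mapsto[X{:}Y{:}T]$, has its unique base point at the singular point $[0{:}0{:}1{:}0]$ of the weighted projective space, and a direct substitution shows that $V_1$ does not meet this point. The projection therefore restricts to a finite $2{:}1$ cover $V_1\to\PS^2$ branched along the plane quartic
\[
B=\bigl\{\,h_2(X,Y)^2 - h_4(X,Y) - cT^4 = 0\,\bigr\}\subset\PS^2.
\]
A standard Bertini-type argument yields smoothness of $B$ for generic $h_2,h_4,g_4$, and a double cover of $\PS^2$ branched along a smooth plane quartic is by definition $\operatorname{dP}_2$. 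The main obstacle is really only the bookkeeping with weighted degrees — confirming that exactly the listed monomials have minimum weight and that the substitution $Z\mapsto Z-h_2$ preserves the grading — which is precisely what forces the asymmetric choice of weights $(1,1,2,1)$ rather than a uniform one.
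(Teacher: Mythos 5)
Your proposal is correct and follows essentially the same route as the paper: both pass to the chart $w\neq 0$, identify $V_1$ as the weight-$4$ initial form $z^2+2zh_2+h_4+ct^4$ cut out in the exceptional $\PS(1,1,2,1)$, and recognise it as the double cover of $\PS^2$ branched along the smooth plane quartic $h_2^2-h_4-ct^4=0$, hence a $\operatorname{dP}_2$. Your version merely makes explicit two steps the paper leaves implicit (completing the square in $Z$ and checking that $V_1$ avoids the singular point of the weighted projective space), which is a harmless elaboration rather than a different argument.
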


\begin{proof}
We blow-up in the chart $\{w\neq0\}$ to obtain two surfaces $V_0$ and $V_1$, where $V_1$ is defined by the equations
$$0=t=x=y=z=d^2+2dh_2(a,b)+h_4(a,b) +c^4k \subset\mathbb{A}^4 \times \mathbb{P}(1,1,1,2)$$
where $a,b,c$ are the variables of weight $1$ in $\PS(1,1,1,2)$, $d$ is the weight $2$ variable and $k$ is the coefficient of $w^4$ in $g_4$. These equations define a double cover of $\PS^2$ branched along the quartic curve $\{h_2^2(a,b)-h_4(a,b)-kc^4=0\}$. Therefore $V_1 = \operatorname{dP}_2$. The other component, which we denote by $V_0$, is defined by the equations
$$0=t=c=ay-bx=a^2z-dx^2=b^2z-dy^2=d^2+2dh_2(a,b)+h_4(a,b)+d^2z^2\subset \mathbb{A}^4\times \PS(1,1,1,2)$$
which is smooth. Furthermore the intersection $V_0 \cap V_1$ is given by the equation
$$d^2+2dh_2(a,b)+h_4(a,b)=0 \subset \PS(1,1,2)$$
which is a double cover of $\PS^1$ branched in 4 points, so is an elliptic curve.
\end{proof}
We now consider the surface $V_0$ in the central fiber.

\begin{lemma}\label{lem: strict transform central fiber E7}
In this degeneration, $V_0 = \operatorname{Bl}_4\operatorname{dP}_2$.
\end{lemma}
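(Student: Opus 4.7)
The plan is to identify $V_0$ with $\operatorname{Bl}_4\operatorname{dP}_2$ by exhibiting four disjoint $(-1)$-curves on $V_0$ whose contraction produces $\operatorname{dP}_2$.

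First I would find four explicit lines on the original quartic $Q_0=\{w^2z^2+2wzh_2(x,y)+h_4(x,y)+z^4=0\}$ that pass through the $\widetilde{E_7}$ singular point $[0{:}0{:}0{:}1]$. Parametrising such a line as $\{[\alpha{:}\beta{:}\gamma{:}s]:s\in\PS^1\}$ and substituting into the quartic yields $s^2\gamma^2+2s\gamma h_2(\alpha,\beta)+h_4(\alpha,\beta)+\gamma^4=0$, which vanishes identically in $s$ if and only if $\gamma=0$ and $h_4(\alpha,\beta)=0$. For generic $h_4$ this gives four distinct lines $L_1,\dots,L_4\subset Q_0$.

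Next I would verify that the strict transforms $\widetilde{L}_i\subset V_0$ are four pairwise disjoint $(-1)$-curves, each meeting the elliptic curve $E=V_0\cap V_1$ transversally in a single point. A local calculation in a chart of the weighted blowup shows that the four lines have distinct limiting directions, so they meet the exceptional divisor at four distinct points of $E$; hence the $\widetilde{L}_i$ are pairwise disjoint. Since $\widetilde{L}_i\simeq\PS^1$ and $-K_{V_0}\sim E$ (from the Kulikov condition $\omega_X\simeq\mathcal{O}_X$ together with adjunction), the adjunction formula on $V_0$ yields
$$\widetilde{L}_i^2=-2-K_{V_0}\cdot\widetilde{L}_i=-2+E\cdot\widetilde{L}_i=-1.$$

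Finally, Castelnuovo's contractibility criterion allows us to contract the four $\widetilde{L}_i$ via a morphism $\pi\colon V_0\rightarrow Y$ to a smooth rational surface $Y$. Using $\pi^*K_Y=K_{V_0}-\sum\widetilde{L}_i$, one computes $K_Y^2=2$ and that $-K_Y=\pi_*E$ is an elliptic curve with $(\pi_*E)^2=2$, so $Y$ is at worst a weak del Pezzo surface of degree $2$. A genericity argument on the coefficients $h_2,h_4$ rules out $(-2)$-curves on $V_0$ disjoint from $E$, ensuring that $-K_Y$ is actually ample, so $Y=\operatorname{dP}_2$ and therefore $V_0=\operatorname{Bl}_4\operatorname{dP}_2$.

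The main obstacle is the local analysis in the second paragraph: verifying that the four strict transforms meet $E$ in distinct points and carrying out the adjunction computation requires an explicit chart calculation in the weighted blowup with weights $(1,1,2,1)$. Ampleness of $-K_Y$ in the final step is a secondary genericity issue which I would handle by tracking potential $(-2)$-curves throughout the construction.
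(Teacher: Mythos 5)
Your argument is correct, but it takes a genuinely different route from the paper's. The paper never treats the four lines as $(-1)$-curves via adjunction: it performs an ordinary blowup at the singular point, obtains a model $V_0'\subset\PS^3\times\PS^2$ that is non-normal along a line, and reads off from the last defining equation that the normalization $V_0$ is a double cover of $\PS^2$ branched along $c^2\left(h_2(a,b)^2-h_4(a,b)-c^4\right)=0$, i.e.\ (discarding the double line) along a smooth quartic; the four lines cut out by $h_4(a,b)=0$ in $\{c=0\}$ are visibly contracted by this covering map, so the target is identified immediately and explicitly as the double plane $\operatorname{dP}_2$. You instead locate the same four lines upstairs on the quartic (the lines $z=h_4(x,y)=0$ through the $\widetilde{E_7}$ point), check in a chart of the weighted blowup that their strict transforms are pairwise disjoint and meet $E$ transversally once (this does work out: in the chart $y=xy_1$, $z=x^2z_1$ the strict transform of the line with slope $\mu_i$ is $\{y_1=\mu_i,\,z_1=0\}$, meeting $E=\{x=0\}\cap V_0$ in the single reduced point $(0,\mu_i,0)$), and use $-K_{V_0}\sim E$ together with adjunction and Castelnuovo. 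The trade-offs are: the paper's covering map makes the identification of the contracted surface as an honest $\operatorname{dP}_2$ automatic, whereas your route only pins down a weak del Pezzo of degree $2$ and must separately exclude $(-2)$-curves by genericity; and your computation $K_Y^2=2$ needs the input $E^2=-2$ on $V_0$, which you should justify either by $d$-semistability combined with $V_1=\operatorname{dP}_2$ from the preceding lemma, or by a direct chart computation. Both points are handled by the genericity of $h_2,h_4$ already assumed in this section, so the proof goes through.
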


\begin{proof}
We perform an ordinary blowup of $X$ at the point $0=t=x=y=z$ and consider the non-exceptional locus of this blowup. Notice that the normalisation of this surface is $V_0$ because $V_0$ is smooth and the fact that the normalisation of reduced schemes is unique. We rewrite the equations so that $h_2(x,y) = xf_1(x,y)+Ky^2$ and $h_4(x,y) = x^2g_2(x,y)+y^3g_1(x,y)$ and we obtain the system of equations
$$0=ay-bx=az-cx=bz-cy=c^2+2c\left(af_1(x,y)+Kby\right)+a^2g_2(x,y)+b^2yg_1(x,y)+c^2z^2 \subset \A^3\times\PS^2$$
where $a,b,c$ are the coordinates on $\PS^2$. Projectivising the final equation determines a subvariety of $\PS^3 \times \PS^2$, and we obtain the equations
$$0=ay-bx=az-cx=bz-cy=w^2c^2+2wc\left(af_1(x,y)+Ky^2\right)+a^2g_2(x,y)+b^3g_1(x,y)+c^2z^2 \subset \PS^3\times\PS^2$$
where $x,y,z,w$ are the coordinates on $\PS^3$. We call this variety $V_0'$ and one can check that $V_0'$ is non-normal along the line $0=c=x=y=z$. Notice that the locus $c=0$ consists of the non-normal line and the 4 lines defined by the solutions of $h_4(a,b)=0$. We denote these lines by $L_1,\dots,L_4$.

From the first three equations, we see that picking a point $(a,b,c)\in\PS^2$ uniquely determines the choices for $x,y$ and $z$ in $V_0'$. Therefore the final equation determines a (birational) double cover of $\PS^2$ which is branched along the curve given by the equation $c^2\left((h_2(a,b))^2-h_4(a,b)-c^4\right)=0$, which consists of a smooth quartic curve and a repeated line. This map to $\PS^2$ also contracts the lines $L_1\dots,L_4$ to points on the line $\{c=0\}\subset\PS^2$. As $V_0$ is the normalization of $V_0'$ there is a double covering map $V_0\rightarrow\PS^2$ which is branched along a smooth quartic and contracts 4 lines, and hence $V_0=\operatorname{Bl}_4\operatorname{dP}_2$.
\end{proof}

\begin{lemma}\label{lemma: root lattice E7E7A3}
For this degeneration, $\operatorname{Span}(\Phi(\mathscr{L}))=E_7+E_7+A_3$.
\end{lemma}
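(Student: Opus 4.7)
The plan follows the template of the previous subsections: determine the classes $h = (h_0, h_1) \in H^2(V_0) \oplus H^2(V_1)$ and $\xi = (-E_{0,1}, E_{1,0})$ explicitly; exhibit a distinguished collection of simple roots in $\Phi(\mathscr{L})$; check that these generate the desired lattice $E_7 + E_7 + A_3$; and finish with a rank argument.

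First I would pin down $h_0$, $h_1$, and $\xi$. Combining the weighted blowup description with Lemma \ref{lem: strict transform central fiber E7}, one labels the exceptional curves so that $V_1 = \operatorname{dP}_2 = \operatorname{Bl}_7 \PS^2$ has exceptional divisors $e_1', \ldots, e_7'$ and $V_0 = \operatorname{Bl}_4 \operatorname{dP}_2 = \operatorname{Bl}_{11} \PS^2$ has exceptional divisors $e_1, \ldots, e_{11}$ in which $e_1, \ldots, e_7$ record a $\operatorname{dP}_2$ substructure and $e_8, \ldots, e_{11}$ correspond to the four lines contracted in the proof of Lemma \ref{lem: strict transform central fiber E7}. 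Following the logic used in the earlier subsections, $h_1$ takes the anti-canonical form $3l' - \sum_{i=1}^{7} e_i'$ coming from the weighted hypersurface presentation of $V_1 \subset \PS(1,1,1,2)$, while $h_0$ is the pullback of the hyperplane section of the quartic $Q$ through $V_0 \to V_0'\to Q$. From the $d$-semistability relation and the computations $-K_{V_0}^2 = -2$, $-K_{V_1}^2 = 2$, one reads off $\xi = (-3l + \sum_{i=1}^{11} e_i,\ 3l' - \sum_{i=1}^{7} e_i')$.

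Next I would exhibit the following elements of $\Phi(\mathscr{L})$: the set $\Phi_1 = \{(e_i - e_{i+1}, 0) : 1 \leq i \leq 6\} \cup \{(l - e_1 - e_2 - e_3, 0)\}$ whose Dynkin diagram is $E_7$ with $(e_3 - e_4, 0)$ as the trivalent node; the set $\Phi_2 = \{(0, e_i' - e_{i+1}') : 1 \leq i \leq 6\} \cup \{(0, l' - e_1' - e_2' - e_3')\}$ giving a second $E_7$; and the set $\Phi_3 = \{(e_8 - e_9, 0), (e_9 - e_{10}, 0), (e_{10} - e_{11}, 0)\}$ giving an $A_3$. Each element has square $-2$, so the reflection $R_v(w) = w + (v,w)v$ is automatically an integral automorphism of $\mathscr{L}$; primitivity and orthogonality to $h$ and $\xi$ are then immediate from the explicit expressions for $h_0$, $h_1$, and $\xi$. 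The three root systems live in mutually orthogonal sublattices by inspection, so $\operatorname{Span}(\Phi_1 \cup \Phi_2 \cup \Phi_3) \simeq E_7 + E_7 + A_3$.

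To finish, this sublattice has rank $7 + 7 + 3 = 17 = \operatorname{rk}(\mathscr{L})$, so $\operatorname{Span}(\Phi(\mathscr{L}))$ has full rank. By Theorem \ref{thm:scattone lattices}, $\operatorname{Span}(\Phi(\mathscr{L}))$ must be one of the nine rank-$17$ root lattices listed there, and $E_7 + E_7 + A_3$ is the unique such lattice containing two mutually orthogonal $E_7$ factors, forcing the claimed equality. I expect the main obstacle to be the explicit determination of $h_0$ on $V_0 = \operatorname{Bl}_4 \operatorname{dP}_2$: one has to trace the hyperplane section of the singular quartic through the normalization map of Lemma \ref{lem: strict transform central fiber E7}, identify precisely which four of the eleven exceptional curves on $V_0$ correspond to the four contracted lines, and confirm that the resulting $h_0$ is orthogonal to all of the $E_7$ and $A_3$ simple roots above. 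Once $h_0$ is correctly identified, the remainder of the verification reduces to bookkeeping with the intersection pairing on $H^2(V_0) \oplus H^2(V_1)$.
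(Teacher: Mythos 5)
Your root collections $\Phi_1$, $\Phi_2$, $\Phi_3$ together with the two appended nodes coincide exactly with the paper's, and your closing rank-$17$ argument via Theorem \ref{thm:scattone lattices} is a legitimate (and slightly more explicit) way to conclude than the paper's direct assertion. However, there are two problems with the setup of $h$, one of which is an outright error. You claim $h_1 = 3l' - \sum_{i=1}^{7} e_i'$, the anticanonical class of $V_1 = \operatorname{dP}_2$. This cannot be right: $V_1$ is the exceptional component of the weighted blowup at the $\widetilde{E}_7$ singular point, and a general hyperplane of $\PS^3$ misses that point, so the correct value is $h_1 = 0$. Your value would give $h^2 = h_0^2 + 2$, which is incompatible with $h^2 = 4$ once $h_0$ is the pullback of a hyperplane section of a quartic (indeed the paper's $h_0$ satisfies $h_0^2 = 4$ on its own). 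You are saved only by luck: every root you list on the $V_1$ side is orthogonal to the anticanonical class anyway, so membership in $h^\perp$ is unaffected; but the lattice $\mathscr{L} = h^\perp$ itself is defined by $h$, and with the wrong $h$ the statement being proved is about the wrong lattice.

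The second problem is the one you flag yourself: you leave $h_0$ undetermined, and without it none of the orthogonality checks on the $V_0$ side can actually be carried out, so the argument is incomplete where it matters most. The paper closes this gap geometrically: a general hyperplane section meets each of the four lines $L_1,\dots,L_4$ of Lemma \ref{lem: strict transform central fiber E7} in one point, so after contracting them it has self-intersection $8$ and is therefore the preimage of a conic under the double cover $\operatorname{dP}_2 \rightarrow \PS^2$; unwinding this gives $h_0 \in |6l - 2\sum_{i=1}^{7}e_i - \sum_{i=8}^{11}e_i|$, from which orthogonality to $\Phi_1$, $\Phi_3$ and the node $(l-e_1-e_2-e_3,0)$ is immediate. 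With $h_1 = 0$ and this $h_0$ in hand, the rest of your verification goes through as you describe.
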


\begin{proof}
Setting $h=(h_0,h_1)$ where $h$ arises as a hyperplane section of the original central fiber we see that $h_1=0$ because a general hyperplane will not pass through the singular point.

Furthermore a general hyperplane section will intersect each of the lines $L_1,\dots,L_4$ defined in Lemma \ref{lem: strict transform central fiber E7} in one point. After contracting these lines, the hyperplane section has self-intersection 8, and therefore its image under the double covering map is a conic in $\PS^2$. From this one can see that $h_0 \in |6l-2\sum_{i=1}^7e_i-\sum_{i=8}^{11}e_i|$. It is then quick to see that the collections
\begin{align*}
\Phi_1 = \{(e_i-e_{i+1},0) &\mid 1\leq i\leq 6\}, \quad  \Phi_2 = \{(0,e_i'-e_{i+1}') \mid 1\leq i\leq 6\},\\
&\Phi_3 = \{(e_i-e_{i+1},0) \mid 8\leq i\leq 10\}
\end{align*}
are contained in $\Phi(\mathscr{L})$. The collection $\Phi_3$ consists of the simple roots for the $A_3$ lattice. One can append the simple roots $(l-e_1-e_2-e_3,0)$ and $(0,l'-e_1'-e_2'-e_3')$ to $\Phi_1$ and $\Phi_2$ respectively to obtain the simple roots for two copies of the $E_7$ lattice.
\end{proof}

\subsection{A Surface with an $\widetilde{E_8}$ singularity and a line passing through the singularity}\label{sect: E8 singularity and a line}
Our next degeneration has a central fiber which has an $\widetilde{E}_8$ singularity and has a line passing through the singular point. This is similar to Shah's case \cite[SS-1.1]{Shah81} which has two $\widetilde{E}_8$ singularities, both of which have a line passing through them . The equation of this degeneration is given by
$$X = \{ w^2z^2+wy^3+Axyzw+Bx^2y^2+Cx^3z+Dz^4+t^6g_4(x,y,z,w)=0\}\subset \PS^3 \times \Delta$$
where $A,B,C,D$ are constants and $g_4$ is a generic homogeneous polynomial of degree 4. The central fiber has one singular point of type $\widetilde{E}_8$ at the point $0=x=y=z$. $X$ also has only one singular point at $0=t=x=y=z$.

\begin{lemma}
After performing a weighted blowup at the point $x=y=z=t=0$ with corresponding weights (1,2,3,1) the total transform of the central fiber has two components $V_0 \cup V_1$ where $V_0 = \operatorname{Bl}_{10}\PS^2$ and $V_1=\operatorname{dP}_{1}$. Furthermore, $V_0 \cap V_1$ is an elliptic curve.
\end{lemma}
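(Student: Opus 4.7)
The plan is to carry out the weighted blowup in explicit local coordinates, identify the two components of the central fibre as the strict transform of the quartic and the exceptional divisor, and analyse each separately. The approach parallels the $\widetilde{E_7}$ case treated in Section~\ref{sect: E7 singularity}, but with the weights adjusted to resolve the more severe $\widetilde{E_8}$ singularity.

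In the affine chart $\{w \neq 0\}$ centred at the singular point $[0:0:0:1]\times\{0\}$, the weighted blowup along $x=y=z=t=0$ with weights $(1,2,3,1)$ introduces coordinates $(a,b,c,d,s)$ via $(x,y,z,t)=(as,bs^{2},cs^{3},ds)$, so that the exceptional divisor is $\{s=0\}\cong\PS(1,2,3,1)$. Every monomial in the defining polynomial of $X$ has weighted multiplicity at least $6$ at the singular point; the weight-$6$ terms are exactly the $\widetilde{E_8}$-type expression $c^{2}+b^{3}+Aabc+Ba^{2}b^{2}+Ca^{3}c$ together with the contribution $\alpha d^{6}$, where $\alpha:=g_{4}(0,0,0,1)$. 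Dividing through by $s^{6}$ gives the equation of the strict transform $\tilde X$. Since $\pi^{*}\{t=0\}=\{d=0\}\cup\{s=0\}$ is reduced ($t$ and $s$ both having weight one), the central fibre of $\tilde X\to\Delta$ decomposes as $V_{0}\cup V_{1}$, where $V_{0}:=\tilde X\cap\{d=0\}$ is the strict transform of the singular quartic and $V_{1}:=\tilde X\cap\{s=0\}$ is the exceptional component.

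To identify $V_{1}$, I would complete the square in the weight-$3$ coordinate $c$ and then eliminate the $b^{2}$-coefficient by a shift of the weight-$2$ coordinate $b$, bringing the defining equation of $V_{1}\subset\PS(1,2,3,1)$ into the Weierstrass-type form $c'^{2}=-b'^{3}+Pa^{4}b'+Qa^{6}-\alpha d^{6}$. This presents $V_{1}$ as a double cover of $\PS(1,1,2)$ branched along a weighted sextic, which is smooth for generic coefficients, so $V_{1}=\operatorname{dP}_{1}$ by the standard weighted-projective description of degree-$1$ del Pezzos. Restricting the same normalised equation to $\{d=0\}$ yields the Weierstrass equation of a smooth elliptic curve in $\PS(1,2,3)$, which is $V_{0}\cap V_{1}$.

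The main obstacle is identifying $V_{0}$ with $\operatorname{Bl}_{10}\PS^{2}$. Smoothness of $V_{0}$ reduces to a Jacobian check in the blowup chart using genericity of $g_{4}$. The numerical invariants come from the Kulikov condition: $\omega_{X}\simeq\mathcal{O}_{X}$ gives $K_{V_{0}}=-E$ where $E:=V_{0}\cap V_{1}$, and the $d$-semistability relation $E^{2}|_{V_{0}}+E^{2}|_{V_{1}}=0$ combined with $E^{2}|_{V_{1}}=K_{\operatorname{dP}_{1}}^{2}=1$ forces $K_{V_{0}}^{2}=E^{2}|_{V_{0}}=-1$. To establish rationality, I would exploit the line $L=\{y=z=0\}\subset X_{0}$ through the $\widetilde{E_8}$-point and project $X_{0}$ from this point, exhibiting it as a double cover of $\PS^{2}$ whose branch sextic is sufficiently singular at the image of $L$ for the resolved cover to be rational. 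A smooth rational surface with $K^{2}=-1$ has Picard rank $11$; a minimal-model argument combined with the generic position of the blown-up points on the anticanonical elliptic curve then identifies $V_{0}$ with $\operatorname{Bl}_{10}\PS^{2}$.
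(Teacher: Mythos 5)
Your set-up of the weighted blowup, the identification of the weight-$6$ initial form $c^{2}+b^{3}+Aabc+Ba^{2}b^{2}+Ca^{3}c+\alpha d^{6}$, the decomposition of the total transform of $\{t=0\}$ into $\{d=0\}\cup\{s=0\}$, and the identification of $V_{1}$ as a sextic in $\PS(1,1,2,3)$ (hence $\operatorname{dP}_{1}$) with $V_{0}\cap V_{1}$ the smooth sextic elliptic curve in $\PS(1,2,3)$ all agree with the paper's computation, which works with the same initial form in coordinates $(\alpha,\beta,\gamma,\delta)$ of weights $(1,1,2,3)$. Up to that point the proposal is essentially the paper's proof.

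The divergence, and the gap, is in the claim $V_{0}=\operatorname{Bl}_{10}\PS^{2}$. The paper does not prove this from scratch: it checks that $V_{0}$ is smooth and then invokes Urabe (Prop.\ 1.5) and Laza--O'Grady (Remark 4.2), who identify the minimal resolution of a quartic with a single $\widetilde{E_8}$ point as $\operatorname{Bl}_{10}\PS^{2}$ together with its polarization class. You instead propose a self-contained argument, and its load-bearing step --- ``the branch sextic is sufficiently singular at the image of $L$ for the resolved cover to be rational'' --- is an assertion of exactly the nontrivial fact. Projection from the double point does realize $X_{0}$ birationally as a double cover of $\PS^{2}$ branched along $F_{3}^{2}-4F_{2}F_{4}$, and that sextic acquires a $T_{2,3,6}$-type (degree-one simple elliptic) singularity at $[1:0:0]$; but the resolved double cover of such a sextic is not rational for free. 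Since $-K_{V_{0}}=E$ is effective you do get $\kappa=-\infty$, so rationality reduces to $q(V_{0})=0$; however a simple elliptic singularity contributes to $R^{1}\pi_{*}\mathcal{O}$, so $q$ of the resolution is not automatically that of the singular surface, and ruling out a birationally elliptic-ruled $V_{0}$ is precisely the content of Shah's ``insignificant limit'' / Urabe's analysis that the paper cites. Your final step also overreaches slightly: rationality plus $K^{2}=-1$ gives Picard rank $11$, but to land on $\operatorname{Bl}_{10}\PS^{2}$ of ten \emph{distinct} points (rather than a blowup involving infinitely near points, or a nine-point blowup of some $\F_{n}$ that does not reduce to $\PS^{2}$) you need the MMP run you gesture at, using that every $(-1)$-curve meets the smooth anticanonical curve $E$ transversally in one point; this part is routine but should be written out. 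The numerical bookkeeping ($K_{V_{0}}=-E$ from triviality of $K_{\widetilde X}$, $E^{2}|_{V_{0}}+E^{2}|_{V_{1}}=0$, $E^{2}|_{V_{1}}=1$) is correct. In short: either carry out the $q=0$ computation for the resolved double sextic explicitly, or do as the paper does and cite Urabe/Laza--O'Grady for the identification of $V_{0}$.
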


\begin{proof}
After performing this weighted blowup, $V_1$ is given by the equations
$$0=t=x=y=z=\delta^2 +\gamma ^3 +A\alpha\gamma\delta +B\alpha^2\gamma^2+C\alpha^3\delta+\beta^6 k  \subset \mathbb{A}^4 \times \mathbb{P}(1,1,2,3)$$
where $\alpha,\beta,\gamma,\delta$ have weights $1,1,2,3$ respectively and $k$ is the coefficient of the $w^4$ term of $g_4$. This determines a sextic in $\PS(1,1,2,3)$, hence $V_1 = \operatorname{dP}_1$. The surface $V_0$ is given by the equations
\begin{align*}
0=t=\beta=\alpha^3z-\delta x=\alpha^2y&-\gamma x=\gamma^3z^2-\delta^2y^3\\
&=\delta^2 +\gamma ^3 +A\alpha\gamma\delta +B\alpha^2\gamma^2+C\alpha^3\delta+D\delta^2z^2+\beta^6 k \subset \A^4\times\PS(1,1,2,3).
\end{align*}
One can check that $V_0$ is smooth and is therefore the normalisation of the central fiber. Work of Urabe \cite[Proposition 1.5, p.1192]{Ura84} as well as Laza and O'Grady \cite[Remark 4.2]{LOG19} then shows that $V_1=\operatorname{Bl}_{10}\PS^2$. Furthermore, $V_0 \cap V_1$ is a sextic curve in $\PS(1,2,3)$ which is an elliptic curve.
\end{proof}

\begin{lemma}\label{lemma:root lattice E8E8}
For this degeneration, the lattice $\operatorname{Span}(\Phi(\mathscr{L}))=E_8+E_8+\langle -4 \rangle$.
\end{lemma}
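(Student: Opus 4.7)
The plan is to mirror the strategy used in Lemma \ref{lemma: root lattice E7E7A3} and in the $D_8+D_8+\langle-4\rangle$ case of Section \ref{sect:non-normal along line}. First I would identify the classes $h=(h_0,h_1)$ and $\xi=(-D_0,D_1)\in H^2(V_0)\oplus H^2(V_1)$, then exhibit simple roots for two copies of $E_8$ (one supported on each component), and finally produce an extra generalised root of norm $-4$; this last vector is what distinguishes this degeneration from the $E_8+D_9$ case of Section \ref{sect: E8 singularity and no line}, and it is the feature that is geometrically accounted for by the presence of the line through the $\widetilde{E_8}$ singularity.

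Since $V_1=\operatorname{dP}_1$ is the exceptional component lying over the singular point, a generic hyperplane of $\PS^3$ avoids this point and therefore $h_1=0$. The class $h_0\in\operatorname{Pic}(V_0)$ is determined by the birational map $V_0\to\PS^3$ induced by the resolution, and by the position of the 10 blown-up points on the elliptic curve $E=V_0\cap V_1$. The double curve class is $\xi=(-K_{V_0},-K_{V_1})=\left(-3l+\sum_{i=1}^{10}e_i,\;3l'-\sum_{i=1}^{9}e_i'\right)$.

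For the two $E_8$'s I would verify that the sets
\begin{align*}
\Phi_1 &= \{(e_i-e_{i+1},0):1\le i\le 7\}\cup\{(l-e_1-e_2-e_3,0)\}, \\
\Phi_2 &= \{(0,e_i'-e_{i+1}'):1\le i\le 7\}\cup\{(0,l'-e_1'-e_2'-e_3')\}
\end{align*}
both lie in $\Phi(\mathscr{L})$. This reduces to the routine checks that each vector is primitive, has self-intersection $-2$, is orthogonal to $h$ and $\xi$, and induces an integral reflection on $\mathscr{L}$; the Dynkin labelling is then exactly as in Lemma \ref{lemma: root lattice E7E7A3}, with $(l-e_1-e_2-e_3,0)$ and $(0,l'-e_1'-e_2'-e_3')$ playing the role of the trivalent nodes together with $(e_3-e_4,0)$ and $(0,e_3'-e_4')$ respectively. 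This gives $E_8\oplus E_8 \subseteq \operatorname{Span}(\Phi(\mathscr{L}))$.

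The key and most delicate step is producing a vector $v\in\Phi(\mathscr{L})$ with $(v,v)=-4$ orthogonal to $\Phi_1\cup\Phi_2$. I expect $v=(v_0,v_1)$, where $v_0$ is built from the proper transform of the line passing through the $\widetilde{E_8}$ singularity after resolution on $V_0=\operatorname{Bl}_{10}\PS^2$, and $v_1$ is a matching class on $V_1=\operatorname{Bl}_9\PS^2$ chosen so that $v\in\xi^\perp$ and $v\cdot h=0$, in direct analogy with the vector $v=(-(l-e_1),2l'-\sum_{i=2}^9e_i')$ used in Section \ref{sect:non-normal along line}. Once $v$ is in hand I would verify primitivity and self-intersection, and then show that $(v,w)\in 2\Z$ for every $w\in\mathscr{L}$ so that the reflection $R_v(w)=w+\tfrac{(v,w)}{2}v$ is integral. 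The parity check is the main obstacle and should go through by writing a general $w$ in the basis $\{l,e_i,l',e_j'\}$ and using the single relation that $w\in h^\perp\cap(\xi^\perp/\Z\xi)$ imposes on its coefficients, exactly as in the $D_8+D_8+\langle-4\rangle$ computation.
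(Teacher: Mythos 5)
Your overall strategy is the same as the paper's (identify $h$ and $\xi$, exhibit two component-supported copies of $E_8$, then adjoin one generalised root of square $-4$), and your two sets $\Phi_1,\Phi_2$ of $E_8$-roots agree with the ones used there. However, there are concrete gaps. First, your description of the second component is wrong: $V_1=\operatorname{dP}_1$ is $\operatorname{Bl}_{8}\PS^2$, not $\operatorname{Bl}_9\PS^2$, so the double curve class is $\xi=\bigl(-3l+\sum_{i=1}^{10}e_i,\,3l'-\sum_{i=1}^{8}e_i'\bigr)$ rather than your $\bigl(-3l+\sum_{i=1}^{10}e_i,\,3l'-\sum_{i=1}^{9}e_i'\bigr)$. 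With your class one gets $\xi^2=-1+0=-1\neq 0$, which is incompatible with $d$-semistability and would derail every subsequent orthogonality and parity computation. Second, you never pin down $h_0$; the paper takes $h=\bigl(9l-3\sum_{i=1}^{8}e_i-2e_9-e_{10},\,0\bigr)$ from Urabe's analysis of the resolution of the $\widetilde{E_8}$ point with a line through it, and without this explicit class you cannot verify that your roots lie in $h^\perp$, nor carry out the parity check $(v,w)\in2\Z$ that you correctly identify as the crux.

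Third, and most importantly, the norm $-4$ generalised root is only conjectured, not produced. The paper's vector is $v=\bigl(e_9-2e_{10},\,-3l'+\sum_{i=1}^{8}e_i'\bigr)$: its $V_0$-part is supported on the two exceptional classes $e_9,e_{10}$ (of which $e_{10}$, having $h\cdot e_{10}=1$, is indeed tied to the line through the singularity), and its $V_1$-part is the canonical class of $\operatorname{dP}_1$. This is not "the proper transform of the line" in any direct sense, and your proposed analogy with the vector $\bigl(-(l-e_1),\,2l'-\sum_{i=2}^{9}e_i'\bigr)$ from the non-normal-line case does not hand you a candidate here; the delicate step you flag is exactly the step left undone. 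Your geometric intuition — that the extra $\langle-4\rangle$ summand, as opposed to the $E_8+D_9$ outcome, is what the line through the singularity buys — is correct and matches the paper, but as written the argument is not complete and the lattice data it rests on must be corrected first.
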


\begin{proof}
A general hyperplane section of the central fiber will not intersect the singular point and by \cite[Proposition 4.3]{Ura84} we see that $h=(9l-3\sum_{i=1}^8e_i-2e_9-e_{10},0)$. We have the two collections of simple roots
$$\Phi_1 = \{(e_i-e_{i+1},0) \mid 1\leq i\leq 7\}, \qquad  \quad \Phi_2 = \{(0,e_i'-e_{i+1}') \mid 1\leq i\leq 7\}.$$ 
We can append the simple roots $(l-e_1-e_2-e_3,0)$ and $(0,l'-e_1'-e_2'-e_3')$ to $\Phi_1$ and $\Phi_2$ respectively to obtain two copies of the $E_8$ lattice. There is also the class $v=(e_9-2e_{10},-3l'+\sum_{i=1}^8e_i')$ which has self-intersection $-4$. By considering an arbitrary $w \in \mathscr{L}$, one can check that $(v,w)\in 2\Z$ and hence the reflection $R_v$ is an integral automorphism. Therefore $v$ is a generalised root, completing the proof.
\end{proof}

\subsection{A Surface With One $\widetilde{E_8}$ singularity}\label{sect: E8 singularity and no line}
We consider a degeneration whose central fiber is a quartic surface with one $\widetilde{E}_8$ singularity which does not have a line passing through the singularity. The equation for such a singular surface is given by Shah \cite[S-4.1]{Shah81} and the degeneration is given by the equation
\begin{align*}
   X = \Bigl\{0= \left(wz+x^2+ay^2\right)^2 &+y^3(w+f_1(x,y,z))+y^2f_2(x,z)\\&+yzg_2(x,z)+z^3g_1(x,z)+t^6g_4(x,y,z,w) \Bigl\} \subset \mathbb{P}^3 \times \Delta
\end{align*}
where $f_i$ and $g_i$ are generic homogeneous equations of degree $i$.

\begin{lemma}\label{blow 1 E8}
After performing a weighted blowup at the point $x=y=z=t=0$ with corresponding weights (1,2,3,1) the total transform of the central fiber has two components $V_0 \cup V_1$ where $V_0 = \operatorname{Bl}_{10}\PS^2$ and $V_1=\operatorname{dP}_{1}$. Furthermore, $V_0\cap V_1$ is a smooth elliptic curve.
\end{lemma}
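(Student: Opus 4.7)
The plan is to mirror the structure of the proof in Section \ref{sect: E8 singularity and a line}, adapting to the fact that the singular point now has no line through it. I would first perform the weighted blowup in the single affine chart $\{w\neq 0\}$, where the weighted blowup introduces a copy of $\PS(1,2,3,1)$ worth of exceptional directions with coordinates $(\alpha,\beta,\gamma,\delta)$ of weights $(1,2,3,1)$ respectively (here $\alpha,\beta,\gamma$ correspond to $x,y,z$ and $\delta$ to $t$). The total transform acquires two components: the exceptional component $V_1$ sitting over the blown-up point, and the strict transform $V_0$ of the central fiber.

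The equation for $V_1$ is obtained by collecting the lowest weighted-degree terms of the defining equation of the central fiber. With weights $(1,2,3,1)$ on $(x,y,z,t)$, the monomials $(wz+x^2+ay^2)^2$, $y^3w$, $y^2f_2(x,z)$, $yzg_2(x,z)$, $z^3g_1(x,z)$ each contribute to the weight-$6$ part, and $t^6g_4$ contributes through the $\delta^6$ coefficient. After restricting to $t=0$ this should present $V_1$ as a sextic hypersurface in $\PS(1,1,2,3)$ (in coordinates $(\alpha,\beta,\gamma,\delta)$ of weights $1,1,2,3$), so $V_1=\operatorname{dP}_1$. For the intersection curve, setting $\beta=0$ (the exceptional divisor of the $t$-direction) restricts $V_1$ to a sextic in $\PS(1,2,3)$, which is well known to be a smooth elliptic curve for generic choice of $f_i,g_i$.

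Next I would write down the equations for $V_0$ in $\A^4\times\PS(1,1,2,3)$ by imposing the usual Rees-type relations $\alpha^3 z=\delta x$, $\alpha^2 y=\gamma x$, $\gamma^3 z^2=\delta^2 y^3$ together with the full transform of the quartic equation. A direct Jacobian check at candidate singular loci (the preimage of the original $\widetilde{E}_8$ point) should show $V_0$ is smooth; genericity of the $f_i,g_i$ is used here. Since $V_0$ is smooth and birational to the central fiber, it is the normalization of that quartic surface. To conclude $V_0=\operatorname{Bl}_{10}\PS^2$, I would invoke Urabe's classification \cite[Proposition 1.5]{Ura84} together with \cite[Remark 4.2]{LOG19}, which identifies the minimal resolution of a quartic surface with a single $\widetilde{E}_8$ singularity as the blowup of $\PS^2$ in $10$ points; smoothness of $V_0$ means this resolution is $V_0$ itself.

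The routine parts are the explicit weighted blowup computation and identifying $V_1$ as a sextic; the only genuine subtlety is verifying smoothness of $V_0$, since the absence of a line through the singular point (compared with Section \ref{sect: E8 singularity and a line}) means the equations take a different shape, and one must confirm that no new singular locus appears along the proper transform of the exceptional directions. I expect this smoothness check to be the main obstacle, but it should succumb to a direct Jacobian computation using the genericity of $f_1,f_2,g_1,g_2,g_4$.
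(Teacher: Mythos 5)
Your overall architecture matches the paper's proof: identify the exceptional component $V_1$ with the weighted-degree-$6$ initial form of the equation, realized as a sextic in $\PS(1,1,2,3)$ and hence a $\operatorname{dP}_1$; check smoothness of the strict transform $V_0$ and invoke Urabe \cite[Proposition 1.5]{Ura84} together with \cite[Remark 4.2]{LOG19} to get $V_0=\operatorname{Bl}_{10}\PS^2$; and read off the double curve as a sextic in $\PS(1,2,3)$. However, there is a genuine gap at the very first step. In the coordinates $(x,y,z,t)$ with weights $(1,2,3,1)$ the polynomial $(wz+x^2+ay^2)^2$ is \emph{not} concentrated in weight $6$: in the chart $w\neq0$ its lowest-weight piece is $x^4$, of weight $4$, so ``collecting the lowest weighted-degree terms'' as you describe would produce the non-reduced quartic $\alpha^4=0$ as the exceptional divisor rather than a sextic, and the identification of $V_1$ with $\operatorname{dP}_1$ would fail. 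One must first absorb the lower-weight terms into the weight-$3$ variable, i.e.\ replace $z$ by $\tilde z:=wz+x^2+ay^2$; this is precisely what the paper is doing when it appeals to Shah's initial form \cite[p.~281]{Shah81}, and only after this substitution is the initial form weighted-homogeneous of degree $6$.

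After that change your list of contributing monomials also needs correcting: since $z=\tilde z-x^2-ay^2$ now has lowest weight $2$, the term $z^3g_1(x,z)$ has weight at least $7$ and does not enter the initial form, nor does $y^3f_1$; what survives from $y^2f_2(x,z)$ and $yzg_2(x,z)$ are only the pieces $a_6x^2y^2$ and $a_3x^4y$. The resulting initial form is $\tilde z^2+y^3+a_6x^2y^2+a_3x^4y+kt^6$, which is exactly the sextic in $\PS(1,1,2,3)$ displayed in the paper. The remainder of your plan --- the Jacobian check for smoothness of $V_0$, the citations identifying it as $\operatorname{Bl}_{10}\PS^2$, and the identification of $V_0\cap V_1$ with a sextic in $\PS(1,2,3)$ --- coincides with the paper's argument. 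One cosmetic point: your labelling of the exceptional coordinates is internally inconsistent (you assign $\delta$ to the $t$-direction but later give it weight $3$ and set $\beta=0$ for the double curve); in the paper's convention the weight-$1$ coordinate $\beta$ corresponds to $t$ and the weight-$3$ coordinate $\delta$ corresponds to $\tilde z$.
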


\begin{proof}
After performing this weighted blowup, $V_1$ is given by the equations
$$0=t=x=y=z=\delta^2+\gamma^3+a_6\alpha^2\gamma^2+a_3\alpha^4\gamma+\beta^6k \subset \A^4\times\PS(1,1,2,3)$$
where $\alpha,\beta,\gamma,\delta$ have weights $1,1,2,3$ respectively. The coefficients $a_6, a_3$ are arbitrary and $k$ is the coefficient of $w^4$ in $g_4$. This form is obtained using Shah's initial form \cite[p. 281]{Shah81}. One can see that this equation determines a sextic in $\PS(1,1,2,3)$ so $V_1=\operatorname{dP}_1$.

One can check that $V_0$ is smooth and hence $V_0=\operatorname{Bl}_{10}\PS^2$ by work of Urabe \cite[Proposition 1.5, p.1192]{Ura84} as well as Laza and O'Grady \cite[Remark 4.2]{LOG19}. It is also easy to check that $V_0\cap V_1$ is an elliptic curve.
\end{proof}

\begin{lemma}\label{lemma:root lattice E8D9}
For this degeneration, the lattice $\operatorname{Span}(\Phi(\mathscr{L}))=E_8+D_9$.
\end{lemma}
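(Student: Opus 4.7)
The plan is to follow Lemma~\ref{lemma:root lattice E8E8} closely, the key geometric difference being the form of the hyperplane class on $V_0 = \operatorname{Bl}_{10}\PS^2$. As in that lemma, a general hyperplane section of the singular quartic $X_0$ avoids the $\widetilde{E}_8$ point, so $h = (h_0, 0)$ is supported on $V_0$, and $\xi = (-3l + \sum_{i=1}^{10} e_i,\; 3l' - \sum_{i=1}^{8} e_i')$ is unchanged. The absence of a line through the singularity forces $h_0$ into a more symmetric form; applying \cite[Proposition~1.5]{Ura84} gives
$$h_0 = 7l - 3e_1 - 2\sum_{i=2}^{10} e_i,$$
which is primitive and satisfies $h_0^2 = 49 - 9 - 36 = 4$ and $h_0 \cdot (-K_{V_0}) = 0$. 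As a consistency check, restriction to the double curve via $l|_E = \mathcal{O}_E(3q)$ and $e_i|_E = \mathcal{O}_E(p_i)$ yields $h_0|_E = 21q - 3p_1 - 2\sum_{i=2}^{10} p_i$, whose vanishing in $\operatorname{Pic}(E)$ is exactly the period relation of Table~\ref{Complete Description Table}.

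For the $E_8$ summand I would use the $V_1$-side simple roots
$$\{(0, e_i' - e_{i+1}') : 1 \leq i \leq 7\} \cup \{(0, l' - e_1' - e_2' - e_3')\},$$
exactly as in Lemma~\ref{lemma:root lattice E8E8}; these trivially lie in $\Phi(\mathscr{L})$ because $h_1 = 0$ and because they are orthogonal to $\xi$. For the $D_9$ summand I would propose the nine $V_0$-side simple roots
$$\{(e_i - e_{i+1}, 0) : 2 \leq i \leq 9\} \cup \{(l - e_1 - e_2 - e_3, 0)\}.$$
Orthogonality to $h_0$ is immediate: the first eight pair to zero against the uniform coefficient $-2$ of $h_0$ on $e_2, \ldots, e_{10}$, while $h_0 \cdot (l - e_1 - e_2 - e_3) = 7 - 3 - 2 - 2 = 0$. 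Orthogonality to $\xi$ is similarly direct. A short intersection computation gives $(l - e_1 - e_2 - e_3) \cdot (e_i - e_{i+1}) = \delta_{i,3}$, so $(e_3 - e_4, 0)$ becomes the unique trivalent vertex of the resulting Dynkin diagram, with neighbours $(e_2 - e_3, 0)$, $(e_4 - e_5, 0)$, and $(l - e_1 - e_2 - e_3, 0)$. This is a $T_{1,1,6}$ diagram, that is $D_9$. Together with the $E_8$ above, these seventeen simple roots generate a rank-17 sublattice of $\operatorname{Span}(\Phi(\mathscr{L}))$ of type $E_8 + D_9$, and Theorem~\ref{thm:scattone lattices} then forces $\operatorname{Span}(\Phi(\mathscr{L})) = E_8 + D_9$.

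The main obstacle is pinning down $h_0$. One can either appeal to Urabe's classification of polarizations on minimal resolutions of such quartic surfaces, or compute $h_0$ directly by tracing $\mathcal{O}_{\PS^3}(1)$ through the weighted blow-up of Lemma~\ref{blow 1 E8}; in the latter approach the period relation $21q = 3p_1 + 2\sum_{i=2}^{10} p_i$ emerges naturally as the condition that $h = (h_0, 0)$ glue to a line bundle on $X_0$.
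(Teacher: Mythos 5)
Your proposal is correct and follows essentially the same route as the paper: both take $h=(h_0,0)$ with $h_0=7l-3e_1-2\sum_{i=2}^{10}e_i$ from Urabe's description of the polarization, and both exhibit the same seventeen simple roots, namely $\{(e_i-e_{i+1},0):2\le i\le 9\}\cup\{(l-e_1-e_2-e_3,0)\}$ for $D_9$ and $\{(0,e_i'-e_{i+1}'):1\le i\le 7\}\cup\{(0,l'-e_1'-e_2'-e_3')\}$ for $E_8$. Your extra verifications (the intersection computations, the identification of the trivalent node, the consistency check against the period relation, and the closing appeal to Theorem \ref{thm:scattone lattices}) only make explicit what the paper leaves implicit.
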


\begin{proof}
Let $h$ be a hyperplane section of the central fiber, a generic choice will not intersect the singular point, so $h=(h_0,0)$. Urabe \cite[Proposition 4.3]{Ura84} and Laza and O'Grady \cite[Remark 4.2]{LOG19} show that $h_0 = 7l-3e_1-2\sum_{i=2}^{10}e_i$. The lattice $\Phi(\mathscr{L})$ contains the two collections 
$$\Phi_1 = \{(e_i-e_{i+1},0) : 2 \leq i \leq 9) \}, \qquad \Phi_2 = \{(0,e_i'-e_{i+1}') : 1\leq i\leq7 \}.$$
One can append the simple root $(l-e_1-e_2-e_3,0)$ to $\Phi_1$ to determine an $D_9$ lattice. One can also append the simple root $(0,l'-e_1'-e_2'-e_3')$ to $\Phi_2$ to determine a $E_8$ lattice completing the proof.
\end{proof}

\subsection{A Hyperelliptic Degeneration}\label{sect:hyperelliptic degen}
In this section we consider the degeneration which corresponds to the $D_{17}$ lattice. Before we construct this degeneration, we remark that the general fiber is a hyperelliptic K3, as opposed to all the previous constructions where the general fiber is a quartic surface in $\PS^3$. Laza and O'Grady \cite[7.3]{LOG19} note that the $D_{17}$ stratum in $\overline{\mathcal{F}_4}$ is not visible as a Type II quartic surface in the sense of Shah \cite[Theorem 2.4]{Shah81}. Instead it is seen as a Type IV surface and is contained in the $E_{12}$ stratum, see \cite[Theorem 2.4, S-4.3.1]{Shah81} for the equation of such a surface. We note that this degeneration was inspired by Shah's family of type 3--18c \cite[p. 306]{Shah81} and by the Type II component $D_{16}$ considered by Laza and O'Grady \cite{LOG21}. In \cite{LOG21}, the authors also remark that this Type II component is not a GIT quotient of $(4,4)$ curves in $\PS^1\times\PS^1$.

We define the degeneration $X\rightarrow \Delta$ to be the subvariety of $\PS(1,1,1,1,2)\times \Delta$ given by the equations
\begin{align*}
w^2&=\left(Ax_3+x_1-\frac{k}{2}x_2\right)(x_0x_2^2-2x_1x_2x_3+x_3^3)+t(F_0+F_1+F_2)f_2(x_i)+t^2g_4(x_i),\\
0&=x_1^2-x_0x_3+t^2Q_2(x_i),
\end{align*}
where $x_i$ and $w$ are the weight 1 and 2 variables of $\PS(1,1,1,1,2)$ respectively. $Q_2$ and $f_2$ are generic homogeneous polynomials of degree 2 and $g_4$ is a generic homogeneous polynomial of degree 4. Furthermore, we define the polynomials $F_0,F_1,F_2$ to be
$$F_0=x_1x_2-x_3^2, \quad F_1 = x_0x_2-x_1x_3,\quad F_2 = x_1^2-x_0x_3$$
whose common vanishing in $\PS^3$ define a twisted cubic. One can see that for $t \neq 0$ we obtain a $(2,4)$ complete intersection in $\PS(1,1,1,1,2)$ where the degree 2 equation does not contain the weight two variable. This is a formulation of hyperelliptic K3s of degree 4 \cite[Example 2]{HarThom15}. The central fiber is the double cover of the quadric cone branched along the desired curve. The singular locus of $X$ is $0=t=w=F_0=F_1=F_2$, which is the twisted cubic. We proceed by blowing up the singular locus.

\begin{lemma}\label{blowup D17 total space}
The exceptional divisor of the blowup of $\PS(1,1,1,1,2) \times \Delta$ along the locus $0=t=w=F_0=F_1=F_2$ is isomorphic to $\F(-6,0,-5)$.
\end{lemma}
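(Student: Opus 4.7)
The plan is to mirror the proof of Lemma \ref{exceptional piece double TC}, adapting its computations to the weighted ambient space $\PS(1,1,1,1,2) \times \Delta$ and the extra generator $w$ in the blowup ideal. I would first parameterise the twisted cubic $C = \{F_0 = F_1 = F_2 = 0\} \subset \PS^3$ by the Veronese embedding. A direct substitution shows the parameterisation $(x_0, x_1, x_2, x_3) = (u^3, u^2 v, v^3, u v^2)$ satisfies all three equations $F_i = 0$, so $\PS^1 \to \PS^3$ via this map is a degree-3 embedding whose image is $C$.

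Next, I would carry out the blowup of $\PS(1,1,1,1,2) \times \Delta$ along the ideal $(t, w, F_0, F_1, F_2)$ in standard affine charts such as $\{x_0 \neq 0\}$ and $\{x_2 \neq 0\}$, introducing projective coordinates $(a, b, c, d, e)$ on the fibre dual to these five generators. Pulling each generator back along the Veronese gives its line bundle degree on $C \cong \PS^1$: both the degree-2 polynomials $F_i$ and the weight-2 variable $w$ are sections of $\mathcal{O}_{\PS(1,1,1,1,2)}(2)$, which pulls back to $\mathcal{O}_{\PS^1}(6)$, while $t$ is a coordinate on $\Delta$ and contributes degree $0$. From the blowup relations involving only these five generators, the exceptional divisor is realised as a subvariety of a rational scroll whose entries are determined by these degrees, in direct analogy with the embedding into $\F(0,0,0,6)$ in the earlier lemma.

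Because the twisted cubic is not a complete intersection, additional Rees relations arise from the Hilbert-Burch syzygies of $(F_0, F_1, F_2)$. A direct computation produces the two syzygies $x_0 F_0 - x_1 F_1 - x_3 F_2 = 0$ and $x_1 F_0 - x_3 F_1 - x_2 F_2 = 0$; on the blowup these become $x_0 a - x_1 b - x_3 c = 0$ and $x_1 a - x_3 b - x_2 c = 0$, both of which reduce, after substituting the Veronese parameterisation, to the single equation $u^2 a - uv\, b - v^2 c = 0$. Cutting the ambient scroll by this relation yields the exceptional divisor, which one identifies as $\F(-6, 0, -5)$ by exhibiting an explicit embedding that extends the map $(u, v; \alpha, \beta, d) \mapsto (u, v; \alpha v, \alpha u + \beta v, \beta u, d)$ from Lemma \ref{exceptional piece double TC} to incorporate the coordinate $e$ dual to $w$, with the weight $-6$ on $e$ reflecting the pullback degree of $w$.

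The main obstacle will be the careful bookkeeping of weights coming from the weighted projective structure of $\PS(1,1,1,1,2)$: because $w$ has weight $2$, its contribution to the normal bundle of $C$ differs from what one would encounter in an ordinary projective space, and this must be tracked consistently through both the ambient scroll construction and the explicit embedding. A secondary subtlety is verifying that the interplay between the Rees relation and the generators $t$ and $w$ produces exactly the claimed scroll $\F(-6, 0, -5)$, rather than an a priori larger or differently-twisted bundle, which requires checking the weights of $\alpha$, $\beta$, $d$, and $e$ in the explicit embedding are the ones the statement claims.
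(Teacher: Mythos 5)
Your setup is correct and matches the paper's: the Veronese parameterisation $(x_0,x_1,x_2,x_3)=(u^3,u^2v,v^3,uv^2)$ does cut out the $F_i$, the pullbacks of $w,t,F_0,F_1,F_2$ have degrees $6,0,6,6,6$ on the cubic, giving the ambient scroll $\F(-6,0,-6,-6,-6)$, and your two Hilbert--Burch syzygies are the right ones and do collapse, on the exceptional fibre, to the single conic relation $u^2a-uvb-v^2c=0$ among the three coordinates dual to $F_0,F_1,F_2$. The gap is in the final identification, and it is visible from a dimension count: cutting the $\PS^4$-bundle $\F(-6,0,-6,-6,-6)$ by that one relation and parameterising its solutions by $(a,b,c)=(\alpha v,\alpha u+\beta v,\beta u)$ leaves you with \emph{four} fibre coordinates --- $\alpha,\beta$ together with the coordinates dual to $t$ and $w$ --- that is, a $\PS^3$-bundle $\F(-6,0,-5,-5)$ over $\PS^1$, not the $\PS^2$-bundle $\F(-6,0,-5)$. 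Your plan to ``extend'' the two-parameter map of Lemma \ref{exceptional piece double TC} by simply appending the coordinate dual to $w$ therefore cannot land in the claimed three-coordinate target.

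The missing idea is that the relevant exceptional locus lies inside the vanishing of the coordinate dual to $F_2$. The degeneration has a second defining equation $0=x_1^2-x_0x_3+t^2Q_2=F_2+t^2Q_2$, and the strict transform of this quadric meets the exceptional divisor exactly along $\{e=0\}$ in the paper's labelling $(a,b,c,d,e)\leftrightarrow(w,t,F_0,F_1,F_2)$; this is how one cuts down to the sub-scroll $\F(-6,0,-6,-6)$. Once $e=0$ is imposed, the two syzygies no longer give a conic in three variables but the single linear proportionality $cu=dv$ between the two surviving $F$-coordinates, which is parameterised by \emph{one} new coordinate $\alpha$ of weight $-5$ via $(c,d)=(\alpha v,\alpha u)$. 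The result is the three-coordinate scroll $\F(-6,0,-5)$ with fibre coordinates dual to $w$, $t$ and the syzygy parameter. So this case is genuinely not a verbatim copy of Lemma \ref{exceptional piece double TC}: there both syzygy parameters $\alpha,\beta$ survive, here only one does, and it is precisely the interaction with the quadric $F_2+t^2Q_2=0$ that your argument does not account for.
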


\begin{proof}
The proof is similar to that of Lemma \ref{exceptional piece double TC}, however the twisted cubic is now parameterized by $(x_0,x_1,x_2,x_3)=(u^3,u^2v,v^3,uv^2)$ and so we blowup on the two charts $\{x_0\neq 0\}$ and $\{ x_2\neq 0\}$. By considering the blowup relations involving only $t,w,F_0,F_1,F_2$ we see that the exceptional divisor is a subvariety of the rational scroll $\F(-6,0,-6,-6,-6)$. The coordinates of this rational scroll are $(u,v;a,b,c,d,e)$. The projective coordinates $a,b,c,d,e$ are obtained from performing the blowup, and the relations obtained by blowing up are the $2\times2$ minors of
$$
\begin{pmatrix}
    w & t & F_0 & F_1 & F_2\\
    a & b & c & d & e\\
\end{pmatrix}$$
in addition to the syzygies $0=cx_1-dx_3-ex_2=cx_0-dx_1-ex_3$. From this we can see that the exceptional divisor is contained in the locus $\{e=0\} = \F(-6,0,-6,-6)\subset\F(-6,0,-6,-6,-6)$. We then use the syzygies to obtain the equation $cu=dv$, defining $\alpha$ so that $c = \alpha v$ and $d=\alpha u$ determines an embedding 
\begin{align*}
    \F(-6,0,-5) &\rightarrow \F(-6,0,-6,-6)\\
    (u,v;a,b,\alpha) &\mapsto (u,v;a,b,\alpha v,\alpha u),
\end{align*}
completing the proof.
\end{proof}

\begin{lemma}\label{lemma: blow up hyperelliptic case}
After blowing up the singular locus, the  exceptional component of the central fiber is a subvariety $V_0$ of $\F(-6,0,-5)$. Furthermore $\operatorname{Bl}_3(V_0)$ is a double cover of $\mathbb{F}_6$, branched in a smooth member of the linear system $|2s+16f|$ and $V_0=\operatorname{Bl}_{18}\PS^2$.
\end{lemma}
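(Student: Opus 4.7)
The strategy follows the template of Lemmas 3.6, 3.8, and 3.10 for the other non-normal curve cases: first write down the equation cutting out $V_0$ inside the scroll $\F(-6,0,-5)$ from Lemma 3.13, then project away from a section of the $\PS^2$-bundle to exhibit a birational morphism $\operatorname{Bl}_3 V_0 \to \F_6$ realising $\operatorname{Bl}_3 V_0$ as a double cover, and finally invoke Proposition 3.4 with $n = 6$.

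First I would substitute the blowup identifications $w = ra$, $t = rb$, $F_0 = r\alpha v$, $F_1 = r\alpha u$ from Lemma 3.13, together with the twisted cubic parametrisation $(x_0,x_1,x_2,x_3) = (u^3, u^2v, v^3, uv^2)$, into the defining equations of $X$ and extract the leading $r$-term. The delicate point is that the cubic $Q = x_0x_2^2 - 2x_1x_2x_3 + x_3^3$ equals $x_2 F_1 - x_3 F_0$, and in this parametrisation the two contributions cancel identically on $\F(-6,0,-5)$, so one must expand $Q$ to second order transverse to the twisted cubic in order to see what the strict transform contributes to $V_0$. The resulting equation will be quadratic in $(a, b, \alpha)$ with coefficients polynomial in $(u,v)$, schematically of the form
\[
A(u,v)\alpha^2 + \alpha\bigl(a L(u,v) + b M(u,v)\bigr) + L_1(u,v) a^2 + L_2(u,v) ab + L_3(u,v) b^2 = 0,
\]
and a degree count (together with the fact that $V_0$ is a hypersurface of relative degree $2$ in the $\PS^2$-bundle) forces the class of $V_0$ to be $2H + 13F$ on the scroll, where $H$ is the tautological divisor and $F$ is the fiber class over $\PS^1$.

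Next I would project $V_0$ away from the section $S = \{a = b = 0\}$; dropping the weight $-5$ coordinate $\alpha$ lands in $\F(-6,0) \simeq \F_6$. The class $2H + 13F$ forces the coefficient of $\alpha^2$ to be a homogeneous polynomial of degree $3$ in $(u,v)$, so $S$ meets $V_0$ transversely in three points, and blowing them up gives a well-defined map $\operatorname{Bl}_3 V_0 \to \F_6$ which is a double cover. Completing the square in $\alpha$ realises the branch curve as the discriminant, which is quadratic in $(a,b)$, and tracking the $(u,v)$-degrees of the three resulting coefficients (expected to be $4$, $10$, $16$ at the $a^2$, $ab$, $b^2$ terms respectively) identifies the branch curve as a member of $|2s + 16f|$ on $\F_6$; smoothness follows from the generic choice of $g_4$, $Q_2$ and $f_2$ in the definition of $X$.

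An application of Proposition 3.4 with $n = 6$ then gives $\operatorname{Bl}_3 V_0 \simeq \operatorname{Bl}_{21} \PS^2$, whence $V_0 \simeq \operatorname{Bl}_{18}\PS^2$ as claimed. The principal obstacle is the first step: because $Q$ vanishes to second order along the twisted cubic inside $\F(-6,0,-5)$, the normal-cone calculation must combine contributions from $PQ$ and from $t(F_0+F_1+F_2)f_2$ at the correct relative $r$-order to produce a genuine quadratic form in $(a, b, \alpha)$. Verifying that the six coefficients have exactly the $(u,v)$-degrees required for the branch curve to lie in $|2s+16f|$ is the most careful piece of bookkeeping; once that is in place, the remainder of the argument is routine by the methods of Sections 3.3--3.5.
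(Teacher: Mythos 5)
Your proposal follows the paper's proof essentially step for step: derive the defining equation of $V_0$ in $\F(-6,0,-5)$ from the blowup relations of Lemma \ref{blowup D17 total space}, blow up the three points where the section $\{a=b=0\}$ meets $V_0$ (the roots of the degree-$3$ coefficient of $\alpha^2$), project to $\F_6$ to realise a double cover branched in a smooth member of $|2s+16f|$, and conclude via Proposition \ref{double cover Hirzeburch is blowup of P2} with $n=6$, giving $\operatorname{Bl}_3V_0=\operatorname{Bl}_{21}\PS^2$ and hence $V_0=\operatorname{Bl}_{18}\PS^2$. Your observation that $x_0x_2^2-2x_1x_2x_3+x_3^3=x_2F_1-x_3F_0$ cancels at first order on the exceptional scroll, so the $\alpha^2$-term must be extracted from the second-order expansion, correctly identifies the one genuinely delicate computation that the paper compresses into ``one can check''.
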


\begin{proof}
The first part is a direct consequence of Lemma \ref{blowup D17 total space}. We define $L_3 = \left(Auv^2+u^2v-\frac{k}{2}v^3\right)$, one can check that we obtain the subvariety
$$a^2u =L_3\alpha^2+\alpha bu(u+v)f_6+b^2(ug_{12}+v^4Q_6L_3)$$
where $f_6, Q_6, g_{12}$ arise from $f_2,Q_2,g_{4}$ respectively after performing the substitution $(x_0,x_1,x_2,x_3)=(u^3,u^2v,v^3,uv^2)$. After blowing up the three points $(u_i,v_i;0,0,1)$ where $L_3(u_i,v_i)=0$, we project away from the section $(a,b,\alpha)=(0,0,1)$ to obtain a double cover of $\{\alpha = \frac{-bu(u+v)f_6}{2L_3} \} =\F_6$. It is then easy to check that the branch curve is a smooth curve in the linear system $|2s+16f|$. Applying Proposition \ref{double cover Hirzeburch is blowup of P2} with $n=6$ gives the desired result.
\end{proof}

\begin{lemma}
Let $V_1$ be the strict transform of the central fiber, then $V_1 = \PS^2$.
\end{lemma}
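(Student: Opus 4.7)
The plan is to identify $V_1$ with the normalisation of $X_0$ and then to recognise this normalisation as $\PS^2$ by a change of variable in weighted projective space.

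First I would pull the two defining equations of $X_0$ back to the quadric cone $Q = \{x_1^2 = x_0 x_3\}$ via the parametrisation $\PS(1,1,2) \to Q$ given by $(a,b,c) \mapsto (a^2, ab, c, b^2)$, where $a,b$ have weight $1$ and $c$ has weight $2$. A direct computation will show $L_3|_Q = Ab^2 + ab - \tfrac{k}{2}c$ and $P_3|_Q = (ac - b^3)^2$. Hence on $Q$ the surface $X_0$ is cut out by $w^2 = L_3|_Q \cdot (ac - b^3)^2$, exhibiting it as a double cover of $Q$ whose branch locus consists of the smooth conic $L_3|_Q$ together with a doubled twisted cubic $\{ac = b^3\}$. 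Locally along the twisted cubic $T$ we have $w = \pm(ac-b^3)\sqrt{L_3|_Q}$, so $X_0$ is non-normal along $T$ with two transverse sheets.

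Next I would argue that the strict transform $V_1$ in the blowup of Lemma \ref{blowup D17 total space} coincides with the normalisation of $X_0$: the blowup along $T_0$ separates the two transverse sheets, producing a finite birational morphism $V_1 \to X_0$ that is an isomorphism on $X_0 \setminus T$, and the universal property of the normalisation then forces equality. To write the normalisation down explicitly, I would introduce $z := w/(ac - b^3)$, which becomes regular after normalising and carries weighted degree $4 - 3 = 1$ in $(a,b,c)$. The equation $w^2 = L_3|_Q(ac-b^3)^2$ then divides through to $z^2 = L_3|_Q$, so that
$$V_1 = \{z^2 = Ab^2 + ab - \tfrac{k}{2}c\} \subset \PS(1,1,2,1)$$
with coordinates $(a,b,c,z)$ of weights $(1,1,2,1)$.

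Finally, because this equation is linear in $c$ with nonzero coefficient $-\tfrac{k}{2}$, I can solve $c = \tfrac{2}{k}(Ab^2 + ab - z^2)$ and obtain an explicit isomorphism $\PS^2 = \PS(1,1,1) \xrightarrow{\sim} V_1$, $(a:b:z) \mapsto (a:b:\tfrac{2}{k}(Ab^2 + ab - z^2):z)$, yielding $V_1 = \PS^2$. The main obstacle will be the second step: because the ideal $(t,w,F_0,F_1,F_2)$ being blown up is not a regular sequence, care is needed to justify that the strict transform really does realise the normalisation, particularly near the vertex of $Q$, where $T$ passes through the singular point of the cone. A useful \emph{a posteriori} check is provided by $d$-semistability: since $V_0 = \operatorname{Bl}_{18}\PS^2$ satisfies $(-K_{V_0})^2 = -9$, one expects $(-K_{V_1})^2 = 9$, which is consistent only with $V_1 = \PS^2$.
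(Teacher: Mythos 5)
Your proposal is correct and follows essentially the same route as the paper: both identify $V_1$ with the double cover of the quadric cone $\PS(1,1,2)$ branched in the conic $\{L_3=0\}$ and in the vertex, and recognise this as $\PS^2$ by solving for the weight-$2$ variable. The paper reaches this more tersely by reading the double-cover structure directly off the blowup relations (so the normalisation-versus-strict-transform issue you flag never arises as a separate step), but the key computation --- the factorisation of the cubic as $(ac-b^3)^2$ on the cone and the resulting equation $z^2=L_3$ in $\PS(1,1,1,2)$ --- is the same.
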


\begin{proof}
We first perform the change of variables $y=\left(Ax_3+x_1-\frac{k}{2}x_2\right)$ and recall that on this component, $b=0$. The relations obtained from blowing up which use only the $x_i$ and $y$ define a double cover of the quadric cone branched in a conic and in the vertex of the cone. Therefore these equations define a copy of $\PS^2$. The equations involving $w$ show that for each choice of $x_i, y, a, c, d$ there is a unique choice for $w$, so $V_1 = \PS^2$.
\end{proof}

\begin{lemma}
For this degeneration, the lattice $\operatorname{Span}(\Phi(\mathscr{L}))=D_{17}$.
\end{lemma}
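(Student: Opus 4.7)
Following the template of the preceding lemmas, the plan is to identify simple roots in $\Phi(\mathscr{L})$ that span a $D_{17}$ lattice, then invoke Theorem \ref{thm:scattone lattices} for equality.

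First I would pin down the polarization class $h = (h_0, h_1)$. On $V_1 = \PS^2$, the natural map to $\PS^3 \subset \PS(1,1,1,1,2)$ factors as $\PS^2 \to \PS(1,1,2) \hookrightarrow \PS^3$, where the first arrow is the double cover described in the preceding lemma and the second is the standard embedding of $\PS(1,1,2)$ as the quadric cone. Pulling back $\mathcal{O}_{\PS^3}(1)$ through this composition gives $\mathcal{O}_{\PS^2}(2)$, so $h_1 = 2l'$. On $V_0 = \operatorname{Bl}_{18}\PS^2$, a generic hyperplane in $\PS^3$ meets the twisted cubic in $3$ points, so after blowing up the singular locus one obtains $h_0 = 3(l-e_1)$ by the same argument as in Section \ref{sect:non-normal along TC}. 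This gives $h_0^2 + h_1^2 = 0 + 4 = 4$, matching $h^2 = 4$. Consistent with Construction \ref{Tyurin constructions}, the double curve class is $\xi = \bigl(-(3l - \sum_{i=1}^{18} e_i),\ 3l'\bigr)$, the anti-canonical divisor on each component, and one checks directly that $\xi^2 = 0$ and $h \cdot \xi = 0$.

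The core step is to exhibit $17$ simple roots. Because $V_1 = \PS^2$ has Picard rank one, every simple root must be supported on the $V_0$ side. I would take
\[
\Phi_1 = \{(e_i - e_{i+1}, 0) : 2 \leq i \leq 17\},
\]
which manifestly lies in $h^\perp \cap \xi^\perp$ because $h_0$ only involves $e_1$ among the exceptional classes; these form an $A_{16}$ chain. To this I append $v' = (l - e_1 - e_2 - e_3,\ 0)$, which likewise lies in $h^\perp \cap \xi^\perp$ by a direct check using the forms of $h_0$ and $\xi$. The decisive intersection numbers are $v' \cdot (e_3 - e_4, 0) = 1$ and $v' \cdot v = 0$ for every other $v \in \Phi_1$, so $v'$ attaches as a branch at the interior node $(e_3 - e_4, 0)$. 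This produces precisely the fork characteristic of the $D_{17}$ Dynkin diagram: the two leaves $(e_2 - e_3, 0)$ and $v'$ meet at $(e_3 - e_4, 0)$, which then continues as a chain to $(e_{17} - e_{18}, 0)$.

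This exhibits a $D_{17}$ subsystem of $\operatorname{Span}(\Phi(\mathscr{L}))$; since Theorem \ref{thm:scattone lattices} forces this span to be one of the nine rank-$17$ root lattices on Scattone's list, and $D_{17}$ is among them, equality follows. The main obstacle is the correct identification of $h_0$: this is the only hyperelliptic case, so the polarization cannot be read off from a map to $\PS^3$ as directly as in the quartic surface cases, and one must track it carefully through the compositions $V_1 \to \PS(1,1,2) \hookrightarrow \PS^3$ and through the blowup on $V_0$.
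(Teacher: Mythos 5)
Your proposal is correct and follows essentially the same route as the paper: the same identification $h=(3(l-e_1),2l')$, the same chain of roots $\{(e_i-e_{i+1},0)\}_{2\le i\le 17}$, and the same appended root $(l-e_1-e_2-e_3,0)$ forming the $D_{17}$ fork. The extra care you take in deriving $h_1=2l'$ via the composition $V_1\to\PS(1,1,2)\hookrightarrow\PS^3$ is a slightly more explicit version of the paper's shortcut ($h_0^2=0$ forces $h_1^2=4$), but the argument is otherwise identical.
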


\begin{proof}
Setting $h=(h_0,h_1)$, we see that $h_0$ consists of three fibers of $V_0$ so $h_0 = 3(l-e_1)$. Therefore $h_1^2=4$ so $h_1=2l'$. One can see that the collection 
$$\Phi = \{(e_i-e_{i+1},0) \mid i=2,3,\dots,17\}$$
is contained in $\Phi(\mathscr{L})$. One can append the simple root $(l-e_1-e_2-e_3,0)$ which is also contained in $\Phi(\mathscr{L})$ and determines a $D_{17}$ lattice.
\end{proof}

This concludes the proof of Theorem \ref{correspondence intro}. 

\section{Period Domains}\label{sect:period domains}
In Section \ref{sect:Explicit descriptions of families} we explicitly constructed Type II degenerations which correspond to the 9 Type II boundary components of $\overline{\mathcal{F}_4}$. In this section we show that each Type II degeneration we constructed in Section \ref{sect:Explicit descriptions of families} is distinguished by a relation on the points we blewup. As motivation we state a proposition of Alexeev and Engel, suitably modified for our purposes.

\begin{prop}\cite[Proposition 7.33]{AE23}\label{prop: nef model extension}
Given a nef model $L\rightarrow X$ of a Type II $\langle h \rangle$-polarized Kulikov model, $X\rightarrow \Delta$, there is a nef $\lambda$-family $\mathcal{L}\rightarrow \mathcal{X}\rightarrow S_\lambda$ extending it.
\end{prop}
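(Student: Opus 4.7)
The plan is to start from the standard $\lambda$-family of Proposition \ref{AE23 Prop 7.23} and modify it by a series of GM0 and GM1 operations so that one of its fibers becomes the given $(X \to \Delta, L)$. First I would use the marking carried by $X \to \Delta$ to locate its central fiber $X_0$ as a specific point $p_0 \in S_\lambda = \operatorname{Hom}(\Lambda,\widetilde{\mathcal{E}})$: the period point $\psi_{X_0}$ of $X_0$, together with the choice of marked elliptic curve coming from the ordered basis $\underline{b}$ of $I_0$, determines such a $p_0$. The $\langle h \rangle$-quasipolarization then forces us to restrict to the Noether--Lefschetz sublocus $\D_4(I)^\lambda \cap S_\lambda \subset \D(I)^\lambda$, producing an $\langle h \rangle$-quasipolarized $\lambda$-family.

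Next I would reconcile the Kulikov model sitting over $p_0$ in the standard family with our given $X$. By construction the standard family realizes one particular Kulikov model with period map equal to the identity, but any other Kulikov model of the same central fiber differs by a sequence of elementary modifications in the sense of Friedman--Morrison. The global versions of these — GM0 along $(-2)$-curves over a Noether--Lefschetz divisor, and GM1 along internal exceptional curves meeting a double curve over $\Delta_\lambda$ — are exactly what is allowed in Definition \ref{def:lambda family}. I would read off the sequence of elementary modifications relating $X$ to the standard model at $p_0$ and propagate each one into a global GM0 or GM1 modification; the resulting $\lambda$-family $\mathcal{X} \to S_\lambda$ then has fiber $X \to \Delta$ over $p_0$.

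It remains to extend $L$ to a relatively nef line bundle $\mathcal{L}$ on $\mathcal{X}$. The class $h \in \Lambda_0$, transported through the marking, determines a section of the local system $R^2\pi_*\Z$ on the Noether--Lefschetz locus which is of type $(1,1)$ at every point and is represented by a line bundle fiberwise; this yields the desired extension $\mathcal{L}$ restricting to $L$. Relative nefness on the general fiber is automatic from the Hodge-theoretic setup, and on the central fiber it holds by hypothesis on $L$; since the nef cone behaves well under the GM0/GM1 modifications (which are precisely the operations accounting for the change of chamber), $\mathcal{L}$ stays relatively nef after the modifications are performed.

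The hard part, I expect, is the compatibility step: matching an arbitrary given nef model $X$ with a controlled sequence of GM0 and GM1 modifications of the standard family, and then verifying that the line bundle obtained by spreading $L$ over the Noether--Lefschetz locus remains relatively nef after each flop. This requires tracking how Mori cones and walls in the nef cone vary in a neighbourhood of $p_0$ and checking that the chosen modifications realize the correct chamber, rather than an adjacent one. Once this combinatorial bookkeeping is in place, the construction of $\mathcal{L} \to \mathcal{X} \to S_\lambda$ as a nef $\lambda$-family extending $L \to X$ follows.
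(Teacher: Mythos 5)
The paper does not actually prove this statement: it is quoted verbatim from Alexeev--Engel \cite[Proposition 7.33]{AE23}, so there is no in-paper argument to compare against. Your sketch does reproduce the strategy of the cited source --- locate the period point $p_0 \in S_\lambda$ of $X_0$ via the marking, restrict to the Noether--Lefschetz locus, modify the standard family of Proposition \ref{AE23 Prop 7.23} by GM0/GM1 operations until its fiber over $p_0$ matches the given $X$, and spread the class $h$ out to a relatively nef line bundle $\mathcal{L}$.

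As written, however, the argument has a genuine gap, and it is exactly the step you flag as ``the hard part'' and then defer. Two things are asserted without justification. First, that the fiberwise sequence of flops relating $X$ to the standard model's fiber over $p_0$ globalizes: each flopped curve must sweep out a smooth $\PS^1$-fibration $Z\rightarrow B$ over a divisor $B\subset S_\lambda$ of one of the two allowed types, with normal bundle restricting to $\mathcal{O}(-1)\oplus\mathcal{O}(-1)$ on every fiber. A curve on a single Kulikov fiber does not automatically extend to such a family, and showing that the only curves one ever needs to flop are of GM0 or GM1 type (families of $(-2)$-curves over the closure of a Noether--Lefschetz divisor, or internal exceptional curves over the discriminant $\Delta_\lambda$) is the structural heart of the proof in \cite{AE23}. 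Second, that the extended class $h$ is nef on \emph{every} fiber of the modified family, not merely on the fibers of the original $X\rightarrow\Delta$; the assertion that ``the nef cone behaves well under GM0/GM1'' is precisely what needs proving, via the wall-and-chamber analysis of how the Mori cones of the Kulikov fibers vary over $S_\lambda$. There is also a smaller elision in the construction of $\mathcal{L}$: a class that is of type $(1,1)$ on each fiber must still be lifted to an actual line bundle on the total space, which requires an argument with the exponential sequence rather than being automatic. Since the proposal explicitly leaves this bookkeeping ``to be put in place,'' it is a correct outline of the known proof rather than a complete one.
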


By the definition of $\lambda$-families, this means that there is a family of $\langle h \rangle$-polarized Kulikov models $\mathcal{X}_0\rightarrow \operatorname{Hom}(\Lambda/h, \widetilde{\mathcal{E}})$ for which the period map is the identity.

\begin{prop}\label{prop:relations}
For each of the explicitly constructed Tyurin degenerations of K3 surfaces of degree 4 constructed in Theorem \ref{correspondence intro}, the period map for families of Kulikov surfaces imposes an additional relation on the points one can blow up in Construction \ref{Tyurin constructions}. This relation is given in the final column of Table \ref{Complete Description Table}. Furthermore these relations can be obtained geometrically.
\end{prop}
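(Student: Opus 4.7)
The plan is to derive the listed relation twice---once abstractly from the period morphism of Construction \ref{construct:period point}, once from the explicit equations of Section \ref{sect:Explicit descriptions of families}---and to observe that the two outputs agree. By Proposition \ref{prop: nef model extension} every Tyurin degeneration of Theorem \ref{correspondence intro} extends to a nef $\lambda$-family, and the period map on such a family is the identity (Proposition \ref{AE23 Prop 7.23}); in particular the quasipolarization class $h = (h_0, h_1) \in \widetilde{\Lambda}(X_0)$ lies in $\ker(\psi_{X_0})$. Writing out Construction \ref{construct:period point} gives
\[
\widetilde{\psi}(h) \;=\; \mathcal{O}_{V_0}(h_0)\big|_{E_{0,1}} \otimes \mathcal{O}_{V_1}(h_1)\big|_{E_{1,0}}^{-1} \;=\; \mathcal{O}_E,
\]
and combining this single identity with the $d$-semistability relation from Construction \ref{Tyurin constructions} eliminates one of the two sums $\sum p_i$, $\sum p_i'$, producing the relation in the final column of Table \ref{Complete Description Table}.

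The core of the argument is then a row-by-row computation of $\widetilde{\psi}(h)$. Once $h_0, h_1$ have been read off from the appropriate subsection of Section \ref{sect:Explicit descriptions of families} (the hyperplane section in Sections \ref{sect:Two Quadrics}--\ref{sect: E8 singularity and no line}, the pullback polarization in Section \ref{sect:hyperelliptic degen}), the restriction to $E$ is routine: for $\overline{V_i} = \PS^2$ one uses $l|_E = 3q$ (with $q$ an inflection point) and $e_j|_E = p_j$, with analogous formulas in the $\PS^1 \times \PS^1$ cases for $s$ and $f$. For example, in the plane-and-cubic row $h = (l,\,3l' - \sum_{i=1}^6 e_i')$ gives $\widetilde{\psi}(h) = 3q - 9q' + \sum_{i=1}^6 p_i'$; setting this to zero and combining with the $d$-semistability relation $9q + 9q' = \sum_{i=1}^{12} p_i + \sum_{i=1}^6 p_i'$ collapses to $12q = \sum_{i=1}^{12} p_i$, exactly the relation in row 5.

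For the geometric half, I would argue case by case that the blown-up points are cut out on $E$ by an auxiliary plane curve arising from the perturbation term of the degeneration or from the branch locus of a double cover in Section \ref{sect:Explicit descriptions of families}, and then invoke the classical fact that a curve of degree $d$ in $\PS^2$ cuts a divisor of class $(dn)q$ on a smooth planar elliptic curve of degree $n$. In the plane-and-cubic example the twelve points are $\{g_4 = 0\} \cap E$ inside the plane $\{x = 0\}$, and a quartic cuts class $12q$ on the elliptic cubic; the relation $12q = \sum_{i=1}^{12} p_i$ drops out immediately. The main obstacles are the weighted blow-up rows (Sections \ref{sect: E7 singularity}--\ref{sect: E8 singularity and no line}), where $h_0$ is a nontrivial combination of $l$ and the exceptional curves with varying multiplicities (see e.g.\ Lemma \ref{lemma: root lattice E7E7A3}), so those multiplicities must be tracked through to the coefficient of $q$ and the coefficients of the $p_i$ in the final relation; and the hyperelliptic row (Section \ref{sect:hyperelliptic degen}), where $\overline{V_1} = \PS^2$ is glued along a conic via a double cover of the quadric cone, so the identification of $q'$ and the restriction $h_1|_E$ must be pulled back through the double-cover description of Proposition \ref{double cover Hirzeburch is blowup of P2} before the arithmetic on $E$ is carried out.
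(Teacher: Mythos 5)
Your first half --- deriving each relation from the two identities $\widetilde{\psi}(h)=0$ and the $d$-semistability relation $\widetilde{\psi}(\xi)=0$, then eliminating one of the sums --- is exactly the paper's argument. The paper phrases it as ``both the polarization $h$ and the double curve $\xi$ are Cartier, hence lie in the kernel of the period morphism,'' and then reads off the two linear relations in $\operatorname{Pic}^0(E)$ row by row; your worked example for the plane-and-cubic case reproduces the paper's computation verbatim, and your restriction rules $l|_E=3q$, $e_j|_E=p_j$ are the ones used throughout. The detour through Propositions \ref{prop: nef model extension} and \ref{AE23 Prop 7.23} is harmless but unnecessary: Cartier-ness of $h$ on $X_0$ already puts it in $\ker(\widetilde{\psi})$.

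The geometric half is where your proposal has a genuine gap, and you have also misplaced the difficulty. The weighted blow-up rows ($E_7+E_7+A_3$, $E_8+E_8+\langle-4\rangle$, $E_8+D_9$) that you flag as the main obstacle are in fact the easy cases: there the relation is read off tautologically from the existence of the degree $6$, $9$, or $7$ plane curve realizing $h_0$, with the multiplicities already built in. The hard cases are $D_8+D_8+\langle-4\rangle$, $D_{16}+\langle-4\rangle$, and $D_{17}$, where $V_0$ is only a \emph{birational} double cover of a Hirzebruch surface: Proposition \ref{double cover Hirzeburch is blowup of P2} applies only after blowing up two or three auxiliary points $r_i$, so it outputs a relation on $\{p_1,\dots,p_n\}\cup\{r_i\}$, and one must then eliminate the $r_i$ by proving separate sub-relations --- $3q'=p_1'+r_1+r_2$ from collinearity on the other component in the $D_8+D_8+\langle-4\rangle$ case, $r_1+r_2=3q-p_1$ via an argument with the two rulings of $\PS^1\times\PS^1$ in the $D_{16}+\langle-4\rangle$ case, and $3q=p_1+2r_i$ from tangency of three fibers of $\F_6$ to the branch curve in the $D_{17}$ case. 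Your proposed device (``an auxiliary plane curve of degree $d$ cuts class $(dn)q$'') does not produce these sub-relations, because the $r_i$ are not cut out on $E$ by any plane curve intrinsic to the original degeneration; they are artifacts of the projection used to exhibit the double-cover structure. Without this elimination step the geometric derivation fails in those three rows.
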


In Table \ref{Complete Description Table}, the Tyurin degenerations for $E_8+E_8+\langle -4 \rangle$ and $E_6+A_{11}$ constructed in Section \ref{sect:Explicit descriptions of families} are the models given in rows 2 and 5 respectively. Notice that Proposition \ref{prop:relations} and Table \ref{Complete Description Table} describe nef $\lambda$-families which extend the nef models we constructed in Section \ref{sect:Explicit descriptions of families}. The remainder of this section is dedicated to the proof of Proposition \ref{prop:relations}. We address each degeneration in turn, referring to each case by their corresponding lattice. From now on we adopt the conventions established in Construction \ref{Tyurin constructions}.

\begin{lemma}
In the $A_{11}+E_6$ case (Section \ref{sect:Plane and Cubic}), the period map imposes the condition $12q = \sum_{i=1}^{12}p_i$ where $p_i$ are the points we blow up to obtain $\operatorname{Bl}_{12}\PS^2$. This relation is also determined by the geometry of the degeneration.
\end{lemma}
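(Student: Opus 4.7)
The plan is to extract the relation $12q=\sum_{i=1}^{12}p_i$ in two independent ways: first from the Carlson period morphism applied to the polarization class, and second from a direct Bezout computation on the explicit family $\{xQ_3+tg_4=0\}$ of Section \ref{sect:Plane and Cubic}. Together these show both that the period map forces the relation and that the relation is already visible in the geometry.

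For the period-theoretic derivation, the starting point is the definition of an $\langle h\rangle$-quasipolarization, which requires $h\in\ker(\psi_{X_0})$, equivalently $\widetilde{\psi}(h)=0\in E$ where $h=(h_0,h_1)\in\widetilde{\Lambda}$ is a lift. I would take $h_0=l$ on $V_0=\operatorname{Bl}_{12}\PS^2$ and $h_1=3l'-\sum_{i=1}^{6}e_i'$ on $V_1=\operatorname{dP}_3$, as in Section \ref{sect:Plane and Cubic}. By Construction \ref{construct:period point}, the vanishing of $\widetilde{\psi}(h)$ amounts to the identity
\[
\mathcal{O}_{V_0}(h_0)\big|_E \;=\; \mathcal{O}_{V_1}(h_1)\big|_E \qquad \text{in }\operatorname{Pic}^3(E).
\]
I would then compute each restriction using that the chosen identities $q,q'$ are flexes of the plane cubic embeddings $E\hookrightarrow\overline{V_0}=\PS^2$ and $E\hookrightarrow\overline{V_1}=\PS^2$, so that $l|_E=3q$ and $l'|_E=3q'$. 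This gives $h_0|_E=3q$ and $h_1|_E=9q'-\sum_{i=1}^{6}p_i'$, and the period condition becomes $3q=9q'-\sum_{i=1}^{6}p_i'$. Combining this with the $d$-semistability relation
\[
9q+9q' \;=\; \sum_{i=1}^{12}p_i+\sum_{i=1}^{6}p_i'
\]
from Construction \ref{Tyurin constructions} (with $k_0=k_1=9$, $n=12$) eliminates both $q'$ and the $p_i'$ and yields $12q=\sum_{i=1}^{12}p_i$.

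For the geometric derivation, I would use the explicit equations of Section \ref{sect:Plane and Cubic}. The 12 points blown up on $\overline{V_0}=\PS^2$ are exactly the singular locus $\{t=x=Q_3=g_4=0\}$ of $X$, which inside the plane $\{x=0\}$ is the complete intersection of $E=\{x=Q_3=0\}$ with the quartic $\{g_4=0\}$. Bezout in this plane gives $\sum p_i\sim 4H|_E$ where $H$ is the hyperplane class, and since $H|_E=3q$ this produces $\sum p_i=12q$ directly from the equations, without invoking the period map.

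The main obstacle is bookkeeping of the conventions: one must verify that the identity point $q$ chosen in Construction \ref{Tyurin constructions} agrees with the flex dictating the plane-cubic group law (so that $l|_E=3q$ on the nose rather than up to a torsion ambiguity), and that Construction \ref{construct:period point} has been applied with the correct signs on each $V_i$. Once those are pinned down, both routes produce the same relation and the lemma follows.
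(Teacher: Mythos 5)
Your proposal is correct and follows essentially the same route as the paper: the paper obtains the two linear relations by applying the period morphism of Construction \ref{construct:period point} to both $h=(l,3l'-\sum_{i=1}^6e_i')$ and $\xi=(-3l+\sum_{i=1}^{12}e_i,3l'-\sum_{i=1}^6e_i')$, and your substitution of the $d$-semistability relation from Construction \ref{Tyurin constructions} is literally the same equation as $\widetilde{\psi}(\xi)=0$, while your Bezout argument on $\{x=Q_3=0\}\cap\{g_4=0\}$ matches the paper's geometric justification. The bookkeeping point you flag (that $q$ is a flex so $l|_E=3q$) is indeed the convention the paper uses implicitly, so nothing further is needed.
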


\begin{proof}
Recall that $h=(l,3l'-\sum_{i=1}^6e_i')$ and $\xi=(-3l+\sum_{i=1}^{12}e_i,3l'-\sum_{i=1}^6e_i')$. Since both the polarization $h$ and double curve $\xi$ are Cartier divisors, we see that $$3q-9q'+\sum_{i=1}^6p_i' = 0, \qquad 9q-\sum_{i=1}^{12}p_i+9q' -\sum_{i=1}^6p_i' = 0$$
and therefore $12q=\sum_{i=1}^{12}p_i$. By the construction of our degeneration in Section \ref{sect:Plane and Cubic}, the 12 points $p_i$ lie on the intersection of a planar cubic and planar quartic, therefore they satisfy the relation $12q=\sum_{i=1}^{12}p_i$.
\end{proof}

\begin{lemma}
In the $A_1+A_1+A_{15}$ case (Section \ref{sect:Two Quadrics}), the period map imposes the condition $16q=\sum_{i=1}^{16}p_i$ where the $p_i$ are the points we blow up to obtain $\operatorname{Bl}_{16}(\PS^1\times\PS^1)$. This relation is also determined by the geometry of the degeneration.
\end{lemma}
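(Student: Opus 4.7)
The plan is to follow the template of the $A_{11}+E_6$ case: combine the vanishing of the period map $\widetilde{\psi}$ on the polarization class $h$ and on the double-curve class $\xi$ to get two relations in $\operatorname{Pic}(E)$, then combine them and verify the result geometrically from the equations of Section \ref{sect:Two Quadrics}. From that construction $V_0=\operatorname{Bl}_{16}(\PS^1\times\PS^1)$ and $V_1=\PS^1\times\PS^1$, and $h=(s+f,\,s+f)$. Because the $(2,2)$ curve $Q_1\cap Q_2$ on $\overline{V_0}$ passes simply through all sixteen singular points we blow up, one has $E_{0,1}=-K_{V_0}=2s+2f-\sum_{i=1}^{16}e_i$, while $E_{1,0}=-K_{V_1}=2s+2f$; this determines $\xi=(-E_{0,1},E_{1,0})$.

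I would first feed $\xi$ into $\widetilde{\psi}$ (Construction \ref{construct:period point}). Using the conventions $\mathcal{O}_{\overline{V_0}}(1,0)|_E=\mathcal{O}_{\overline{V_0}}(0,1)|_E=2q$ on $V_0$ and the analogous relations with $q'$ on $V_1$, together with $e_i|_{E_{0,1}}=p_i$, the Cartierness of $\xi$ recovers the $d$-semistability relation $\sum_{i=1}^{16}p_i=8q+8q'$. Next, applying $\widetilde{\psi}$ to $h$ gives $h_0|_{E_{0,1}}=4q$ and $h_1|_{E_{1,0}}=4q'$, so Cartierness of $h$ yields $4q=4q'$ in $\operatorname{Pic}(E)$, whence $8q'=8q$. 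Substituting into the previous equation gives $\sum_{i=1}^{16}p_i=16q$, as claimed.

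For the independent geometric verification, I would use that the sixteen singularities of $X=\{Q_1Q_2+tg_4=0\}$ are exactly the solutions of $t=Q_1=Q_2=g_4=0$. Restricting $\{g_4=0\}$ to $Q_1\cong\PS^1\times\PS^1$ cuts out a $(4,4)$ divisor, whose scheme-theoretic intersection with the $(2,2)$ curve $E$ is precisely the sixteen points $p_i$. Under the chosen identifications, $\mathcal{O}_{Q_1}(4,4)|_E$ has class $16q$ in $\operatorname{Pic}(E)$, which reproduces $\sum p_i=16q$ directly.

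The only subtle step is bookkeeping: I would have to make the choices of origins $q,q'$ fully explicit and check that the identifications $\mathcal{O}(1,0)|_E=\mathcal{O}(0,1)|_E=2q$ (resp.\ $2q'$) on the two copies of $\PS^1\times\PS^1$ are the ones induced by the embeddings $E\hookrightarrow\overline{V_i}$ fixed in Construction \ref{Tyurin constructions}, and to keep the sign in $\widetilde{\psi}=L_0|_{E_{0,1}}\otimes L_1^{-1}|_{E_{1,0}}$ straight. Once the conventions are pinned down, both derivations---via the period map and via direct intersection on $Q_1$---proceed mechanically.
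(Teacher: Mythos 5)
Your proposal is correct and follows essentially the same route as the paper: apply the period map to $h=(s_1+f_1,s_2+f_2)$ and to $\xi$ to get $4q-4q'=0$ and $8q+8q'-\sum_{i=1}^{16}p_i=0$, combine, and then verify geometrically that the sixteen points are cut out on $E=Q_1\cap Q_2$ by the quartic $g_4$, so linear equivalence gives $16q=\sum p_i$. The only cosmetic difference is that you phrase the geometric check via the $(4,4)$ restriction of $g_4$ to $Q_1$ while the paper invokes the coplanarity criterion for the group law on $E\subset\PS^1\times\PS^1\subset\PS^3$; these amount to the same linear-equivalence computation.
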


\begin{proof}
Recall that $h=(s_1+f_1,s_2+f_2)$ and $\xi=(-2s_1-2f_1+\sum_{i=1}^{16}p_i,2s_2+2f_2)$. Applying the period map to $h$ and $\xi$ we obtain the relations
$$4q-4q'=0,\qquad 8q+8q'-\sum_{i=1}^{16}p_i=0$$
from which we obtain the desired relation. Four points $p_1,p_2,p_3,p_4$ sum to zero in the group law on elliptic curves in $\PS^1\times\PS^1 \subset\PS^3$  if and only if they are coplanar \cite{Sil09}. In the construction of Section \ref{sect:Two Quadrics} we blowup 16 points which lie on the intersection of a quadric and a quartic surface. Using linear equivalence we see that $16q=\sum_{i=1}^{16}p_i$.
\end{proof}

In the cases $D_8+D_8+\langle -4 \rangle,$ $D_{12}+D_5,$ $D_{16}+\langle -4 \rangle$, and $D_{17}$ one of the components is realised as a birational double cover of $\F_n$ for some $n$. By Proposition \ref{double cover Hirzeburch is blowup of P2} these double covers can also be obtained by blowing up points in $\PS^2$ which are subject to a relation.

\begin{lemma}
In the $D_{12}+D_5$ case (Section \ref{sect:non-normal along conic}) the period map imposes the condition $15q = 3p_1+\sum_{i=2}^{13}p_i$ on the points we blowup to obtain $\operatorname{Bl}_{13}\PS^2$. This relation is also determined by the geometry of the degeneration.
\end{lemma}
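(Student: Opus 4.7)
The plan is to mirror the computational strategy used in the preceding two lemmas: extract the relation in $\operatorname{Pic}(E)$ by evaluating the period morphism $\widetilde{\psi}$ on the two distinguished classes that extend to Cartier divisors on the total space $X$, namely the polarization $h$ and the double-curve class $\xi = \mathcal{O}_X(V_0)$. Since both $h$ and $\xi$ are Cartier on $X$, their images under $\widetilde{\psi}$ must be trivial in $\operatorname{Pic}^0(E) = E$, and this will produce two linear relations among $q, q', p_1,\dots, p_{13}, p_1',\dots, p_5'$ from which the desired identity is extracted.

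First I would record that on $V_0 = \operatorname{Bl}_{13}\PS^2$ the restriction map to $E \subset V_0$ satisfies $l\big{|}_E = 3q$ and $e_i\big{|}_E = p_i$, and similarly on $V_1 = \operatorname{dP}_4 = \operatorname{Bl}_5\PS^2$ we have $l'\big{|}_E = 3q'$ and $e_i'\big{|}_E = p_i'$. Plugging in $h = (2(l-e_1),\, 3l' - \sum_{i=1}^5 e_i')$ and using Construction \ref{construct:period point} gives the relation
\[
 \widetilde{\psi}(h) \;=\; (6q - 2p_1) - \Bigl(9q' - \sum_{i=1}^5 p_i'\Bigr) \;=\; 0 \quad\text{in } \operatorname{Pic}(E). \tag{$*$}
\]
Similarly, using $\xi = (-3l + \sum_{i=1}^{13} e_i,\, 3l' - \sum_{i=1}^5 e_i')$ (recall the orientation convention $\xi = (-E_{0,1}, E_{1,0})$ and that the double curve is anticanonical on each $V_i$), the Cartier condition on $\xi$ reads
\[
 \widetilde{\psi}(\xi)\;=\;\Bigl(-9q + \sum_{i=1}^{13}p_i\Bigr) - \Bigl(9q' - \sum_{i=1}^5 p_i'\Bigr)\;=\;0, \tag{$**$}
\]
which is precisely the $d$-semistability relation of Construction \ref{Tyurin constructions}. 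Solving $(*)$ for $\sum_{i=1}^5 p_i' = 9q' - 6q + 2p_1$ and substituting into $(**)$ eliminates $q'$ and the $p_i'$ entirely and produces
\[
 \sum_{i=1}^{13} p_i \;=\; 15q - 2p_1,\qquad\text{i.e.}\qquad 15q \;=\; 3p_1 + \sum_{i=2}^{13} p_i,
\]
which is the claimed period relation.

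For the geometric derivation I would invoke the explicit construction of $V_0$ from Section \ref{sect:non-normal along conic}. There it is shown that $V_0$ is a double cover of $\F_2$ branched in a smooth member of $|2s+8f|$, so Proposition \ref{double cover Hirzeburch is blowup of P2} with $n = 2$ applies verbatim and yields $2n+9 = 13$ blowup points on the anticanonical elliptic curve $D$ satisfying
\[
 (3n+9)q \;=\; (n+1)p_1 + \sum_{i=2}^{2n+9} p_i \;=\; 3p_1 + \sum_{i=2}^{13} p_i,
\]
with $p_1$ being the $(n+3) = 5$-fold node of the quintic image curve, i.e., the point through which all fibers of $V_0 \to \PS^1$ pass. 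The only subtlety worth flagging is that the coefficient of $p_1$ in the geometric relation and in the period-theoretic relation both equal three, reflecting the special role of $p_1$ as the fiber base-point; matching these two derivations of the same identity is what confirms that the period map is genuinely capturing the geometry of the family, rather than merely the combinatorics of the blowup. No serious obstacle is expected beyond this bookkeeping since the key input (Proposition \ref{double cover Hirzeburch is blowup of P2}) is already in hand.
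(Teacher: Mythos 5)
Your proposal is correct and follows essentially the same route as the paper: evaluate the period morphism on $h=(2(l-e_1),3l'-\sum_{i=1}^5 e_i')$ and $\xi=(-3l+\sum_{i=1}^{13}e_i,3l'-\sum_{i=1}^5 e_i')$ to get the two relations, eliminate the primed points to obtain $15q=3p_1+\sum_{i=2}^{13}p_i$, and then cite Proposition \ref{double cover Hirzeburch is blowup of P2} with $n=2$ for the geometric derivation. The only difference is that you spell out the restriction conventions and the elimination step, which the paper leaves implicit.
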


\begin{proof}
Recall that $h = (2(l-e_1), 3l'-\sum_{i=1}^5e_i')$ and $\xi=(-3l+\sum_{i=1}^{13}e_i,3l'-\sum_{i=1}^5e_i')$ Applying the period map to $h$ and $\xi$ one obtains the relations
$$6q-2p_1-9q'+\sum_{i=1}^5p_i'=0, \qquad 9q+9q'-\sum_{i=1}^{13}p_i-\sum_{i=1}^{5}p_i'=0$$
from which one obtains the desired relation. By applying Proposition \ref{double cover Hirzeburch is blowup of P2} with the case $n=2$, one is able to see the relation geometrically too.
\end{proof}

In the remaining cases where we obtained a birational double cover of a Hirzebruch surface we had to blow up points and so we cannot apply Proposition \ref{double cover Hirzeburch is blowup of P2} directly.

\begin{lemma}
In the $D_{8}+D_8+\langle -4 \rangle$ case (Section \ref{sect:non-normal along line}) the period map imposes the condition $12q+p_1'=3q'+2p_1+\sum_{i=2}^9p_i$ on the points we blowup. This relation is also determined by the geometry of the degeneration.
\end{lemma}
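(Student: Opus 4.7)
The plan is to derive the relation by applying the period morphism of Construction \ref{construct:period point} to a judiciously chosen Cartier class on the total space $X$, following the template of the earlier lemmas of Section \ref{sect:period domains}. Both the polarization $h$ and the component class $\xi=\mathcal{O}_X(V_0)$ are Cartier on $X$, so $h-\xi$ is as well, and hence its restriction to $E$ from the two sides of $X_0=V_0\cup_E V_1$ must agree in $\operatorname{Pic}(E)$. Choosing the class $h-\xi$ rather than $h$ alone produces the relation in exactly the form stated in the proposition.

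Explicitly, from Section \ref{sect:non-normal along line} I would record $h=(l-e_1,\,4l'-2e_1'-\sum_{i=2}^9 e_i')$ and $\xi=(-3l+\sum_{i=1}^9 e_i,\,3l'-\sum_{i=1}^9 e_i')$, which gives
\[
h-\xi=\Bigl(4l-2e_1-\textstyle\sum_{i=2}^9 e_i,\; l'-e_1'\Bigr).
\]
Using $l|_E\sim 3q$ and $e_i|_E=p_i$ on $V_0$, together with the analogous statements with primes on $V_1$, I would then compute
\[
(h-\xi)_0|_E \;=\; 12q - 2p_1 - \sum_{i=2}^9 p_i,\qquad (h-\xi)_1|_E \;=\; 3q' - p_1'.
\]
Equating these in $\operatorname{Pic}(E)$ via Construction \ref{construct:period point} and rearranging yields $12q+p_1'=3q'+2p_1+\sum_{i=2}^9 p_i$, as claimed.

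For the geometric verification I would appeal to Proposition \ref{double cover Hirzeburch is blowup of P2}. By Lemma \ref{Skewlines blowup}, $\operatorname{Bl}_2 V_0$ is a double cover of $\F_1$ branched along a smooth member of $|2s+6f|$, so Proposition \ref{double cover Hirzeburch is blowup of P2} with $n=1$ realizes $\operatorname{Bl}_2 V_0$ as $\operatorname{Bl}_{11}\PS^2$ whose eleven points on a plane cubic satisfy $12q=2p_1+\sum_{i=2}^{11}p_i$. Nine of these points are our $p_1,\ldots,p_9$ on $V_0$, while the two remaining points $p_{10},p_{11}$ arise from intersecting $V_0$ with the section $(a,b,c)=(0,1,0)$ where $h_2$ vanishes. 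The main obstacle will be showing that the sum $p_{10}+p_{11}$ of these two extra points matches the class $3q'-p_1'$ on the $V_1$ side, which requires carefully comparing Urabe's description of $V_1$ as the normalization of the non-normal quartic (with hyperplane class $|4l'-2e_1'-\sum_{i=2}^9 e_i'|$) with the gluing identification $V_0\cap V_1=E$; once this is established, subtracting this identification from the Proposition \ref{double cover Hirzeburch is blowup of P2} relation returns the required identity independently of the period-map argument.
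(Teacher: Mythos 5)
Your period-map computation is correct and is essentially the paper's argument in compressed form: the paper applies the period morphism to $h$ and to $\xi$ separately and combines the two resulting identities, whereas you apply it once to the single Cartier class $h-\xi=(4l-2e_1-\sum_{i=2}^9 e_i,\,l'-e_1')$. Your restrictions $12q-2p_1-\sum_{i=2}^9 p_i$ and $3q'-p_1'$ agree with the paper's data for $h$ and $\xi$, and equating them does yield the stated relation, so this half of the lemma is complete and differs from the paper only in packaging.

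The geometric half, however, has a genuine gap. You correctly invoke Proposition \ref{double cover Hirzeburch is blowup of P2} with $n=1$ to obtain $12q=2p_1+\sum_{i=2}^9 p_i+r_1+r_2$, where $r_1,r_2$ are the two points $(y_i,z_i;0,1,0)$ with $h_2(y_i,z_i)=0$ blown up in Lemma \ref{Skewlines blowup}, but the entire content of the geometric argument is the identification $r_1+r_2\sim 3q'-p_1'$ in the group law of $E$, and you explicitly leave this as an unproven ``main obstacle.'' The paper closes exactly this step with a concrete curve: on $V_1$ the locus $C:=\{a=w=0\}$ is a reducible member of the hyperplane class $|4l'-2e_1'-\sum_{i=2}^9 e_i'|$, decomposing as $C=C_1+C_2$ where $C_1$ is the double curve $E$ itself, so that $C_2\in|l'-e_1'|$ and $C_1\cap C_2=\{r_1,r_2\}$; since $(l'-e_1')|_E\sim 3q'-p_1'$, this gives $r_1+r_2\sim 3q'-p_1'$ (the collinearity of $p_1'$, $r_1$, $r_2$), and substituting into the Hirzebruch-cover relation finishes the proof. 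Without exhibiting such a curve, or some equivalent identification of the two residual points on the $V_1$ side, the geometric derivation remains incomplete, so you should supply this computation rather than defer it.
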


\begin{proof}
Recall that $h = (l-e_1,4l'-2e_1'-\sum_{i=2}^9e_i')$ and $\xi= (-3l+\sum_{i=1}^9e_i,3l'-\sum_{i=1}^9e_i')$. After applying the period map to $h$ and $\xi$ one obtains the relations
$$3q-p_1 -12q' + 2p_1'+\sum_{i=2}^9p_i'=0,\qquad
    9q-\sum_{i=1}^9p_i+9q'-\sum_{i=1}^9p_i'=0$$
from which one derives the desired relation. This relation can be seen geometrically too. Recall that in Section \ref{sect:non-normal along line} we blewup the two points $(y_i,z_i;0,1,0)$ in $V_0$ to obtain a double cover of $\F_1$. Denote these points by $r_1$ and $r_2$. By Proposition \ref{double cover Hirzeburch is blowup of P2}, we see that $12q=2p_1+p_2+\dots+p_9+r_1+r_2$. 

On $V_1$ the points $r_1, r_2$ belong to the locus defined by $C := \{a=w=0\}$ which is a reducible member of the linear system $|4l'-2e_1'-\sum_{i=2}^9e_i'|$ in $V_1$. We write $C=C_1+C_2$ where $C_i$ are the two connected components of $C$. Note in particular that $C$ contains the double curve as a component, say $C_1$, and so $C_2 \in |l'-e_1'|$. Furthermore $C_1 \cap C_2$ consists of the two points $r_1,r_2$. From this we see that $3q'=p_1'+r_1+r_2$ as $p_1',r_1,$ and $r_2$ are collinear. To obtain the desired relation, we substitute this into the relation $12q=2p_1+\sum_{i=2}^9p_i+r_1+r_2$.
\end{proof}

\begin{rk}
When treating this case as having two non-normal lines which are skew, one can also obtain this relation. In that approach the two rational surfaces $V_0$ and $V_2$ are birational double covers of $\F_1$. Furthermore, one can show that the points $r_1$ and $r_2$ belong to different fibers of $V_1$. However, the points of intersection of those fibers with $V_2$ belong to a common fiber of $V_2$, from which one obtains the desired relation.
\end{rk}

\begin{lemma}
In the $D_{16}+\langle -4 \rangle$ case (Section \ref{sect:non-normal along TC}) the period map imposes the condition $63q=15p_1+3\sum_{i=2}^{17}p_i$ on the points we blowup to obtain $\operatorname{Bl}_{17}\PS^2$. This relation is also determined by the geometry of the degeneration.
\end{lemma}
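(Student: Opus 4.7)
The plan is to apply the period point construction from Construction \ref{construct:period point} to both the polarization class $h=(3(l-e_1),s+2f)$ and the class $\xi=(-3l+\sum_{i=1}^{17}e_i,2s+2f)$, and then eliminate $q'$ between the two resulting relations in $\operatorname{Pic}(E)$.

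First I would compute the restrictions to the double elliptic curve $E$ using the standard conventions of the preceding sections. On $V_0=\operatorname{Bl}_{17}\PS^2$ the identity $q$ is an inflection of the anticanonical cubic, giving $l|_E\sim 3q$ and $e_i|_E\sim p_i$, so $h_0|_E\sim 9q-3p_1$ and $\xi_0|_E\sim -9q+\sum_{i=1}^{17}p_i$. On $V_1=\PS^1\times\PS^1$ I would use the convention that the two rulings restrict symmetrically as $s|_E\sim f|_E\sim 2q'$, consistent with the hyperplane section relation $(s+f)|_E\sim 4q'$ coming from the Segre embedding into $\PS^3$ and matching the symmetric computation in Section \ref{sect:Two Quadrics}. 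This yields $h_1|_E\sim 6q'$ and $\xi_1|_E\sim 8q'$.

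Next I would impose $\widetilde{\psi}(\xi)=0$ and $\widetilde{\psi}(h)=0$. The first is the $d$-semistability condition built into Construction \ref{Tyurin constructions}, and it gives $\sum_{i=1}^{17}p_i=9q+8q'$. The second expresses that the polarization extends from the general fiber to a line bundle on the central fiber, and it gives $6q'=9q-3p_1$. To eliminate $q'$ I would take the $\operatorname{lcm}(8,6)=24$: multiplying the first relation by $3$ gives $24q'=3\sum_{i=1}^{17}p_i-27q$, while multiplying the second by $4$ gives $24q'=36q-12p_1$. Equating and rearranging yields $63q=12p_1+3\sum_{i=1}^{17}p_i=15p_1+3\sum_{i=2}^{17}p_i$, which is the stated relation.

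For the geometric derivation I would invoke Proposition \ref{double cover Hirzeburch is blowup of P2} with $n=5$, applied to the surface $\operatorname{Bl}_2 V_0=\operatorname{Bl}_{19}\PS^2$ constructed in Section \ref{sect:non-normal along TC}. This yields a relation of the form $24\tilde{q}=6\tilde{p}_1+\sum_{i=2}^{19}\tilde{p}_i$ on the associated smooth cubic, where $\tilde{p}_1$ is the image of the $8$-fold node. The two extra points blown up in passing from $V_0$ to $\operatorname{Bl}_2 V_0$ are the points $(u_i,v_i;1,0,0)$ appearing in Section \ref{sect:non-normal along TC}; these lie on the double curve $E$ and are cut out there by a single ruling of $V_1$, so their sum equals the class $f|_E\sim 2q'$. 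Using this together with the $d$-semistability relation to eliminate these two points from the Proposition's relation would then reproduce the relation derived above. The main obstacle will be matching the labelings precisely: verifying that under the chain of contractions $\operatorname{Bl}_2 V_0\to\F_5\to\F_1\to\PS^2$ the $8$-fold node $\tilde{p}_1$ coincides with the point $p_1$ singled out by the convention following Proposition \ref{double cover Hirzeburch is blowup of P2}, and that the group law identities on the two copies of $\PS^2$ appearing in this chain are compatible.
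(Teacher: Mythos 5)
Your overall strategy matches the paper's: apply the period point construction to $h$ and $\xi$ and eliminate $q'$, then recover the relation geometrically via Proposition \ref{double cover Hirzeburch is blowup of P2} with $n=5$ together with an identification of the class $r_1+r_2$ of the two extra blown-up points. However, there is a genuine gap at the step where you set $s|_E\sim f|_E\sim 2q'$. The normalization of $q'$ only pins down the sum $(s+f)|_E\sim 4q'$; the two rulings of a quadric restrict to a $(2,2)$ curve as two \emph{different} $g^1_2$'s in general, so individually $f|_E\sim 2q'+\tau$ and $s|_E\sim 2q'-\tau$ for some a priori unknown $\tau\in\operatorname{Pic}^0(E)$. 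The paper's proof opens with a geometric digression precisely to control this: it shows $f|_E\sim q'+p_f$ with $4p_f=4q'$, i.e.\ $\tau=p_f-q'$ is $4$-torsion, and then the relation from $\widetilde{\psi}(h)=0$ is $9q-3p_1-5q'-p_f=0$, not $9q-3p_1-6q'=0$. Your final answer agrees with the paper's only because you multiply that relation by $4$, which happens to annihilate a $4$-torsion discrepancy — but you never establish that the discrepancy is torsion at all, and without that the elimination of $q'$ does not close. As written, the claim $f|_E\sim 2q'$ is equivalent to $p_f=q'$, which the paper neither asserts nor needs.

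The same unjustified identification reappears in your geometric argument. The two points $r_i=(u_i,v_i;1,0,0)$ do lie on the double curve, but they are \emph{not} cut out by a single ruling of $V_1$: in the paper's coordinates $r_1\in f_1\cap s_1$ and $r_2\in f_2\cap s_2$ with $f_1\neq f_2$ and $s_1\neq s_2$. The paper instead deduces $r_1+r_2\sim r_5+r_6$ by comparing the divisor $f_1+f_2+s_1+s_2$ with the curve $\{Ax+Dz=0\}\in|2s+f|$ through all six points $r_1,\dots,r_6$, and then shows $r_5+r_6\sim 3q-p_1$ because $r_5,r_6$ lie on a common fiber of $V_0$; substituting $r_1+r_2\sim 3q-p_1$ into the $n=5$ relation $24q=6p_1+\sum_{i=2}^{19}p_i$ gives $21q=5p_1+\sum_{i=2}^{17}p_i$ and hence the stated relation. (It is in fact true that $r_1+r_2\sim f|_E$ as divisor classes, since both equal $q'+p_f$, but your justification for it is incorrect, and feeding $r_1+r_2\sim 2q'$ into the elimination yields $105q=25p_1+5\sum_{i=2}^{17}p_i$ rather than the stated $63q=15p_1+3\sum_{i=2}^{17}p_i$; these are not interchangeable in $\operatorname{Pic}(E)$ without a torsion-freeness argument.) To repair the proof you should either establish the $4$-torsion property of $f|_E-2q'$ for this particular $E$, as the paper does, or carry the unknown class $\tau$ through the computation and show it drops out.
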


\begin{proof}
We first recall some details for the group law of an elliptic curve which is embedded in $\PS^1\times\PS^1$. Let $E\subset\PS^2$ be an elliptic curve with $q$ the identity of the group law. Let $p_s$ be a point on $E$ satisfying $4p_s=4q$, note that there are only finitely many such points. Consider the line passing through $q$ and $p_s$, one can see that this residual point which we call $p_f$ satisfies $4p_f=4q$. Notice also that $p_s+p_f=2q$. Blowing up the points $p_s$ and $p_f$, then contracting the line joining them defines a map to $\PS^1\times\PS^1$. From this we obtain a fiber passing through the points $q$ and $p_s$ and a section passing through the points $q$ and $p_f$. Moreover, the points $p_s$ and $p_f$ still satisfy $4p_s=4p_f=4q$. Finally, in $\PS^1\times\PS^1$, four points $p_1,\dots,p_4$ satisfy $p_1+\dots+p_4=4q$ if and only if they lie on a curve belonging to the linear system $|s+f|$.

Recall in this case that $h=(3(l-e_1),s+2f)$ and $\xi=(-3l+\sum_{i=1}^{17}e_i,2s+2f)$. Applying the period map to $h$ and $\xi$ one obtains
$$9q-3p_1-5q'-p_f=0, \qquad 9q+8q'-\sum_{i=1}^{17}p_i=0$$
from which the required relation can be deduced using the discussion above. To determine this relation geometrically, we first recall some notation of Section \ref{sect:non-normal along TC}. In this case, $V_0=\operatorname{Bl}_{17}\PS^2$ is a subvariety of the rational scroll $\F(0,0,5)$ with coordinates $(u,v;\alpha,\beta,d)$. There is also an embedding $\F(0,0,5)\hookrightarrow\F(0,0,0,6)$ which is defined by $(u,v;\alpha,\beta,d)\mapsto (u,v;\alpha v,\alpha u +\beta v,\beta u,d)=(u,v;a,b,c,d)$ so $\F(0,0,5)=\{bvu=au^2+cv^2\}\subset \F(0,0,0,6)$. The surface $V_0$ is then given by the equation 
$$\alpha^2(Au^2+Dv^2)+\alpha\beta(2Auv)+\beta^2(Av^2+Bu^2)+d^2g_{12}(u,v) =0,$$
and in $\F(0,0,5)$ we blew up two points, $(u_i,v_i;1,0,0)$ where $Au_i^2+Dv_i^2=0$, to obtain a double cover of $\F_5$. We now refer to these two points by $r_1$ and $r_2$. The surface $V_1=\PS^1\times \PS^1$ is defined by the three equations 
$$0=bz-ay-cw=by-ax-cz = Ab^2+Bc^2+Da^2 \subset \mathbb{P}^3\times \mathbb{P}^2.$$
On $V_0$, the elliptic double curve $E:=V_0\cap V_1$ is given by the equation $d=0$ and $E$ is given by $(x,y,z,w)=(u^3,u^2v,uv^2,v^3)$ on $V_1$. In particular, we see that the points $r_i\in E$ are given by $\left((u_i^3,u_i^2v_i,u_iv_i^2,v_i^3),(v_i,u_i,0)\right)$ for $i=1,2$. We will show that the relation $3q=p_1+r_1+r_2$ is satisfied in $V_0$.

As mentioned in Section \ref{sect:non-normal along TC}, the two rulings arise from choosing suitable $(a,b,c)\in\PS^2$ and from choosing $(z,w)\in\PS^1$. We denote these rulings by $|s|$ and $|f|$ respectively. We consider the two fibers $f_i$ obtained by setting $(z,w)=(u_i,v_i)\in\PS^1$ so $f_1\cap E=\{r_1,r_3\}$ and $f_2\cap E=\{r_2,r_4\}$. We also consider the two sections $s_i$ which arise from the choice $(a,b,c)=(v_i,u_i,0)\in\PS^2$ so $s_1\cap E=\{r_1,r_5\}$ and $s_2\cap E=\{r_2,\ r_6\}$ where $r_5=((0,0,0,1),(v_1,u_1,0))$ and $r_6=((0,0,0,1),(v_2,u_2,0))$. Notice that $r_5$ and $r_6$ belong to the same fiber and therefore satisfy $r_5+r_6+p_s=3q'$ in the group law of the elliptic curve.

We also consider the locus $\{Ax+Dz=0\}$ on $V_1$, which belongs to the linear system $|2s+f|$ and intersects the elliptic curve in all of the $r_i$ defined above. Considering $f_1+f_2+s_1+s_2$ and the locus $\{Ax+Dz=0\}$ shows that $r_1+r_2=3q'-p_s=r_5+r_6$ in the group law of the elliptic curve. 

Considering the points $r_5$ and $r_6$ in $V_0$, we see that they also belong to a fiber in $V_0$. After performing the necessary contractions to $\PS^2$ as in Proposition \ref{double cover Hirzeburch is blowup of P2} we see that $r_5+r_6=3q-p_1$ in the group law of the elliptic curve. Therefore we also see that $r_1+r_2=3q-p_1$. We obtain the desired relation by substituting this into the relation of Proposition \ref{double cover Hirzeburch is blowup of P2}.
\end{proof}

\begin{lemma}
In the $D_{17}$ case (Section \ref{sect:hyperelliptic degen}) the period map imposes the condition $45q=11p_1+2\sum_{i=2}^{18}p_i$ on the points we blowup to obtain $\operatorname{Bl}_{18}\PS^2$. This relation is also determined by the geometry of the degeneration.
\end{lemma}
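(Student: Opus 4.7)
The plan is to follow the same template used for the other eight degenerations in Section \ref{sect:period domains}: read off the classes $h$ and $\xi$, restrict them to $E$ via the period morphism $\widetilde{\psi}$ of Construction \ref{construct:period point}, and then eliminate the auxiliary identity $q' \in V_1$. From Section \ref{sect:hyperelliptic degen} we have $V_0 = \operatorname{Bl}_{18}\PS^2$ and $V_1 = \PS^2$, with $h_0 = 3(l - e_1)$ (three fibers of the $\F_6$-fibration) and $h_1 = 2l'$ (forced by $h_1^2 = 4$). Since both components are rational surfaces in a Tyurin Kulikov surface and meet along an anticanonical elliptic curve, we have $\xi = \bigl(-3l + \sum_{i=1}^{18} e_i,\; 3l'\bigr)$.

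Using $l|_E = 3q$, $e_i|_E = p_i$ and $l'|_E = 3q'$ in the group laws of $E$ on the two components, the period morphism yields
\begin{align*}
\widetilde{\psi}(h) &= (9q - 3p_1) - 6q' = 0,\\
\widetilde{\psi}(\xi) &= \Bigl(-9q + \sum_{i=1}^{18} p_i\Bigr) - 9q' = 0.
\end{align*}
Tripling the first relation and doubling the second equates two expressions for $18q'$, giving
$$27q - 9p_1 \;=\; 2\sum_{i=1}^{18} p_i - 18q, \qquad \text{so} \qquad 45q \;=\; 11p_1 + 2\sum_{i=2}^{18} p_i,$$
as required.

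For the geometric derivation, the $\xi$-relation is the $d$-semistability constraint built into Construction \ref{Tyurin constructions}. The $h$-relation reflects the fact that $h$ is a genuine global line bundle on $X$, so its restrictions to the double curve $E$ from $V_0$ and $V_1$ must agree: on $V_1$, the conic $2l'$ meets the cubic $E \subset \PS^2$ in six points summing to $6q'$; on $V_0$, each fiber in the class $l - e_1$ corresponds in the planar model to a line through $p_1 \in E$, meeting $E$ residually in two points summing to $3q - p_1$, so three such fibers contribute $9q - 3p_1$. Equating these and combining with $d$-semistability as above recovers the relation geometrically.

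The main obstacle I anticipate is the identification $h_0 = 3(l - e_1)$ inside the abstract model $V_0 = \operatorname{Bl}_{18}\PS^2$, which requires tracing the fibers of the $\F_6$-fibration through the birational chain of Proposition \ref{double cover Hirzeburch is blowup of P2}. This identification is already implicit at the end of Section \ref{sect:hyperelliptic degen} where the hyperplane class is computed; once it is in hand, the restrictions $(l-e_1)|_E = 3q - p_1$ and $l'|_E = 3q'$ follow from the standard identification of plane cubics with their Jacobians.
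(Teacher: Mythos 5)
Your period-map computation coincides with the paper's: the classes $h=(3(l-e_1),2l')$ and $\xi=(-3l+\sum_{i=1}^{18}e_i,3l')$ and the two resulting relations $9q-3p_1-6q'=0$ and $9q+9q'-\sum_{i=1}^{18}p_i=0$ are exactly what the paper writes down, and your elimination of $q'$ is correct. Where you diverge is on the second claim. Your ``geometric'' derivation is really the period-map argument restated: saying that the restrictions of $h$ and $\xi$ to $E$ from the two components must agree is precisely the statement $\widetilde{\psi}(h)=\widetilde{\psi}(\xi)=0$, so you obtain the same necessary condition twice rather than an independent verification. The paper's geometric argument is genuinely different and is self-contained on $V_0$: it recalls from Lemma \ref{lemma: blow up hyperelliptic case} that $\operatorname{Bl}_3V_0$ is a double cover of $\F_6$ branched in $|2s+16f|$, applies Proposition \ref{double cover Hirzeburch is blowup of P2} with $n=6$ to get $27q=7p_1+\sum_{i=2}^{18}p_i+r_1+r_2+r_3$, where $r_1,r_2,r_3$ are the three auxiliary points lying over the zeros of $L_3$, and then proves the tangency relation $3q=p_1+2r_i$ by observing that the three corresponding fibers of $\F_6$ are tangent to the branch curve along the negative section and become, after contracting to $\PS^2$, lines through $p_1$ tangent to the cubic at $r_i$; eliminating the $r_i$ yields $45q=11p_1+2\sum_{i=2}^{18}p_i$ without ever invoking $V_1$, the gluing, or $d$-semistability. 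What that route buys is a direct check that the points actually blown up in the explicit construction of Section \ref{sect:hyperelliptic degen} satisfy the relation, which is the intended content of ``determined by the geometry of the degeneration''; your version only shows the relation is forced on any smoothable model, i.e.\ the first half of the lemma again. To recover the second claim in the paper's sense you would need the tangency argument for the three points of Lemma \ref{lemma: blow up hyperelliptic case}, or something equivalent.
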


\begin{proof}
Recall in this case that $h = (3(l-e_1),2l')$ and $\xi=(-3l+\sum_{i=1}^{18}e_i,3l')$. Applying the period map to $h$ and $\xi$ one sees that
$$9q-3p_1-6q'=0, \qquad 9q+9q'-\sum_{i=1}^{18}p_i=0$$
from which one can derive the desired relation.

From a geometric point of view, recall that in Section \ref{sect:hyperelliptic degen} we defined $V_0$ to be a subvariety of $\F(-6,0,-5)$ with coordinates $(u,v;a,b,\alpha)$. We denote the three points $(u_i,v_i;0,0,1)$ where $(u_i,v_i)$ are solutions of the equation $L_3:=Auv^2+u^2v-\frac{k}{2}v^3=0$ by $r_1,r_2$ and $r_3$. After blowing up $r_1,r_2$ and $r_3$, the surface  $\operatorname{Bl}_3V_0$ can be viewed as a double cover of $\F_6$ by Lemma \ref{lemma: blow up hyperelliptic case}. The coordinates on $\F_6$ are $(u,v;a,b)$ and the branch curve is given by the locus $$\{b^2u^2(u+v)^2f_6^2-4L_3(a^2u+b^2(ug_{12}+v^4Q_6L_3))=0\}.$$

We first show that $3q=p_1+2r_i$ for $i=1,2,3$. The unique negative section on $\F_6$ is given by $\{b=0\}$ and intersects the branch locus along $b=uL_3=0$. Furthermore, the three fibers in $\F_6$ arising from the solutions of $L_3$ are tangent to the branch curve at $b=0$. The pullback of the negative section is anti-canonical and the pullback of each of these three fibers is reducible. After contracting as in Proposition \ref{double cover Hirzeburch is blowup of P2} we first obtain 3 fibers in $\F_1$ which are tangent to the anti-canonical curve. When contracting to $\PS^2$ we obtain three lines, each of which is tangent to a cubic curve in a point corresponding to $r_i$ and also passes through $p_1$. This proves the claim. Using this in conjunction with the relation given by Proposition \ref{double cover Hirzeburch is blowup of P2} gives the desired relation.
 \end{proof}
 
The final three cases are those which contained a lattice of type $E_7$ or $E_8$, due to the construction of the hyperplane sections in these cases, the geometric relations are almost immediate.

\begin{lemma}
In the $E_8+D_9$ case (Section \ref{sect: E8 singularity and no line}) the period map imposes the condition $21q = 3p_1+2\sum_{i=2}^{10}p_i$ on the points we blowup to obtain $\operatorname{Bl}_{10}\PS^2$. This relation is also determined by the geometry of the degeneration.
\end{lemma}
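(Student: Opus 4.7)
The plan is to follow the template already established by the lemmas for the $A$- and $D$-type lattices earlier in this section, but in the simpler form promised in the paragraph just above the statement: since the polarization $h$ lives entirely on one component of the central fiber, I expect the relation to fall out from a single application of the period map to $h$ alone, with no need to combine it with the $d$-semistability relation coming from $\xi$.

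For the period-map step I will use the explicit polarization class computed in Lemma \ref{lemma:root lattice E8D9}, namely $h = (7l - 3e_1 - 2\sum_{i=2}^{10} e_i,\, 0)$; the vanishing of the $V_1$-component reflects the fact that a general hyperplane of $\PS^3$ avoids the unique $\widetilde{E}_8$ singularity of the central fiber. Applying the period morphism of Construction \ref{construct:period point}, with the usual dictionary $l|_E = 3q$ (three collinear points on a plane cubic sum to the identity of its group law, placed at an inflection point) and $e_i|_E = p_i$, produces the class $21q - 3p_1 - 2\sum_{i=2}^{10} p_i$ in $\operatorname{Pic}^0(E) \cong E$. Since $h$ is the restriction of a Cartier divisor on the total space $X$, this class must vanish, which is precisely the stated relation.

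For the geometric interpretation, I will note that the pull-back to $V_0 = \operatorname{Bl}_{10}\PS^2$ of a generic hyperplane section of the quartic central fiber is the strict transform of a plane septic $C$ with a triple point at $p_1$ and nodes at $p_2,\ldots,p_{10}$, representing the class $7l - 3e_1 - 2\sum_{i=2}^{10}e_i$. Bezout gives $C \cdot E = 7\cdot 3 = 21$ in $\PS^2$, while the prescribed multiplicities already contribute $3 + 9\cdot 2 = 21$ intersections at the $p_i$. These therefore exhaust the intersection, so summing in the group law of $E$ yields $3p_1 + 2\sum_{i=2}^{10}p_i = 21q$.

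I do not anticipate any serious obstacle: as the author indicates, the three $E$-type cases are easier than the $D$-type ones precisely because the polarization is supported on a single component, so a single period-map equation already gives the complete relation. The only point needing care is that the linear system of hyperplane sections on $V_0$ really does have the announced class, which is already justified in the proof of Lemma \ref{lemma:root lattice E8D9} via the cited work of Urabe and of Laza--O'Grady on resolutions of quartic surfaces with an $\widetilde{E}_8$ singularity.
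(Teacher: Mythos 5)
Your proposal is correct and follows essentially the same route as the paper: apply the period morphism to the polarization class $h=(7l-3e_1-2\sum_{i=2}^{10}e_i,0)$ computed in Lemma \ref{lemma:root lattice E8D9} to get $21q-3p_1-2\sum_{i=2}^{10}p_i=0$, and observe that the geometric relation is immediate from the existence of the degree $7$ plane curve with a triple point at $p_1$ and double points at $p_2,\dots,p_{10}$. The only cosmetic difference is that the paper also records the relation coming from $\xi$ (for uniformity with the other cases) even though, as you correctly note, it is not needed here, and your Bezout count just makes explicit what the paper calls ``almost tautological.''
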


\begin{proof}
Reacll that $h=(7l-3e_1-2\sum_{i=2}^{10}e_i,0)$ and $\xi=(-3l+\sum_{i=1}^{10}e_i,3l'-\sum_{i=1}^{8}e_i')$. The period map applied to $h$ and $\xi$ gives the relations
$$21q-3p_1-2\sum_{i=2}^{10}p_i=0 ,\qquad 9q-\sum_{i=1}^{10}p_i+9q'-\sum_{i=1}^{8}p_i'=0$$
respectively. Determining the relation geometrically is almost tautological as we know from the construction of the hyperplane section in Lemma \ref{lemma:root lattice E8D9} that there is a degree 7 curve in $\PS^2$ with the desired properties.
\end{proof}

\begin{lemma}
In the $E_7+E_7+A_3$ case (Section \ref{sect: E7 singularity}) the period map imposes the condition $18q=2\sum_{i=1}^{7}p_i+\sum_{i=8}^{11}p_i$ on the points we blowup to obtain $\operatorname{Bl}_{4}\operatorname{dP}_2$. This relation is also determined by the geometry of the degeneration.
\end{lemma}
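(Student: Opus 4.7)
The plan is to follow the same two-step template used in the previous lemmas of this section: first extract the relation mechanically from the period morphism $\widetilde{\psi}$ of Construction \ref{construct:period point}, and then verify it geometrically from the explicit model of $V_0$ constructed in Section \ref{sect: E7 singularity}.

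For the period map step, I would start from the data already identified in Lemma \ref{lemma: root lattice E7E7A3}: the polarization $h = (6l - 2\sum_{i=1}^7 e_i - \sum_{i=8}^{11} e_i,\ 0)$ on $V_0 \cup V_1$, together with the double-curve class $\xi = \mathcal{O}_X(V_0)|_{X_0} = (-3l + \sum_{i=1}^{11} e_i,\ 3l' - \sum_{i=1}^7 e_i')$, read off from the anti-canonical class on each rational component $V_0 = \operatorname{Bl}_{11}\PS^2$ and $V_1 = \operatorname{dP}_2 = \operatorname{Bl}_7\PS^2$. Applying $\widetilde{\psi}$ with the standard identifications $l|_E = 3q$, $e_i|_E = p_i$ and their primed versions on $V_1$ (obtained by intersecting with the anti-canonical elliptic curve on each $V_i$) produces
$$\widetilde{\psi}(h) = 18q - 2\sum_{i=1}^7 p_i - \sum_{i=8}^{11} p_i, \qquad \widetilde{\psi}(\xi) = -9q + \sum_{i=1}^{11} p_i - 9q' + \sum_{i=1}^7 p_i',$$
both of which vanish in $\operatorname{Pic}(E)$ since $h$ and $\xi$ are Cartier on $X_0$. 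The vanishing of $\widetilde{\psi}(\xi)$ recovers the $d$-semistability relation already built into Construction \ref{Tyurin constructions}, while the vanishing of $\widetilde{\psi}(h)$ is precisely the additional relation $18q = 2\sum_{i=1}^7 p_i + \sum_{i=8}^{11} p_i$ claimed in the statement.

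For the geometric derivation, I would contract $V_0 = \operatorname{Bl}_{11}\PS^2$ all the way down to $\PS^2$: the anti-canonical double curve $E$ descends to a smooth plane cubic, and the hyperplane class $h_0$ descends to a sextic with nodes at $p_1,\ldots,p_7$ and simple points at $p_8,\ldots,p_{11}$. On the cubic $E$, the restriction of this sextic is the line bundle $\mathcal{O}_E(18q)$ of degree $18$, while by construction the intersection cycle is $2\sum_{i=1}^7 p_i + \sum_{i=8}^{11} p_i$; equating the two in $\operatorname{Pic}(E)$ gives the same relation and confirms that the two derivations agree. This is the exact analogue of the almost-tautological geometric argument used in the $E_8+D_9$ case immediately above.

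I do not expect any substantive obstacle: all of the real input (the explicit numerical class of $h_0$ and the identification $V_0 = \operatorname{Bl}_4 \operatorname{dP}_2$) has already been established in Lemmas \ref{lem: strict transform central fiber E7} and \ref{lemma: root lattice E7E7A3}. The only point requiring the slightest care is the standard identification $l|_E = 3q$, which follows from the fact that three collinear points on a smooth plane cubic sum to $3q$ in the group law when $q$ is chosen as a flex; after that the argument reduces to one line of bookkeeping on $E$.
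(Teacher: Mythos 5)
Your proposal is correct and follows essentially the same route as the paper: apply the period map to the classes $h=(6l-2\sum_{i=1}^7e_i-\sum_{i=8}^{11}e_i,0)$ and $\xi$ established in Lemma \ref{lemma: root lattice E7E7A3} to extract the relation, then observe geometrically that the hyperplane class descends to a degree 6 plane curve with double points at $p_1,\dots,p_7$ and passing through $p_8,\dots,p_{11}$, whose restriction to the cubic $E$ gives the same relation. The paper's own proof is exactly this two-step argument, stated more tersely.
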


\begin{proof}
Recall in this case that $h= (6l-2\sum_{i=1}^7e_i-\sum_{i=8}^{11}e_i,0)$ and $\xi=(-3l+\sum_{i=1}^{11}e_i,3l'-\sum_{i=1}^{7}e_i')$. Applying the period map to $h$ and $\xi$ gives rise to the relations
$$18q-2\sum_{i=1}^7p_i-\sum_{i=8}^{11}p_i=0 , \qquad 9q-\sum_{i=1}^{11}p_i+9q'-\sum_{i=1}^{7}p_i'=0$$
respectively. Again, the relation is determined geometrically in a tautological manner. Constructing the hyperplane section in Lemma \ref{lemma: root lattice E7E7A3} reveals the existence of a degree 6 curve in $\PS^2$ with the desired properties.
\end{proof}

\begin{lemma}
In the $E_8+E_8+\langle -4 \rangle$ case (Section \ref{sect: E8 singularity and a line}) the period map imposes the condition $27q=3\sum_{i=1}^{8}p_i+2p_9+p_{10}$ on the points we blowup to obtain $\operatorname{Bl}_{10}\PS^2$. This relation is also determined by the geometry of the degeneration.
\end{lemma}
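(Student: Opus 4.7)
The plan is to mirror the approach used in the previous two lemmas (the $E_8+D_9$ and $E_7+E_7+A_3$ cases). First I would recall from the proof of Lemma \ref{lemma:root lattice E8E8} that the polarization and double curve classes are
$$h = \left(9l - 3\sum_{i=1}^{8} e_i - 2e_9 - e_{10},\ 0\right), \qquad \xi = \left(-3l + \sum_{i=1}^{10}e_i,\ 3l' - \sum_{i=1}^{8} e_i'\right),$$
where $E = V_0 \cap V_1$ is embedded anti-canonically in each component. Since $h$ is a Cartier divisor on $X_0$, the period morphism $\widetilde{\psi}_{X_0}$ of Construction \ref{construct:period point} sends $h$ to the identity in $\operatorname{Pic}^0(E) = E$.

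Next I would evaluate the two restrictions to $E$, using the standard identifications $l|_E \sim 3q$ and $e_i|_E \sim p_i$ coming from $q$ being the identity of the group law on the plane cubic $E\subset\PS^2$. One checks $h_0\cdot(-K_{V_0}) = 27 - 24 - 2 - 1 = 0$, so that $h_0|_E$ already lies in $\operatorname{Pic}^0(E)$ and equals $27q - 3\sum_{i=1}^{8}p_i - 2p_9 - p_{10}$; since $h_1 = 0$, setting $h_0|_E - h_1|_E = 0$ gives
$$27q = 3\sum_{i=1}^{8}p_i + 2p_9 + p_{10},$$
which is the claimed relation. Applying $\widetilde{\psi}$ to $\xi$ only recovers the $d$-semistability relation $9q + 9q' = \sum_{i=1}^{10}p_i + \sum_{i=1}^{8}p_i'$ of Construction \ref{Tyurin constructions}, and so yields no new information.

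For the geometric derivation, the construction of $h_0$ in Lemma \ref{lemma:root lattice E8E8} realizes it as a plane curve of degree $9$ passing through $p_1,\dots,p_8$ with multiplicity $3$, through $p_9$ with multiplicity $2$, and through $p_{10}$ simply. By B\'ezout on the smooth cubic $E\subset\PS^2$, such a curve meets $E$ in a divisor linearly equivalent to $9\cdot(l|_E) \sim 27q$, and comparing this with the prescribed intersection multiplicities reproduces the relation above. As in the two preceding $E$-type cases, the argument is essentially tautological once the hyperplane section has been identified, so no substantive obstacle is expected; the only care needed is in tracking the $\operatorname{Pic}^0$-normalization when transferring divisor classes from $V_0$ to $E$.
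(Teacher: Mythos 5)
Your proposal matches the paper's argument: both apply the period morphism to the same classes $h=(9l-3\sum_{i=1}^{8}e_i-2e_9-e_{10},0)$ and $\xi$, read off the relation $27q=3\sum_{i=1}^{8}p_i+2p_9+p_{10}$ directly from $h$ (with $\xi$ contributing only the $d$-semistability relation), and then observe geometrically that the hyperplane section from Lemma \ref{lemma:root lattice E8E8} is a degree $9$ plane curve with the prescribed multiplicities at the $p_i$. Your added check that $h_0\cdot(-K_{V_0})=0$ is a harmless refinement of the same argument.
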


\begin{proof}
Recall that $h=(9l-3\sum_{i=1}^8e_i-2e_9-e_{10},0)$ and $\xi=(-3l+\sum_{i=1}^{10}e_i,3l'\sum_{i=1}^{8}e_i')$. The period map applied to $h$ and $\xi$ give rise to the relations
$$27q-3\sum_{i=1}^{8}p_i-2p_9-p_{10}=0 ,\qquad 9q -\sum_{i=1}^{10}p_i+9q'-\sum_{i=1}^{8}p_i'=0$$
respectively. Also in this case, given the class of a hyperplane section as found in Lemma \ref{lemma:root lattice E8E8} we see that there is a degree $9$ curve in $\PS^2$ with the required properties.
\end{proof}

This concludes the proof of Proposition \ref{prop:relations}.

\begin{rk}
The three cases $D_8+D_8+\langle-4\rangle$, $E_7+E_7+A_3$, $E_8+E_8+\langle-4\rangle$  can also be viewed as three component degenerations. Generally they are more difficult to work with in their three component form. Obtaining the relations as we have done in this section is especially non-trivial in the $E_8+E_8+\langle-4\rangle$ case.
\end{rk}

This explicitly shows for each Type II boundary component of $\overline{\mathcal{F}_4}^{tor}$ there is a $\langle 4 \rangle$-quasipolarized $\lambda$-family lying over it. The proof of this is an easy adaptation of the proof of \cite[Theorem 7.23]{AE23}, using Construction \ref{Tyurin constructions} and the relevant relation of Table \ref{Complete Description Table}. The important consequence of this is that the period map is surjective on each of the Type II boundary components of $\overline{\mathcal{F}_4}^{tor}$ because we have constructed families in each case where it is the identity. From this we continue by providing a complete list for all stable models of the central fibers of Tyurin degenerations of degree 4 K3 surfaces.

\section{The Classification of Models}\label{sect:clasification of models}
By a remark of Friedman \cite[Remark 3.9]{Fried84}, any Type II degeneration of K3 surfaces of degree 4 can be obtained from a Tyurin degeneration of degree 4 by performing base changes, flips, and twists. In this section we prove Theorem \ref{list of nef models} which describes the central fibers of all stable models of Tyurin degenerations of degree 4, distinguishing them from a geometric point of view. In addition we can see that up to relabelling, there are only two Tyurin degenerations of degree 4 which admit multiple stable models. These arise from the boundary components $E_6+A_{11}$ and $E_8+E_8+\langle -4 \rangle$. Table \ref{Complete Description Table} also demonstrates several key differences between the $D_8+D_8+\langle -4 \rangle$ and $E_8+E_8+\langle -4 \rangle$ cases, even though they both admit models which have $\operatorname{Bl}_9\PS^2\cup\operatorname{Bl}_9\PS^2$ as their central fiber. Namely, the relations on the points one blows up is different, but we will see another difference in the proof of Theorem \ref{list of nef models} and in Figure \ref{fig:figures}. The remainder of this section is dedicated to proving Theorem \ref{list of nef models}. We first prove a technical lemma which is used in the proof of Theorem \ref{list of nef models}.


\begin{lemma}\label{sections on central fiber come from global}
Let $\pi:X \rightarrow \Delta$ be a Tyurin degeneration K3 surfaces with $L$ a nef line bundle on $X$ such that $L_t$ is nef and big for all fibers. Then there is a surjection $H^0(X,L) \twoheadrightarrow H^0(X_0,L_0)$.
\end{lemma}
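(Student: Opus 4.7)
The plan is to reduce the surjectivity to a vanishing/base-change statement. The starting point is the short exact sequence of sheaves on $X$
\[
0 \longrightarrow L(-X_0) \longrightarrow L \longrightarrow L_0 \longrightarrow 0.
\]
Since $X_0=\pi^{-1}(0)$ is cut out by $\pi^*t$, one has $\mathcal{O}_X(-X_0)\simeq\pi^*\mathcal{O}_\Delta(-0)\simeq \mathcal{O}_X$, so the first map is multiplication by $t$. Taking the long exact sequence of cohomology, the cokernel of $H^0(X,L)\to H^0(X_0,L_0)$ is the kernel of $\cdot t\colon H^1(X,L)\to H^1(X,L)$. Hence it suffices to show this multiplication map is injective, or equivalently (working locally on $\Delta$) that $R^1\pi_*L$ is torsion free on $\Delta$.

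I would obtain this via Grauert's theorem on cohomology and base change: if $h^1(X_t,L_t)$ is constant in $t$, then $R^1\pi_*L$ is locally free and $\pi_*L$ commutes with base change. For $t\neq 0$ the fiber $X_t$ is a smooth K3 surface and $L_t$ is nef and big, so Kawamata--Viehweg vanishing gives $H^1(X_t,L_t)=0$. The remaining point, and what I expect to be the main obstacle, is the corresponding vanishing on the singular central fiber: $H^1(X_0,L_0)=0$ for the nef line bundle $L_0$ on the Tyurin Kulikov surface $X_0=V_0\cup_E V_1$ (which is nef and big by flatness, since $L_0^2=L_t^2=4>0$).

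To handle this, I would use the Mayer--Vietoris sequence
\[
0\to L_0 \to L_0|_{V_0}\oplus L_0|_{V_1}\to L_0|_E\to 0
\]
and reduce to showing $H^1(V_i,L_0|_{V_i})=0$ together with surjectivity of $H^0(V_0,L_0|_{V_0})\oplus H^0(V_1,L_0|_{V_1})\twoheadrightarrow H^0(E,L_0|_E)$. Each $V_i$ is a smooth rational surface on which $L_0|_{V_i}$ is nef (after the flops of Shepherd--Barron used in the setup); since $-K_{V_i}$ is effective and meets every curve non-negatively on its rational components, the desired $H^1$-vanishing follows from Kawamata--Viehweg on rational surfaces (or directly from Kodaira-type vanishing since $L_0|_{V_i}-K_{V_i}$ is big and nef). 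Surjectivity onto $H^0(E,L_0|_E)$ is automatic once one notes that $L_0|_E$ is a degree $(L_0\cdot E)_{V_i}$ line bundle extending to both components and that the restriction maps are surjective by the $H^1$-vanishing on each $V_i$ combined with the relevant exact sequences for $E\subset V_i$.

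Once $H^1(X_0,L_0)=0$ is established, Riemann--Roch and flatness give $h^0(X_t,L_t)$ constant, so base change applies: $\pi_*L$ is locally free on $\Delta$ and the natural map $(\pi_*L)\otimes k(0)\to H^0(X_0,L_0)$ is an isomorphism. Since $\Delta$ is affine (in fact Stein), $H^0(X,L)=H^0(\Delta,\pi_*L)$ surjects onto the fiber $(\pi_*L)\otimes k(0)$, yielding the required surjection $H^0(X,L)\twoheadrightarrow H^0(X_0,L_0)$.
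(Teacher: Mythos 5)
Your strategy is genuinely different from the paper's: the paper never analyses the central fiber at all, but instead applies a relative Kawamata--Viehweg vanishing theorem (Ancona) to get $R^q\pi_*L=0$ for $q\geq1$ directly, using $\omega_X=\mathcal{O}_X$ and the fact that $L$ is relatively nef and big on the generic fiber, and then concludes by analytic cohomology-and-base-change. Your opening reduction via $0\to L\xrightarrow{\cdot t}L\to L_0\to 0$ is fine, and in fact once one knows $R^1\pi_*L=0$ it already finishes the proof (over the Stein base $H^1(X,L)=H^0(\Delta,R^1\pi_*L)=0$, so the connecting map vanishes). The divergence is that you instead try to establish $H^1(X_0,L_0)=0$ by hand on the normal crossings central fiber, and that is where there is a genuine gap.

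Your justification of $H^1(V_i,L_0|_{V_i})=0$ does not work as stated. You claim $-K_{V_i}$ is effective ``and meets every curve non-negatively,'' so that $L_0|_{V_i}-K_{V_i}$ is nef and big and Kodaira/Kawamata--Viehweg applies. But $-K_{V_i}$ is the double curve $E$, and $E^2=\deg N_{E/V_i}$ is negative on one of the two components in most of the cases occurring here (by $d$-semistability the two degrees sum to zero). Concretely, in the $D_{17}$ case $V_0=\operatorname{Bl}_{18}\PS^2$ one has $-K_{V_0}=3l-\sum_{i=1}^{18}e_i$ with square $-9$, and with $L_0|_{V_0}=3(l-e_1)$ one computes $(L_0|_{V_0}-K_{V_0})\cdot E=18-4-17=-3<0$; so $L_0|_{V_0}-K_{V_0}$ is not even nef and KV gives nothing. (The vanishing itself is still true --- here $3(l-e_1)$ is pulled back from $3f$ on $\F_1$ --- but it requires Harbourne-type vanishing for nef divisors on anticanonical rational surfaces, or Friedman's and Shepherd-Barron's results on linear systems on Type II degenerations, not plain KV.) The same issue undermines your surjectivity step onto $H^0(E,L_0|_E)$: via $0\to L_0|_{V_i}\otimes\mathcal{O}(K_{V_i})\to L_0|_{V_i}\to L_0|_E\to 0$ you would need $H^1(V_i,L_0|_{V_i}+K_{V_i})=0$, i.e.\ KV for $L_0|_{V_i}$ itself, but $L_0|_{V_i}$ is frequently not big on $V_i$ (for instance $h_1=0$ in the $\widetilde{E_7}$ and $\widetilde{E_8}$ cases, and $h_0=2(l-e_1)$ has square zero in the $D_{12}+D_5$ case). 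So the central-fiber vanishing needs either the correct citations or a case-by-case verification; alternatively, invoking the relative vanishing theorem as the paper does removes the need for it entirely.
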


\begin{proof}
Note that by a result of Shepherd-Barron \cite[Corollary 3.1]{SB83}, $L$ can always be arranged to be nef. In any case we can use a relative version of Kawamata-Viehweg vanishing \cite[Theorem 2.1]{Ancona87}, as $L$ is relatively nef and is big on the generic fiber. Therefore $R^q\psi_*(L) = 0$ for $q \geq 1$ because $K_{X}=0$. Similarly, we see that $H^q(X_t,L_t)=0$ for $q\geq 1$.

We now use a complex analytic version for cohomology and base change. We use this as opposed to the algebraic setting because in general a Kulikov model may be analytic and non-algebraic. A reference for the results in the analytic setting can be found in \cite[Chapter III.3]{GlobalTheory}, Corollary 3.7 is especially relevant and all the pre-requisites are satisfied. As $L$ is a line bundle, it is locally free and hence flat over $X$ by \cite[III.9.2]{Har77}. Therefore $L$ is flat over $\Delta$ by the transitivity of flatness.

From this we consider the map
$$0=R^1\psi_*(L)_t\rightarrow R^1\psi_*(L/\hat{\mathfrak{m}}_tL)_t=H^1(X_t,L_t)=0$$
which is surjective and note that  $R^1\psi_*(L)_t=0$ is locally free. Therefore by \cite[Corollary III.3.7]{GlobalTheory} we see that the map
$$R^0\psi_*(L)_t\rightarrow R^0\psi_*(L/\hat{\mathfrak{m}}_tL)_t=H^0(X_t,L_t)$$
is surjective.

The stalk $R^0\psi_*(L)_t \simeq H^0(X_t,L)$ using \cite[Lemma III.1.3]{GlobalTheory}. Then as $L$ is nef it is globally generated, and so the natural map $H^0(X,L)\rightarrow H^0(X_t,L)$ is surjective. Therefore we have a composition of surjective maps $$H^0(X,L)\rightarrow H^0(X_t,L)=R^0\psi_*(L)_t\rightarrow R^0\psi_*(L/\hat{\mathfrak{m}}_tL)_t=H^0(X_t,L_t)$$ which gives the desired conclusion when applied to the fiber at $t=0$.
\end{proof}
As a consequence of Lemma \ref{sections on central fiber come from global}, given a nef and relatively big line bundle $L$ on a Tyurin degeneration, an effective divisor in the linear system $|L_0|$ can be extended to an effective divisor in $|L|$. Furthermore, in the case that $L_t$ is a polarization for $t \neq 0$, effective members of $|L_0|$ are guaranteed by upper semicontinuity.
We now prove a result which determines how two nef models are related.

\begin{lemma}\label{nef polarisations are related}
Let $\pi:X\rightarrow \Delta$ be a Tyurin degeneration of K3 surfaces. Let $L,L'\in \operatorname{Pic}(X)$ be two line bundles and suppose there exist integers $m,m'\in\Z$ such that $L_t^m=(L_t')^{m'}$ for all $t\neq0$. Then there exists an integer $n\in\Z$ such that $(L')^{m'}\simeq L^m\otimes\mathcal{O}_X(nV_0)$.
\end{lemma}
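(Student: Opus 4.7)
The plan is to let $M := L^m \otimes (L')^{-m'} \in \operatorname{Pic}(X)$ and show $M \simeq \mathcal{O}_X(-nV_0)$ for some integer $n$, which is equivalent to the desired statement. I would proceed in two stages: first show that $M$ is trivial on the smooth locus $X^* := \pi^{-1}(\Delta^*) = X \setminus X_0$, then extend a trivialising section across $X_0$ and analyse its divisor, which must be supported on $X_0 = V_0 \cup V_1$.

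For the first stage, the hypothesis gives $M|_{X_t} \simeq \mathcal{O}_{X_t}$ for every $t \in \Delta^*$, so $h^0(X_t, M|_{X_t}) = 1$ and $h^1(X_t, M|_{X_t}) = h^1(\mathcal{O}_{X_t}) = 0$ since each $X_t$ is a K3 surface. Cohomology and base change applied to the smooth proper family $\pi\colon X^* \to \Delta^*$ then shows that $\pi_*(M|_{X^*})$ is a line bundle on $\Delta^*$ whose formation commutes with base change. Since $\operatorname{Pic}(\Delta^*) = 0$, this line bundle is trivial, and any generator yields a section $s \in H^0(X^*, M)$ whose restriction to each fiber is a nonzero constant in $H^0(X_t, \mathcal{O}_{X_t}) = \C$, and is therefore nowhere vanishing. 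Thus $s$ trivialises $M|_{X^*}$.

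For the second stage, extend $s$ to a rational section $\tilde{s}$ of $M$ on all of $X$, which is possible because $X$ is smooth and $M$ is a line bundle. Since $s$ is nowhere vanishing on $X^*$, the divisor $\operatorname{div}(\tilde{s})$ is supported on $X_0$, and as $V_0$ and $V_1$ are the only irreducible components of $X_0$, we may write $\operatorname{div}(\tilde{s}) = aV_0 + bV_1$ for integers $a, b \in \Z$. Hence $M \simeq \mathcal{O}_X(aV_0 + bV_1)$. Because $V_0 + V_1 = \pi^{-1}(0)$ is the principal divisor defined by $\pi^* t$, with $t$ a coordinate on $\Delta$, we have $\mathcal{O}_X(V_0 + V_1) \simeq \mathcal{O}_X$, so $M \simeq \mathcal{O}_X((a-b)V_0)$. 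Setting $n := b - a$ yields $(L')^{m'} \simeq L^m \otimes \mathcal{O}_X(nV_0)$, as required. The only non-routine step is the triviality of $M|_{X^*}$, which combines the K3 vanishing $h^1(\mathcal{O}_{X_t}) = 0$ (so that cohomology and base change cleanly produces a global section) with $\operatorname{Pic}(\Delta^*) = 0$; the divisor-theoretic conclusion on the central fiber is then routine.
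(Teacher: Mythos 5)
Your overall strategy is exactly the paper's: the published proof is a two-sentence sketch asserting that $L^m$ and $(L')^{m'}$ differ by a line bundle supported on the central fiber and that $\mathcal{O}_X(X_0)=\mathcal{O}_X(V_0+V_1)$ is trivial, which is precisely what you prove in detail. Your first stage (triviality of $M|_{X^*}$ via $h^1(\mathcal{O}_{X_t})=0$, cohomology and base change, and $\operatorname{Pic}(\Delta^*)=0$) and your final reduction $\mathcal{O}_X(aV_0+bV_1)\simeq\mathcal{O}_X((a-b)V_0)$ are both correct and are the right way to fill in the paper's sketch.

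The one step that does not hold as stated is the extension of $s$ to a rational section of $M$ on $X$ ``because $X$ is smooth and $M$ is a line bundle.'' This is fine in the algebraic category, but a Kulikov model is a priori only a complex manifold over the disc (the paper itself invokes the analytic theory of direct images elsewhere for exactly this reason), and in the analytic category a holomorphic section defined off a divisor need not extend meromorphically: your $s$ is only determined up to a unit of $\mathcal{O}(\Delta^*)$, and multiplying by $e^{1/t}\circ\pi$ produces an equally valid trivialisation of $M|_{X^*}$ with an essential singularity along $X_0$. The fix is to avoid $\Delta^*$ altogether: $\pi_*M$ is coherent on $\Delta$ by Grauert, torsion-free because $X$ is irreducible, hence locally free on the smooth curve $\Delta$, of rank $1$ by your base-change computation on $\Delta^*$, and therefore trivial on the Stein contractible base $\Delta$. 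A generator $\sigma\in H^0(X,M)$ is then globally holomorphic, restricts to a nonzero constant on each $X_t$ with $t\neq 0$, and so has divisor $aV_0+bV_1$ with $a,b\geq 0$; the rest of your argument goes through unchanged.
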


\begin{proof}
As $L_t^m=(L_t')^{m'}$ for all $t\neq0$, we see that $L^m$ and $(L')^{m'}$ differ by a line bundle which is supported on the central fiber. Since $\mathcal{O}_X(X_0)$ is trivial, they must differ by $\mathcal{O}_X(nV_0)$ for some integer $n$. Therefore $(L')^m\simeq L^m\otimes\mathcal{O}_X(nV_0)$ as required.
\end{proof}

Notice that if we define $h := L_0$ and $\xi=\mathcal{O}_X(V_0)|_{X_0}$ we see that $(L'_0)^{m'}=mh+n\xi$. As a consequence of this we form the following definition.

\begin{defn}
Given $X\rightarrow \Delta$ a Tyurin degeneration with central fiber $X_0 = V_0\cup V_1$. Let $L$ be a polarization, with $h:=L_0$ and $\xi = \mathcal{O}_X(V_0)|_{X_0}$. We define \emph{lifted polarizations} to be classes on $X_0$ of the form $mh+n\xi$ for integers $m,n$. We define the set of lifted polarizations to be $$\operatorname{Lift}(X_0):=\{mh+n\xi \text{ }\big{|} \text{ }m,n\in\Z\}\subset H^2(X_0,\Z).$$ An \emph{effective lifted polarization} is a lifted polarization which contains an effective divisor. We denote the set of effective lifted polarizations by $\operatorname{Lift_{\geq0}}(X_0)$.
\end{defn}

From now on we refer to the central fibers of the Tyurin degenerations constructed in Section \ref{sect:Explicit descriptions of families} as \emph{standard models} and $X_0$ is used to denote the central fiber of a standard model. To prove Theorem \ref{list of nef models}, we utilise the standard models and the additional relations we constructed in Section \ref{sect:period domains}. For each standard model we construct a decomposition of the cone $\operatorname{Lift}_{\geq0}(X_0)$ in the following way. 

First, let the vertex $(m,n)$ be used to denote the lifted polarization $mh+n\xi$ and let $H_{m,n}\in H^0(X_0,mh+n\xi)$ be an effective divisor. Either $H_{m,n}$ is nef or it is not. If $H_{m,n}$ is nef then we construct the stable model associated to $(X_0,H_{m,n})$, which we denote by $\left(\overline{X_{m,n}},\overline{H_{m,n}}\right)$. Note that if $H_{m,n}$ is ample then $\left(\overline{X_{m,n}},\overline{H_{m,n}}\right)=(X_0,H_{m,n})$. If $H_{m,n}$ is not nef then by a result of Shepherd-Barron \cite[Theorem 1]{SB83} there is a sequence of flops $\phi:X_0\dashrightarrow X_{m,n}$ such that the strict transform $\phi_*H_{m,n}$ is nef. We then construct the stable model associated to $(X_{m,n},\phi_*H_{m,n})$ which we also denote by $\left(\overline{X_{m,n}},\overline{H_{m,n}}\right)$. Note once again that if $\phi_*H_{m,n}$ is ample, then $\left(\overline{X_{m,n}},\overline{H_{m,n}}\right)=(X_{m,n},\phi_*H_{m,n})$. 

The point $(m,n)$ belongs to the interior of a chamber if either $H_{m,n}$ is ample on $X_0$ or there is a sequence of flops $\phi:X_0\dashrightarrow X_{m,n}$ such that $\phi_*H_{m,n}$ is ample. The label of this chamber is $X_0$ if $H_{m,n}$ is ample or $X_{m,n}$ if instead $\phi_*H_{m,n}$ is ample. As the figures are 2-dimensional and the ample cone is convex, the chambers meet along rays which correspond to the situation that either $H_{m,n}$ or $\phi_*H_{m,n}$ is nef because the nef cone is the closure of the ample cone. Notice that the stable model associated to an internal ray is obtained by contracting curves on the stable model of a neighbouring chamber. Moreover, two chambers which meet along a ray are related by a flop. Finally, the stable model associated to a point on an exterior ray contracts a component of the stable model of the neighbouring chamber to a point.

In the diagrams of Figure \ref{fig:figures} the rays correspond to the positive scalar multiples of a given vertex. For example in Figure \ref{nef cone A1A1A15} the ray labeled $2h+\xi$ denotes positive multiples of the point $(2,1)$. In each diagram the point $(1,0)$ gives rise to the singular degenerations we used in Section \ref{sect:Explicit descriptions of families}. The wall and chamber decompositions of $\operatorname{Lift_{\geq0}}(X_0)$ for each case are provided in Figure \ref{fig:figures} and the proofs for these decompositions are provided in Lemmas \ref{lifted polarization A1A1A15}--\ref{lifted polarization E8E8-4}.

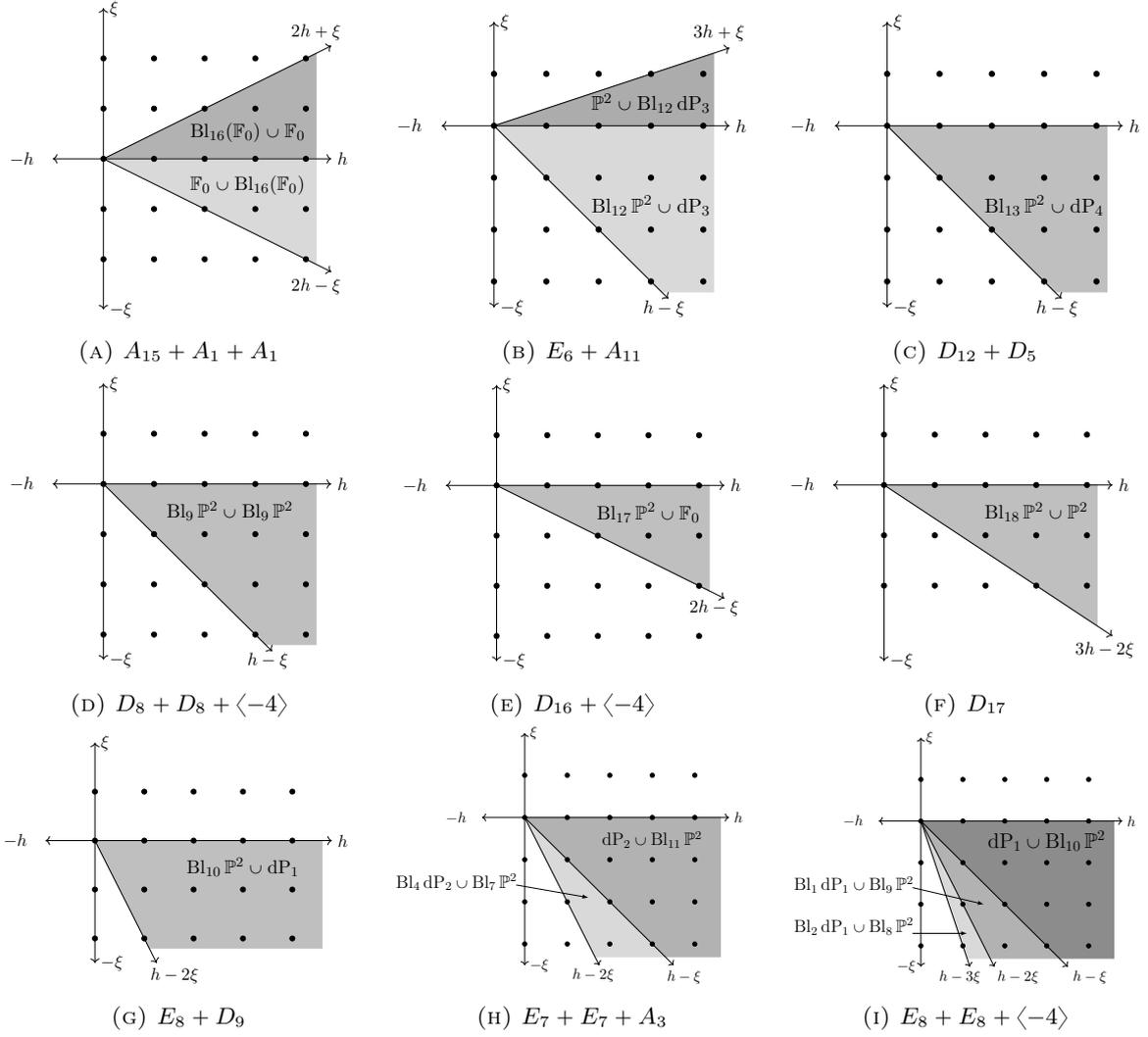
\begin{figure}[h]
\centering

\begin{subfigure}{0.315\textwidth}
\resizebox{\textwidth}{!}{
\begin{tikzpicture}
\draw [gray!60, fill=gray!60] (0,0)--(4.2,0)--(4.2,2.1)--(0,0);
\draw [gray!30, fill=gray!30] (0,0)--(4.2,0)--(4.2,-2.1)--(0,0);
\draw [->] (0,0)--(4.5,0);
\draw [->] (0,0)--(-1,0);
\draw [->] (0,0)--(0,3);
\draw [->] (0,0)--(0,-3);
\draw [->] (0,0)--(4.5,2.25);
\draw [->] (0,0)--(4.5,-2.25);
\draw [fill] (0,0) circle [radius=0.05]; 
\draw [fill] (0,1) circle [radius=0.05]; 
\draw [fill] (0,2) circle [radius=0.05]; 
\draw [fill] (1,0) circle [radius=0.05]; 
\draw [fill] (1,1) circle [radius=0.05]; 
\draw [fill] (1,2) circle [radius=0.05]; 
\draw [fill] (2,0) circle [radius=0.05]; 
\draw [fill] (2,1) circle [radius=0.05]; 
\draw [fill] (2,2) circle [radius=0.05];  
\draw [fill] (3,0) circle [radius=0.05]; 
\draw [fill] (3,1) circle [radius=0.05]; 
\draw [fill] (3,2) circle [radius=0.05];  
\draw [fill] (4,0) circle [radius=0.05];
\draw [fill] (4,1) circle [radius=0.05];
\draw [fill] (4,2) circle [radius=0.05];
\draw [fill] (0,-1) circle [radius=0.05]; 
\draw [fill] (0,-2) circle [radius=0.05]; 
\draw [fill] (1,-1) circle [radius=0.05];
\draw [fill] (1,-2) circle [radius=0.05]; 
\draw [fill] (2,-1) circle [radius=0.05]; 
\draw [fill] (2,-2) circle [radius=0.05];  
\draw [fill] (3,-1) circle [radius=0.05]; 
\draw [fill] (3,-2) circle [radius=0.05]; 
\draw [fill] (4,-1) circle [radius=0.05];
\draw [fill] (4,-2) circle [radius=0.05];
\node [right] at (0,3) {$\xi$};
\node [right] at (0,-3) {$-\xi$};
\node [right] at (4.5,0) {$h$};
\node [left] at (-1.25,0) {$-h$};
\node [above] at (4.2,2.25) {$2h+\xi$};
\node [below] at (4.2,-2.25) {$2h-\xi$};
\node [below] at (2.85,0.85) {\large $\operatorname{Bl}_{16}(\F_0)\cup \F_0$};
\node [above] at (2.85,-0.85) {\large $\F_0\cup\operatorname{Bl}_{16}(\F_0)$};
\end{tikzpicture}
}
\subcaption{$A_{15}+A_1+A_1$}
\label{nef cone A1A1A15}
\end{subfigure}
\hfill
\begin{subfigure}{0.315\textwidth}
\resizebox{\textwidth}{!}{\begin{tikzpicture}
\draw [gray!30, fill=gray!30] (0,0)--(3.2,-3.2)--(4.2,-3.2)--(4.2,0)--(0,0);
\draw [gray!65, fill=gray!65] (0,0)--(4.2,0)--(4.2,1.4)--(0,0);
\draw [->] (0,0)--(4.5,0);
\draw [->] (0,0)--(-1,0);
\draw [->] (0,0)--(0,-3.5);
\draw [->] (0,0)--(0,2);
\draw [->] (0,0)--(3.33,-3.33);
\draw [->] (0,0)--(4.5,1.5);
\draw [fill] (0,0) circle [radius=0.05]; 
\draw [fill] (0,1) circle [radius=0.05]; 
\draw [fill] (1,0) circle [radius=0.05]; 
\draw [fill] (1,1) circle [radius=0.05];  
\draw [fill] (2,0) circle [radius=0.05]; 
\draw [fill] (2,1) circle [radius=0.05];   
\draw [fill] (3,0) circle [radius=0.05]; 
\draw [fill] (3,1) circle [radius=0.05];   
\draw [fill] (4,0) circle [radius=0.05];
\draw [fill] (4,1) circle [radius=0.05];
\draw [fill] (0,-1) circle [radius=0.05]; 
\draw [fill] (0,-2) circle [radius=0.05]; 
\draw [fill] (0,-3) circle [radius=0.05]; 
\draw [fill] (1,-1) circle [radius=0.05];
\draw [fill] (1,-2) circle [radius=0.05];
\draw [fill] (1,-3) circle [radius=0.05]; 
\draw [fill] (2,-1) circle [radius=0.05]; 
\draw [fill] (2,-2) circle [radius=0.05];  
\draw [fill] (2,-3) circle [radius=0.05]; 
\draw [fill] (3,-1) circle [radius=0.05]; 
\draw [fill] (3,-2) circle [radius=0.05]; 
\draw [fill] (3,-3) circle [radius=0.05]; 
\draw [fill] (4,-1) circle [radius=0.05];
\draw [fill] (4,-2) circle [radius=0.05];
\draw [fill] (4,-3) circle [radius=0.05]; 
\node [right] at (0,2) {$\xi$};
\node [right] at (0,-3.5) {$-\xi$};
\node [right] at (4.5,0) {$h$};
\node [left] at (-1.25,0) {$-h$};
\node [below] at (3.25,-3.25) {$h-\xi$};
\node [above] at (4.3,1.5) {$3h+\xi$};
\node [above] at (3,-1.85) {\large$\operatorname{Bl}_{12}\PS^2\cup\operatorname{dP}_{3}$};
\node [below] at (3,0.75) {\large$\PS^2\cup\operatorname{Bl}_{12}\operatorname{dP}_3$};
\end{tikzpicture}}
\subcaption{$E_6+A_{11}$}
\label{nef cone A11E6}
\end{subfigure}
\hfill
\begin{subfigure}{0.315\textwidth}
\resizebox{\textwidth}{!}{\begin{tikzpicture}
\draw [lightgray, fill=lightgray] (0,0)--(3.2,-3.2)--(4.2,-3.2)--(4.2,0)--(0,0);
\draw [->] (0,0)--(4.5,0);
\draw [->] (0,0)--(-1,0);
\draw [->] (0,0)--(0,-3.5);
\draw [->] (0,0)--(0,2);
\draw [->] (0,0)--(3.33,-3.33);
\draw [fill] (0,0) circle [radius=0.05]; 
\draw [fill] (0,1) circle [radius=0.05]; 
\draw [fill] (1,0) circle [radius=0.05]; 
\draw [fill] (1,1) circle [radius=0.05];  
\draw [fill] (2,0) circle [radius=0.05]; 
\draw [fill] (2,1) circle [radius=0.05];   
\draw [fill] (3,0) circle [radius=0.05]; 
\draw [fill] (3,1) circle [radius=0.05];   
\draw [fill] (4,0) circle [radius=0.05];
\draw [fill] (4,1) circle [radius=0.05];
\draw [fill] (0,-1) circle [radius=0.05]; 
\draw [fill] (0,-2) circle [radius=0.05]; 
\draw [fill] (0,-3) circle [radius=0.05]; 
\draw [fill] (1,-1) circle [radius=0.05];
\draw [fill] (1,-2) circle [radius=0.05];
\draw [fill] (1,-3) circle [radius=0.05]; 
\draw [fill] (2,-1) circle [radius=0.05]; 
\draw [fill] (2,-2) circle [radius=0.05];  
\draw [fill] (2,-3) circle [radius=0.05]; 
\draw [fill] (3,-1) circle [radius=0.05]; 
\draw [fill] (3,-2) circle [radius=0.05]; 
\draw [fill] (3,-3) circle [radius=0.05]; 
\draw [fill] (4,-1) circle [radius=0.05];
\draw [fill] (4,-2) circle [radius=0.05];
\draw [fill] (4,-3) circle [radius=0.05]; 
\node [right] at (0,2) {$\xi$};
\node [right] at (0,-3.5) {$-\xi$};
\node [right] at (4.5,0) {$h$};
\node [left] at (-1.25,0) {$-h$};
\node [below] at (3.25,-3.25) {$h-\xi$};
\node [above] at (3,-1.85) {\large$\operatorname{Bl}_{13}\PS^2\cup\operatorname{dP}_{4}$};
\end{tikzpicture}}
\subcaption{$D_{12}+D_5$}
\label{nef cone D12D5}
\end{subfigure}
\hfill
\begin{subfigure}{0.315\textwidth}
\resizebox{\textwidth}{!}{
\begin{tikzpicture}
\draw [lightgray, fill=lightgray] (0,0)--(3.2,-3.2)--(4.2,-3.2)--(4.2,0)--(0,0);
\draw [->] (0,0)--(4.5,0);
\draw [->] (0,0)--(-1,0);
\draw [->] (0,0)--(0,-3.5);
\draw [->] (0,0)--(0,2);
\draw [->] (0,0)--(3.33,-3.33);
\draw [fill] (0,0) circle [radius=0.05]; 
\draw [fill] (0,1) circle [radius=0.05]; 
\draw [fill] (1,0) circle [radius=0.05]; 
\draw [fill] (1,1) circle [radius=0.05];  
\draw [fill] (2,0) circle [radius=0.05]; 
\draw [fill] (2,1) circle [radius=0.05];   
\draw [fill] (3,0) circle [radius=0.05]; 
\draw [fill] (3,1) circle [radius=0.05];   
\draw [fill] (4,0) circle [radius=0.05];
\draw [fill] (4,1) circle [radius=0.05];
\draw [fill] (0,-1) circle [radius=0.05]; 
\draw [fill] (0,-2) circle [radius=0.05]; 
\draw [fill] (0,-3) circle [radius=0.05]; 
\draw [fill] (1,-1) circle [radius=0.05];
\draw [fill] (1,-2) circle [radius=0.05];
\draw [fill] (1,-3) circle [radius=0.05]; 
\draw [fill] (2,-1) circle [radius=0.05]; 
\draw [fill] (2,-2) circle [radius=0.05];  
\draw [fill] (2,-3) circle [radius=0.05]; 
\draw [fill] (3,-1) circle [radius=0.05]; 
\draw [fill] (3,-2) circle [radius=0.05]; 
\draw [fill] (3,-3) circle [radius=0.05]; 
\draw [fill] (4,-1) circle [radius=0.05];
\draw [fill] (4,-2) circle [radius=0.05];
\draw [fill] (4,-3) circle [radius=0.05]; 
\node [right] at (0,2) {$\xi$};
\node [right] at (0,-3.5) {$-\xi$};
\node [right] at (4.5,0) {$h$};
\node [left] at (-1.25,0) {$-h$};
\node [below] at (3.25,-3.25) {$h-\xi$};
\node [above] at (2.5,-0.85) {\large$\operatorname{Bl}_{9}\PS^2\cup\operatorname{Bl}_{9}\PS^2$};
\end{tikzpicture}
}
\subcaption{$D_8+D_8+\langle-4\rangle$}
\label{nef cone D8D8}
\end{subfigure}
\hfill
\begin{subfigure}{0.315\textwidth}
\resizebox{\textwidth}{!}{
\begin{tikzpicture}
\draw [lightgray, fill=lightgray] (0,0)--(4.2,-2.1)--(4.2,0)--(0,0);
\draw [->] (0,0)--(4.5,0);
\draw [->] (0,0)--(-1,0);
\draw [->] (0,0)--(0,-3.5);
\draw [->] (0,0)--(0,2);
\draw [->] (0,0)--(4.5,-2.25);
\draw [fill] (0,0) circle [radius=0.05]; 
\draw [fill] (0,1) circle [radius=0.05]; 
\draw [fill] (1,0) circle [radius=0.05]; 
\draw [fill] (1,1) circle [radius=0.05];  
\draw [fill] (2,0) circle [radius=0.05]; 
\draw [fill] (2,1) circle [radius=0.05];   
\draw [fill] (3,0) circle [radius=0.05]; 
\draw [fill] (3,1) circle [radius=0.05];   
\draw [fill] (4,0) circle [radius=0.05];
\draw [fill] (4,1) circle [radius=0.05];
\draw [fill] (0,-1) circle [radius=0.05]; 
\draw [fill] (0,-2) circle [radius=0.05];
\draw [fill] (0,-3) circle [radius=0.05]; 
\draw [fill] (1,-1) circle [radius=0.05];
\draw [fill] (1,-2) circle [radius=0.05];
\draw [fill] (1,-3) circle [radius=0.05]; 
\draw [fill] (2,-1) circle [radius=0.05]; 
\draw [fill] (2,-2) circle [radius=0.05];
\draw [fill] (2,-3) circle [radius=0.05]; 
\draw [fill] (3,-1) circle [radius=0.05]; 
\draw [fill] (3,-2) circle [radius=0.05];
\draw [fill] (3,-3) circle [radius=0.05]; 
\draw [fill] (4,-1) circle [radius=0.05];
\draw [fill] (4,-2) circle [radius=0.05];
\draw [fill] (4,-3) circle [radius=0.05]; 
\node [right] at (0,2) {$\xi$};
\node [right] at (0,-3.5) {$-\xi$};
\node [right] at (4.5,0) {$h$};
\node [left] at (-1.25,0) {$-h$};
\node [below] at (4.3,-2.15) {$2h-\xi$};
\node [above] at (3,-0.85) {\large$\operatorname{Bl}_{17}\PS^2\cup\F_0$};
\end{tikzpicture}
}
\subcaption{$D_{16}+\langle -4 \rangle$}
\label{nef cone D16}
\end{subfigure}
\hfill
\begin{subfigure}{0.315\textwidth}
\resizebox{\textwidth}{!}{\begin{tikzpicture}
\draw [lightgray, fill=lightgray] (0,0)--(4.2,-2.8)--(4.2,0)--(0,0);
\draw [->] (0,0)--(4.5,0);
\draw [->] (0,0)--(-1,0);
\draw [->] (0,0)--(0,-3.5);
\draw [->] (0,0)--(0,2);
\draw [->] (0,0)--(4.5,-3);
\draw [fill] (0,0) circle [radius=0.05]; 
\draw [fill] (0,1) circle [radius=0.05]; 
\draw [fill] (1,0) circle [radius=0.05]; 
\draw [fill] (1,1) circle [radius=0.05];  
\draw [fill] (2,0) circle [radius=0.05]; 
\draw [fill] (2,1) circle [radius=0.05];   
\draw [fill] (3,0) circle [radius=0.05]; 
\draw [fill] (3,1) circle [radius=0.05];   
\draw [fill] (4,0) circle [radius=0.05];
\draw [fill] (4,1) circle [radius=0.05];
\draw [fill] (0,-1) circle [radius=0.05]; 
\draw [fill] (0,-2) circle [radius=0.05]; 
\draw [fill] (1,-1) circle [radius=0.05];
\draw [fill] (1,-2) circle [radius=0.05];
\draw [fill] (2,-1) circle [radius=0.05]; 
\draw [fill] (2,-2) circle [radius=0.05]; 
\draw [fill] (3,-1) circle [radius=0.05]; 
\draw [fill] (3,-2) circle [radius=0.05];
\draw [fill] (4,-1) circle [radius=0.05];
\draw [fill] (4,-2) circle [radius=0.05]; 
\node [right] at (0,2) {$\xi$};
\node [right] at (0,-3.5) {$-\xi$};
\node [right] at (4.5,0) {$h$};
\node [left] at (-1.25,0) {$-h$};
\node [below] at (4.35,-3) {$3h-2\xi$};
\node [above] at (3,-0.85) {\large$\operatorname{Bl}_{18}\PS^2\cup\PS^2$};
\end{tikzpicture}
}
\subcaption{$D_{17}$}
\label{nef cone D17}
\end{subfigure}
\hfill
\begin{subfigure}{0.315\textwidth}
\resizebox{\textwidth}{!}{\begin{tikzpicture}
\draw [lightgray, fill=lightgray] (0,0)--(1.1,-2.2)--(4.6,-2.2)--(4.6,0)--(0,0);
\draw [->] (0,0)--(4.8,0);
\draw [->] (0,0)--(-1,0);
\draw [->] (0,0)--(0,-2.5);
\draw [->] (0,0)--(0,2);
\draw [->] (0,0)--(1.25,-2.5);
\draw [fill] (0,0) circle [radius=0.05]; 
\draw [fill] (0,1) circle [radius=0.05]; 
\draw [fill] (1,0) circle [radius=0.05]; 
\draw [fill] (1,1) circle [radius=0.05];  
\draw [fill] (2,0) circle [radius=0.05]; 
\draw [fill] (2,1) circle [radius=0.05];   
\draw [fill] (3,0) circle [radius=0.05]; 
\draw [fill] (3,1) circle [radius=0.05];   
\draw [fill] (4,0) circle [radius=0.05];
\draw [fill] (4,1) circle [radius=0.05];
\draw [fill] (0,-1) circle [radius=0.05]; 
\draw [fill] (0,-2) circle [radius=0.05];  
\draw [fill] (1,-1) circle [radius=0.05];
\draw [fill] (1,-2) circle [radius=0.05]; 
\draw [fill] (2,-1) circle [radius=0.05]; 
\draw [fill] (2,-2) circle [radius=0.05];
\draw [fill] (3,-1) circle [radius=0.05]; 
\draw [fill] (3,-2) circle [radius=0.05]; 
\draw [fill] (4,-1) circle [radius=0.05];
\draw [fill] (4,-2) circle [radius=0.05]; 
\node [right] at (0,2) {$\xi$};
\node [right] at (0,-2.5) {$-\xi$};
\node [right] at (4.8,0) {$h$};
\node [left] at (-1.25,0) {$-h$};
\node [right] at (1,-2.75) {$h-2\xi$};
\node [above] at (3,-0.85) {\large$\operatorname{Bl}_{10}\PS^2\cup\operatorname{dP}_1$};
\end{tikzpicture}
}
\subcaption{$E_8+D_9$}
\label{nef cone E8D9}
\end{subfigure}
\hfill
\begin{subfigure}{0.315\textwidth}
\resizebox{\textwidth}{!}{\begin{tikzpicture}
\draw [gray!30, fill=gray!30] (0,0)--(1.65,-3.3)--(3.3,-3.3)--(0,0);
\draw [gray!60, fill=gray!60] (0,0)--(3.3,-3.3)--(4.6,-3.3)--(4.6,0)--(0,0);
\draw [->] (0,0)--(4.8,0);
\draw [->] (0,0)--(-1,0);
\draw [->] (0,0)--(0,-3.5);
\draw [->] (0,0)--(0,2);
\draw [->] (0,0)--(1.75,-3.5);
\draw [->] (0,0)--(3.5,-3.5);
\draw [-latex] (-0.2,-1.6)--(1.5,-1.9);
\draw [fill] (0,0) circle [radius=0.05]; 
\draw [fill] (0,1) circle [radius=0.05]; 
\draw [fill] (1,0) circle [radius=0.05]; 
\draw [fill] (1,1) circle [radius=0.05];  
\draw [fill] (2,0) circle [radius=0.05]; 
\draw [fill] (2,1) circle [radius=0.05];   
\draw [fill] (3,0) circle [radius=0.05]; 
\draw [fill] (3,1) circle [radius=0.05];   
\draw [fill] (4,0) circle [radius=0.05];
\draw [fill] (4,1) circle [radius=0.05];
\draw [fill] (0,-1) circle [radius=0.05]; 
\draw [fill] (0,-2) circle [radius=0.05];
\draw [fill] (0,-3) circle [radius=0.05];
\draw [fill] (1,-1) circle [radius=0.05];
\draw [fill] (1,-2) circle [radius=0.05];
\draw [fill] (1,-3) circle [radius=0.05];
\draw [fill] (2,-1) circle [radius=0.05]; 
\draw [fill] (2,-2) circle [radius=0.05];
\draw [fill] (2,-3) circle [radius=0.05];
\draw [fill] (3,-1) circle [radius=0.05]; 
\draw [fill] (3,-2) circle [radius=0.05]; 
\draw [fill] (3,-3) circle [radius=0.05];
\draw [fill] (4,-1) circle [radius=0.05];
\draw [fill] (4,-2) circle [radius=0.05]; 
\draw [fill] (4,-3) circle [radius=0.05];
\node [right] at (0,2) {$\xi$};
\node [right] at (0,-3.5) {$-\xi$};
\node [right] at (4.8,0) {$h$};
\node [left] at (-1.25,0) {$-h$};
\node [right] at (1,-3.75) {$h-2\xi$};
\node [below] at (3.75,-3.5) {$h-\xi$};
\node [above] at (3,-0.85) {\Large$\operatorname{dP}_{2}\cup\operatorname{Bl}_{11}\PS^2$};
\node [above] at (-1.6,-1.85) {\Large$\operatorname{Bl}_4\operatorname{dP}_{2}\cup\operatorname{Bl}_{7}\PS^2$};
\end{tikzpicture}
}
\subcaption{$E_7+E_7+A_3$}
\label{nef cone E7E7A3}
\end{subfigure}
\hfill
\begin{subfigure}{0.315\textwidth}
\resizebox{\textwidth}{!}{
\begin{tikzpicture}
\draw [gray!30, fill=gray!30] (0,0)--(1.1,-3.3)--(1.65,-3.3)--(0,0);
\draw [gray!60, fill=gray!60] (0,0)--(1.65,-3.3)--(3.3,-3.3)--(0,0);
\draw [gray!90, fill=gray!90] (0,0)--(3.3,-3.3)--(4.6,-3.3)--(4.6,0)--(0,0);
\draw [->] (0,0)--(4.8,0);
\draw [->] (0,0)--(-1,0);
\draw [->] (0,0)--(0,-3.5);
\draw [->] (0,0)--(0,2);
\draw [->] (0,0)--(1.75,-3.5);
\draw [->] (0,0)--(3.5,-3.5);
\draw [->] (0,0)--(1.167,-3.5);
\draw [-latex] (-0.2,-1.6)--(1.5,-2);
\draw [-latex] (-0.2,-2.7)--(1.1,-2.7);
\draw [fill] (0,0) circle [radius=0.05]; 
\draw [fill] (0,1) circle [radius=0.05]; 
\draw [fill] (1,0) circle [radius=0.05]; 
\draw [fill] (1,1) circle [radius=0.05];  
\draw [fill] (2,0) circle [radius=0.05]; 
\draw [fill] (2,1) circle [radius=0.05];   
\draw [fill] (3,0) circle [radius=0.05]; 
\draw [fill] (3,1) circle [radius=0.05];   
\draw [fill] (4,0) circle [radius=0.05];
\draw [fill] (4,1) circle [radius=0.05];
\draw [fill] (0,-1) circle [radius=0.05]; 
\draw [fill] (0,-2) circle [radius=0.05];
\draw [fill] (0,-3) circle [radius=0.05];
\draw [fill] (1,-1) circle [radius=0.05];
\draw [fill] (1,-2) circle [radius=0.05];
\draw [fill] (1,-3) circle [radius=0.05];
\draw [fill] (2,-1) circle [radius=0.05]; 
\draw [fill] (2,-2) circle [radius=0.05];
\draw [fill] (2,-3) circle [radius=0.05];
\draw [fill] (3,-1) circle [radius=0.05]; 
\draw [fill] (3,-2) circle [radius=0.05]; 
\draw [fill] (3,-3) circle [radius=0.05];
\draw [fill] (4,-1) circle [radius=0.05];
\draw [fill] (4,-2) circle [radius=0.05]; 
\draw [fill] (4,-3) circle [radius=0.05];
\node [right] at (0,2) {$\xi$};
\node [left] at (0,-3.5) {$-\xi$};
\node [right] at (4.8,0) {$h$};
\node [left] at (-1.25,0) {$-h$};
\node [right] at (1.7,-3.75) {$h-2\xi$};
\node [right] at (3.5,-3.75) {$h-\xi$};
\node [right] at (0.315,-3.75){$h-3\xi$};
\node [above] at (3,-0.85) {\LARGE$\operatorname{dP}_{1}\cup\operatorname{Bl}_{10}\PS^2$};
\node [above] at (-1.6,-1.85) {\Large$\operatorname{Bl}_1\operatorname{dP}_{1}\cup\operatorname{Bl}_{9}\PS^2$};
\node [above] at (-1.6,-2.85) {\Large$\operatorname{Bl}_2\operatorname{dP}_{1}\cup\operatorname{Bl}_{8}\PS^2$};
\end{tikzpicture}
}
\subcaption{$E_8+E_8+\langle-4\rangle$}
\label{nef cone E8E8}

\end{subfigure}

\caption{The wall and chamber decompositions for $\operatorname{Lift}_{\geq0}(X_0)$ for each case of degeneration.}
\label{fig:figures}
\end{figure} 

\begin{lemma}\label{lifted polarization A1A1A15}
In the $A_1+A_1+A_{15}$ case $\operatorname{Lift}_{\geq 0}(X_0)$ has two chambers with one interior wall.
\end{lemma}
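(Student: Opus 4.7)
The plan is to describe the cone $\operatorname{Lift}_{\geq 0}(X_0)$ via the restrictions of $mh + n\xi$ to each of the two components $V_0 = \operatorname{Bl}_{16}(\PS^1\times\PS^1)$ and $V_1 = \PS^1\times\PS^1$ of the standard model from Section~\ref{sect:Two Quadrics}. Using $h = (s_1+f_1, s_2+f_2)$ and $\xi = (-2(s_1+f_1) + \sum e_i,\, 2(s_2+f_2))$ computed there, the $V_0$-restriction of $mh + n\xi$ is $(m-2n)(s_1+f_1) + n\sum e_i$, which I will rewrite as $m(s_1+f_1) - nE_0$ via the identity $E_0 = 2(s_1+f_1) - \sum e_i$ for the $(-8)$-elliptic curve on $V_0$; the $V_1$-restriction is $(m+2n)(s_2+f_2)$. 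Checking effectivity on each side, using the rewritten forms for $n \leq 0$ and $n \geq 0$ respectively, yields $\operatorname{Lift}_{\geq 0}(X_0) = \{m \geq 2|n|\}$, reproducing the outer rays of Figure~\ref{nef cone A1A1A15}.

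Next I locate the chambers by intersecting $mh + n\xi$ against the negative curves on $V_0$: the exceptional curves $e_i$, the strict transforms of rulings $s_1-e_i$ and $f_1-e_i$ through each blown-up point, and the $(-8)$-curve $E_0$, with intersection numbers $-n$, $m-n$, and $4(m+2n)$ respectively. Within the effective cone the only intersection that can be negative is $-n$, which happens precisely when $n > 0$. Hence for $n \leq 0$ the polarization is nef on $X_0$, and ample whenever $n < 0$ and $m+2n > 0$, which identifies the lower chamber. For $n > 0$, flops are required: the normal bundle sequence $0 \to N_{e_i/V_0} \to N_{e_i/X} \to \mathcal{O}_X(V_0)|_{e_i} \to 0$ reads $0 \to \mathcal{O}(-1) \to N_{e_i/X} \to \mathcal{O}(-1) \to 0$ (using $E_0 \cdot e_i = 1$ for the quotient), and splits since $H^1(\PS^1,\mathcal{O}) = 0$, so each $e_i$ is a $(-1,-1)$-curve in the total space. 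Flopping all sixteen at once yields the alternate Kulikov model obtained by resolving the $16$ nodes of the singular total space by blowing up $\{t = Q_2 = 0\}$ rather than $\{t = Q_1 = 0\}$, and by symmetry the analogous intersection computation shows the polarization is ample on this flopped model exactly when $n > 0$ and $m > 2n$, giving the upper chamber.

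It remains to observe that the interior ray $n = 0$ gives the polarization $mh$, which is nef but not ample on $X_0$ because $mh \cdot e_i = 0$; the stable model here contracts the $e_i$ and recovers the singular total space of Section~\ref{sect:Two Quadrics}, so this is a single wall between the two chambers rather than a further subdivision. The boundary rays $m \pm 2n = 0$ correspond to stable models in which $V_1$, respectively $V_0^{\mathrm{flop}}$, is contracted to a point. The main obstacle I anticipate is confirming that the four families $e_i$, $s_1-e_i$, $f_1-e_i$, $E_0$ really exhaust the curves that must be tested, i.e.\ that no further flops or sub-chambers appear; this is resolved by the intersection computations above, since for any irreducible curve $C \neq E_0$ on $V_0$ one has $(s_1+f_1)\cdot C \geq 0$ and $E_0 \cdot C \geq 0$, and the critical ratio $(s_1+f_1)\cdot C / E_0 \cdot C$ is minimised precisely on the $e_i$.
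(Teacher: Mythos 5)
Your proposal is correct and follows essentially the same route as the paper: identify the effective cone $\operatorname{Lift}_{\geq 0}(X_0)$ as spanned by $2h+\xi$ and $2h-\xi$, observe that for $n>0$ the class is negative on the $e_i$ and flop all sixteen to the other component (recovering the alternate resolution of the $16$ nodes), and locate the single interior wall at the ray spanned by $h$ with the outer rays contracting a component. The paper's version is terser (it does not spell out the effective-cone computation, the exhaustiveness of the list of test curves, or the $(-1,-1)$ normal bundle check), so your write-up simply supplies details the paper leaves to the reader.
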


\begin{proof}
Recall $h=(s_1+f_1,s_2+f_2)$ and $\xi = (-2s_1-2f_1+\sum_{i=1}^{16}e_i,2s_2+2f_2)$, it is quick to check that $\operatorname{Lift}_{\geq 0}(X_0)$ is spanned by $2h+\xi$ and $2h-\xi$. Considering $h'=h+\epsilon\xi$, we see that $h'$ is not nef for $\epsilon > 0 $ and we flip all $e_i$ simultaneously to $V_1$. After performing the flips, the double curve is $\xi_1=(-2s_1-2f_1,2s_2+2f_2-\sum_{i=1}^{16}e_i)$ and we consider $h_1=h+\epsilon\xi_1$. This is not nef for $\epsilon > 1/2$ and the stable model contracts $V_0$ to a point at $\epsilon =1/2$.

Similarly we consider $h'=h-\epsilon\xi$ for $\epsilon > 0$. This is not nef for $\epsilon > 1/2$ where it contracts $V_1$ to a point at $\epsilon = 1/2$. The wall and chamber decomposition is given in Figure \ref{nef cone A1A1A15}.
\end{proof}

Lemma \ref{lifted polarization A1A1A15} is a result of the fact that we could have chosen to resolve all of the singularities in either component when we first constructed this degeneration in Section \ref{sect:Explicit descriptions of families}. The same is true for the next lemma too.
\FloatBarrier
\begin{lemma}
In the $A_{11}+E_6$ case $\operatorname{Lift}_{\geq 0}(X_0)$ has two chambers with one interior wall.
\end{lemma}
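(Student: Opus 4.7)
The plan is to analyse $\operatorname{Lift}_{\geq 0}(X_0)$ by restricting a general lifted polarization $mh + n\xi$ to each component of the standard model from Section \ref{sect:Plane and Cubic}, where $V_0 = \operatorname{Bl}_{12}\PS^2$, $V_1 = \operatorname{dP}_3$, $h = (l,\ 3l' - \sum_{i=1}^6 e_i')$ and $\xi = (-3l + \sum_{i=1}^{12} e_i,\ 3l' - \sum_{i=1}^6 e_i')$. A direct computation gives
\[
(mh + n\xi)|_{V_0} = (m-3n)l + n\sum_{i=1}^{12} e_i, \qquad (mh + n\xi)|_{V_1} = (m+n)\bigl(3l' - \textstyle\sum_{i=1}^6 e_i'\bigr).
\]
Effectiveness on $V_1$ is equivalent to $m + n \geq 0$ because $-K_{V_1}$ is ample, and effectiveness on $V_0$ forces $m - 3n \geq 0$ since pushforward to $\PS^2$ has degree $m - 3n$. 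The period relation $12q = \sum p_i$ from Section \ref{sect:period domains} ensures the two restrictions agree on the double curve $E$, so I will conclude that $\operatorname{Lift}_{\geq 0}(X_0)$ is the cone spanned by $h - \xi = (1,-1)$ and $3h + \xi = (3,1)$.

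Next I will identify the chambers by analysing nefness. On $V_0$, intersection of the restriction with each $e_i$ gives $-n$, and with the strict transform $E = 3l - \sum e_i$ of the double curve gives $3(m+n)$; other negative curves (lines through pairs of $p_i$, conics through five of them, and cubics containing $E$) yield weaker inequalities, so nefness on $V_0$ reduces to $n \leq 0$ and $m + n \geq 0$. Combined with nefness on $V_1$, which again amounts to $m + n \geq 0$, the nef chamber of the standard model is the cone between $h$ and $h - \xi$, giving the model $\operatorname{Bl}_{12}\PS^2 \cup \operatorname{dP}_3$. For $n > 0$ every $e_i$ has negative intersection; since each $e_i$ is a disjoint $(-1,-1)$-curve in $X$ meeting the double curve once (because $e_i \cdot E = 1$ on $V_0$), I will flop all twelve simultaneously to obtain $X_0' = \PS^2 \cup \operatorname{Bl}_{12}\operatorname{dP}_3$. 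A parallel analysis shows that the nef chamber on $X_0'$ is $\{n \geq 0,\ m - 3n \geq 0\}$, spanned by $h$ and $3h + \xi$, with the new exceptionals $f_j$ on the $V_1'$-side pairing with the restriction to give $n$. Thus the two chambers meet exactly along the single interior wall $\R_{>0} \cdot h$.

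The main obstacle is controlling the Mori cone of $V_0 = \operatorname{Bl}_{12}\PS^2$, since the twelve blown-up points are in the special position of being cut out by the cubic $E$ and a quartic arising from the period relation. I expect that classifying the potentially obstructing $(-1)$-curves on $V_0$ and using the fact that any cubic through ten or more of the $p_i$ must contain $E$ as a component, so that classes like $3l - \sum_{j \neq k,\ell} e_j$ decompose as $E + e_k + e_\ell$, will reduce the nefness check to the finitely many conditions already considered, confirming that no further walls appear in $\operatorname{Lift}_{\geq 0}(X_0)$.
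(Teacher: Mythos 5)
Your proposal is correct and follows essentially the same route as the paper: you identify $\operatorname{Lift}_{\geq 0}(X_0)$ as the cone spanned by $h-\xi$ and $3h+\xi$, locate the nef chamber of the standard model $\operatorname{Bl}_{12}\PS^2\cup\operatorname{dP}_3$ between $h$ and $h-\xi$, and flop all twelve exceptional curves $e_i$ across the single interior wall $\R_{>0}\cdot h$ to reach the second chamber $\PS^2\cup\operatorname{Bl}_{12}\operatorname{dP}_3$, exactly as in the paper's argument. The only difference is that you spell out the effectiveness and nefness computations (and the Mori cone check on $\operatorname{Bl}_{12}\PS^2$) that the paper compresses into ``one can check,'' which is a welcome but not essentially different elaboration.
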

\FloatBarrier
\begin{proof}
\FloatBarrier
Recall that $h=(l,3l'-\sum_{i=1}^6e_i')$ and $\xi = (-3l+\sum_{i=1}^{12}e_i,3l'-\sum_{i=1}^6e_i')$, one can check that $\operatorname{Lift}_{\geq 0}(X_0)$ is spanned by $3h+\xi$ and $h-\xi$. Consider $h'=h+\epsilon\xi$ for $\epsilon>0$, then $h'$ is not nef and we must flip all $e_i$ to $V_1$, and define $\xi_1=(-3l,3l'-\sum_{i=1}^6e_i'-\sum_{i=1}^{12}e_i)$. Then $h+\epsilon\xi_1$ is nef until $\epsilon>1/3$ and the stable model contracts $V_0$ to a point at $\epsilon=1/3$.

Similarly, considering $h-\epsilon\xi$ is nef until $\epsilon>1$ and the stable model contracts $V_1$ to a point when $\epsilon=1$. The wall and chamber decomposition is given in Figure \ref{nef cone A11E6}.
\end{proof}
\FloatBarrier
\begin{lemma}
In the $D_{12}+D_5$ case $\operatorname{Lift}_{\geq 0}(X_0)$ has one chamber.
\end{lemma}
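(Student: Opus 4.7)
The plan is to mirror the structure of the preceding two lemmas by first identifying the extremal rays of $\operatorname{Lift}_{\geq 0}(X_0)$ and then ruling out any interior walls. Recall from Section \ref{sect:non-normal along conic} that $h = (2(l - e_1), 3l' - \sum_{i=1}^5 e_i')$ and $\xi = (-3l + \sum_{i=1}^{13} e_i, 3l' - \sum_{i=1}^5 e_i')$. On $V_1 = \operatorname{dP}_4$ both $h_1$ and $\xi_1$ coincide with $-K_{V_1}$, while on $V_0 = \operatorname{Bl}_{13}\PS^2$ we have $h_0 = 2(l - e_1) = 2f$, twice the class of a fiber of the $\PS^1$-fibration $V_0 \to \PS^1$ inherited from the double cover structure, and $\xi_0 = -E_{0,1}$ where $E_{0,1} = 3l - \sum_{i=1}^{13} e_i$ is the anti-canonical elliptic double curve.

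First I would determine the extremal rays. On $V_1$ the restriction $mh_1 + n\xi_1 = (m+n)(-K_{V_1})$ is effective precisely when $m + n \geq 0$, giving the extremal ray spanned by $h - \xi$. On $V_0$ one computes $(mh_0 + n\xi_0) \cdot f = -2n$, and since $f$ moves in a pencil on $V_0$, effectiveness of any representative forces $n \leq 0$, giving the extremal ray spanned by $h$. Hence $\operatorname{Lift}_{\geq 0}(X_0)$ is the cone generated by $h$ and $h - \xi$.

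It then remains to show that no flop wall subdivides the interior. Such walls correspond to internal $(-1)$-curves $C$ meeting the double curve once. If $C \subset V_0$, then $\xi_0 \cdot C = -1$ and $(mh_0 + n\xi_0) \cdot C = m(h_0 \cdot C) - n \geq -n > 0$ for any interior point, by the nefness of $h_0 = 2f$. If $C \subset V_1$, then $\xi_1 \cdot C = 1$ and $(mh_1 + n\xi_1) \cdot C = m(h_1 \cdot C) + n \geq m + n > 0$, since $h_1 = -K_{V_1}$ is ample on the del Pezzo $V_1$ so that $h_1 \cdot C \geq 1$ for every irreducible curve. More generally, for any irreducible curve $C$ distinct from the double curve on either component, both $h \cdot C \geq 0$ and $-\xi \cdot C \geq 0$ hold throughout the closed cone, while the direct computation $(mh + n\xi) \cdot E_{0,1} = 4(m+n)$ is strictly positive on the interior; analogously on $V_1$. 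Therefore $mh + n\xi$ is nef (in fact ample on each component) throughout the interior, so the cone comprises a single chamber with stable model $\operatorname{Bl}_{13}\PS^2 \cup \operatorname{dP}_4$.

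The hard part will be conceptual rather than computational, namely the step of identifying exactly which walls can in principle occur. Once one recognises via the theory of Kulikov models that the relevant walls correspond precisely to flops of internal $(-1)$-curves meeting the double curve once, the verification reduces to the two elementary sign computations above, exploiting the ampleness of $-K_{V_1}$ on the del Pezzo component and the fact that $h_0$ is a positive multiple of the fiber class of the induced ruling on $V_0$.
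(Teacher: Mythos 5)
Your proposal is correct and follows essentially the same route as the paper: it identifies the same extremal rays $h$ and $h-\xi$ (your computation $(mh+n\xi)\cdot f=-2n$ with $f=l-e_1$ moving in a pencil is exactly the paper's observation that $h'\cdot(l-e_1)<0$ cannot be fixed by flips), and then verifies there are no interior walls. The only difference is one of detail: where the paper simply asserts that $h-\epsilon\xi$ stays nef until $\epsilon>1$, you justify this by checking positivity against internal $(-1)$-curves and all other irreducible curves, which is a welcome elaboration rather than a different method.
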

\FloatBarrier
\begin{proof}
Recall that $h=(2(l-e_1),3l'-\sum_{i=1}^5e_i')$ and $\xi=(-3l+\sum_{i=1}^{13}e_i,3l-\sum_{i=1}^5e_i')$, it is easy to see that $\operatorname{Lift}_{\geq 0}(X_0)$ is spanned by $h$ and $h-\xi$. Consider $h+\epsilon\xi$ for $\epsilon >0$ then $h'.(l-e_1)<0$ and  $h'.e_1>0$ so we cannot perform any flips to resolve this. Therefore we can only consider $h'=h-\epsilon\xi$ for $\epsilon >0$. Such an $h'$ is nef until $\epsilon >1$ and the stable model contracts $V_1$ to a point when $\epsilon=1$. The wall and chamber decomposition is given in Figure \ref{nef cone D12D5}.
\end{proof}
\FloatBarrier
\begin{lemma}
In the $D_8+D_8+\langle -4\rangle$ case $\operatorname{Lift}_{\geq 0}(X_0)$ has one chamber.
\end{lemma}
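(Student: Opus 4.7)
The plan is to identify $\operatorname{Lift}_{\geq 0}(X_0)$ with a 2-dimensional cone cut out by intersection numbers against the two ruling classes, and then verify that a lifted polarization in its interior is relatively ample on $X_0$, so that the entire cone forms a single chamber. The starting observation is that $V_0$ and $V_1$ are both of the form $\operatorname{Bl}_9\PS^2$, and one has $\xi|_{V_0}=K_{V_0}$ and $\xi|_{V_1}=-K_{V_1}$; since each $V_i$ is a rational elliptic surface by the $d$-semistability relation, $-K_{V_i}$ is nef with $(-K_{V_i})^2=0$ and is the elliptic fiber class. One further decomposition, $h|_{V_1}=(l'-e_1')+(-K_{V_1})$, exhibits $h|_{V_1}$ as the sum of a ruling fiber class and the anti-canonical. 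With this in hand, $h'=mh+n\xi$ restricts to
\[
h'|_{V_0}=m(l-e_1)+(-n)(-K_{V_0}), \qquad h'|_{V_1}=m(l'-e_1')+(m+n)(-K_{V_1}).
\]
Pairing with the nef ruling fiber classes gives $h'|_{V_0}.(l-e_1)=-2n$ and $h'|_{V_1}.(l'-e_1')=2(m+n)$, so effectivity forces $n\leq 0$, $m+n\geq 0$, and $m\geq 0$; the resulting cone is the region of Figure~\ref{nef cone D8D8}, spanned by the rays through $h$ and $h-\xi$.

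Next I would check that $h'$ is relatively ample on $X_0$ in the interior of the cone, which ensures the stable model is $X_0$ itself throughout. Self-intersections give $(h'|_{V_0})^2=4m(-n)>0$ and $(h'|_{V_1})^2=4m(m+n)>0$, using $(l-e_1).(-K_{V_0})=(l'-e_1').(-K_{V_1})=2$. For positivity against every irreducible curve $C\subset V_i$ I would case-split: curves in a ruling fiber class give $h'|_{V_0}.C=-2n$ or $h'|_{V_1}.C=2m$; curves in an anti-canonical fiber class give $h'|_{V_0}.C=2m$ or $h'|_{V_1}.C=2(m+n)$; and all remaining irreducible curves satisfy $C.(-K_{V_i})\geq 1$ together with $C.(\text{ruling fiber})\geq 0$, so $h'|_{V_0}.C\geq -n>0$ and $h'|_{V_1}.C\geq m+n>0$. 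Nakai-Moishezon then supplies the required ampleness on each component.

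Finally I would treat the two boundary rays and the exterior. On the ray through $h$, the class $l-e_1$ contracts $V_0$ onto $\PS^1$, which glues to $V_1$ along the double cover of this $\PS^1$ by the elliptic double curve $E$; the stable model is thereby identified with the non-normal quartic of Section~\ref{sect:non-normal along line}. Symmetrically, the ray through $h-\xi$ contracts $V_1$ and recovers the same non-normal quartic. Crossing either exterior ray violates one of the inequalities above for a class of self-intersection zero---a ruling fiber---rather than for a $(-1,-1)$-curve, so it is not flopable and no adjacent chamber exists. The main obstacle is the Nakai-Moishezon verification on a rational elliptic surface: since $-K_{V_i}$ is only nef and can vanish on components of reducible elliptic fibers, strict positivity on all curves requires combining the ruling and anti-canonical companion classes, and a genericity assumption on the blown-up points (consistent with the $\lambda$-family framework of Section~\ref{sect:period domains}) is needed to rule out pathologies.
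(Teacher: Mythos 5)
Your proof is correct and follows the same basic strategy as the paper's: compute the intersection of $mh+n\xi$ against the fiber classes $l-e_1$ and $l'-e_1'$ on the two components to pin down the cone as the span of $h$ and $h-\xi$, then check that no wall is crossed. The differences are in presentation and thoroughness: the paper works along the one-parameter slices $h\pm\epsilon\xi$ and only records nefness and effectivity (asserting the span of the cone without computation), whereas you decompose $h'|_{V_i}$ into a ruling fiber class plus a multiple of $-K_{V_i}$ and run a full Nakai--Moishezon check, which establishes ampleness throughout the interior --- strictly more than the paper writes down, and what the single-chamber label in Figure \ref{nef cone D8D8} really requires. One correction: the $d$-semistability relation (which here reads $18q=\sum_{i}p_i+\sum_i p_i'$) does \emph{not} make each $V_i$ a rational elliptic surface; that would require $9q=\sum_i p_i$ on each side separately, so $-K_{V_i}$ is in general not an elliptic fiber class. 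The facts you actually use --- $-K_{V_i}$ nef with $(-K_{V_i})^2=0$ --- hold anyway because the irreducible double curve $E$ represents $-K_{V_i}$ and has self-intersection $0$, and this even simplifies your case analysis, since the only irreducible curve in an ``anticanonical fiber'' is $E$ itself. The residual worry about $(-2)$-curves orthogonal to both $E$ and the ruling is, as you note, excluded by genericity of the blown-up points.
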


\begin{proof}
Recall that $h=(l-e_1,4l'-2e_1'-\sum_{i=2}^9e_i')$ and $\xi=(-3l+\sum_{i=1}^9e_i,3l-\sum_{i=1}^9e_i')$, one can easily see that $\operatorname{Lift}_{\geq 0}(X_0)$ is spanned by $h$ and $h-\xi$. We first consider $h'=h+\epsilon\xi$ for $\epsilon>0$ but notice that $h'. (l-e_1)<0$ and so $h'$ is not effective. We consider instead $h'=h-\epsilon\xi$ for $\epsilon >0$. Such an $h'$ is nef until $\epsilon >1$ where $h'. e_i <0$ for $i\geq 2$, we also see that $h'$ is not effective when $\epsilon >1$ because $h'. (l'-e_1')<0$. Therefore there is just one chamber, see Figure \ref{nef cone D8D8}.
\end{proof}

\begin{lemma}
In the $D_{16}+\langle -4\rangle$ case $\operatorname{Lift}_{\geq 0}(X_0)$ has one chamber.
\end{lemma}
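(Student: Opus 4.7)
The plan is to follow the template of Lemma \ref{lifted polarization A1A1A15} and the preceding lemmas in this section: compute the extreme rays of $\operatorname{Lift}_{\geq 0}(X_0)$ directly and check whether any intermediate nef class lies on an internal wall that would require a flop. From Section \ref{sect:non-normal along TC} we have $h = (3(l - e_1), s + 2f)$ and $\xi = (-3l + \sum_{i=1}^{17} e_i, 2s + 2f)$; a quick calculation gives $2h - \xi = (9l - 7e_1 - \sum_{i=2}^{17} e_i, 2f)$, which is effective on both components, while $h$ itself is visibly effective. These are the proposed boundary rays of the cone.

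First I would rule out extending the chamber in the $+\xi$ direction. For $h' = h + \epsilon \xi$ with $\epsilon > 0$, the restriction to $V_0$ is $h'_0 = (3 - 3\epsilon)l - (3 - \epsilon)e_1 + \epsilon \sum_{i=2}^{17} e_i$, in which every $e_i$ with $i \geq 2$ appears with positive coefficient; for this class to be effective each such $e_i$ would have to be a base component, and after removing them and contracting to $\F_1$ the residual class is $(3 - 3\epsilon)f - 2\epsilon e_1$, which is not effective for any $\epsilon > 0$. Flopping any subset of the $e_i$ across the elliptic double curve to $V_1$ does not repair this residual, so no class $h + \epsilon \xi$ with $\epsilon > 0$ is effective on both components.

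Next I would analyze $h' = h - \epsilon \xi$ with $\epsilon > 0$. Direct intersection computations on $V_0$ yield $h'_0 \cdot e_i = \epsilon$ for $i \geq 2$, $h'_0 \cdot e_1 = 3 + \epsilon$, and $h'_0 \cdot (l - e_1) = 2\epsilon$, all strictly positive; on $V_1$ the restriction $h'_1 = (1 - 2\epsilon)s + (2 - 2\epsilon)f$ is nef precisely when $\epsilon \leq 1/2$. Since no intersection vanishes for $0 < \epsilon < 1/2$, no internal wall arises and no flop is needed in the interior of the chamber. At the boundary value $\epsilon = 1/2$ the class becomes $h'_1 = f$, which contracts the $s$-ruling of $V_1$, so the stable model collapses $V_1$ onto a $\PS^1$ and we reach the extreme ray $2h - \xi$.

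The main technical point to confirm will be that no $(-2)$-curve or other unexpected negative curve appears on the generic $V_0$ produced by Construction \ref{Tyurin constructions} which could contribute an interior wall; for the generic configuration of the seventeen blown-up points on $V_0$ this is automatic. Combining the three steps above then shows that $\operatorname{Lift}_{\geq 0}(X_0)$ consists of the single chamber pictured in Figure \ref{nef cone D16}.
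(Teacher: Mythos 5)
Your proof is correct and follows essentially the same route as the paper's: identify the spanning rays $h$ and $2h-\xi$, rule out the $+\xi$ direction because $(h+\epsilon\xi)|_{V_0}$ pairs negatively with the free pencil $|l-e_1|$ and this cannot be repaired by flops, and then observe that $(h-\epsilon\xi)|_{V_1}=(1-2\epsilon)s+(2-2\epsilon)f$ stays nef with no interior wall until $\epsilon=1/2$. One minor remark: at the extreme ray your description of the boundary contraction (the class $f$ collapses $V_1$ onto a $\PS^1$) is actually more accurate than the paper's assertion that $V_1$ is contracted to a point, though this does not affect the count of chambers.
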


\begin{proof}
Recall that $V_0 = \operatorname{Bl_{17}\PS^2}$ and $V_1=\PS^1\times\PS^1$ with $h=(3(l-e_1),s+2f)$ and $\xi=(-3l+\sum_{i=1}^{17}e_i,2s+2f)$, it is straightforward to check that $\operatorname{Lift}_{\geq 0}(X_0)$ is spanned by $h$ and $2h-\xi$. Considering $h'=h+\epsilon\xi$ for $\epsilon > 0$ we see that $h'.e_1>0$ while $h.(l-e_1)<0$, so $h'$ is not effective and this cannot be resolved by flipping curves. Therefore we can only consider $h'=h-\epsilon\xi$ for $\epsilon >0$. Such an $h'$ is nef until $\epsilon >1/2$, and the stable model contracts the entirety of $V_1$ to a point when $\epsilon =1/2$. The wall and chamber decomposition is given in Figure \ref{nef cone D16}.
\end{proof}

\begin{lemma}
In the $D_{17}$ case $\operatorname{Lift}_{\geq 0}(X_0)$ has one chamber.
\end{lemma}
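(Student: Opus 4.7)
The plan is to compare the two directions $h \pm \epsilon\xi$ for $\epsilon > 0$ and show that only the negative direction produces effective lifted polarizations, with no interior walls before the ray $3h - 2\xi$. Recall from Section \ref{sect:hyperelliptic degen} that $V_0 = \operatorname{Bl}_{18}\PS^2$ and $V_1 = \PS^2$, with $h = (3(l-e_1), 2l')$ and $\xi = \left(-3l + \sum_{i=1}^{18}e_i,\; 3l'\right)$.

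For the positive direction $h' = h + \epsilon\xi$ with $\epsilon > 0$, the natural thing to exhibit is a movable class on $V_0$ pairing negatively with $h'_0$. I would use the fiber class $F := l - e_1$ of the birational double cover $V_0 \dashrightarrow \F_6$ arising from Proposition \ref{double cover Hirzeburch is blowup of P2}, which moves in a base-point-free pencil. A direct computation gives $h'_0 \cdot F = (3-3\epsilon) - (3-\epsilon) = -2\epsilon < 0$, so $h'_0$ is not effective. Since $F$ is movable, it cannot be removed by flopping, ruling out the positive direction entirely.

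For the negative direction $h' = h - \epsilon\xi$, I would decompose
$$h'_0 \;=\; 3(l - e_1) + \epsilon(3l - \textstyle\sum e_i) \;=\; 3F + \epsilon E,$$
where $E := -K_{V_0}$ is the smooth irreducible elliptic double curve. For any irreducible curve $C \ne E$ on $V_0$, both $F \cdot C \ge 0$ (since $F$ is nef) and $E \cdot C \ge 0$ (since $E$ is irreducible and distinct from $C$), so $h'_0 \cdot C \ge 0$ automatically. The only non-trivial check is
$$h'_0 \cdot E \;=\; 3(F \cdot E) + \epsilon\, E^2 \;=\; 6 - 9\epsilon,$$
using the easy intersection numbers $F \cdot E = 2$ and $E^2 = 9 - 18 = -9$. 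Simultaneously, on $V_1 = \PS^2$ we have $h'_1 = (2 - 3\epsilon)l'$, which is ample for $\epsilon < 2/3$, trivial at $\epsilon = 2/3$, and not effective beyond. Both bounds from $V_0$ and $V_1$ coincide at $\epsilon = 2/3$, so $h'$ is ample throughout the interior $0 < \epsilon < 2/3$, and the boundary ray $3h - 2\xi$ simultaneously contracts $V_1$ and the double curve $E \subset V_0$ to a single point.

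Combining the two directions, $\operatorname{Lift}_{\geq 0}(X_0)$ is the cone spanned by $h$ and $3h - 2\xi$, with no interior walls, matching Figure \ref{nef cone D17}. The one subtle step is the joint nefness check on $V_0$, but it reduces cleanly to the two intersection numbers $F \cdot E = 2$ and $E^2 = -9$; no flops are required, and that is precisely why the chamber structure is trivial in this case.
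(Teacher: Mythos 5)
Your proposal is correct and follows essentially the same route as the paper: rule out the $h+\epsilon\xi$ direction via the fiber class $l-e_1$, then check nefness of $h-\epsilon\xi$ up to $\epsilon=2/3$ where $V_1$ is contracted. Your decomposition $h'_0=3F+\epsilon E$ simply makes explicit the ``one can check'' nefness verification that the paper leaves to the reader, and the intersection numbers $F\cdot E=2$, $E^2=-9$ are right.
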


\begin{proof}
Recall that $h=(3(l-e_1),2l')$ and $\xi=(-3l+\sum_{i=1}^{18}e_i,3l')$ and one can check that $\operatorname{Lift}_{\geq 0}(X_0)$ is spanned by $h$ and $3h-2\xi$. Considering $h'=h+\epsilon\xi$ for $\epsilon >0$ we see that $h'. e_1>0$ while $h'.(l-e_1)<0$, so $h'$ is not effective and this cannot be resolved by flipping curves. Therefore we can only consider $h'$ of the form $h'=h-\epsilon\xi$ where $\epsilon >0$. Such an $h'$ is nef until $\epsilon >2/3$ and the stable model contracts all of $V_1$ to a point when $\epsilon =2/3$. The wall and chamber decomposition is given in Figure \ref{nef cone D17}.
\end{proof}

\begin{lemma}
In the $E_8+D_9$ case $\operatorname{Lift}_{\geq 0}(X_0)$ has one chamber.
\end{lemma}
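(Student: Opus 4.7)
My plan is to mirror the template of the $D_{16}$ and $D_{17}$ proofs, adjusting for the fact that here $h_1=0$. First I would recall from Sections~\ref{sect: E8 singularity and no line} and~\ref{sect:period domains} that $h=(7l-3e_1-2\sum_{i=2}^{10}e_i,0)$ and $\xi=(-3l+\sum_{i=1}^{10}e_i,3l'-\sum_{i=1}^{8}e_i')$, and verify by computing extremal rays that $\operatorname{Lift}_{\geq 0}(X_0)$ is spanned by the two rays through $h$ and $h-2\xi$.

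The $+\xi$ direction requires a different argument than in the $D_{16}$ and $D_{17}$ cases. There, one obtains $h'.(l-e_1)<0$ and invokes the non-floppability of $(0)$-curves; here the same intersection is $4-2\epsilon$, positive for small $\epsilon$, so no single negative intersection number obstructs nefness on $V_0$. Instead I would compute the intersections of $h'_0=h_0+\epsilon K_{V_0}$ against $e_i$, $l-e_i-e_j$, the conics $2l-\sum_{k\in S}e_k$, and $-K_{V_0}$, and show that the first simultaneous zeros occur at $\epsilon=2$ against the $(-1)$-curves $e_i$ ($i\geq 2$) and the $(0)$-curve $l-e_1$. Beyond $\epsilon=2$ the $e_i$ are floppable to $V_1$, but the $(0)$-curve $l-e_1$ is not, and the resulting configuration $\operatorname{Bl}_1\PS^2\cup\operatorname{Bl}_{17}\PS^2$ lies outside the $E_8+D_9$ Kulikov combinatorics of Section~\ref{sect: E8 singularity and no line}, so $h'$ is not represented by a divisor model on the standard $X_0$ for any $\epsilon>0$ past this wall.

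For $h'=h-\epsilon\xi$ with $\epsilon>0$, I would compute $h'_0=(7+3\epsilon)l-(3+\epsilon)e_1-(2+\epsilon)\sum_{i\geq 2}e_i$ and verify that all intersections against the standard effective curves of $V_0$ remain strictly positive, giving nefness on $V_0$. Although the restriction $h'|_{V_1}=\epsilon K_{V_1}$ is not effective on $V_1=\operatorname{dP}_1$, the polarization $h$ already collapses $V_1$ at the ray $\epsilon=0$ (since $h_1=0$), and the analysis tracks the polarization on the stable model obtained by that contraction. At $\epsilon=2$ one finds $h_0-2\xi_0=13l-5e_1-4\sum_{i\geq 2}e_i\sim 4E+(l-e_1)$, so the fixed component is $4E$ and the residual linear system is the base-point-free pencil $|l-e_1|$ on $V_0$; this determines the stable model at the extreme ray $h-2\xi$, and confirms there are no interior walls in $(0,2)$.

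The principal difficulty I anticipate is justifying that the $+\xi$ direction produces no genuinely new chamber: the obstruction is not a single negative intersection but a combinatorial one, namely that the flops required to restore nefness past $\epsilon=2$ take the Kulikov surface out of the $E_8+D_9$ type. Once that is settled, the $-\xi$ analysis reduces to intersection computations on $V_0$ and a tracking of the contraction of $V_1$, reproducing the single chamber of Figure~\ref{nef cone E8D9}.
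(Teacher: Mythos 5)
Your numerical computations are essentially right, but the global description of $\operatorname{Lift}_{\geq 0}(X_0)$ is not, and both of your termination arguments are flawed. With your labelling ($V_0=\operatorname{Bl}_{10}\PS^2$, $V_1=\operatorname{dP}_1$, so $\xi=(K_{V_0},-K_{V_1})$), the class $h-\epsilon\xi$ restricts to $V_1$ as $\epsilon K_{V_1}$, which is anti-ample on a del Pezzo surface and hence not effective for any $\epsilon>0$; since an effective divisor class on $X_0$ restricts to an effective class on each component, $h-\epsilon\xi\notin\operatorname{Lift}_{\geq 0}(X_0)$ for every $\epsilon>0$ and $h$ itself is an extremal ray. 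You cannot set this aside by saying that $h$ ``already collapses $V_1$'': $\operatorname{Lift}_{\geq0}(X_0)$ is defined by effectivity on $X_0$, not on a contraction of it. This one-line observation is precisely the paper's argument for the empty direction (the paper labels the $\operatorname{dP}_1$ as $V_0$ in this section, so there the empty direction is $+\xi$ and the cone is spanned by $h$ and $h-2\xi$; in your convention it is spanned by $h$ and $h+2\xi$). Your opening claim that the cone is spanned by $h$ and $h-2\xi$, followed by an analysis in which both $h+\epsilon\xi$ and $h-\epsilon\xi$ run out to $\epsilon=2$, is internally inconsistent and misidentifies the extremal rays; in particular the decomposition $h_0-2\xi_0\sim 4E+(l-e_1)$ describes a class that is not even in the effective cone.

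In the genuine direction $h+\epsilon\xi$ your wall at $\epsilon=2$ against $e_2,\dots,e_{10}$ and $l-e_1$ is correct, but the ``combinatorial'' termination argument is neither needed nor valid: flopping $e_2,\dots,e_{10}$ produces $\operatorname{Bl}_1\PS^2\cup\operatorname{Bl}_{17}\PS^2$, which is still a perfectly good Tyurin Kulikov surface over the same $E_8+D_9$ boundary component (flops do not change $I^\perp/I$, and the paper itself passes to flopped configurations of this kind in the $E_7+E_7+A_3$ and $E_8+E_8+\langle-4\rangle$ cases), so ``leaving the $E_8+D_9$ combinatorics'' is not an obstruction. The actual reason nothing lies beyond $\epsilon=2$ is again effectivity: one has $h'\cdot(l-e_1)=4-2\epsilon<0$ for $\epsilon>2$, and $l-e_1$ is the nef class of a covering pencil on $V_0$ (before or after the flop), so $h'$ is not pseudoeffective. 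That is the paper's argument, and with it the single chamber $\{h+\epsilon\xi:0\le\epsilon\le2\}$ (in your labelling) follows, with extremal ray given by $l-e_1$ on $V_0$ paired with $2(-K_{V_1})$ on $V_1$.
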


\begin{proof}
Recall that $h=(0,7l'-3e_1'-2\sum_{i=2}^{10}e_i')$ and $\xi=(-3l+\sum_{i=1}^7e_i,3l'-\sum_{i=1}^{10}e_i')$, one can check that $\operatorname{Lift}_{\geq 0}(X_0)$ is spanned by $h$ and $h-2\xi$. We first consider $h' = h+\epsilon\xi$ for $\epsilon >0$. Such an $h'$ is not effective as $h'. l<0$ which cannot be altered by flipping curves, therefore we can only consider $h'=h-\epsilon\xi$ for $\epsilon >0$.

Set $h'=h-\epsilon\xi$ with $\epsilon >0$, then $h'$ is nef until $\epsilon > 2$, for $\epsilon = 2$ we flip all $e_i'$ for $i\geq 2$. Then $h'.(l'-e_1')<0$, so $h'$ is not effective. Therefore we only have one chamber and the wall and chamber decomposition is given in Figure \ref{nef cone E8D9}.
\end{proof}

\begin{lemma}
In the $E_7+E_7+A_3$ case $\operatorname{Lift}_{\geq 0}(X_0)$ has two chambers with one interior wall.
\end{lemma}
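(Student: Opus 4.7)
The plan is to follow the template established by the earlier cases in this section. First, I will identify the two extreme rays of the cone $\operatorname{Lift}_{\geq 0}(X_0)$ and then analyse how nefness of $h' := h + n\xi$ behaves as $n$ varies, recording the Type II modifications performed at the walls. Throughout I will use the classes $h = (6l - 2\sum_{i=1}^7 e_i - \sum_{i=8}^{11} e_i, 0)$ and $\xi = (-3l + \sum_{i=1}^{11} e_i, 3l' - \sum_{i=1}^7 e_i')$ recorded in Section \ref{sect: E7 singularity} and recalled in the $E_7+E_7+A_3$ case of Proposition \ref{prop:relations}.

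First I will show that $\operatorname{Lift}_{\geq 0}(X_0)$ is bounded by the rays generated by $h$ and $h - 2\xi$. To rule out the direction complementary to the cone, I will exhibit a curve class on $V_0 = \operatorname{Bl}_{11}\PS^2$ which is not a $(-1)$-curve meeting $E$ transversally in a single point and which acquires negative intersection with $h'$; since such a class cannot be moved by a Type II modification, its negative intersection obstructs extending the cone in that direction, exactly as in the $D_{12}+D_5$ analysis above.

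The substantive content is the interior behaviour for $\epsilon \in (0, 2)$. On one component the restriction of $h - \epsilon\xi$ stays nef throughout by direct verification against the $(-1)$-curves and standard curve classes. On the other component nefness fails, and the key observation is that the four exceptional curves $e_8, e_9, e_{10}, e_{11}$ appear symmetrically in $h|_{V_0}$ (each with coefficient $-1$, distinct from the coefficient $-2$ attached to $e_1, \ldots, e_7$), so they become extremal simultaneously at a single value of $\epsilon$. The wall at $h - \xi$ corresponds to this common value, and the Type II modification performed there is a simultaneous flip of all four curves, yielding a new Kulikov model with components $\operatorname{Bl}_7\PS^2 = \operatorname{dP}_2$ and $\operatorname{Bl}_{11}\PS^2 = \operatorname{Bl}_4\operatorname{dP}_2$, matching the second chamber label in Figure \ref{nef cone E7E7A3}. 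A recomputation in the flipped model, obtained by pushing forward $h|_{V_0}$ under the contraction of $e_8,\ldots,e_{11}$, shows nefness persists until $\epsilon = 2$, at which point the restriction to the $\operatorname{dP}_2$ component becomes numerically trivial and contracts via the anticanonical morphism, giving the outer boundary.

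The main obstacle I anticipate is the careful bookkeeping of how the abstract classes $h, \xi \in \operatorname{Pic}(X_0)$ restrict to the components of the flipped model, together with verifying the simultaneity of the four flips. This simultaneity is what distinguishes the $E_7+E_7+A_3$ case (one interior wall) from the $E_8+E_8+\langle -4 \rangle$ case (two interior walls arising from successive single-curve flips), and it follows from the symmetric structure of $h|_{V_0}$ produced by the $\widetilde{E_7}$ resolution performed in Section \ref{sect: E7 singularity}.
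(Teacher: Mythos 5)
Your proposal takes essentially the same route as the paper's proof: determine the two extremal rays of $\operatorname{Lift}_{\geq 0}(X_0)$, rule out one direction because the restriction of $h'$ to the $\operatorname{dP}_2$ component becomes a negative multiple of the anticanonical class (so the line class there goes negative, which no flop can repair), locate the single interior wall where the four exceptional curves $e_8,\dots,e_{11}$ (coefficient $-1$ in $h$, as opposed to $-2$ for $e_1,\dots,e_7$) become extremal simultaneously and are flipped together onto the $\operatorname{dP}_2$ component, and contract the resulting $\operatorname{Bl}_7\PS^2$ component to a point on the outer ray. The one correction needed is a sign: you recall $h$ and $\xi$ in the convention of Section \ref{sect: E7 singularity}, where the negative part of $\xi$ sits on $\operatorname{Bl}_{11}\PS^2$, and in that convention the cone is spanned by $h$ and $h+2\xi$ with the interior wall at $h+\xi$ and the non-flippable obstruction living on the $\operatorname{dP}_2$ component rather than on $\operatorname{Bl}_{11}\PS^2$; the generators $h$ and $h-2\xi$ that you quote belong to the convention of Section \ref{sect:clasification of models}, in which the roles of $V_0$ and $V_1$ are interchanged and $\xi$ is therefore negated.
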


\begin{proof}
Recall that $h=(0,6l'-2\sum_{i=1}^7e_i'-\sum_{i=8}^{11}e_i')$ and $\xi=(-3l+\sum_{i=1}^7e_i,3l-\sum_{i=1}^{11}e_i')$ and it is quick to check that $\operatorname{Lift}_{\geq 0}(X_0)$ is spanned by $h$ and $h-2\xi$. Considering $h'=h+\epsilon\xi$ for $\epsilon >0$ we see that $h'. l <0$ which cannot be altered by flips, therefore we can only consider $h'=h-\epsilon\xi$ for $\epsilon >0$. Such an $h'$ is nef until $\epsilon >1$ where we flip the curves $e_i'$ for $i\geq 8$. We can then continue until $\epsilon =2$ where the stable model contracts the $V_1$ to a point. The wall and chamber decomposition is given in Figure \ref{nef cone E7E7A3}.
\end{proof}

\begin{lemma}\label{lifted polarization E8E8-4}
In the $E_8+E_8+\langle -4\rangle$ case $\operatorname{Lift}_{\geq 0}(X_0)$ has three chambers with two interior walls.
\end{lemma}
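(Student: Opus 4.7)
The plan is to adapt the strategy used for the $E_7+E_7+A_3$ case above, producing three chambers with two interior walls by iterating that argument. First I will switch to the convention in which $V_0=\operatorname{dP}_1$ denotes the exceptional del Pezzo component while $V_1=\operatorname{Bl}_{10}\PS^2$ is the strict transform, so that $h=(0,9l'-3\sum_{i=1}^8 e_i'-2e_9'-e_{10}')$ and $\xi=(-3l+\sum_{i=1}^8 e_i,3l'-\sum_{i=1}^{10} e_i')$. Under this convention $\xi|_{V_0}=K_{V_0}$ and $\xi|_{V_1}=-K_{V_1}$, matching the shape of the $E_7+E_7+A_3$ setup.

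The first step is to show that $\operatorname{Lift}_{\geq 0}(X_0)$ is spanned by $h$ and $h-3\xi$. To exclude the $+\xi$ direction I will observe that $(h+\epsilon\xi)|_{V_0}=\epsilon K_{V_0}$, and since $-K_{V_0}$ is ample on $\operatorname{dP}_1$ while $(\epsilon K_{V_0})\cdot(-K_{V_0})=-\epsilon<0$ for $\epsilon>0$, this restriction cannot be represented by an effective divisor on $V_0$; hence $h+\epsilon\xi$ itself is not effective on $X_0$, ruling out the upper half-plane.

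For the $-\xi$ direction I will analyze $h'=h-\epsilon\xi$. On $V_0$ the restriction $(h-\epsilon\xi)|_{V_0}=\epsilon(-K_{V_0})$ is ample for $\epsilon>0$, so all obstructions to ampleness come from intersections on $V_1$. Direct calculation gives $(h-\epsilon\xi)\cdot e_{10}'=1-\epsilon$, $(h-\epsilon\xi)\cdot e_9'=2-\epsilon$, $(h-\epsilon\xi)\cdot e_i'=3-\epsilon$ for $1\le i\le 8$, and $(h-\epsilon\xi)\cdot l'=9-3\epsilon$. Thus $h-\epsilon\xi$ is nef on the starting Kulikov model throughout $\epsilon\in[0,1]$, forming the first chamber $\operatorname{dP}_1\cup\operatorname{Bl}_{10}\PS^2$. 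At $\epsilon=1$ the exceptional curve $e_{10}'$ meets $h-\xi$ in degree zero and must be flopped from $V_1$ to $V_0$ by a GM1 modification, producing the second-chamber model $\operatorname{Bl}_1\operatorname{dP}_1\cup\operatorname{Bl}_9\PS^2$. Iterating the calculation in the new model (with the appropriately transformed classes) shows that $h-\epsilon\xi$ remains nef for $\epsilon\in[1,2]$, with $e_9'$ becoming a flopping curve at $\epsilon=2$ and yielding the third-chamber model $\operatorname{Bl}_2\operatorname{dP}_1\cup\operatorname{Bl}_8\PS^2$. Finally at $\epsilon=3$ the restriction of $h-3\xi$ to the third-chamber $V_1=\operatorname{dP}_1$ is trivial on both the hyperplane class and all remaining exceptional curves, so the stable model contracts this $V_1$ to a point, giving the extreme ray $h-3\xi$.

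The main technical point will be verifying that the classes transform correctly under each of the two successive GM1 flops and that $h-\epsilon\xi$ remains nef across the intermediate chambers; this essentially repeats the single-flop analysis of the $E_7+E_7+A_3$ case once for each wall, with the intersection numbers above tracking the contribution of each flopped curve in the appropriate chamber. Together this will confirm the three-chamber decomposition with walls at $\epsilon=1$ and $\epsilon=2$ depicted in Figure \ref{nef cone E8E8}.
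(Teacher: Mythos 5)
Your proposal is correct and follows essentially the same route as the paper: rule out the $+\xi$ direction by non-effectivity on the $\operatorname{dP}_1$ component, then track $h-\epsilon\xi$ downwards, flopping $e_{10}'$ at $\epsilon=1$ and $e_9'$ at $\epsilon=2$, and contracting $V_1$ at $\epsilon=3$, with your intersection computations just making explicit what the paper leaves as ``one can check.'' (One terminological quibble: the flops here are elementary modifications of a single Kulikov model in the sense of Friedman--Morrison and Shepherd-Barron, not GM1 modifications, which the paper defines only for $\lambda$-families over $S_\lambda$.)
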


\begin{proof}
Recall that $h=(0,9l'-3\sum_{i=1}^8e_i'-2e_9'-e_{10}')$ and $\xi=(-3l+\sum_{i=1}^8e_i,3l'-\sum_{i=1}^{10}e_i')$ one can check that $\operatorname{Lift}_{\geq 0}(X_0)$ is spanned by $h$ and $h-3\xi$. Note that $h'=h+\epsilon\xi$ is not effective for $\epsilon >0$ as $h'. l <0$ and this cannot be altered by flipping curves. Therefore we need only to consider $h'=h-\epsilon\xi$ for $\epsilon >0$.
Such an $h'$ is nef until $\epsilon >1$ where we flip $e_{10}'$. After this flip, $h'$ is nef until $\epsilon >2$ where we flip $e_9'$. After this, $h'$ is nef until $\epsilon =3$ where the stable model contracts $V_1$ to a point. We therefore have three chambers and the decomposition is given in Figure \ref{nef cone E8E8}.
\end{proof}

We can now provide a proof of Theorem \ref{list of nef models}.
\renewcommand{\proofname}{Proof of Theorem \ref{list of nef models}}
\begin{proof}

Let $(X,H)$ be one of the standard divisor models, as constructed in Section \ref{sect:Explicit descriptions of families}. Let $(X',H')$ be another divisor model which gives rise to the same lattice as $(X,H)$, so that $X'\rightarrow \Delta$ is a Tyurin degeneration of K3 surfaces of degree 4, with polarizing line bundle $L'$ and $H'\in|(L')^{\otimes m}|$. Suppose that $X=X'$, by Lemma \ref{nef polarisations are related} we have that $H'\sim mH+nV_0$. Note also that by the definition of divisor models, $H'$ is nef on $X$. If $H'$ is ample then the the central fiber of the stable model $(\overline{X'},\overline{H'})$ is given in Table \ref{Complete Description Table}. If $H'$ is nef but not ample, then the central fiber of the stable model $(\overline{X'},\overline{H'})$ is given by contracting the appropriate curves of $X_0$.

Suppose that $X\neq X'$, then by a result of Shepherd-Barron \cite[Corollary 3.1]{SB83} there exists a sequence of flops $\phi:X'\dashrightarrow X$ such that $\phi_*H'$ is effective but not nef on $X$. By Lemma \ref{nef polarisations are related} we can write $\phi_*H'\sim mH+nV_0$ and the point $(m,n)$ does not belong to nef cone of $X$. We then perform the sequence of flops $\pi_{m,n}:X\dashrightarrow X_{m,n}$ defined as follows. In the appropriate subfigure of Figure \ref{fig:figures}, we start at the vertex $(1,0)$, move horizontally to the point $(m,0)$ and then move vertically to the point $(m,n)$. Moving horizontally does not cross any walls in our diagrams, but vertical moves can cross walls. One performs the flops corresponding to the wall-crossings arising from the vertical moves, we define $\psi:=\pi_{m,n}\circ\phi$ and so $\psi_*H'$ is nef on $X_{m,n}$. By Lemma \ref{sections on central fiber come from global} there exists $H_{m,n} \sim \psi_*H'$ which is effective and does not contain a stratum of any fibers. We can therefore use $H_{m,n}$ to construct stable models.

If $H_{m,n}$ is ample, $(X_{m,n}, H_{m,n})$ is the stable model of $(X',H')$ and the point $(m,n)$ belongs to the interior of a chamber. 
If instead $H_{m,n}$ is nef but not ample on $X_{m,n}$ then the point $(m,n)$ belongs to a boundary ray. Contracting the curves $C$ where $H_{m,n}.C=0$ gives rise to the stable model $(\overline{X_{m,n}},\overline{H_{m,n}})$ of $(X_{m,n},H_{m,n})$. The description of such curves is given in the corresponding Lemma in this section. By standard results of canonical models, we see that $(\overline{X_{m,n}},\overline{H_{m,n}})$ is the stable model of $(X',H')$. Hence this provides a description of all stable models of Tyurin degenerations of K3 surfaces of degree 4.
\end{proof}
\renewcommand{\proofname}{Proof}

We conclude this section with a proof of Corollary \ref{thm:lambda families intro}.

\begin{proof}[Proof of Corollary \ref{thm:lambda families intro}]

Let $(\mathcal{X}\rightarrow S_{\lambda},\mathcal{L})$ be a $\langle 4 \rangle$-quasipolarized nef $\lambda$-family of Tyurin degenerations. Let $(X\rightarrow \Delta,H)$ be a divisor model belonging to the $\lambda$-family so that $H\in|L|$ is an effective divisor where $L$ is a restriction of $\mathcal{L}$. Note that $X\rightarrow\Delta$ is a Tyurin degeneration of K3 surfaces of degree 4 and that $L$ is a multiple of the polarization line bundle, hence satisfying the conditions of Theorem \ref{list of nef models}. Furthermore, for each $\lambda$-family, at least one such divisor model exists corresponding to those constructed in Section \ref{sect:Explicit descriptions of families}. By Theorem \ref{list of nef models}, the stable model of $(X,H)$ belongs to Table \ref{Complete Description Table}.
\end{proof}

As a consequence of Theorem \ref{list of nef models} and Corollary \ref{thm:lambda families intro}, we have explicitly constructed $\langle 4 \rangle$-quasipolarized $\lambda$-families over tubular neighbourhoods of the Type II boundary components of $\overline{\mathcal{F}_4}^{tor}$. We have also shown that there is not necessarily a unique stable model associated to a given $\lambda$-family.

\bibliography{bib}

\begin{thebibliography}{AMRT75}

\bibitem[AE23]{AE23}
Valery Alexeev and Philip Engel.
\newblock Compact {M}oduli of {K}3 {S}urfaces.
\newblock {\em Ann. of Math.}, 198, 09 2023.

\bibitem[AMRT75]{AMRT75}
A.~Ash, D.~Mumford, M.~Rapoport, and Y.~Tai.
\newblock {\em Smooth compactification of locally symmetric varieties}, volume Vol. IV of {\em Lie Groups: History, Frontiers and Applications}.
\newblock Math Sci Press, Brookline, MA, 1975.

\bibitem[Anc87]{Ancona87}
Vincenzo Ancona.
\newblock Vanishing and nonvanishing theorems for numerically effective line bundles on complex spaces.
\newblock {\em Ann. Mat. Pura Appl. (4)}, 149:153--164, 1987.

\bibitem[BB66]{BB66}
W.~L. Baily and A.~Borel.
\newblock Compactification of arithmetic quotients of bounded symmetric domains.
\newblock {\em Ann. of Math.}, 84(3):442--528, 1966.

\bibitem[BR75]{BurnsRapoport}
Dan Burns, Jr. and Michael Rapoport.
\newblock On the {T}orelli problem for k\"ahlerian {$K-3$} surfaces.
\newblock {\em Ann. Sci. \'Ecole Norm. Sup. (4)}, 8(2):235--273, 1975.

\bibitem[BS76]{GlobalTheory}
Constantin B{\u a}nic{\u a} and Octavian St{\u a}n{\u a}{\c s}il{\u a}.
\newblock {\em Algebraic methods in the global theory of complex spaces}.
\newblock Editura Academiei, Bucharest; John Wiley \& Sons, London-New York-Sydney, 1976.
\newblock Translated from the Romanian.

\bibitem[Car85]{Carlson85}
James~A. Carlson.
\newblock The one-motif of an algebraic surface.
\newblock {\em Compositio Math.}, 56(3):271--314, 1985.

\bibitem[Dol12]{Dolg12}
Igor~V. Dolgachev.
\newblock {\em Classical Algebraic Geometry : a modern view}.
\newblock Cambridge University Press, Cambridge, 2012.

\bibitem[EVdV81]{EVdV81}
D.~Eisenbud and A.~Van~de Ven.
\newblock On the normal bundles of smooth rational space curves.
\newblock {\em Math. Ann.}, 256(4):453--463, 1981.

\bibitem[FM83]{FM81}
Robert Friedman and David~R. Morrison.
\newblock The birational geometry of degenerations: an overview.
\newblock In {\em The birational geometry of degenerations ({C}ambridge, {M}ass., 1981)}, volume~29 of {\em Progr. Math.}, pages 1--32. Birkh\"{a}user, Boston, MA, 1983.

\bibitem[Fri84]{Fried84}
Robert Friedman.
\newblock A new proof of the global torelli theorem for {K}3 surfaces.
\newblock {\em Ann. of Math.}, 120(2):237--269, 1984.

\bibitem[Har77]{Har77}
Robin Hartshorne.
\newblock {\em Algebraic geometry}.
\newblock Springer-Verlag, New York, 1977.
\newblock Graduate Texts in Mathematics, No. 52.

\bibitem[HT15]{HarThom15}
Andrew Harder and Alan Thompson.
\newblock The geometry and moduli of {K}3 surfaces.
\newblock In {\em Calabi-Yau varieties: arithmetic, geometry and physics}, pages 3--43. Springer, 2015.

\bibitem[Huy16]{Huy16}
Daniel Huybrechts.
\newblock {\em Lectures on K3 Surfaces}.
\newblock Cambridge Studies in Advanced Mathematics. Cambridge University Press, 2016.

\bibitem[KSC04]{KSC04}
J\'{a}nos Koll\'{a}r, Karen~E. Smith, and Alessio Corti.
\newblock {\em Rational and nearly rational varieties}, volume~92 of {\em Cambridge Studies in Advanced Mathematics}.
\newblock Cambridge University Press, Cambridge, 2004.

\bibitem[Laz16]{Laza16}
Radu Laza.
\newblock The {KSBA} compactification for the moduli space of degree two {$K3$} pairs.
\newblock {\em J. Eur. Math. Soc. (JEMS)}, 18(2):225--279, 2016.

\bibitem[LO19]{LOG19}
Radu Laza and Kieran O'Grady.
\newblock Birational geometry of the moduli space of quartic {$K3$} surfaces.
\newblock {\em Compos. Math.}, 155(9):1655--1710, 2019.

\bibitem[LO21]{LOG21}
Radu Laza and Kieran O'Grady.
\newblock G{IT} versus {B}aily-{B}orel compactification for {$K3$}'s which are double covers of {$\PS^1\times\PS^1$}.
\newblock {\em Adv. Math.}, 383:Paper No. 107680, 63, 2021.

\bibitem[PSS71]{PSS71}
I.~I. Pjatecki{\u i}-Shapiro and I.~R. Shafarevi{\v c}.
\newblock Torelli's theorem for algebraic surfaces of type {${\rm K}3$}.
\newblock {\em Izv. Akad. Nauk SSSR Ser. Mat.}, 35:530--572, 1971.

\bibitem[Rei97]{Reid96}
Miles Reid.
\newblock Chapters on algebraic surfaces.
\newblock In {\em Complex algebraic geometry ({P}ark {C}ity, {UT}, 1993)}, volume~3 of {\em IAS/Park City Math. Ser.}, pages 3--159. Amer. Math. Soc., Providence, RI, 1997.

\bibitem[SB83]{SB83}
N.~I. Shepherd-Barron.
\newblock Extending polarizations on families of {$K3$} surfaces.
\newblock In {\em The birational geometry of degenerations ({C}ambridge, {M}ass., 1981)}, volume~29 of {\em Progr. Math.}, pages 135--171. Birkh\"auser, Boston, MA, 1983.

\bibitem[Sca87]{Sca87}
Francesco Scattone.
\newblock On the compactification of moduli spaces for algebraic {$K3$} surfaces.
\newblock {\em Mem. Amer. Math. Soc.}, 70(374):x+86, 1987.

\bibitem[Sha80]{Shah80}
Jayant Shah.
\newblock A complete moduli space for {K}3 surfaces of degree {$2$}.
\newblock {\em Ann. of Math. (2)}, 112(3):485--510, 1980.

\bibitem[Sha81]{Shah81}
Jayant Shah.
\newblock Degenerations of {K}3 surfaces of degree {$4$}.
\newblock {\em Trans. Amer. Math. Soc.}, 263(2):271--308, 1981.

\bibitem[Sil09]{Sil09}
Joseph~H. Silverman.
\newblock {\em The arithmetic of elliptic curves}, volume 106 of {\em Graduate Texts in Mathematics}.
\newblock Springer, Dordrecht, second edition, 2009.

\bibitem[Ura84]{Ura84}
Tohsuke Urabe.
\newblock On quartic surfaces and sextic curves with singularities of type {$\tilde E_8,\;T_{2,3,7},\;E_{12}$}.
\newblock {\em Publ. Res. Inst. Math. Sci.}, 20(6):1185--1245, 1984.

\bibitem[Ura86]{Ura86}
Tohsuke Urabe.
\newblock Classification of nonnormal quartic surfaces.
\newblock {\em Tokyo J. Math.}, 9(2):265--295, 1986.

\end{thebibliography}
\end{document}